\documentclass{amsart}
\usepackage{amssymb}
\usepackage{mathrsfs}
\usepackage{stmaryrd}
\usepackage{imakeidx}
\usepackage[colorlinks = true]{hyperref}
\usepackage{enumitem}
\usepackage{comment}
\input xy \xyoption {all}

\usepackage{tikz}
\usetikzlibrary{cd}

\usepackage{dsfont}

\usepackage{rotating}
\usepackage{lscape}


\newcommand{\Z}{\mathbb{Z}}

\newcommand{\bk}{\Bbbk}

\newcommand{\Gm}{\mathbb{G}_{\mathrm{m}}}


\newcommand{\Mod}{\mathsf{Mod}}




\newcommand{\bX}{\mathbf{X}}



\newcommand{\hatstar}{\mathbin{\widehat{\star}}}



\newcommand{\cB}{\mathcal{B}}

\newcommand{\cN}{\mathcal{N}}

\newcommand{\cP}{\mathcal{P}}
\newcommand{\cS}{\mathcal{S}}
\newcommand{\cZ}{\mathcal{Z}}


\newcommand{\id}{\mathrm{id}}

\newcommand{\simto}{\xrightarrow{\sim}}

\DeclareMathOperator{\Hom}{Hom}
\DeclareMathOperator{\Ext}{Ext}
\DeclareMathOperator{\End}{End}



\newcommand{\Waff}{W_{\mathrm{aff}}}
\newcommand{\Saff}{S_{\mathrm{aff}}}
\newcommand{\Wext}{W_{\mathrm{ext}}}

\newcommand{\bG}{\mathbf{G}}
\newcommand{\bB}{\mathbf{B}}
\newcommand{\bP}{\mathbf{P}}
\newcommand{\bL}{\mathbf{L}}
\newcommand{\bU}{\mathbf{U}}
\newcommand{\bT}{\mathbf{T}}
\newcommand{\bg}{\mathbf{g}}
\newcommand{\bb}{\mathbf{b}}
\newcommand{\bt}{\mathbf{t}}
\newcommand{\bn}{\mathbf{n}}
\newcommand{\bbX}{\mathbb{X}}

\renewcommand{\hat}{\widehat}
\renewcommand{\tilde}{\widetilde}


%

\makeatletter
\def\lotimes{\@ifnextchar_{\@lotimessub}{\@lotimesnosub}}
\def\@lotimessub_#1{\mathchoice{\mathbin{\mathop{\otimes}^L}_{#1}}%
  {\otimes^L_{#1}}{\otimes^L_{#1}}{\otimes^L_{#1}}}
\def\@lotimesnosub{\mathbin{\mathop{\otimes}^L}}
\makeatother

\makeatletter
\def\lboxtimes{\@ifnextchar_{\@lboxtimessub}{\@lboxtimesnosub}}
\def\@lboxtimessub_#1{\mathchoice{\mathbin{\mathop{\boxtimes}^L}_{#1}}%
  {\boxtimes^L_{#1}}{\boxtimes^L_{#1}}{\boxtimes^L_{#1}}}
\def\@lboxtimesnosub{\mathbin{\mathop{\boxtimes}^L}}
\makeatother

\makeatletter
\def\hatotimes{\@ifnextchar_{\@hatotimessub}{\@hatotimesnosub}}
\def\@hatotimessub_#1{\mathchoice{\mathbin{\mathop{\widehat{\otimes}}}_{#1}}%
  {\widehat{\otimes}_{#1}}{\widehat{\otimes}_{#1}}{\widehat{\otimes}_{#1}}}
\def\@lotimesnosub{\mathbin{\mathop{\widehat{\otimes}}}}
\makeatother

\newcommand{\scO}{\mathscr{O}}

\newcommand{\cI}{\mathcal{I}}

\newcommand{\Rep}{\mathsf{Rep}}


\newcommand{\Coh}{\mathsf{Coh}}
\newcommand{\QCoh}{\mathsf{QCoh}}


\newcommand{\ft}{\mathfrak{t}}
\newcommand{\fn}{\mathfrak{n}}

\newcommand{\fb}{\mathfrak{b}}
\newcommand{\fg}{\mathfrak{g}}

\newcommand{\Ind}{\mathrm{Ind}}

\newcommand{\Ug}{\mathcal{U} \mathfrak{g}}

\newcommand{\fR}{\mathfrak{R}}
\newcommand{\fRs}{\mathfrak{R}^{\mathrm{s}}}

\newcommand{\scF}{\mathscr{F}}
\newcommand{\scG}{\mathscr{G}}
\newcommand{\scU}{\mathscr{U}}
\newcommand{\cU}{\mathcal{U}}
\newcommand{\ZHC}{Z_{\mathrm{HC}}}
\newcommand{\ZFr}{Z_{\mathrm{Fr}}}
\newcommand{\fC}{\mathfrak{C}}
\newcommand{\fD}{\mathfrak{D}}
\newcommand{\tfD}{\widetilde{\mathfrak{D}}}
\newcommand{\fE}{\mathfrak{E}}
\newcommand{\reg}{\mathrm{reg}}

\newcommand{\rs}{\mathrm{rs}}
\newcommand{\HCBim}{\mathsf{HC}}
\newcommand{\DBS}{\mathsf{D}_{\mathrm{BS}}}
\newcommand{\uw}{\underline{w}}

\newcommand{\Bim}{\mathrm{Bim}}
\newcommand{\bgreg}{\bg_{\mathrm{reg}}}
\newcommand{\bgrs}{\bg_{\mathrm{rs}}}

\newcommand{\bS}{\mathbf{S}}
\newcommand{\tbg}{\widetilde{\bg}}

\newcommand{\ext}{\mathrm{ext}}

\newcommand{\Spec}{\mathrm{Spec}}
\newcommand{\op}{\mathrm{op}}

\newcommand{\bbI}{\mathbb{I}}
\newcommand{\bbJ}{\mathbb{J}}

\newcommand{\Sim}{\mathsf{L}}
\newcommand{\Ver}{\mathsf{Z}}
\newcommand{\fin}{\mathrm{fg}}

\newcommand{\Lie}{\mathrm{Lie}}
\newcommand{\omu}{{\overline{\mu}}}
\newcommand{\ola}{{\overline{\lambda}}}
\newcommand{\tla}{{\widetilde{\lambda}}}
\newcommand{\tmu}{{\widetilde{\mu}}}
\newcommand{\AS}{\mathrm{AS}}
\newcommand{\tD}{\widetilde{\mathscr{D}}}
\newcommand{\Fr}{\mathrm{Fr}}
\newcommand{\diag}{\mathrm{diag}}

\newcommand{\WW}{\mathbf{W}}

\newcommand{\SSaff}{\mathbf{S}_{\mathrm{aff}}}

\newcommand{\sfP}{\mathsf{P}}
\newcommand{\sfM}{\mathsf{M}}
\newcommand{\sfC}{\mathsf{C}}
\newcommand{\sfQ}{\mathsf{Q}}

\numberwithin{equation}{section}
\numberwithin{figure}{section}
\newtheorem{thm}{Theorem}[section]
\newtheorem{lem}[thm]{Lemma}
\newtheorem{prop}[thm]{Proposition}
\newtheorem{cor}[thm]{Corollary}
\newtheorem{conj}[thm]{Conjecture}

\theoremstyle{definition}

\theoremstyle{remark}
\newtheorem{rmk}[thm]{Remark}

\title{Hecke action on the principal block}

\author[R. Bezrukavnikov]{Roman Bezrukavnikov}
\address{Department of Mathematics \\ Massachusetts Institute of Technology \\ Cambridge, MA \\ 02139 \\ USA.}
\email{bezrukav@math.mit.edu}

 \author[S.~Riche]{Simon Riche}
 \address{Universit\'e Clermont Auvergne, CNRS, LMBP, F-63000 Clermont-Ferrand, France.}
 \email{simon.riche@uca.fr}
 
\begin{document}

\begin{abstract}
In this paper we construct an action of the affine Hecke category (in its ``Soergel bimodules'' incarnation) on the principal block of representations of a simply-connected semisimple algebraic group over an algebraically closed field of characteristic bigger than the Coxeter number. This confirms a conjecture of G. Williamson and the second author, and provides a new proof of the tilting character formula in terms of antispherical $p$-Kazhdan--Lusztig polynomials.
\end{abstract}

\maketitle

\setcounter{tocdepth}{1}
\tableofcontents

\section{Introduction}

\subsection{Representation theory of reductive algebraic groups and the Hecke category}

Let $G$ be a connected reductive algebraic group over an algebraically closed field $\bk$ of characteristic $p$ (assumed to be strictly bigger than the Coxeter number of $G$), and let $\Waff$ be the associated affine Weyl group. A choice of a Borel subgroup $B$ in $G$ determines a subset $\Saff \subset \Waff$ of ``simple reflections" such that $(\Waff, \Saff)$ is a Coxeter system. It has been expected for long (following ideas of Verma~\cite{verma} later expanded by Lusztig~\cite{lusztig} in particular) that the combinatorics of the category $\Rep(G)$ of finite-dimensional algebraic $G$-modules should be expressible in terms of the Kazhdan--Lusztig combinatorics of $(\Waff, \Saff)$. This expectation is now known to be exact if $p$ is large (thanks to work of Kazhdan--Lusztig, Kashiwara--Tanisaki and Andersen--Jantzen--Soergel, or a later version of Fiebig), but not for some smaller $p$'s (as shown by Williamson); see~\cite{rw} for details and references.

In~\cite{rw}, G. Williamson and the second author of the present paper started advocating the idea that the combinatorics of $\Rep(G)$ should rather be expressed in terms of the \emph{$p$-Kazhdan--Lusztig} combinatorics, introduced a few years before by G. Williamson (partly in collaboration, see~\cite{jmw,jw}) as some ``combinatorial shadow" of the Hecke category $\DBS$ over $\bk$ attached to $(\Waff, \Saff)$. (Here by ``Hecke category" we mean the diagrammatic category introduced by Elias--Williamson in~\cite{ew}; this category is closely related with those of Soergel bimodules and parity complexes on flag varieties.)
In that paper it was in particular observed that a concrete incarnation of this idea (a character formula for indecomposable tilting modules in the principal block, in terms of antispherical $p$-Kazhdan--Lusztig polynomials) was a consequence of the following conjecture, where for $s \in \Saff$ we denote by $B_s$ the object of $\DBS$ naturally associated with $s$.

\begin{conj}[\cite{rw}]
\label{conj:intro}
There exists a ($\bk$-linear) right action of the monoidal category $\DBS$ on the principal block $\Rep_0(G)$ of $\Rep(G)$ such that for any $s \in \Saff$ the object $B_s$ acts via a functor isomorphic to the wall-crossing functor associated with $s$.
\end{conj}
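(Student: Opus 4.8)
The plan is to realize the action by transport of structure from a geometric model, rather than by writing down the images of the generators of $\DBS$ (the dots, trivalent and $2m_{st}$-valent vertices, and the polynomial generators) as natural transformations between composites of translation functors and then verifying the Elias--Williamson relations by hand; the latter route is hopeless, both because of the number of relations and because those natural transformations have no transparent intrinsic description on the representation-theoretic side. As a first reduction, recall that $\Rep_0(G)$ is a highest weight category with enough tilting objects, that $\Db\Rep_0(G)$ identifies with the bounded homotopy category $\Kb(\Tilt_0)$ of complexes of tilting modules, and that a Bott--Samelson object $B_{s_1}\cdots B_{s_k}$ of $\DBS$ ought to act by the composite $\Xi_{s_1}\circ\cdots\circ\Xi_{s_k}$ of wall-crossing functors, which is $t$-exact. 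Hence it is enough to construct a $\bk$-linear monoidal functor from $\DBS$ to the additive monoidal category of ``translation-type'' endofunctors of $\Tilt_0$, sending $B_s$ to $\Xi_s$; the induced action on $\Kb(\Tilt_0)=\Db\Rep_0(G)$ then restricts to the heart to give the action on $\Rep_0(G)$, and one expects such an action to be essentially unique given its values on the $B_s$, so that no coherence ambiguity survives.

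The heart of the matter is to produce the geometric model. What one wants is an equivalence of additive categories between $\Tilt_0$, or equivalently $\Db\Rep_0(G)\cong\Kb(\Tilt_0)$, and a category of Iwahori--Whittaker parity complexes on the affine flag variety $\Fl^\vee$ of the Langlands dual group $G^\vee$ over $\bk$ (Lie-theoretic models in terms of Harish--Chandra bimodules, or of exotic coherent sheaves on a Springer resolution, should serve equally well), subject to the crucial requirement that the wall-crossing functor $\Xi_s$ correspond to convolution on the right with the parity complex $\cE_s$ attached to $s$; this compatibility is arranged by building the equivalence out of push-pull along the projection to the partial affine flag variety for $s$. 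On the geometric side, the monoidal category of Iwahori-constructible parity complexes on $\Fl^\vee$ acts on the Iwahori--Whittaker category by right convolution, and by the known identification of that monoidal category with $\DBS$ (sending $B_s$ to $\cE_s$; the extended affine Weyl group contributes only harmless twists by the center of $G$) one transports this convolution action to $\Tilt_0$. Establishing such a model in characteristic $p$ --- where the characteristic-zero tools (Beilinson--Bernstein and Arkhipov--Bezrukavnikov--Ginzburg localization) must be replaced by their modular avatars: Bezrukavnikov--Mirkovi\'c--Rumynin localization, parity complexes, geometric Satake over $\bk$, Smith--Treumann localization, and the linkage principle --- is the step I expect to be the main obstacle, and presumably where most of the work lies.

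Granting the model, the transported monoidal functor sends $B_s$ to a functor isomorphic to $\Xi_s$ by the built-in compatibility; passing to $\Kb$ and then to the heart, and checking $t$-exactness (which need only be verified on the generators $B_s$, where it is clear), yields the action on $\Rep_0(G)$ demanded by the conjecture, with the matching of monoidal units and of gradings a routine verification. Finally, as observed in \cite{rw}, feeding this action into the antispherical module over $\DBS$ and comparing indecomposable tilting modules with the antispherical $p$-Kazhdan--Lusztig basis recovers the tilting character formula, which yields the new proof announced in the abstract.
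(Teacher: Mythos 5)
Your plan defers precisely the content that constitutes the theorem. The two load-bearing steps --- the existence of an equivalence between $\Tilt_0$ (or $\Db\Rep_0(G) \cong \Kb(\Tilt_0)$) and a category of Iwahori--Whittaker parity complexes on $\Fl^\vee$ that intertwines each wall-crossing functor with right convolution by $\cE_s$, and the monoidal identification of Iwahori-constructible parity complexes with $\DBS$ acting on that category --- are not available results one may ``grant'': the known theorems in characteristic $p$ (\cite{amrw2}, \cite{rw-smith}) produce the \emph{character-level} consequence of Conjecture~\ref{conj:intro} but, as is emphasized in the introduction here, do not produce the categorical action, and building such a geometric model compatible with convolution \emph{is} the conjecture in disguise. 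Moreover, your fallback claim that ``such an action is essentially unique given its values on the $B_s$, so that no coherence ambiguity survives'' is unjustified: a monoidal functor out of $\DBS$ is in no way determined by the images of the generating objects; one must either specify the images of the generating morphisms and verify the Elias--Williamson relations, or construct the functor by a structural mechanism that supplies the coherence for free. So as written the proposal is a (reasonable) strategy outline with the main step missing, not a proof.

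For comparison, the route you sketch is essentially the one carried out independently by Ciappara \cite{ciappara} via geometric Satake and Smith--Treumann theory, and it is emphatically \emph{not} the route of this paper, which avoids constructible sheaves entirely. Here the coherence problem is solved algebraically: Abe's bimodule incarnation of $\DBS$ \cite{abe} is identified with a full subcategory of $\Gm$-equivariant representations of the universal centralizer pulled back to $\ft^{*(1)} \times_{\ft^{*(1)}/W} \ft^{*(1)}$ (Theorem~\ref{thm:Hecke-centralizer}); an equivariant splitting bundle for the Azumaya algebra $\cU_\bS^{\hat{\lambda},\hat{\mu}}$ over the Kostant section (Theorem~\ref{thm:splitting}) converts such representations into completed Harish-Chandra bimodules; fully faithfulness of restriction to the Kostant section on diagonally induced bimodules (Proposition~\ref{prop:rest-S-fully-faithful}) lifts the resulting functor to $\HCBim^{\hat{0},\hat{0}}$; and the identification of the images of the $B_s$ with wall-crossing bimodules is done by an explicit localization computation for $s \in W$ plus conjugation by standard objects $\Delta_x$ for the remaining affine simple reflections. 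The monoidal structure is inherited from Abe's category and the localization equivalences, so no relation-checking against the diagrammatic presentation is ever needed --- which is exactly the point your proposal leaves unresolved.
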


The formulation of Conjecture~\ref{conj:intro} was motivated in particular by the philosophy of categorical actions of Lie algebras; it was proved in~\cite[Part~II]{rw} (and independently by Elias--Losev~\cite{el}) in the special case when $G=\mathrm{GL}_n(\bk)$ for some $n$, using the machinery of $2$-Kac--Moody algebras~\cite{rouquier}.

Later the ``combinatorial'' consequence of Conjecture~\ref{conj:intro} (the tilting character formula) was proved for general $G$ (in fact, in two very different ways, see~\cite{amrw2} and~\cite{rw-smith}), but these proofs use other tools, and none of them implies the original categorical conjecture. The main result of the present paper is a proof of Conjecture~\ref{conj:intro} (which, as explained above, provides in particular a third general proof of the tilting character formula). Contrarily to the other approaches to such questions, our proof does not involve constructible sheaves in any way; it uses coherent sheaves, but mostly over affine schemes, and hence can be considered essentially algebraic.

\subsection{Localization for Harish-Chandra bimodules}
\label{ss:intro-localization}

The action from Conjecture~\ref{conj:intro} will be constructed from the natural action of Harish-Chandra $\Ug$-bimo\-dules (where $\Ug$ is the enveloping algebra of the Lie algebra $\fg$ of $G$), i.e.~$G$-equivariant finitely generated $\Ug$-bimodules on which the diagonal action of $\Ug$ is the differential of the $G$-action (see~\S\ref{ss:HCBim} for details). Namely, the category $\Rep(G)$ can be naturally seen as a full subcategory of the category of $G$-equivariant $\Ug$-modules via differentiation. Thus it admits an action of the monoidal category of Harish-Chandra bimodules; moreover, wall-crossing functors (and, more generally, translation functors) can be described as the action of certain specific completed Harish-Chandra bimodules. 

More specifically, recall that (under suitable assumptions on $p$) the center of $\Ug$ identifies with functions on the fiber product
\[
 \fg^{*(1)} \times_{\ft^{*(1)}/W} \ft^*/(W,\bullet)
\]
where the superscript $(1)$ denotes Frobenius twist, $\ft$ is the Lie algebra of a maximal torus $T$ contained in $B$, and $W$ is the Weyl group of $(G,T)$, acting on $\ft^*$ via the ``dot action'' $\bullet$. The subalgebra $\scO(\fg^{*(1)})$ is realized as the ``Frobenius center'' and the subalgebra $\scO(\ft^*/(W,\bullet))$ as the ``Harish-Chandra center."
For $\lambda, \mu \in X^*(T)$, in~\S\S\ref{ss:HCBim-completed}--\ref{ss:monoidal-bimodules} we will construct a certain monoidal category
\[
 \HCBim^{\hat{\lambda},\hat{\mu}}
\]
of ``completed Harish-Chandra bimodules,'' which are certain $G$-equivariant finitely generated modules over
\[
 (\Ug \otimes_{\scO(\fg^{*(1)})} \Ug^\op) \otimes_{\scO(\ft^*/(W,\bullet) \times_{\ft^{*(1)}/W} \ft^*/(W,\bullet))} \scO(\ft^*/(W,\bullet) \times_{\ft^{*(1)}/W} \ft^*/(W,\bullet))^{\hat{\lambda},\hat{\mu}},
\]
where $\scO(\ft^*/(W,\bullet) \times_{\ft^{*(1)}/W} \ft^*/(W,\bullet))^{\hat{\lambda},\hat{\mu}}$ is the completion of $\scO(\ft^*/(W,\bullet) \times_{\ft^{*(1)}/W} \ft^*/(W,\bullet))$ with respect to the maximal ideal determined by $(\lambda,\mu)$. Once this definition is in place,
to construct the desired action it therefore suffices to construct a monoidal functor from the Hecke category $\DBS$ to the category $\HCBim^{\hat{0},\hat{0}}$ sending each $B_s$ to an object isomorphic to the completed bimodule realizing the corresponding wall-crossing functor. This will be realized in Theorem~\ref{thm:action}.

The main tool we will use for this construction is a localization theory for Harish-Chandra bimodules. 
Even though in the end we are interested in $G$-modules, which when seen as $\Ug$-modules have a \emph{trivial} Frobenius central character, we will localize our bimodules on the \emph{regular part} of $\scO(\fg^{*(1)})$, and more precisely on a Kostant section $\bS^{*} \subset \fg^{*(1)}$ to the (co-)adjoint quotient. 
We will therefore set
$\cU_\bS \fg:=\Ug \otimes_{\scO(\fg^{*(1)})} \scO(\bS^{*})$, and consider a certain group scheme over $\ft^*/(W,\bullet) \times_{\ft^{*(1)}/W} \ft^*/(W,\bullet)$ constructed out of the universal centralizer group scheme $\mathbb{J}^*_\bS$ over $\bS^* \cong \ft^{*(1)}/W$.
For $\lambda, \mu \in X^*(T)$, we will define a certain monoidal category $\HCBim_\bS^{\hat{\lambda},\hat{\mu}}$ of finitely generated equivariant modules over
\begin{multline*}
\cU_\bS^{\hat{\lambda},\hat{\mu}} :=
 (\cU_\bS\fg \otimes_{\scO(\bS^*)} \cU_\bS\fg^\op) \otimes_{\scO(\ft^*/(W,\bullet) \times_{\ft^{*(1)}/W} \ft^*/(W,\bullet))} \\
 \scO(\ft^*/(W,\bullet) \times_{\ft^{*(1)}/W} \ft^*/(W,\bullet))^{\hat{\lambda},\hat{\mu}};
\end{multline*}
see~\S\ref{ss:HCBim-S} for the precise definition. By construction we have a natural monoidal functor
\[
 \HCBim^{\hat{\lambda},\hat{\mu}} \to \HCBim_\bS^{\hat{\lambda},\hat{\lambda}},
\]
and the main result of Section~\ref{sec:HCBim} (Proposition~\ref{prop:rest-S-fully-faithful}) states that this functor is fully faithful on a certain subcategory $\HCBim^{\hat{\lambda},\hat{\mu}}_\diag$ of ``diagonally induced'' bimodules which contains the objects realizing translation and wall-crossing functors.

It is a classical observation that $\cU_\bS \fg$ is an Azumaya algebra over its center, see Proposition~\ref{prop:Ureg-Azumaya} for details and references; as a consequence, the category of (finitely generated) bimodules over this algebra such that the left and right actions of its center coincide is equivalent to the category of (finitely generated) modules over this center (see~\S\ref{ss:Azumaya-alg} for details and references). This property is not directly applicable to our problem, since the two actions of this center on Harish-Chandra bimodules do \emph{not} coincide in general; however by using bimodules realizing translation to and from the ``most singular'' Harish-Chandra character (namely, the opposite of the half-sum of the positive roots), we construct in Section~\ref{sec:localization} an equivariant splitting bundle for $\cU_\bS^{\hat{\lambda},\hat{\mu}}$ in case $\lambda$ and $\mu$ belong to the lower closure of the fundamental alcove. As a consequence, for such $\lambda,\mu$ we obtain an equivalence of categories between $\HCBim_\bS^{\hat{\lambda},\hat{\mu}}$ and the category of coherent representations of the pullback of $\mathbb{J}^*_\bS$ to the spectrum of $\scO(\ft^*/(W,\bullet) \times_{\ft^{*(1)}/W} \ft^*/(W,\bullet))^{\hat{\lambda},\hat{\mu}}$;
see Corollary~\ref{cor:equiv-ModUg-ModZ}.

A general theory of localization for modules over $\Ug$ has been developed by the first author with Mirkovi{\'c} and Rumynin, see~\cite{bmr,bmr2,bm}. The localization that we require here is however slightly different, and the present paper does not formally rely on any substantial result from~\cite{bmr,bmr2,bm}. One difference is that we are interested not in modules but in \emph{bimodules}, which are equivariant for the diagonal $G$-action. Some of the constructions in~\cite{bmr,bmr2,bm} (in particular, the non-canonicity of the choice of splitting bundle) make this theory difficult to use in an equivariant setting, and our construction is slightly different. Finally, as explained above we only need to consider the \emph{regular} part of the Frobenius center, which simplifies the situation a lot, and in particular allows us to work completely at the level of \emph{abelian} categories, without having to consider the more involved \emph{derived} categories.

\subsection{The Hecke category and representations of the universal centralizer}
\label{ss:intro-Hecke-cat}

The other crucial ingredient of our proof is a new incarnation of the Hecke category (for any Coxeter system $(\mathcal{W},\mathcal{S})$) recently found by Abe~\cite{abe}. 

The Hecke category is a categorification of the Hecke algebra of $(\mathcal{W},\mathcal{S})$, depending on a choice of extra data (comprising a representation $V$ of $\mathcal{W}$), which admits several different incarnations. An early definition of this category in terms of Soergel bimodules~\cite{soergel} applies to ``reflection faithful'' representations of Coxeter systems, which include natural examples of representations over fields of characteristic $0$ (e.g.~geometric representations of finite Coxeter systems and representations appearing in the theory of Kac--Moody Lie algebras for crystallographic Coxeter systems), but does \emph{not} include important examples over fields of positive characteristic (e.g.~some natural representations of affine Weyl groups of reductive groups).
Under this assumption Soergel bimodules can be
defined as a full subcategory of the category of graded bimodules over the polynomial algebra
$\scO(V)$. More recently Elias and Williamson~\cite{ew} have
proposed a definition of the Hecke category in terms of generators and relations which applies (and behaves as one might expect) in a much greater generality, encompassing the representation of the affine Weyl group that we require. It is in terms of this construction that Conjecture~\ref{conj:intro} was stated. (For more on the Hecke category, see also~\cite{williamson, jw}.)

The main drawback of the construction in~\cite{ew}, however, is that it is much less concrete than Soergel's original definition, and does not involve $\scO(V)$-bimodules.
This drawback is exactly compensated by Abe's work; under minor technical assumptions he proves in~\cite{abe} that the category of Elias and Williamson identifies with a category of ``enhanced Soergel bimodules,'' which are certain graded bimodules over $\scO(V)$ endowed with a decomposition of its tensor product with $\mathrm{Frac}(\scO(V))$ (on the right) parametrized by $\mathcal{W}$.

Based on Abe's work, in the case of the affine Weyl group acting on $X^*(T) \otimes_\Z \bk$ through the natural action of $W$, we realize the Hecke category as a full subcategory in ($\Gm$-equivariant) coherent representations of the pullback of $\bbJ^*_\bS$ to
$\ft^{*(1)} \times_{\ft^{*(1)}/W} \ft^{*(1)}$; see
Theorem~\ref{thm:Hecke-centralizer}.
This construction allows us to define a monoidal functor from the Hecke category to the category of representations considered in~\S\ref{ss:intro-localization}, and then (using our localization theorem) a monoidal functor 
\begin{equation}
\label{eqn:functor-intro}
\DBS \to \HCBim_\bS^{\hat{0},\hat{0}}.
\end{equation}
(This construction applies more generally for the category $\HCBim_\bS^{\hat{\lambda},\hat{\lambda}}$ when $\lambda$ belongs to the fundamental alcove; in this case natural \'etale maps allow us to identify the completions of the schemes $\ft^*/(W,\bullet) \times_{\ft^{*(1)}/W} \ft^*/(W,\bullet)$ and $\ft^{*(1)} \times_{\ft^{*(1)}/W} \ft^{*(1)}$ at the images of $(\lambda,\lambda)$.)

\begin{rmk}
 Although the concrete incarnation of this idea that is relevant in the present paper is new, the fact that affine Soergel bimodules are closely related with representations of the universal centralizer was already known: it dates back (at least) to~\cite{dodd}; see also~\cite{mr} for an adaptation of these ideas to positive characteristic coefficients.
\end{rmk}

At this point, to conclude our proof it only remains to show that our functor~\eqref{eqn:functor-intro}
sends the objects of the Hecke category labelled by simple reflections to the images in $\HCBim_\bS^{\hat{0},\hat{0}}$ of the bimodules realizing the wall-crossing functors. (In fact, this property will also ensure that the functor takes values in the essential image of our fully faithful functor $\HCBim_\diag^{\hat{0},\hat{0}} \to \HCBim_\bS^{\hat{0},\hat{0}}$, which will imply that it ``lifts'' to a functor from $\DBS$ to $\HCBim^{\hat{0},\hat{0}}$.) In the case when the simple reflection belongs to the finite Weyl group $W$, this can be checked explicitly, using localization at a character involving a weight on the corresponding wall of the fundamental alcove; see Proposition~\ref{prop:image-Bs-finite}. The general case is reduced to this one using a standard trick (used e.g.~in~\cite{riche,bm}), based on the observation that in the \emph{extended} affine Weyl group each simple reflection is conjugate to a simple reflection which belongs to $W$. The concrete proof involves the study of an analogue of the affine braid group action from~\cite{br} in our present context; in this case the situation simplifies however (once again because we work over the regular part of the Frobenius center) and this action in fact factors through an action of the extended affine Weyl group.

\begin{rmk}
One of the motivations for Abe's work~\cite{abe} was an attempt to prove Conjecture~\ref{conj:intro}. What Abe was actually able to construct is rather an action on the principal block of the category of $G_1 T$-modules (where $G_1$ is the first Frobenius kernel of $G$), which is interesting but less applicable to character computations; see~\cite{abe2}.
\end{rmk}

\subsection{Towards a coherent realization of the Hecke category}

Thanks to work of Kazhdan--Lusztig~\cite{kl2} and Ginzburg~\cite{cg}, it is known that the Hecke algebra of $(\Waff,\Saff)$ identifies with the Grothendieck group of the category of equivariant coherent sheaves on the Steinberg variety of triples $\mathrm{St}$.\footnote{Here, by \emph{Steinberg variety of triples} we mean the fiber product of two copies of the Grothendieck resolution over the dual of the Lie algebra, and not the version involving the Springer resolution. This distinction is not important for the results of~\cite{kl2,cg}, but it is for our considerations here.} The fiber product $\ft^{*(1)} \times_{\ft^{*(1)}/W} \ft^{*(1)}$ considered  in~\S\ref{ss:intro-Hecke-cat} identifies with the preimage of the Kostant section $\bS^*$ in the Frobenius twist of $\mathrm{St}$, and the construction of~\S\ref{ss:intro-Hecke-cat} can be seen to provide a fully faithful monoidal functor from the Hecke category to the category of equivariant coherent sheaves on the \emph{regular part} of $\mathrm{St}$. In future work we will upgrade this construction to a fully faithful monoidal functor to the category of equivariant coherent sheaves on the whole Steinberg variety.\footnote{We understand that Ivan Losev has found a different proof of this statement, also based on the results of the present paper.} This construction will be part of our project (partly joint with L. Rider) of constructing a ``modular" version of the equivalence constructed by the first author in~\cite{bez}; see~\cite{brr} for a first step towards this goal.


\subsection{Contents}

In Section~\ref{sec:Hecke-univ-centralizer} we recall Abe's results, and use them to construct our monoidal functor from the Hecke category to the appropriate category of representations of the universal centralizer.
In Section~\ref{sec:HCBim} we introduce the categories of completed Harish-Chandra bimodules we will work with, and prove that restriction to a Kostant section is fully faithful on diagonally-induced bimodules.
In Section~\ref{sec:localization} we develop our localization theory for Harish-Chandra bimodules.
In Section~\ref{sec:Ug-D} we prove (for later use) some technical results using the relation between $\Ug$ and differential operators on the flag variety of $G$. In
Section~\ref{sec:Hecke-action} we prove the main result of the paper, i.e.~we construct the Hecke action on the principal block and prove that objects associated with simple reflections act via wall-crossing functors.
Finally, Appendix~\ref{sec:index} contains an index of the main notation used in the paper.

\subsection{Acknowledgements}

We thank Ivan Losev for his active interest in our work and useful discussions on the subject of this paper, and the referee for his/her careful reading and useful suggestions, which both helped improve the exposition of the paper.

While we were in the final stages of our work, we were informed that J.~Ciappara had obtained a radically different proof of Conjecture~\ref{conj:intro}. His construction of the action relies on the geometric Satake equivalence and the Smith--Treumann theory of~\cite{rw-smith}; these results have now appeared in~\cite{ciappara}. We thank G. Williamson for keeping us informed of this work, and for useful comments.

The project realized in this paper was conceived during the workshop \emph{Modular Representation Theory} organized in Oxford by the Clay Mathematical Institute. We thank the organizers of this conference (G.~Williamson, I.~Losev and M.~Emerton) for the fruitful working atmosphere they have created, and the participants for the inspiring talks. A later part of this work was accomplished at the Institut Henri Poincar\'e, as part of the thematic program \emph{Representation Theory} organized by D.~Hernandez, N.~Jacon, E.~Letellier, S.~R., and \'E. Vasserot, and an even later part was done while both authors were members of the Institute for Advanced Study during the \emph{Special Year on Geometric and Modular Representation Theory} organized by G. Williamson.

R.B.~was supported by NSF Grant No.~DMS-1601953, and his work was partly supported by grants
from the Institute for Advanced Study and Carnegie Corporation of New York.
This project has received funding from the European Research Council (ERC) under the European Union's Horizon 2020 research and innovation programme (S.R., grant agreements No. 677147 and 101002592).

\section{The affine Hecke category and representations of the regular centralizer}
\label{sec:Hecke-univ-centralizer}

In this section we explain that the affine Hecke category attached to a connected reductive algebraic group $\bG$ can be described as a category of representations of (a pullback of) the universal centralizer attached to $\bG$. Our main tool will be a description of the Hecke as a category of ``enhanced Soergel bimodules'' recently obtained by Abe~\cite{abe}. In later sections the group $\bG$ will be chosen as the Frobenius twist of the group $G$ appearing in Conjecture~\ref{conj:intro}; however this constructions applies in a slightly more general context, and might be of independent interest.

\subsection{The affine Weyl group and the associated Hecke category}
\label{ss:Hecke-cat}

We let $\bk$ be an algebraically closed field of characteristic $p$ (possibly equal to $0$), and 
$\bG$ be a connected reductive algebraic group over $\bk$. We fix a Borel subgroup $\bB \subset \bG$ and a maximal torus $\bT \subset \bB$. The Lie algebras of $\bG$, $\bB$, $\bT$ will be denoted $\bg$, $\bb$ and $\bt$ respectively. 
We set $\bX:=X^*(\bT)$, resp.~$\bX^\vee := X_*(\bT)$, and denote by $\Phi \subset \bX$, resp.~$\Phi^\vee \subset \bX^\vee$, the root system, resp.~coroot system, of $(\bG,\bT)$. The canonical bijection $\Phi \simto \Phi^\vee$ will be denoted $\alpha \mapsto \alpha^\vee$. The choice of $\bB$ determines a subset $\Phi^+ \subset \Phi$ of positive roots, consisting of the $\bT$-weights in $\bg/\bb$; the corresponding basis of $\Phi$ will be denoted $\Phi^{\mathrm{s}}$. 
In this section we will make the following assumptions:
\begin{enumerate}
\item $p$ is good for $\bG$;
\item 
\label{it:ass-torsion}
neither $\bX/\Z\Phi$ nor $\bX^\vee/\Z\Phi^\vee$ has $p$-torsion;
\item there exists a $\bG$-equivariant isomorphism $\bg \simto \bg^*$.
\end{enumerate}
For simplicity we will fix once and for all a $\bG$-equivariant isomorphism $\kappa : \bg \simto \bg^*$. 
By equivariance this also provides an identification of $\bt$ and $\bt^*$ (where $\bt^*$ is identified with the subspace in $\bg^*$ consisting of linear forms that vanish on all root subspaces).

Let $\WW=N_{\bG}(\bT)/\bT$ be the Weyl group of $(\bG,\bT)$.
The associated \emph{affine Weyl group} is the semi-direct product
\[
 \Waff := \WW \ltimes \Z \Phi
\]
where $\Z \Phi \subset \bX$ is the lattice generated by the roots. For $\lambda \in \Z\Phi$ we will denote by $t_\lambda$ the image of $\lambda$ in $\Waff$. It is well known that $\Waff$ is generated by the subset $\Saff$ consisting of the reflections $s_\alpha$ with $\alpha \in \Phi^{\mathrm{s}}$, together with the products $t_\beta s_\beta$ where $\beta \in \Phi$ is such that $\beta^\vee$ is a maximal coroot. Moreover, the pair $(\Waff,\Saff)$ is a Coxeter system, see~\cite[\S II.6.3]{jantzen}.
We will sometimes need to ``enlarge'' this group by considering translations by all elements of $\bX$. Namely, the \emph{extended affine Weyl group} is the semi-direct product
\[
 \Wext := \WW \ltimes \bX.
\]
Then $\Waff$ is a normal subgroup in $\Wext$. 


We will consider the balanced
``realization'' of $\Waff$ over $\bk$ (in the sense of~\cite{ew}) defined as follows:
\begin{itemize}
\item
the underlying $\bk$-vector space is $\bt^*$;
\item
if $\alpha \in \Phi^{\mathrm{s}}$ and $s=s_\alpha$, then
the ``root'' $\alpha_s \in \bt$ (resp.~``coroot'' $\alpha_s^\vee \in \bt^*$) associated with $s$ is the differential of $\alpha^\vee$ (resp.~of $\alpha$);
\item
if $\beta \in \Phi^+$ is such that $\beta^\vee$ is a maximal coroot and $s=t_\beta s_\beta$ then
the ``root'' $\alpha_s$ (resp.~``coroot'' $\alpha_s^\vee$) associated with $s$ is the differential of $-\beta^\vee$ (resp.~of $-\beta$).
\end{itemize}
This realization is an example of a Cartan realization in the sense of~\cite[\S 10.1]{amrw}. Our assumption~\eqref{it:ass-torsion} implies that this realization satisfies the ``Demazure surjectivity'' condition of~\cite{ew}. 
There is an associated action of $\Waff$ on $\bt^*$, which simply is the natural action of $\WW$, seen as an action of $\Waff$ via the projection $\Waff \to \WW$.

We will denote by $\DBS$ the ``diagrammatic Hecke category'' defined by Elias--Williamson~\cite{ew} for the Coxeter system $(\Waff,\Saff)$ and this choice of realization. (For a discussion of this definition, see also~\cite[Chap.~2]{amrw}.) 
The technical conditions necessary for this category to be defined (and well behaved) are somewhat subtle, and not all of them are made explicit in~\cite{ew}; see~\cite[\S 5]{ew2} for a detailed discussion of this question. As explained in~\cite[\S 5.1]{ew2}, a sufficient condition (in addition to the fact that the data define a realization satisfying Demazure surjectivity) that ensures that all the results of~\cite{ew} are applicable is that for any pair of distinct simple reflections $s,t$ such that $st$ has finite order, the restriction of the action to the subgroup generated by $s$ and $t$ is faithful. It follows from~\cite[Lemma~8.1.1]{amrw} that this condition is satisfied in our context, except possibly if $p=2$ and $st=ts$. (The assumptions of~\cite[Lemma~8.1.1]{amrw} are easily checked by hand for Cartan realizations in good characteristic.) However in this case $s$ and $t$ act nontrivially by Demazure surjectivity, and $st$ acts nontrivially since $\alpha_s^\vee$ and $\alpha_t^\vee$ are not colinear thanks to assumption~\eqref{it:ass-torsion}.

The category $\DBS$ is a $\bk$-linear (non additive) monoidal category.
By definition its objects are pairs $(\uw,n)$ where $\uw$ is a word in $\Saff$ and $n \in \Z$; the product is given by concatenation of words and addition of integers, and for any words $\uw,\uw'$ the direct sum of morphism spaces
\[
\bigoplus_{n \in \Z} \Hom_{\DBS}((\uw,0),(\uw',n))
\]
is a graded bimodule over $R:=\scO(\bt^*) = \mathrm{Sym}(\bt)$ (where the grading is such that elements in $\bt$ have degree $2$). Following usual conventions, the object $(\uw,n)$ will rather be denoted $B_{\uw}(n)$. Then there exists a natural ``grading shift'' autoequivalence of $\DBS$ such that $(B_{\uw}(n))(1)=B_{\uw}(n+1)$ for any $\uw$ and any $n \in \Z$.

\begin{rmk}
The Hecke category $\DBS$ (as, more generally, Hecke categories attached to Cartan realizations of crystallographic Coxeter systems) admits an incarnation in terms of parity complexes on a flag variety; see~\cite[Part~III]{rw}. Although important for some other purposes, this realization of the Hecke category will not play any role in the present paper. (The relation between Soergel bimodules and constructible sheaves on flag varieties was first obtained, in a characteristic-$0$ context, by Soergel~\cite{soergel-philo} in the case of finite crystallographic groups---using the earlier definition of Soergel bimodules in this case in~\cite{soergel-comb}---and by H\"arterich for Kac--Moody groups~\cite{haerterich}.)
\end{rmk}

\subsection{Abe's incarnation of the Hecke category}
\label{ss:abes-incarnation}

For our present purposes we will need
a description of $\DBS$ in terms of $R$-bimodules due to Abe~\cite{abe}, which is close to the definition of Soergel bimodules~\cite{soergel}, and which we now recall. Once again, in order to apply these results one needs some technical assumptions. A sufficient condition (in terms of vanishing of quantum binomial coefficients) for the results of~\cite{abe} to be applicable is given in~\cite{abe3}. One can check by explicit computation that this condition is automatically satisfied for Cartan realizations.

We will denote by $Q$ the fraction field of $R$. Following~\cite{abe}, we denote by $\mathsf{C}'$ the category defined as follows. Objects are pairs consisting of a graded $R$-bimodule $M$ together with a decomposition
\begin{equation}
\label{eqn:dec-bimodule}
M \otimes_R Q = \bigoplus_{w \in \Waff} M_Q^w
\end{equation}
as $(R,Q)$-bimodules
such that:
\begin{itemize}
\item there exist only finitely many $w$'s such that $M_Q^w \neq 0$;
\item for any $w \in \Waff$, $r \in R$ and $m \in M_Q^w$ we have
\begin{equation}
\label{eqn:action-w}
m \cdot r=w(r) \cdot m.
\end{equation}
\end{itemize}
Morphisms in $\mathsf{C}'$ are defined in the obvious way, as morphisms of graded bimodules compatible with the decompositions~\eqref{eqn:dec-bimodule}. We also denote by $\mathsf{C}$ the full subcategory of $\mathsf{C}'$ whose objects are those whose underlying graded $R$-bimodule $M$ is finitely generated as an $R$-bimodule and flat as a right $R$-module. As explained in~\cite[Lemma~2.6]{abe}, the underlying $R$-bimodule of any object in $\mathsf{C}$ is in fact finitely generated as a left and as a right $R$-module; this property shows that the tensor product over $R$ induces in a natural way a monoidal product on $\mathsf{C}$.
We also have a ``grading shift'' autoequivalence of $\mathsf{C}$, which only changes the grading of the underlying graded $R$-bimodule in such a way that $M(1)^i=M^{i+1}$.

For $s \in \Saff$, we consider the $s$-invariants $R^s \subset R$, and the graded $R$-bimodule $B_s^{\Bim} := R \otimes_{R^s} R (1)$. This object has a natural ``lift'' as an object in $\mathsf{C}$, which will also be denoted $B_s^{\Bim}$ (see~\cite[\S 2.4]{abe}).

The following result is a special case of~\cite[Theorem~5.6]{abe} (see also~\cite[Theorem~1.3]{abe3}). 

\begin{thm}
\label{thm:abe}
There exists a canonical fully faithful monoidal functor
\[
\DBS \to \mathsf{C}
\]
sending $B_s$ to $B_s^{\Bim}$ for any $s \in \Saff$ and intertwining the grading shifts $(1)$.
\end{thm}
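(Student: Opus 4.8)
The statement we must address, Theorem~\ref{thm:abe}, is quoted as ``a special case of \cite[Theorem~5.6]{abe}'', so strictly speaking the proof is a citation. The plan below is how I would justify it, either by reducing to Abe's hypotheses or by reproving the comparison in the present Cartan-realization setting.

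\textbf{Overall strategy.} The plan is to verify that the realization introduced in \S\ref{ss:Hecke-cat} satisfies all the technical hypotheses under which Abe's equivalence is proved, and then invoke \cite[Theorem~5.6]{abe} (or \cite[Theorem~1.3]{abe3}) verbatim; the ``special case'' aspect is only that Abe works with an arbitrary Coxeter system and realization while we have a specific one. First I would recall that $(\Waff,\Saff)$ is a Coxeter system (cited from \cite[\S II.6.3]{jantzen}) and that the data of \S\ref{ss:Hecke-cat} genuinely define a balanced Cartan realization in the sense of \cite[\S 10.1]{amrw}, with Demazure surjectivity guaranteed by assumption~\eqref{it:ass-torsion} (already noted in the text). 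Next I would check the extra vanishing condition on quantum binomial coefficients from \cite{abe3} that makes \cite{abe} applicable: as the excerpt already asserts, ``one can check by explicit computation that this condition is automatically satisfied for Cartan realizations'', so the step is to carry out (or cite) that computation---the coefficients in question involve the integers $\langle \alpha_s^\vee, \alpha_t\rangle$ attached to pairs of simple reflections, and for a Cartan realization in good characteristic these lie in the range forced by finite-type Cartan matrices, which kills the relevant binomials.

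\textbf{Key steps, in order.} (1) State precisely the list of hypotheses in \cite{abe3} under which \cite[Theorem~5.6]{abe} holds: Demazure surjectivity, the faithfulness of the dihedral sub-realizations (needed for $\DBS$ itself to be well behaved, as discussed in \S\ref{ss:Hecke-cat}), and the quantum-binomial vanishing. (2) Recall from \S\ref{ss:Hecke-cat} that the first two already hold here---Demazure surjectivity from assumption~\eqref{it:ass-torsion}, and the dihedral faithfulness from \cite[Lemma~8.1.1]{amrw} together with the $p=2$, $st=ts$ exception handled by hand in the text. (3) Verify the quantum-binomial condition for our realization. (4) Having checked the hypotheses, apply \cite[Theorem~5.6]{abe}: it produces a monoidal functor from $\DBS$ to Abe's category $\mathsf{C}$ (his notation $\mathcal{C}$), which is fully faithful, sends the diagrammatic generator $B_s$ to the enhanced bimodule $B_s^{\Bim}=R\otimes_{R^s}R(1)$ with its canonical decomposition of \cite[\S 2.4]{abe}, and intertwines grading shifts. (5) Note the essential image is the Karoubi-generated monoidal subcategory, but since the statement only claims existence of the fully faithful functor we do not need to identify the image.

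\textbf{Main obstacle.} The only real content beyond bookkeeping is step (3): confirming that the technical condition of \cite{abe3} holds for every Cartan realization in good characteristic, including the affine and extended-affine situations and the delicate small primes. This is where one must actually engage with the Cartan matrix: the quantum binomial coefficients $\binom{n}{k}_q$ that appear are evaluated at roots of unity of order dividing the orders $m_{st}$ of the braid relations, and one must show the relevant ones vanish in $\bk$ given that $p$ is good for $\bG$ and that $\bX/\Z\Phi$, $\bX^\vee/\Z\Phi^\vee$ are $p$-torsion free. I expect this to reduce, as in the dihedral analysis of \cite[\S 8]{amrw}, to a short case check over the possible finite dihedral types $m_{st}\in\{2,3,4,6\}$ occurring in affine Dynkin diagrams, but getting the small-$p$ cases (notably $p=2,3$) exactly right is the point requiring care. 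Everything else---that tensor product over $R$ is monoidal on $\mathsf{C}$, that $B_s^{\Bim}$ lifts to $\mathsf{C}$, that the functor intertwines shifts---is quoted directly from \cite{abe} and \cite[Lemma~2.6]{abe}.
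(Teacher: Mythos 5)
Your proposal matches the paper exactly: the paper offers no independent proof of this statement but simply cites Abe's theorem (\cite[Theorem~5.6]{abe}, see also \cite[Theorem~1.3]{abe3}), with the only substantive point being the verification, asserted in \S\ref{ss:abes-incarnation} to follow from explicit computation, that the technical hypotheses (Demazure surjectivity, faithfulness of dihedral sub-realizations, and the quantum-binomial condition of \cite{abe3}) hold for the Cartan realization at hand. Your identification of the quantum-binomial check as the one step with real content is precisely where the paper also places the burden, so the two arguments coincide.
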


It will be convenient to consider also a slight extension of the category $\mathsf{C}$, adapted to the group $\Wext$. Namely, the action of $\Waff$ on $\bt^*$ extends in a natural way to $\Wext$ (using now the projection $\Wext \to \WW$). We will denote by $\mathsf{C}'_{\ext}$ the category whose objects are pairs consisting a graded $R$-bimodule $M$ together with a decomposition
\begin{equation*}
M \otimes_R Q = \bigoplus_{w \in \Wext} M_Q^w
\end{equation*}
as $(R,Q)$-bimodules
such that:
\begin{itemize}
\item there exist only finitely many $w$'s such that $M_Q^w \neq 0$;
\item for any $w \in \Wext$, $r \in R$ and $m \in M_Q^w$ we have $m \cdot r=w(r) \cdot m$,
\end{itemize}
and where morphisms are defined in the obvious way. We will also denote by $\mathsf{C}_\ext$ the full subcategory of $\mathsf{C}'_\ext$ whose objects are those whose underlying graded $R$-bimodule $M$ is finitely generated as an $R$-bimodule and flat as a right $R$-module. It is clear that $\mathsf{C}'$ is a full subcategory in $\mathsf{C}'_\ext$, that $\mathsf{C}$ is a full subcategory in $\mathsf{C}_\ext$, and that the tensor product $\otimes_R$ defines a monoidal structure on $\mathsf{C}_\ext$.

\begin{rmk}
Some adaptations of the ``Soergel calculus" of~\cite{ew} to \emph{extended} affine Weyl groups have been discussed in work of Mackaay--Thiel~\cite{mackaay-thiel} and Elias~\cite[\S 3]{elias}. The analogue of Theorem~\ref{thm:abe} in this context is most likely true, but since it is not needed in this paper we will not investigate this question further.
\end{rmk}

In addition to the objects $B_s^{\Bim}$ considered above, the category $\mathsf{C}_\ext$ also possesses ``standard'' objects $(\Delta_x : x \in \Wext)$ defined as follows. For any $x \in \Wext$, $\Delta_x$ is isomorphic to $R$ as a graded vector space, and the structure of $R$-bimodule is given by
\[
r \cdot m \cdot r' = rmx(r')
\]
for $r,r' \in R$ and $m \in \Delta_x$. The decomposition of $\Delta_x \otimes_R Q$ is defined so that this object is concentrated in degree $x$. For any $x,y \in \Wext$ we have a canonical isomorphism
\begin{equation}
\label{eqn:convolution-Delta}
\Delta_x \otimes_R \Delta_y \simto \Delta_{xy}
\end{equation}
in $\mathsf{C}_\ext$,
defined by $m \otimes m' \mapsto mx(m')$.

\begin{lem}
\label{lem:conjugation-abe}
Let $s,t \in \Saff$ and $x \in \Wext$ be such that $s=xtx^{-1}$. Then there exists a canonical isomorphism
\[
B_s^{\Bim} \cong \Delta_x \otimes_R B_t^{\Bim} \otimes_R \Delta_{x^{-1}}.
\]
\end{lem}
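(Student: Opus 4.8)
The plan is to compute both sides explicitly as graded $R$-bimodules with their $Q$-decompositions, and exhibit a natural isomorphism between them. First I would observe that by the canonical isomorphism~\eqref{eqn:convolution-Delta}, the object $\Delta_x \otimes_R B_t^{\Bim} \otimes_R \Delta_{x^{-1}}$ has underlying graded $R$-bimodule obtained from $B_t^{\Bim} = R \otimes_{R^t} R(1)$ by twisting the left action by $x$ and the right action by $x^{-1}$; concretely, as a graded vector space it is $R \otimes_{R^t} R(1)$, with $r \cdot (m_1 \otimes m_2) \cdot r' = x(r) m_1 \otimes m_2 x^{-1}(r')$. Since $s = xtx^{-1}$, one has $x(R^t) = R^s$, so the map $m_1 \otimes m_2 \mapsto x(m_1) \otimes x(m_2)$ identifies this twisted module with $R \otimes_{R^s} R(1) = B_s^{\Bim}$ as a graded $R$-bimodule. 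The key point to check is that this identification is well-defined (it respects the relation $m_1 r'' \otimes m_2 = m_1 \otimes r'' m_2$ for $r'' \in R^t$, which follows from $x(R^t) = R^s$ and $x$ being a ring automorphism), $R$-bilinear for the correct module structures, and degree-preserving (clear, since $x$ acts linearly on $\bt$ and the grading shift $(1)$ appears on both sides).

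Next I would verify compatibility with the decompositions~\eqref{eqn:dec-bimodule} of the tensor products over $Q$. The decomposition of $B_t^{\Bim} \otimes_R Q$ is indexed by $\{e, t\}$; after applying $\Delta_x \otimes_R - \otimes_R \Delta_{x^{-1}}$ and using that $\Delta_x$ is concentrated in degree $x$ (and $\Delta_{x^{-1}}$ in degree $x^{-1}$), the resulting decomposition of $(\Delta_x \otimes_R B_t^{\Bim} \otimes_R \Delta_{x^{-1}}) \otimes_R Q$ is indexed by $\{x e x^{-1}, x t x^{-1}\} = \{e, s\}$, which matches the index set of the decomposition of $B_s^{\Bim} \otimes_R Q$. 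I would then check that the isomorphism constructed above, after base change to $Q$, carries the component in degree $x t x^{-1} = s$ of the left-hand side onto the degree-$s$ component of $B_s^{\Bim} \otimes_R Q$ (and likewise for the degree-$e$ components); this amounts to tracking how the twist by $x$ permutes the $Q$-decomposition, together with the defining property~\eqref{eqn:action-w} of the $Q$-components, which pins down each summand as the locus where the right $R$-action differs from the left one by the prescribed Weyl group element.

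The step I expect to be the main obstacle is the bookkeeping for the $Q$-decomposition: one must use that the decomposition of a tensor product $M \otimes_R N$ in $\mathsf{C}_\ext$ is determined by $(M \otimes_R N)_Q^w = \bigoplus_{uv = w} M_Q^u \otimes_Q N_Q^v$ (which is implicit in the monoidal structure and in~\cite[\S 2]{abe}), apply it to the product $\Delta_x \otimes_R B_t^{\Bim} \otimes_R \Delta_{x^{-1}}$, and confirm that the single twisting automorphism used to identify the underlying bimodules is exactly the one relating the two decompositions. Everything else — bilinearity, gradings, and the ring-theoretic identity $x(R^t) = R^s$ — is routine. The word "canonical" in the statement is justified because the isomorphism is built only from~\eqref{eqn:convolution-Delta} and the automorphism $x$, with no arbitrary choices.
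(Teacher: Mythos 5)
Your proof follows essentially the same route as the paper: the paper simply exhibits the explicit map $r_1 \otimes (r_2 \otimes r_3) \otimes r_4 \mapsto (r_1 x(r_2)) \otimes (x(r_3)x(r_4))$ and leaves the verifications (well-definedness, bilinearity, compatibility with the $Q$-decompositions) to the reader, and your two-step construction composes to exactly this map. One small slip in your intermediate description: under the natural identification of $\Delta_x \otimes_R B_t^{\Bim} \otimes_R \Delta_{x^{-1}}$ with the graded vector space $R \otimes_{R^t} R(1)$, sending $a \otimes (m_1 \otimes m_2) \otimes c$ to $x^{-1}(a) m_1 \otimes m_2 c$, the transported bimodule structure is $r \cdot (m_1 \otimes m_2) \cdot r' = x^{-1}(r) m_1 \otimes m_2 x^{-1}(r')$, so the left action is twisted by $x^{-1}$, not by $x$ as you wrote; with your stated convention the map $m_1 \otimes m_2 \mapsto x(m_1) \otimes x(m_2)$ would fail to be left $R$-linear (it would require $x^2(r)=r$). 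With the twist corrected, everything you outline goes through, including the bookkeeping for the $Q$-components via $(M \otimes_R N)_Q^w = \bigoplus_{uv=w} M_Q^u \otimes_Q N_Q^v$ and the identity $x(R^t)=R^s$, and the resulting isomorphism coincides with the paper's canonical one.
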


\begin{proof}
The isomorphism of $R$-bimodules
\[
\Delta_x \otimes_R B_t^{\Bim} \otimes_R \Delta_{x^{-1}} \simto B_s^{\Bim}
\]
is defined by
\[
r_1 \otimes (r_2 \otimes r_3) \otimes r_4 \mapsto (r_1x(r_2)) \otimes (x(r_3)x(r_4)).
\]
We leave it to the reader to check that this morphism is well defined, and indeed defines an isomorphism in $\mathsf{C}_\ext$.
\end{proof}

In Section~\ref{sec:Hecke-action} we will also need the following standard claim, for which we refer to~\cite[\S 3.4]{ew}.

\begin{lem}
\label{lem:exact-seq-Bs}
For any $s \in \Saff$, there exist exact sequences of $R$-bimodules
 \[
  \Delta_s \hookrightarrow R \otimes_{R^s} R \twoheadrightarrow \Delta_e, \qquad \Delta_e \hookrightarrow R \otimes_{R^s} R \twoheadrightarrow \Delta_s.
 \]
\end{lem}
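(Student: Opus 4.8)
The plan is to construct the two exact sequences explicitly at the level of $R$-bimodules and then verify that the maps respect the decompositions over $Q$ required by membership in $\mathsf{C}_\ext$. First I would recall that $R \otimes_{R^s} R$ is free as a right $R$-module of rank $2$, and that after base change to $Q$ we have a canonical decomposition $(R \otimes_{R^s} R) \otimes_R Q = Q^e \oplus Q^s$, where the two summands are the eigenspaces for the action of $s$ coming from the fact that $Q$ is a degree-$2$ Galois extension of $Q^s$ with group $\{e,s\}$; concretely $Q^e = \{m \in (R\otimes_{R^s}R)\otimes_R Q : \text{left and right $Q$-actions agree}\}$ and $Q^s$ is the $(-1)$-eigenspace for $s$. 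This is precisely the decomposition making $B_s^{\Bim} = R \otimes_{R^s} R(1)$ an object of $\mathsf{C}$ in the sense of~\S\ref{ss:abes-incarnation}.

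For the first sequence, I would take the map $R \otimes_{R^s} R \twoheadrightarrow \Delta_e$ to be the multiplication map $r \otimes r' \mapsto rr'$, which is clearly a surjective homomorphism of $R$-bimodules (where $\Delta_e = R$ with its standard bimodule structure). Its kernel is generated as a bimodule by $\alpha_s \otimes 1 - 1 \otimes \alpha_s$ (using that $R$ is free over $R^s$ with basis $1, \alpha_s$, Demazure surjectivity guaranteeing $\alpha_s$ is part of such a basis up to units); I would identify this kernel with $\Delta_s$ via the bimodule map $\Delta_s \to R\otimes_{R^s} R$, $m \mapsto m \cdot(\alpha_s \otimes 1 - 1 \otimes \alpha_s) = m\alpha_s \otimes 1 - s(m)\otimes\alpha_s$ (the right-hand equality being forced by the relation $m \cdot r = s(r)\cdot m$ in $\Delta_s$, which is exactly why this lands in the kernel and is a bimodule map). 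Injectivity and the fact that the image is all of the kernel follow from a direct check using the right-$R$-basis. For the second sequence, one dualizes: the inclusion $\Delta_e \hookrightarrow R \otimes_{R^s} R$ is the comultiplication-type map $m \mapsto \partial_s(m)\cdot(\text{something})$; more cleanly, send $1 \mapsto \tfrac{1}{2}(\alpha_s \otimes 1 + 1 \otimes \alpha_s)$ when $2$ is invertible, but since we must allow $p=2$ I would instead use $1 \mapsto \alpha_s\otimes 1 + 1\otimes\alpha_s$ composed appropriately, or better, invoke that $R\otimes_{R^s}R$ is self-dual as a bimodule (up to shift) so that the second sequence is obtained from the first by applying $\uHom_{R\text{-}R}(-,R)$ and using $\Delta_s^\vee \cong \Delta_{s^{-1}} = \Delta_s$, $\Delta_e^\vee \cong \Delta_e$.

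The main obstacle I anticipate is not the bimodule-level algebra, which is essentially the content of~\cite[\S 3.4]{ew} that we are allowed to cite, but rather the bookkeeping needed to confirm that each of the four maps is a morphism in $\mathsf{C}_\ext$, i.e.\ that after $-\otimes_R Q$ it carries the $Q^w$-decomposition of the source into that of the target. For the multiplication map $R\otimes_{R^s}R \to \Delta_e$ this amounts to checking it kills $Q^s$ and is an isomorphism on $Q^e$; for $\Delta_s \hookrightarrow R\otimes_{R^s}R$ one checks the image of $\Delta_s\otimes_R Q = Q^s_{(\Delta_s)}$ lands in the $Q^s$-summand, which follows from the sign computation $m\alpha_s\otimes 1 - s(m)\otimes\alpha_s$ above lying in the $(-1)$-eigenspace. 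These are short verifications but must be done carefully with the conventions for the action~\eqref{eqn:action-w} fixed in~\S\ref{ss:abes-incarnation}. Once these compatibilities are in place, exactness as $R$-bimodules (already established) transports verbatim to $\mathsf{C}_\ext$, since exactness there is tested on underlying bimodules. I would close by remarking that both sequences are shifts of each other's ``Frobenius dual'' and are the bimodule incarnations of the standard short exact sequences $\Delta_s \to B_s \to \Delta_e(1)$ and $\Delta_e(-1) \to B_s \to \Delta_s$ in the Hecke category, so nothing beyond~\cite{ew} is really needed.
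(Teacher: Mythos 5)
Your instinct that the bimodule-level statement ``is essentially the content of \cite[\S 3.4]{ew} that we are allowed to cite'' is in fact exactly what the paper does: its proof of this lemma consists of nothing beyond the reference to \cite[\S 3.4]{ew}. Note also that the lemma only asserts exactness of sequences of $R$-bimodules, so the compatibility checks with the $Q$-decompositions in $\mathsf{C}_\ext$ that you single out as the main obstacle are not actually part of the claim (they are true, but nothing in the statement or its later use requires them).

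The explicit construction you propose, however, has a genuine gap in characteristic $2$, which is in scope here: the hypotheses of \S\ref{ss:Hecke-cat} allow $p=2$ (e.g.\ in type $A$), and the paper explicitly addresses $p=2$ subtleties for this realization. When $p=2$ one has $s(\alpha_s)=-\alpha_s=\alpha_s$, so $\alpha_s\in R^s$: the pair $(1,\alpha_s)$ is \emph{not} an $R^s$-basis of $R$ (indeed $\partial_s(\alpha_s)=2=0$, where $\partial_s(f)=(f-s(f))/\alpha_s$, and no unit rescaling repairs this), and both $\alpha_s\otimes 1-1\otimes\alpha_s$ and your proposed replacement $\alpha_s\otimes 1+1\otimes\alpha_s$ vanish in $R\otimes_{R^s}R$. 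Thus your map $\Delta_s\to R\otimes_{R^s}R$ is zero, and the duality fallback for the second sequence has nothing to start from, since it presupposes the first. The characteristic-free argument replaces $\alpha_s$ by an element $\delta\in\bt$ of degree $2$ with $\partial_s(\delta)=1$, which is precisely what Demazure surjectivity provides: the kernel of the multiplication map $R\otimes_{R^s}R\twoheadrightarrow\Delta_e$ is freely generated as a right $R$-module by $\xi=\delta\otimes 1-1\otimes\delta$, which satisfies $\xi r=s(r)\xi$ and hence yields the embedding of $\Delta_s$; and the kernel of the surjection $R\otimes_{R^s}R\twoheadrightarrow\Delta_s$ given by $f\otimes g\mapsto f\,s(g)$ is generated by $\delta\otimes 1-1\otimes s(\delta)$, which commutes with $R$ and hence yields the embedding of $\Delta_e$. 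Finally, even away from $p=2$, the displayed formula $m\mapsto m\alpha_s\otimes 1-s(m)\otimes\alpha_s$ is not left $R$-linear and does not land in the kernel of multiplication; the correct map is $m\mapsto m\delta\otimes 1-m\otimes\delta$, the twisted right-linearity being exactly the identity $\xi r=s(r)\xi$ above.
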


\subsection{Universal centralizer and Kostant section}
\label{ss:Kostant-section-Abe}

We will denote by $\bgreg \subset \bg$ the open subset consisting of regular elements, i.e.~elements whose centralizer has dimension $\dim(T)$. The ``regular universal centralizer'' is the affine group scheme
\[
\bbJ_\reg := \bgreg \times_{\bgreg \times \bgreg} (\bG \times \bgreg)
\]
over $\bgreg$, where the morphism $\bgreg \to \bgreg \times \bgreg$ is the diagonal embedding, and the map $\bG \times \bgreg \to \bgreg$ sends $(g,x)$ to $(g \cdot x, x)$. For any $x \in \bgreg$, the fiber of $\bbJ_\reg$ over $x$ is the scheme-theoretic centralizer of $x$ for the adjoint $\bG$-action. By construction $\bbJ_\reg$ is a closed subgroup scheme in $\bG \times \bgreg$, and as explained in~\cite[Corollary~3.3.6]{riche-kostant} it is smooth over $\bgreg$. We will also denote by $\bgreg^*$ the image of $\bgreg$ under $\kappa$, and by $\bbJ_\reg^*$ the smooth affine group scheme over $\bgreg^*$ obtained by pushforward from $\bbJ_\reg$. (It is easily seen that these objects do not depend on the choice of $\kappa$.)

There exists a canonical morphism
\begin{equation}
 \label{eqn:morph-J-T}
 \bt^* \times_{\bt^*/\WW} \bbJ^*_\reg \to (\bt^* \times_{\bt^*/\WW} \bgreg^*) \times \bT
\end{equation}
of group schemes over $\bt^* \times_{\bt^*/\WW} \bgreg^*$, whose construction we now explain. Let $\bn$ be the Lie algebra of the unipotent radical $\bU$ of $\bB$. Recall that the \emph{Grothendieck resolution} is the $\bG$-equivariant vector bundle over $\bG/\bB$ given by
\[
 \tbg := \{(\xi, g\bB) \in \bg^* \times \bG/\bB \mid \xi_{|g \cdot \bn}=0\}.
\]
We have natural maps
\[
 \pi : \tbg \to \bg^*, \qquad \vartheta : \tbg \to \bt^*.
\]
(The morphism $\pi$ is induced by the first projection. The morphism $\vartheta$ sends a pair $(\xi,g\bB)$ to $\xi_{|g \cdot \bb}$, seen as an element in $(g \cdot \bb / g \cdot \bn)^* \cong (\bb/\bn)^* \cong \bt^*$, where the first isomorphism is induced by conjugation by the inverse of any representative for the coset $g\bB$.) If we denote by $\tbg_{\reg}$ the preimage of $\bgreg^*$ in $\tbg$, then these maps induce an isomorphism of schemes
\begin{equation}
\label{eqn:tfgreg}
 \tbg_{\reg} \simto \bgreg^* \times_{\bt^*/\WW} \bt^*,
\end{equation}
see~\cite[Lemma~3.5.3]{riche-kostant}. Moreover, under this identification, by~\cite[Proposition~3.5.6]{riche-kostant} the group scheme $\bt^* \times_{\bt^*/\WW} \bbJ^*_\reg$ identifies with the universal stabilizer associated with the action of $\bG$ on $\tbg_{\reg}$ (defined by the same procedure as for $\bbJ_\reg$ above), which is such that the fiber over $(\xi,g\bB)$ is the scheme-theoretic stabilizer of $\xi$ for the action of $g\bB g^{-1}$. Now as above in the definition of $\vartheta$, there exists for any $g \in \bG$ a canonical isomorphism $g\bB g^{-1} / g\bU g^{-1} \cong \bT$, which allows us to define the wished-for morphism~\eqref{eqn:morph-J-T}.


Let $\bgrs \subset \bg$ denote the open subset of semisimple regular elements, and set $\bgrs^*:=\kappa(\bgrs)$. We will denote by $\bbJ_\rs$, resp.~$\bbJ^*_\rs$, the restriction of $\bbJ_\reg$, resp.~$\bbJ^*_\reg$, to $\bgrs$, resp.~$\bgrs^*$. 
Recall that the adjoint quotient $\bg/\bG$ identifies canonically with $\bt/\WW$, see~\cite[\S 4.1]{bc}; as a consequence, under our assumptions the coadjoint quotient $\bg^*/\bG$ identifies canonically with $\bt^*/\WW$.

\begin{lem}
\label{lem:Irs}
The morphism~\eqref{eqn:morph-J-T} restricts to an isomorphism
\[
\bbJ^*_\rs \times_{\bt^*/\WW} \bt^* \simto \bT \times (\bgrs^* \times_{\bt^*/\WW} \bt^*)
\]
of group schemes
over $\bgrs^* \times_{\bt^*/\WW} \bt^*$.
\end{lem}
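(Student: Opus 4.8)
The plan is to reduce Lemma~\ref{lem:Irs} to a fiberwise check, using the already-established description of $\bt^* \times_{\bt^*/\WW} \bbJ^*_\reg$ as the universal stabilizer for the $\bG$-action on $\tbg_\reg$ together with the isomorphism~\eqref{eqn:tfgreg} and the morphism~\eqref{eqn:morph-J-T}. First I would restrict everything over $\bgrs^*$: by~\eqref{eqn:tfgreg} the scheme $\tbg_\rs$ (the preimage of $\bgrs^*$ in $\tbg$) is identified with $\bgrs^* \times_{\bt^*/\WW} \bt^*$, and~\eqref{eqn:morph-J-T} restricts to a morphism $\bbJ^*_\rs \times_{\bt^*/\WW} \bt^* \to \bT \times (\bgrs^* \times_{\bt^*/\WW} \bt^*)$ of group schemes over $\bgrs^* \times_{\bt^*/\WW} \bt^*$. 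Since both sides are flat (indeed smooth) affine group schemes of the same relative dimension $\dim \bT$ over the (reduced) base, it suffices to prove that the map is an isomorphism on each geometric fiber, and then invoke a standard descent/flatness criterion (a fiberwise isomorphism between flat finitely presented schemes over a reduced base, which moreover is a monomorphism of group schemes, is an isomorphism).

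Next I would compute the fiber. A point of $\tbg_\rs$ lying over $x = g\bB \in \bG/\bB$ corresponds to a form $\xi \in \bg^*$ with $\xi|_{g\cdot\bn} = 0$ and $\kappa^{-1}(\xi)$ regular semisimple; conjugating by $g^{-1}$ we may assume $g = e$, so $\xi$ vanishes on $\bn$ and the corresponding semisimple element $y := \kappa^{-1}(\xi)$ lies in $\bb$ with regular semisimple semisimple part. The fiber of the universal stabilizer is then the scheme-theoretic centralizer $Z_\bB(y)$ of $y$ in $\bB$, where now $y$ is regular semisimple; since $y$ is regular semisimple and lies in $\bb$, its centralizer in $\bG$ is a maximal torus contained in $\bB$, hence equals $Z_\bB(y)$, and this centralizer maps isomorphically to $\bB/\bU \cong \bT$ under the quotient $\bB \to \bT$. (Scheme-theoretically: $Z_\bB(y)$ is smooth of dimension $\dim\bT$ because $y$ is regular, and it injects into $\bT$ via $\bB \twoheadrightarrow \bT$; a closed immersion of smooth group schemes of the same dimension over a field is an isomorphism.) This is exactly the statement that the fiber of~\eqref{eqn:morph-J-T} is an isomorphism over $\bgrs^*$, since the canonical isomorphism $g\bB g^{-1}/g\bU g^{-1} \cong \bT$ used in the construction of~\eqref{eqn:morph-J-T} is precisely this quotient map.

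The last point to address is that the source and target are genuinely identified \emph{as group schemes} and not merely as schemes: the map~\eqref{eqn:morph-J-T} is a homomorphism by construction, so once we know it is a fiberwise isomorphism the conclusion follows. I expect the main obstacle to be the scheme-theoretic (as opposed to set-theoretic) nature of the statement: one must be careful that the centralizer $Z_\bB(y)$ is reduced and smooth in the regular semisimple case---this is where regularity of $y$ is used, via smoothness of $\bbJ_\reg$ over $\bgreg$ already cited from~\cite[Corollary~3.3.6]{riche-kostant}---and that the comparison with $\bT$ is an isomorphism of schemes rather than just a bijection on points. Once smoothness and equidimensionality of both sides over the reduced base $\bgrs^* \times_{\bt^*/\WW} \bt^*$ are in hand, the fiberwise check suffices and the proof is complete.
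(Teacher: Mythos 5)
Your route (reduce to geometric fibers, then invoke a fiberwise criterion) is genuinely different from the paper's proof, and it can be made to work, but as written it has one real gap, located exactly at the step you label as the main obstacle. You assert that for $y \in \bb$ regular semisimple the scheme-theoretic centralizer is a maximal torus contained in $\bB$, and that $Z_\bB(y)$ ``injects into $\bT$ via $\bB \twoheadrightarrow \bT$''. That injectivity --- triviality of the scheme-theoretic kernel $Z_\bU(y)$ --- is the whole point, and smoothness of the fiber (from~\cite[Corollary~3.3.6]{riche-kostant}) does not give it: smoothness plus $y \in \Lie(\bT')$ yields $\Lie(Z_\bG(y)) = \Lie(\bT')$, so $Z_\bU(y)$ is \'etale, but you must still rule out nontrivial \emph{unipotent} elements centralizing $y$, i.e.\ rule out that the component group of $Z_\bG(y)$ (which embeds into $N_\bG(\bT')/\bT'$ and fixes $y$) is nontrivial. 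In characteristic $p$ this is precisely where the standing hypotheses enter, e.g.\ via the freeness of the $\WW$-action on $\bt^*_\rs$ (\cite[Lemma~2.3.3]{riche-kostant}); without such an input your ``closed immersion of smooth group schemes of the same dimension'' argument has nothing to apply to, since being a closed immersion is what is in question. (Two smaller remarks: the fiberwise criterion you invoke needs neither reducedness of the base nor a separate monomorphism hypothesis --- fiberwise isomorphy gives universal injectivity, and the fiberwise flatness criterion gives flatness of the map, hence a surjective open immersion; and the containment $Z_\bG(\xi) \subset g\bB g^{-1}$ you use implicitly is exactly the identification from~\cite[Proposition~3.5.6]{riche-kostant} recalled before the lemma, so it is available.)

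The paper avoids all of this by a global argument: it reduces to $\bg$ in place of $\bg^*$ and invokes~\cite[Lemma~13.4]{jantzen-nilp}, which gives $\bG \times^{\bT} \bt_\rs \simto \tbg_\rs$ with $\bt_\rs = \bt \cap \bg_\rs$; since $\bT$ acts trivially on $\bt_\rs$ this is $\bG/\bT \times \bt_\rs$, and the universal stabilizer of the $\bG$-action on $\bG/\bT$ is the family $(g\bT g^{-1})_{g\bT \in \bG/\bT}$, canonically trivialized as $\bT \times \bG/\bT$ because $\bT$ is abelian; compatibility with~\eqref{eqn:morph-J-T} is immediate since $g\bB g^{-1}/g\bU g^{-1} \cong \bT$ restricts to conjugation on $g \bT g^{-1}$. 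In particular the scheme-theoretic identification of each individual centralizer with a maximal torus comes out for free, with no smoothness input and no fiberwise criterion. If you keep your approach, add the missing argument for $Z_\bU(y) = \{e\}$ along the lines above; otherwise the Jantzen isomorphism gives the statement in one stroke.
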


\begin{proof}
It is sufficient to prove the analogous statement for $\bg$ in place of $\bg^*$. If we denote by $\tbg_\rs$ the inverse image of $\bgrs$ under $\pi$, then by~\cite[Lemma~13.4]{jantzen-nilp}, there exists a canonical isomorphism
\[
\bG \times^\bT \bt_\rs \simto \tbg_\rs,
\]
where $\bt_\rs := \bt \cap \bg_\rs$.
From the comments above and the compatibility of universal stabilizers with open embeddings, $\bt \times_{\bt/\WW} \bbJ_\rs$ identifies with the universal stabilizer associated with the action of $\bG$ on $\tbg_\rs$. Since the latter scheme identifies with $\bG \times^\bT \bt_\rs = \bG/\bT \times \bt_\rs$, the universal stabilizer identifies with $\bT \times \tbg_\rs$, as desired.
\end{proof}


Let us choose a Kostant section to the adjoint quotient, i.e.~a closed subscheme $\bS \subset \bg$ contained in $\bgreg$ and such that the composition $\bS \hookrightarrow \bg \to \bg/\bG$ (where the second map is the adjoint quotient morphism) is an isomorphism. (For a construction of such a section in the present generality, see~\cite[\S 3]{riche-kostant}.) We will denote by $\bS^*$ the image of $\bS$ under $\kappa$, so that the composition $\bS^* \hookrightarrow \bg^* \to \bg^*/\bG$ is an isomorphism, and by $\bbJ^*_\bS$ the restriction of $\bbJ_\reg^*$ to $\bS^*$ (a closed subgroup scheme of $\bG \times \bS^*$, smooth over $\bS^*$).

As explained e.g.~in~\cite[\S 4.4]{mr}, there exists a natural action of the multiplicative group $\Gm$ on $\bS^*$ such that the isomorphism $\bS^* \simto \bg^*/\bG$ is $\Gm$-equivariant, where $t \in \bk^\times$ acts on $\bg^*$ by multiplication by $t^{-2}$, and on $\bg^*/\bG$ by the induced action. The isomorphism $\bt^*/\WW \simto \bg^*/\bG$ is also $\Gm$-equivariant, where the action on $\bt^*/\WW$ is induced by the action on $\bt^*$ where $t \in \bk^\times$ acts by multiplication by $t^{-2}$.
As explained in~\cite[\S 4.5, p.~2302]{mr} there exists a natural $\Gm$-action on $\bbJ^*_{\bS}$ such that the structure morphism $\bbJ^*_{\bS} \to \bS^*$, the multiplication map $\bbJ^*_{\bS} \times_{\bS^*} \bbJ^*_{\bS} \to \bbJ^*_{\bS}$ and the inversion morphism $\bbJ^*_{\bS} \to \bbJ^*_{\bS}$ are $\Gm$-equivariant.


\subsection{Representations of the universal centralizer and Abe's category}
\label{ss:Rep-Abe}

The actions of $\Gm$ on $\bt^*$ and $\bt^*/\WW$ considered in~\S\ref{ss:Kostant-section-Abe} provide an action on the fiber product $\bt^* \times_{\bt^*/\WW} \bt^*$.
Let us now
consider the category
\[
\Rep^{\Gm}(\bt^* \times_{\bt^*/\WW} \bbJ^*_{\bS} \times_{\bt^*/\WW} \bt^*)
\]
of $\Gm$-equivariant coherent representations of the smooth affine group scheme
\[
\bt^* \times_{\bt^*/\WW} \bbJ^*_{\bS} \times_{\bt^*/\WW} \bt^*
\]
over
$\bt^* \times_{\bt^*/\WW} \bt^*$ (where the morphism $\bbJ^*_\bS \to \bt^*/\WW$ is obtained via the identification $\bS^* \simto \bt^*/\WW$), i.e.~$\bt^* \times_{\bt^*/\WW} \bbJ^*_{\bS} \times_{\bt^*/\WW} \bt^*$-modules equipped with a structure of $\Gm$-equivariant coherent sheaf on $\bt^* \times_{\bt^*/\WW} \bt^*$, such that the action map is $\Gm$-equivariant. Since $R$ is finite over $R^\WW$, this category admits a natural convolution product $\star$, such that the $\scO(\bt^* \times_{\bt^*/\WW} \bt^*)$-module underlying the product $M \star N$ is the tensor product $M \otimes_{R} N$ (where $R=\scO(\bt^*)$ acts on $M$ via the second projection $\bt^* \times_{\bt^*/\WW} \bt^* \to \bt^*$ and on $N$ via the first projection $\bt^* \times_{\bt^*/\WW} \bt^* \to \bt^*$). In this way, $(\Rep^{\Gm}(\bt^* \times_{\bt^*/\WW} \bbJ^*_{\bS} \times_{\bt^*/\WW} \bt^*), \star)$ is a monoidal category. We will denote by
\[
\Rep_{\mathrm{fl}}^{\Gm}(\bt^* \times_{\bt^*/\WW} \bbJ^*_{\bS} \times_{\bt^*/\WW} \bt^*)
\]
the full subcategory of $\Rep^{\Gm}(\bt^* \times_{\bt^*/\WW} \bbJ^*_{\bS} \times_{\bt^*/\WW} \bt^*)$ whose objects are the representations whose underlying coherent sheaves are flat with respect to the second projection $\bt^* \times_{\bt^*/\WW} \bt^* \to \bt^*$. It is not difficult to check that this subcategory is stable under $\star$, hence also admits a canonical structure of monoidal category.

\begin{prop}
\label{prop:univ-centralizer-Hecke}
There exists a canonical fully faithful monoidal functor
\[
\bigl( \Rep_{\mathrm{fl}}^{\Gm}(\bt^* \times_{\bt^*/\WW} \bbJ^*_{\bS} \times_{\bt^*/\WW} \bt^*),\star \bigr) \to (\mathsf{C}_\ext, \otimes_R).
\]
\end{prop}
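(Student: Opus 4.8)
The construction must associate to a $\Gm$-equivariant coherent representation $M$ of the group scheme $\bt^* \times_{\bt^*/\WW} \bbJ^*_{\bS} \times_{\bt^*/\WW} \bt^*$, flat over the second copy of $\bt^*$, an object of $\mathsf{C}_\ext$, i.e.\ a graded $R$-bimodule together with a decomposition of its localization at $Q$ parametrized by $\Wext$. The underlying graded $R$-bimodule is forced: the $\Gm$-equivariance provides the grading (with $\bt$ in degree $2$, so that the $\Gm$-action with weight $-2$ matches the conventions of \S\ref{ss:Kostant-section-Abe}), and the coherent sheaf structure on $\bt^* \times_{\bt^*/\WW} \bt^*$ gives the left and right $R = \scO(\bt^*)$-module structures (left via the second projection, right via the first projection, to match the convention already used in the definition of $\star$); flatness over the second factor is exactly the flatness as a right $R$-module demanded in the definition of $\mathsf{C}_\ext$. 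Finite generation over $R$ on both sides follows since $R$ is finite over $R^\WW$ and $M$ is coherent over $\bt^* \times_{\bt^*/\WW} \bt^*$, hence over $R^\WW$.

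The heart of the construction is the $\Wext$-graded decomposition of $M \otimes_R Q$. Here I would use the splitting of the regular semisimple part of the universal centralizer provided by Lemma~\ref{lem:Irs}: over $\bgrs^* \times_{\bt^*/\WW} \bt^*$ the group scheme $\bbJ^*_\rs \times_{\bt^*/\WW} \bt^*$ becomes the constant group scheme $\bT \times (\bgrs^* \times_{\bt^*/\WW} \bt^*)$. Pulling back along the Kostant section $\bS^* \cong \bt^*/\WW$ and further along $\bt^* \to \bt^*/\WW$ on both sides, and then restricting to the regular semisimple locus, the fiber product $\bt^* \times_{\bt^*/\WW} \bt^*$ localized over the generic point becomes $\Spec Q \times_{\bt^*/\WW} \Spec Q$, whose irreducible components (equivalently, the points of $\Spec(Q \otimes_{R^\WW} Q)$ after inverting the relevant discriminant) are naturally indexed by $\WW$; tensoring over $\Wext/\Waff$-worth of translation—no: more precisely, the relevant scheme here is $\bt^* \times_{\bt^*/\WW} \bt^*$, whose generic decomposition is indexed by $\WW$, and since the $\WW$-action on $\bt^*$ is the one through which $\Waff$ and $\Wext$ act, a representation of the constant group scheme $\bT$ decomposes, after inverting suitable elements, into weight spaces. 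Concretely: $M \otimes_R Q$ is a module over $Q \otimes_{Q^\WW} Q \cong \prod_{w \in \WW} Q$ (after base change along $R \hookrightarrow Q$ on the right, using that $R/R^\WW$ is étale generically), giving a decomposition $M \otimes_R Q = \bigoplus_{w \in \WW} {}^w(M \otimes_R Q)$ indexed by $\WW$; on each summand the left and right $R$-actions are related by $w$, which is the relation~\eqref{eqn:action-w}. This already produces an object of $\mathsf{C}_\ext$ supported on (the image of) $\WW \subset \Wext$; I do not expect summands outside $\WW$, which is consistent with the fact that the Hecke category functor of Proposition~\ref{prop:univ-centralizer-Hecke} should land in objects whose support meets each coset $w\Waff$ appropriately—but the precise bookkeeping of which $\Wext$-labels occur is exactly the delicate point.

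Having built the functor on objects, functoriality is immediate since everything is natural in $M$. Fully faithfulness: a morphism in $\mathsf{C}_\ext$ is a bimodule map compatible with the $Q$-decompositions; one checks that such a map is automatically $\bbJ^*_\bS$-equivariant because the equivariant structure, after base change to the regular semisimple locus, is determined by the decomposition (this is the key use of Lemma~\ref{lem:Irs}), and $\bgrs^* \times_{\bt^*/\WW}\bt^*$ is dense in $\bS^* \times_{\bt^*/\WW} \bt^* \cong \bt^*/\WW \times_{\bt^*/\WW} \bt^* = \bt^*$ with $\bt^* \times_{\bt^*/\WW} \bt^*$ reduced (or at least the relevant sheaves torsion-free along it), so equivariance spreads out from the dense open. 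Monoidality: one must identify the pushforward of the convolution $M \star N$ (whose underlying $\scO(\bt^*\times_{\bt^*/\WW}\bt^*)$-module is $M \otimes_R N$) with $F(M) \otimes_R F(N)$ as objects of $\mathsf{C}_\ext$, including matching the $Q$-decompositions—this is a direct check once one notes that the $\WW$-indexed decomposition is multiplicative under $\otimes_R$ because the group law of $\WW$ on the components of $\Spec(Q \otimes_{Q^\WW} Q)$ is the one inducing concatenation.

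\medskip

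\textbf{Main obstacle.} The delicate step is showing the functor is \emph{fully faithful}, i.e.\ that a morphism of objects of $\mathsf{C}_\ext$ between images $F(M), F(N)$ automatically respects the $\bbJ^*_\bS$-equivariant structures (not just the decompositions). This requires arguing that the equivariant structure on a flat coherent representation is \emph{determined} by its restriction to the regular semisimple locus, which in turn rests on the smoothness of $\bbJ^*_\bS$ over $\bS^*$, the flatness hypothesis, and a density/separatedness argument to propagate equivariance across the non-regular-semisimple locus. A secondary subtlety is keeping careful track of gradings and $\Gm$-equivariance so that the grading shift $(1)$ on both sides matches (the factor-of-two normalization), and verifying that the functor is well-defined at the level of the ``extended'' group $\Wext$ rather than only $\Waff$—but this last point is essentially formal since $\WW \subset \Wext$ and the decomposition is $\WW$-indexed.
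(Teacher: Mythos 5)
There is a genuine gap, and it sits at the exact point you flag as ``the precise bookkeeping of which $\Wext$-labels occur.'' Your construction only produces a decomposition of $M \otimes_R Q$ indexed by $\WW$ (via the generic splitting of $\bt^* \times_{\bt^*/\WW} \bt^*$ into graphs of elements of $\WW$), and you then assert that no summands outside $\WW$ should occur. That assertion is false, and dropping the finer indexing destroys the proposition: for instance the objects $\Delta^{\bbJ}_{t_\lambda}$ of Lemma~\ref{lem:Delta-J} with $\lambda \in \bX$ nonzero must map to $\Delta_{t_\lambda}$, which is concentrated in degree $t_\lambda \notin \WW$. The missing idea is the refinement of the $\WW$-decomposition into a $\Wext$-decomposition using the torus: by Lemma~\ref{lem:Irs}, the restriction $\mathbf{M}_w$ of the group scheme $\bt^* \times_{\bt^*/\WW} \bbJ^*_{\bS} \times_{\bt^*/\WW} \bt^*$ to the graph $\mathrm{Gr}(w,\bt^*_\rs)$ is canonically isomorphic to the constant group scheme $\bt^*_\rs \times \bT$, so a representation restricted to that graph is exactly an $\bX$-graded coherent sheaf; the summand attached to $t_\lambda w \in \Wext$ is then defined as the $\lambda$-weight part of the $w$-summand. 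In other words, Lemma~\ref{lem:Irs} is not merely a tool to split the fiber product generically (that splitting uses only that $\bt^*_\rs \to \bt^*_\rs/\WW$ is a free quotient); its role is to convert the centralizer action into an extra $\bX$-grading, and it is precisely this grading that records the representation structure in the combinatorial data of an object of $\mathsf{C}_\ext$.

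This omission also breaks your full-faithfulness argument, which claims that ``the equivariant structure, after base change to the regular semisimple locus, is determined by the decomposition.'' With only a $\WW$-indexed decomposition this is not true: the $\bT$-action over the regular semisimple locus is exactly the extra structure your decomposition fails to see, so a bimodule map compatible with your decomposition need not be equivariant. With the $\Wext$-indexed refinement the statement becomes correct over the generic point, and the paper then concludes equivariance everywhere not by a density argument on the reduced scheme but by a flatness argument: since the target is flat as a right $R$-module and $\scO(\bt^* \times_{\bt^*/\WW} \bbJ^*_{\bS} \times_{\bt^*/\WW} \bt^*)$ is flat over $\scO(\bt^* \times_{\bt^*/\WW} \bt^*)$, a map that is a comodule map after restriction to $\bt^* \times \{\eta\}$ ($\eta$ the generic point of $\bt^*$) is a comodule map. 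Your remaining points (the grading from $\Gm$-equivariance, the identification of flatness over the second factor with right $R$-flatness, monoidality via multiplicativity of the decomposition, and working with $\scO(\bt^*_\rs)$ since $Q$ localizes it further) are consistent with the paper's proof, but the argument cannot be repaired without the $\bT$-weight refinement of the decomposition.
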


\begin{proof}
We start by constructing a functor
\begin{equation}
\label{eqn:functor-J-C-2}
\Rep^{\Gm}(\bt^* \times_{\bt^*/\WW} \bbJ^*_{\bS} \times_{\bt^*/\WW} \bt^*) \to \mathsf{C}'_{\ext}.
\end{equation}
By definition, any object in $\Rep^{\Gm}(\bt^* \times_{\bt^*/\WW} \bbJ^*_{\bS} \times_{\bt^*/\WW} \bt^*)$ is in particular a $\Gm$-equivariant coherent sheaf on $\bt^* \times_{\bt^*/\WW} \bt^*$, hence can be seen as a graded $R$-bimodule.
To equip this graded bimodule with the structure of an object in $\mathsf{C}'_\ext$, we must provide a decomposition of its tensor product with $Q$ parametrized by $\Wext$. In fact, we will provide such a decomposition for its tensor product with $\scO(\bt^*_\rs)$, where $\bt^*_\rs := \bt^* \cap \bg^*_\rs$ (which is sufficient since $Q$ is a further localization of $\scO(\bt^*_\rs)$).

First, the open subset $\bt^*_{\rs} \subset \bt^*$ is the complement of the kernels of the differentials of the coroots. This open subset is stable under the action of $\WW$, and the restriction of this action is free, see~\cite[Lemma~2.3.3]{riche-kostant}. In particular we have an open subset $\bt^*_{\rs} / \WW \subset \bt^*/\WW$, the morphism $\bt^*_\rs \to \bt^*_\rs/\WW$ is \'etale, and the map $(w,x) \mapsto (x,w(x))$ induces an isomorphism of schemes
\[
\WW \times \bt^*_\rs \simto \bt^*_\rs \times_{\bt^*_\rs/\WW} \bt^*_\rs,
\]
see~\cite[Exp.~V, \S 2]{sga1}.
As a consequence, for any coherent sheaf $\scF$ on $\bt^* \times_{\bt^*/\WW} \bt^*$, the tensor product
\[
 \Gamma(\bt^* \times_{\bt^*/\WW} \bt^*, \scF) \otimes_R \scO(\bt^*_\rs)
\]
admits a canonical decomposition (as an $\scO(\bt^*_\rs)$-bimodule) parametrized by $\WW$, such that the action on the factor corresponding to $w \in \WW$ factors through the quotient
\[
\scO(\bt_\rs^* \times \bt^*_\rs) \twoheadrightarrow \scO(\mathrm{Gr}(w,\bt^*_\rs))
\]
(where in the right-hand side $\mathrm{Gr}(w,\bt^*_\rs)$ denotes the graph of $w$ acting on $\bt^*_\rs$), i.e.~satisfies the condition in~\eqref{eqn:action-w}.

Next, let us explain how this decomposition can be refined if $\scF$ belongs to $\Rep(\bt^* \times_{\bt^*/\WW} \bbJ^*_{\bS} \times_{\bt^*/\WW} \bt^*)$. For this, we consider the restriction $\mathbf{M}_w$ of $\bt^* \times_{\bt^*/\WW} \bbJ^*_{\bS} \times_{\bt^*/\WW} \bt^*$ to $\mathrm{Gr}(w,\bt^*_\rs)$. Identifying the latter subscheme with $\bt^*_\rs$ via the first projection and using Lemma~\ref{lem:Irs}, we obtain a canonical isomorphism of group schemes
\[
\mathbf{M}_w \simto \bt^*_\rs \times \bT.
\]
This means that the category of representations of $\mathbf{M}_w$ on coherent sheaves on $\mathrm{Gr}(w,\bt^*_\rs)$ is canonically equivalent to the category of $\bX$-graded coherent shea\-ves on $\bt^*_\rs$. Starting with an object $\scF$ in $\Rep(\bt^* \times_{\bt^*/\WW} \bbJ^*_{\bS} \times_{\bt^*/\WW} \bt^*)$, we therefore obtain a decomposition of $\Gamma(\bt^* \times_{\bt^*/\WW} \bt^*, \scF) \otimes_R \scO(\bt^*_\rs)$ parametrized by $\Wext$ by defining, for $\lambda \in \bX$ and $w \in \WW$, the summand associated with $t_\lambda w$ as the $\lambda$-graded part in the summand associated with $w$ (which is a representation of $\mathbf{M}_w$).
This finishes the description of the functor~\eqref{eqn:functor-J-C-2}.

It is clear from construction that this functor sends objects in $\Rep^{\Gm}_{\mathrm{fl}}(\bt^* \times_{\bt^*/\WW} \bbJ^*_{\bS} \times_{\bt^*/\WW} \bt^*)$ to objects in $\mathsf{C}_{\ext}$, which therefore provides the functor of the statement. This functor is also easily seen to be monoidal. Let us now explain why it is fully faithful. Consider $\scF,\scG$ in $\Rep^{\Gm}_{\mathrm{fl}}(\bt^* \times_{\bt^*/\WW} \bbJ^*_{\bS} \times_{\bt^*/\WW} \bt^*)$, and denote their images by $M,N$ (so that the underlying graded bimodule of $M$, resp.~$N$, is $\Gamma(\bt^* \times_{\bt^*/\WW} \bt^*,\scF)$, resp.~$\Gamma(\bt^* \times_{\bt^*/\WW} \bt^*,\scG)$). By construction, morphisms in $\mathsf{C}_{\ext}$ from $M$ to $N$ are morphisms of graded bimodules from $\Gamma(\bt^* \times_{\bt^*/\WW} \bt^*,\scF)$ to $\Gamma(\bt^* \times_{\bt^*/\WW} \bt^*,\scG)$ whose restriction to $\bt^* \times \{\eta\}$ (where $\eta$ is the generic point of $\bt^*$) commutes with the action of the restriction of $\bt^* \times_{\bt^*/\WW} \bbJ^*_{\bS} \times_{\bt^*/\WW} \bt^*$. Now, since by assumption $\Gamma(\bt^* \times_{\bt^*/\WW} \bt^*,\scG)$ is flat as a right $R$-module and $\scO(\bt^* \times_{\bt^*/\WW} \bbJ^*_{\bS} \times_{\bt^*/\WW} \bt^*)$ is flat over $\scO(\bt^* \times_{\bt^*/\WW} \bt^*)$, such a morphism is automatically a morphism of $\scO(\bt^* \times_{\bt^*/\WW} \bbJ^*_{\bS} \times_{\bt^*/\WW} \bt^*)$-comodules. This proves the desired fully faithfulness.
\end{proof}

\begin{lem}
\label{lem:Delta-J}
 For any $w \in \Wext$, the object $\Delta_w$ belongs to the essential image of the functor of Proposition~\ref{prop:univ-centralizer-Hecke}.
\end{lem}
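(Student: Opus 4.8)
The plan is to exhibit, for each $w \in \Wext$, a concrete object of $\Rep_{\mathrm{fl}}^{\Gm}(\bt^* \times_{\bt^*/\WW} \bbJ^*_{\bS} \times_{\bt^*/\WW} \bt^*)$ that the functor of Proposition~\ref{prop:univ-centralizer-Hecke} sends to $\Delta_w$. Write $w = t_\lambda v$ with $\lambda \in \bX$ and $v \in \WW$, and let $Z_v \subset \bt^* \times_{\bt^*/\WW}\bt^*$ be the image of the embedding $\bt^* \to \bt^* \times \bt^*$, $y \mapsto (v(y),y)$ (which does land in the fibre product, since $y$ and $v(y)$ have the same image in $\bt^*/\WW$). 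Projection to the second factor identifies $Z_v$ with $\bt^*$; transporting the natural $\Gm$-action and grading along this identification, the structure sheaf $\scO_{Z_v}$ becomes a $\Gm$-equivariant coherent sheaf on $\bt^* \times_{\bt^*/\WW}\bt^*$ which is flat over the second projection and whose underlying graded $R$-bimodule is isomorphic to $\Delta_v$. Since translations act trivially in our realization, $\Delta_v$ and $\Delta_w$ agree as graded $R$-bimodules, differing only through the decomposition~\eqref{eqn:dec-bimodule}.

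The next step is to upgrade $\scO_{Z_v}$ to a representation of $\cG := \bt^* \times_{\bt^*/\WW} \bbJ^*_{\bS} \times_{\bt^*/\WW} \bt^*$. Under the identification $Z_v \cong \bt^*$, the composition $Z_v \hookrightarrow \bt^* \times_{\bt^*/\WW}\bt^* \to \bt^*/\WW$ is the quotient map, so $\cG|_{Z_v}$ is canonically the pullback $\bt^* \times_{\bt^*/\WW}\bbJ^*_{\bS}$ of $\bbJ^*_\bS$. Restricting the morphism~\eqref{eqn:morph-J-T} along $\bS^* \hookrightarrow \bgreg^*$ and using $\bt^* \times_{\bt^*/\WW}\bS^* \cong \bt^*$ gives a homomorphism $\bt^* \times_{\bt^*/\WW}\bbJ^*_{\bS} \to \bt^* \times \bT$ of group schemes over $\bt^*$; composing with $\lambda \in \bX = X^*(\bT)$ yields a character, and hence a $\cG|_{Z_v}$-representation structure on $\scO_{Z_v}$. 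By pushforward along the closed immersion $Z_v \hookrightarrow \bt^* \times_{\bt^*/\WW}\bt^*$ we obtain an object $\cF_w \in \Rep_{\mathrm{fl}}^{\Gm}(\cG)$.

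It then remains to identify the image of $\cF_w$ under the functor of Proposition~\ref{prop:univ-centralizer-Hecke}. Its underlying graded $R$-bimodule is $\scO(Z_v) \cong \Delta_v = \Delta_w$, so all that has to be checked is that the induced decomposition~\eqref{eqn:dec-bimodule} is concentrated in the summand indexed by $w = t_\lambda v$. After tensoring with $\scO(\bt^*_\rs)$, the resulting sheaf is supported on the graph $\Gr(v,\bt^*_\rs)$; hence in the $\WW$-indexed decomposition constructed in the proof of Proposition~\ref{prop:univ-centralizer-Hecke} it lies entirely in the $v$-summand, and the refinement of that summand to a $\Wext$-indexed decomposition is obtained there from the trivialization of $\cG$ over $\Gr(v,\bt^*_\rs)$ furnished by Lemma~\ref{lem:Irs}, which is induced by the same morphism~\eqref{eqn:morph-J-T}. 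Consequently the $\bT$-weight recorded by this refinement for $\cF_w$ is exactly $\lambda$, so $\cF_w$ maps into the $t_\lambda v = w$ summand, i.e.~to $\Delta_w$.

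The point that requires the most care is the bookkeeping of gradings and of the two trivializations. One must verify that the character of $\bt^* \times_{\bt^*/\WW}\bbJ^*_{\bS}$ built from~\eqref{eqn:morph-J-T} and $\lambda$ is $\Gm$-invariant --- equivalently, that the restriction of~\eqref{eqn:morph-J-T} to $\bS^*$ is $\Gm$-equivariant for the trivial $\Gm$-action on $\bT$, which follows from the description of the $\Gm$-action on $\bbJ^*_{\bS}$ recalled in~\cite[\S 4.5]{mr} --- so that no grading shift is needed; and that the trivialization used implicitly in the proof of Proposition~\ref{prop:univ-centralizer-Hecke} really agrees with the one coming from~\eqref{eqn:morph-J-T}. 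Apart from this, and the routine tracking of left/right and $v$ versus $v^{-1}$ conventions, no difficulty remains. Alternatively, one may treat only $v \in \WW$ and the translations $t_\lambda$, $\lambda \in \bX$, and deduce the general case from the isomorphisms~\eqref{eqn:convolution-Delta} together with the monoidality of the functor of Proposition~\ref{prop:univ-centralizer-Hecke}.
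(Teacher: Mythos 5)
Your proposal is correct and rests on exactly the same essential ingredient as the paper's proof: the representation structure is produced by composing the character $\lambda$ with the morphism~\eqref{eqn:morph-J-T}, and the identification of the resulting $\Wext$-indexed decomposition uses precisely the fact that the isomorphism of Lemma~\ref{lem:Irs} is the restriction of that morphism. The only difference is presentational: you construct the object for general $w=t_\lambda v$ at once, as the structure sheaf of the graph of $v$ twisted by the character $\lambda$, whereas the paper first reduces to the cases $w\in\WW$ and $w\in\bX$ via~\eqref{eqn:convolution-Delta} and monoidality of the functor --- the alternative route you yourself mention at the end.
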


\begin{proof}
 The isomorphism~\eqref{eqn:convolution-Delta} reduces the proof to the case $w$ belongs either to $\WW$ or to $\bX$. The case $w \in \WW$ is obvious: in this case $\Delta_w$ is the image of its underlying graded $R$-bimodule, endowed with the trivial structure as a representation. For the case $w \in \bX$, in view of the construction of the functor in Proposition~\ref{prop:univ-centralizer-Hecke}, 
the claim follows from the fact that the isomorphism of Lemma~\ref{lem:Irs} is the restriction of the morphism~\eqref{eqn:morph-J-T}.
\end{proof}

 For $w \in \Wext$, we will denote by $\Delta_w^{\bbJ}$ the unique object in $\Rep_{\mathrm{fl}}^{\Gm}(\bt^* \times_{\bt^*/\WW} \bbJ^*_{\bS} \times_{\bt^*/\WW} \bt^*)$ which is sent to $\Delta_w$.

\subsection{Representations of the universal centralizer and the Hecke category}
\label{ss:Rep-Hecke}

By Proposition~\ref{prop:univ-centralizer-Hecke} the category $\Rep_{\mathrm{fl}}^{\Gm}(\bt^* \times_{\bt^*/\WW} \bbJ^*_{\bS} \times_{\bt^*/\WW} \bt^*)$ can be seen as a full monoidal subcategory in Abe's category $\mathsf{C}_\ext$, and by Theorem~\ref{thm:abe} the same is true for the Hecke category $\DBS$. We now investigate the relation between these two subcategories.

\begin{lem}
\label{lem:essential-images}
The essential image of the functor of Theorem~\ref{thm:abe} is contained in the essential image of the functor of Proposition~\ref{prop:univ-centralizer-Hecke}.
\end{lem}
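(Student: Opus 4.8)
The plan is to exploit the fact that the essential image of the functor of Theorem~\ref{thm:abe} is generated, as a monoidal category with grading shift, by the objects $B_s^{\Bim}$ for $s \in \Saff$ (since $\DBS$ is generated as such by the $B_s$), together with the fact that the essential image of the functor of Proposition~\ref{prop:univ-centralizer-Hecke} is a full monoidal subcategory of $\mathsf{C}_\ext$ stable under grading shift (this is immediate from the construction: the monoidal structure matches on the nose, and grading shift corresponds to the $\Gm$-equivariant structure twist). So it suffices to check that each $B_s^{\Bim}$ lies in the essential image of the functor of Proposition~\ref{prop:univ-centralizer-Hecke}; and more precisely, since that image is replete and closed under isomorphism, to produce for each $s \in \Saff$ a $\Gm$-equivariant coherent representation of $\bt^* \times_{\bt^*/\WW} \bbJ^*_{\bS} \times_{\bt^*/\WW} \bt^*$, flat over the second copy of $\bt^*$, whose image under~\eqref{eqn:functor-J-C-2} is isomorphic to $B_s^{\Bim}$.

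For this last step I would split into two cases according to whether $s$ is conjugate in $\Wext$ to a simple reflection lying in $\WW$. If $\alpha \in \Phi^{\mathrm{s}}$ and $s = s_\alpha \in \WW$, then $B_s^{\Bim} = R \otimes_{R^s} R(1)$ with decomposition over $\{e,s\} \subset \WW$, and one checks directly that the sheaf $\scO(\bt^* \times_{\bt^*/\langle s \rangle} \bt^*)$, viewed as a coherent sheaf on $\bt^* \times_{\bt^*/\WW} \bt^*$ via the closed embedding $\bt^*/\WW \hookrightarrow$ (resp. finite map) and endowed with the trivial representation of the centralizer group scheme and the natural $\Gm$-structure, maps to $B_s^{\Bim}$ under~\eqref{eqn:functor-J-C-2}: over $\bt^*_\rs$ the group scheme $\mathbf{M}_w$ acts trivially on this sheaf, so the refined decomposition degenerates to the coarse one indexed by $\{e,s\} \subset \WW$, which is exactly the decomposition carried by $B_s^{\Bim}$. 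For $s \in \Saff \setminus \WW$, I would use Lemma~\ref{lem:conjugation-abe}: choose $x \in \Wext$ and $t \in \Saff \cap \WW$ with $s = xtx^{-1}$ (possible since every simple affine reflection is $\Wext$-conjugate to a simple reflection of $\WW$), so that $B_s^{\Bim} \cong \Delta_x \otimes_R B_t^{\Bim} \otimes_R \Delta_{x^{-1}}$; now $B_t^{\Bim}$ is in the essential image by the first case, $\Delta_x$ and $\Delta_{x^{-1}}$ are in the essential image by Lemma~\ref{lem:Delta-J}, and the essential image is closed under the monoidal product $\star$, hence $B_s^{\Bim}$ lies in it as well.

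The main obstacle, and the point that needs the most care, is the explicit verification in the case $s \in \WW$ that the candidate representation of $\bt^* \times_{\bt^*/\WW} \bbJ^*_{\bS} \times_{\bt^*/\WW} \bt^*$ really does produce the Abe object $B_s^{\Bim}$ and not merely a bimodule with the correct underlying module: one must track, through the identifications in the proof of Proposition~\ref{prop:univ-centralizer-Hecke} (in particular the isomorphism $\WW \times \bt^*_\rs \simto \bt^*_\rs \times_{\bt^*_\rs/\WW} \bt^*_\rs$ and the resulting $\WW$-indexed decomposition), that the summands of $B_s^{\Bim} \otimes_R \scO(\bt^*_\rs)$ supported on $\mathrm{Gr}(e,\bt^*_\rs)$ and $\mathrm{Gr}(s,\bt^*_\rs)$ are each free of rank one over $\scO(\bt^*_\rs)$ with no nontrivial $\bX$-grading, so that no further refinement to $\Wext$ occurs. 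This is a direct computation with the bimodule $R\otimes_{R^s}R$ and its localization, using that $R$ is free of rank two over $R^s$, and I expect it to be routine once the compatibilities are set up; the conjugation trick then disposes of the remaining affine simple reflections with no extra work.
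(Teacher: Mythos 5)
Your proposal is correct and follows essentially the same route as the paper: reduce to the generators $B_s^{\Bim}$, realize $B_s^{\Bim}$ for $s=s_\alpha\in\WW$ as the image of (a shift of) $\scO(\bt^*\times_{\bt^*/\{e,s\}}\bt^*)$ with the trivial representation structure, and handle the remaining affine simple reflections by $\Wext$-conjugation via Lemma~\ref{lem:conjugation-abe} together with Lemma~\ref{lem:Delta-J}. The only difference is that you spell out the verification that the trivial representation structure forces the $\Wext$-decomposition of $B_s^{\Bim}\otimes_R Q$ to be concentrated on $\{e,s\}\subset\WW$ (which the paper leaves implicit), and your sketch of that check is accurate.
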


\begin{proof}
By definition, the category $\DBS$ is generated under convolution and grading shift by the objects $(B_s : s \in \Saff)$. Hence to prove the lemma it suffices to prove each $B_s^\Bim$ belongs to the essential image of the functor of Proposition~\ref{prop:univ-centralizer-Hecke}. 

If $s=s_\alpha$ for some $\alpha \in \Phi^{\mathrm{s}}$, then $B_s^\Bim$ is the image of the appropriate shift of $\scO(\bt^* \times_{\bt^*/\{e,s\}} \bt^*)$, endowed with the trivial structure as a representation of $\bt^* \times_{\bt^*/\WW} \bbJ^*_{\bS} \times_{\bt^*/\WW} \bt^*$. If $s \in \SSaff$ is not of this form, then there exist $x \in \Wext$ and $t \in \Saff$ such that $t=s_\alpha$ for some $\alpha \in \Phi^{\mathrm{s}}$ and $s=xtx^{-1}$. (In fact, such a statement is even true in the braid group associated with $\Wext$: see~\cite[Lemma~6.1.2]{riche} or~\cite[Lemma~2.1.1]{bm} for the proof in the setting where $\bG$ is semisimple and simply connected, from which one can deduce the general case using restriction to the derived subgroup.) By Lemma~\ref{lem:conjugation-abe} we then have $B_s^\Bim \cong \Delta_x \otimes_R B_t^\Bim \otimes_R \Delta_{x^{-1}}$; since $B_t^\Bim$ is now known to belong to the essential image of our functor, and since $\Delta_x$ also satisfies this property by Lemma~\ref{lem:Delta-J}, this finishes the proof.
\end{proof}

From this lemma we deduce the following claim, which will be crucial for our constructions in Section~\ref{sec:Hecke-action}.

\begin{thm}
\label{thm:Hecke-centralizer}
There exists a canonical fully faithful monoidal functor
\[
\DBS \to \Rep_{\mathrm{fl}}^{\Gm}(\bt^* \times_{\bt^*/\WW} \bbJ^*_{\bS} \times_{\bt^*/\WW} \bt^*).
\]
\end{thm}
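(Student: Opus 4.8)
The plan is to assemble Theorem~\ref{thm:Hecke-centralizer} directly from the three results already in place: Theorem~\ref{thm:abe}, Proposition~\ref{prop:univ-centralizer-Hecke}, and Lemma~\ref{lem:essential-images}. First I would recall that Theorem~\ref{thm:abe} gives a canonical fully faithful monoidal functor $\Phi \colon \DBS \to \mathsf{C}$, and that $\mathsf{C}$ is a full monoidal subcategory of $\mathsf{C}_\ext$, so we may regard $\Phi$ as landing in $\mathsf{C}_\ext$. Next, Proposition~\ref{prop:univ-centralizer-Hecke} provides a canonical fully faithful monoidal functor
\[
\Psi \colon \Rep_{\mathrm{fl}}^{\Gm}(\bt^* \times_{\bt^*/\WW} \bbJ^*_{\bS} \times_{\bt^*/\WW} \bt^*) \to \mathsf{C}_\ext.
\]

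The content of Lemma~\ref{lem:essential-images} is precisely that the essential image of $\Phi$ is contained in the essential image of $\Psi$. Since $\Psi$ is fully faithful, it induces an equivalence of its source onto a full (monoidal) subcategory $\mathcal{D}$ of $\mathsf{C}_\ext$, namely its essential image (more precisely, choose a quasi-inverse $\Psi^{-1}$ on $\mathcal{D}$; because $\Psi$ is monoidal and fully faithful, $\Psi^{-1}$ inherits a monoidal structure). By Lemma~\ref{lem:essential-images}, $\Phi$ factors through $\mathcal{D}$, so the composite $\Psi^{-1} \circ \Phi$ is a well-defined functor
\[
\DBS \to \Rep_{\mathrm{fl}}^{\Gm}(\bt^* \times_{\bt^*/\WW} \bbJ^*_{\bS} \times_{\bt^*/\WW} \bt^*).
\]
It is monoidal as a composite of monoidal functors, and fully faithful since both $\Phi$ and $\Psi^{-1}$ (on $\mathcal{D}$) are fully faithful; it is canonical because each of its constituents is. This gives the statement.

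There is essentially no obstacle here: the theorem is a formal consequence of the preceding results, and the only point requiring a word of care is the standard observation that a fully faithful monoidal functor admits a monoidal quasi-inverse on its essential image, so that the factorization through the essential image is again monoidal. If one prefers to avoid explicitly inverting $\Psi$, one can phrase the argument via the universal property: for each object and morphism of $\DBS$ one chooses (using Lemma~\ref{lem:essential-images}, and full faithfulness of $\Psi$ for morphisms) the unique preimage in $\Rep_{\mathrm{fl}}^{\Gm}$, and checks functoriality and monoidality by transporting the corresponding structure isomorphisms of $\Phi$ through $\Psi$; this is routine. Either way the proof is short.
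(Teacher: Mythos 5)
Your argument is exactly the paper's: Theorem~\ref{thm:Hecke-centralizer} is stated there as an immediate consequence of Lemma~\ref{lem:essential-images}, combined with the fully faithful monoidal functors of Theorem~\ref{thm:abe} and Proposition~\ref{prop:univ-centralizer-Hecke} into $\mathsf{C}_\ext$, i.e.\ one factors Abe's embedding through the essential image of the representation-theoretic embedding. Your extra remark about transporting the monoidal structure through a quasi-inverse on the essential image is the standard point and is fine.
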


\section{Some categories of equivariant \texorpdfstring{$\Ug$}{Ug}-bimodules}
\label{sec:HCBim}

\subsection{Weights}
\label{ss:weights}

From now on we assume that $p>0$. We consider a simply connected semisimple algebraic group $G$ over $\bk$, and its category
\[
\Rep(G)
\]
of finite-dimensional algebraic representations.

For a $\bk$-scheme $X$ we will denote by $X^{(1)}$ the associated Frobenius twist, defined as the fiber product $X^{(1)}:=X \times_{\mathrm{Spec}(\bk)} \mathrm{Spec}(\bk)$, where the morphism $\mathrm{Spec}(\bk) \to \mathrm{Spec}(\bk)$ is associated with the map $x \mapsto x^p$. (The projection $X^{(1)} \to X$ is an isomorphism of $\mathbb{F}_p$-schemes, but not of $\bk$-schemes.) 
 We will assume that
 \[
 \text{$p$ is very good for $G$.}
 \]
 Then the group $\bG := G^{(1)}$ satisfies the assumptions of Section~\ref{sec:Hecke-univ-centralizer}. We will denote by $\Fr : G \to \bG$ the Frobenius morphism of $G$, and will use the same notation for its restriction to the various subgroups considered below.
 
 The subgroups $\bB$, $\bT$, $\bU$ of $\bG$, when seen as subschemes in $G$, determine subgroups $B$, $T$, $U$ whose Frobenius twists are $\bB$, $\bT$, $\bU$ respectively.
We will denote by $\fg$, $\fb$, $\ft$, $\fn$ the respective Lie algebras of $G$, $B$, $T$, $U$ (so that $\bg=\fg^{(1)}$ and similarly for $B$, $U$, $T$), and by
$W$ the Weyl group of $(G,T)$. We set $\bbX:=X^*(T)$, and denote by $\fR \subset \bbX$ the root system of $(G,T)$. The choice of $B$ determines a system of positive roots $\fR^+ \subset \fR$, chosen as the $T$-weights in $\fg/\fb$. We will denote by $\fRs \subset \fR$ the corresponding subset of simple roots, and by $\rho \in \bbX$ the halfsum of the positive roots. We also set $\bbX^\vee := X_*(T)$, and denote by $\fR^\vee \subset \bbX^\vee$ the coroot system. The canonical bijection $\fR \simto \fR^\vee$ will be denoted as usual $\alpha \mapsto \alpha^\vee$.

The Frobenius morphism $\Fr$ induces an isomorphism
\[
N_G(T)/T \simto N_{\bG}(\bT)/\bT,
\]
which allows us to identify the Weyl group $\WW$ of $\bG$ with $W$. 
It is a standard fact that the morphism from $\bX=X^*(T^{(1)})$ to $\bbX$ induced by $\Fr : T \to \bT$ is injective, and that its image is $p \cdot \bbX$, which allows us to identify $\bX$ with $p \cdot \bbX$.
The identification $\bX = p \cdot \bbX$ is $W$-equivariant, and the root system $\Phi$ of $(\bG,\bT)$ is
$\Phi = \{p \cdot \alpha : \alpha \in \fR\}$; similarly we have $\Phi^+=p\fR^+$ and $\Phi^{\mathrm{s}}=p\fRs$. In particular, the affine Weyl group $\Waff$ of~\S\ref{ss:Hecke-cat} identifies with $W \ltimes (p \Z\fR)$, and the extended affine Weyl group $\Wext$ identifies with $W \ltimes (p \cdot \bbX)$. Recall also our subset of Coxeter generators $\Saff \subset \Waff$. The subgroup $W \subset \Waff$ is a parabolic subgroup; its longest element will be denoted (as usual) by $w_0$.
We will consider the ``dot'' action of $\Wext$ (or its subgroup $\Waff$) on $\bbX$ defined by
\[
 (t_\mu w) \bullet \lambda = w(\lambda+\rho)-\rho+\mu
\]
for $\mu \in p\bbX$, $w \in W$ and $\lambda \in \bbX$.


Given a character $\lambda \in \bbX$, we will denote by $\ola \in \ft^*$ the differential of $\lambda$. We set
\[
 \ft^*_{\mathbb{F}_p} := \{\ola : \lambda \in \bbX\} \subset \ft^*.
\]
In this way, the map $\lambda \mapsto \ola$ induces an isomorphism of abelian groups
\[
 \bbX/p\bbX \simto \ft^*_{\mathbb{F}_p}.
\]
(In particular, $\ft^*_{\mathbb{F}_p}$ is finite.)

The group $W$ naturally acts on $\ft^*$. We also have a ``dot'' action of $W$ on $\ft^*$, defined by
\[
 w \bullet \xi := w(\xi+\overline{\rho})-\overline{\rho}.
\]
With this definition the map $\bbX \to \ft^*$ sending $\lambda$ to $\ola$ is $\Wext$-equivariant, where $\Wext$ acts on $\bbX$ via the dot-action and on $\ft^*$ via the projection $\Wext \to W$ and the dot-action of $W$ on $\ft^*$. This observation legitimates the use of the same notation for these actions. It also shows that the subset $\ft^*_{\mathbb{F}_p} \subset \ft^*$ is stable under the dot-action of $W$. Below we will consider the quotient $\ft^*/(W,\bullet)$ of the dot-action of $W$ on $\ft^*$. For $\lambda \in \bbX$, we will denote by $\tla$ the image of $\ola$ in $\ft^*/(W,\bullet)$.

As mentioned above our assumption that $p$ is very good for $G$ implies in particular that the quotient $\bbX/\Z\fR$ has no $p$-torsion, or in other words that
\begin{equation}
\label{eqn:no-torsion}
 \Z\fR \cap p\bbX = p\Z\fR.
\end{equation}
This equality has the following consequences.

\begin{lem}
\label{lem:weights}
 Let $\lambda \in \bbX$.
 \begin{enumerate}
  \item 
  \label{it:lem-no-torsion-1}
  We have
  \[
   \Waff \bullet \lambda = (\Wext \bullet \lambda) \cap (\lambda+\Z\fR).
  \]
\item 
\label{it:lem-no-torsion-2}
The stabilizer of $\ola$ for the dot-action of $W$ on $\ft^*$ is the image under the natural surjection $\Waff \to W$ of the stabilizer of $\lambda$ for the dot-action of $\Waff$ on $\bbX$.
 \end{enumerate}
\end{lem}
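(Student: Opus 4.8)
The plan is to deduce both statements from the torsion-free condition~\eqref{eqn:no-torsion}, which says $\Z\fR \cap p\bbX = p\Z\fR$. Recall that $\Waff = W \ltimes p\Z\fR$ and $\Wext = W \ltimes p\bbX$, so any element of $\Wext$ has the form $t_\mu w$ with $\mu \in p\bbX$, $w \in W$, and it lies in $\Waff$ precisely when $\mu \in p\Z\fR$. The key point is that the dot-action is an affine action whose linear part is the ordinary $W$-action and whose translation lattice (inside $\Wext$) is $p\bbX$, while the orbit $\Waff \bullet \lambda$ moves $\lambda$ only inside the coset $\lambda + p\Z\fR \subset \lambda + \Z\fR$.

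For part~\eqref{it:lem-no-torsion-1}: the inclusion $\Waff \bullet \lambda \subseteq (\Wext \bullet \lambda) \cap (\lambda + \Z\fR)$ is immediate, since for $t_\mu w \in \Waff$ with $\mu \in p\Z\fR$ we have $(t_\mu w)\bullet\lambda - \lambda = w(\lambda + \rho) - (\lambda+\rho) + \mu$, and both $w(\lambda+\rho)-(\lambda+\rho) \in \Z\fR$ and $\mu \in p\Z\fR \subset \Z\fR$. Conversely, suppose $\nu = (t_\mu w)\bullet\lambda \in \lambda + \Z\fR$ for some $\mu \in p\bbX$, $w \in W$. Then $\nu - \lambda = \bigl(w(\lambda+\rho)-(\lambda+\rho)\bigr) + \mu$ lies in $\Z\fR$; since the bracketed term already lies in $\Z\fR$, we get $\mu \in \Z\fR \cap p\bbX = p\Z\fR$ by~\eqref{eqn:no-torsion}, hence $t_\mu w \in \Waff$ and $\nu \in \Waff\bullet\lambda$. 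This is the heart of the argument; it is short but it is the place where very-goodness of $p$ is genuinely used.

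For part~\eqref{it:lem-no-torsion-2}: one inclusion is formal — if $t_\mu w \in \Waff$ stabilizes $\lambda$ under the dot-action, then applying the $\Wext$-equivariant (in particular $\Waff$-equivariant) map $\lambda \mapsto \ola$, which sends the dot-action to the dot-action of $W$ on $\ft^*$ via the projection $\Waff \to W$, shows that $w$ (the image of $t_\mu w$) stabilizes $\ola$. For the reverse inclusion, suppose $w \in W$ fixes $\ola$, i.e.\ $w\bullet\ola = \ola$ in $\ft^*$. Unwinding, $\overline{w(\lambda+\rho) - \rho} = \ola$, so $w(\lambda+\rho)-\rho - \lambda \in \bbX$ maps to $0$ in $\ft^*$, hence lies in $p\bbX$ (using that $\lambda\mapsto\ola$ has kernel $p\bbX$). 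But this element also lies in $\Z\fR$, so by~\eqref{eqn:no-torsion} it lies in $p\Z\fR$; calling it $-\mu$ with $\mu \in p\Z\fR$, the element $t_\mu w \in \Waff$ satisfies $(t_\mu w)\bullet\lambda = w(\lambda+\rho)-\rho+\mu = \lambda$, and its image in $W$ is $w$. Thus $w$ is in the image of the stabilizer of $\lambda$, as required. The main obstacle, such as it is, is simply bookkeeping the two competing lattices $p\Z\fR$ and $p\bbX$ correctly and invoking~\eqref{eqn:no-torsion} at exactly the right moments; no deeper input is needed.
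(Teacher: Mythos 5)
Your proof is correct and follows essentially the same route as the paper's: both parts come down to the single observation that the relevant difference lies in $\Z\fR \cap p\bbX$, which equals $p\Z\fR$ by~\eqref{eqn:no-torsion} (the paper phrases part~(1) as a coset computation $(W\bullet\lambda+p\bbX)\cap(\lambda+\Z\fR)=W\bullet\lambda+(\Z\fR\cap p\bbX)$, you phrase it element-wise, but the content is identical). No gaps.
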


\begin{proof}
\eqref{it:lem-no-torsion-1}
 Since $W \bullet \lambda \subset \lambda+\Z\fR$, we have
 \[
  (\Wext \bullet \lambda) \cap (\lambda+\Z\fR) = (W \bullet \lambda + p\bbX) \cap (\lambda+\Z\fR) = W \bullet \lambda + (\Z\fR \cap p\bbX).
 \]
In view of~\eqref{eqn:no-torsion}, the right-hand side equals $W \bullet \lambda + p\Z\fR = \Waff \bullet \lambda$, as desired.

\eqref{it:lem-no-torsion-2} For $w \in W$ we have
\[
 w \bullet \ola = \overline{w \bullet \lambda},
\]
so that $w \bullet \ola=\ola$ iff $w \bullet \lambda \in \lambda+p\bbX$. Since $w \bullet \lambda \in \lambda+\Z\fR$, as above this condition is equivalent to $w \bullet \lambda \in \lambda+p\Z\fR$, i.e.~to the existence of $\mu \in p\Z\fR$ such that $t_\mu w \in \Waff$ stabilizes $\lambda$.
\end{proof}

For any subset $I \subset \fR^{\mathrm{s}}$, we will denote by $W_I \subset W$ the subgroup generated by the reflections $\{s_\alpha : \alpha \in I\}$. Recall that an element of $\bbX$ is called \emph{regular} if its stabilizer in $\Waff$ (for the dot-action) is trivial.
As a consequence of Lemma~\ref{lem:weights}, we obtain in particular the following claim.

\begin{lem}
\label{lem:quotient-etale}
 Let $\lambda \in \bbX$, and assume that the stabilizer of $\lambda$ for the dot-action of $\Waff$ is $W_I$. Then the morphism
 \[
  \ft^* / (W_I,\bullet) \to \ft^*/(W,\bullet)
 \]
 induced by the quotient morphism $\ft^* \to \ft^*/(W,\bullet)$
is \'etale at the image of $\ola$. In particular, if $\lambda$ is regular then the quotient morphism $\ft^* \to \ft^*/(W,\bullet)$
is \'etale at $\ola$.
\end{lem}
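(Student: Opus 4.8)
The plan is to use part~\eqref{it:lem-no-torsion-2} of Lemma~\ref{lem:weights} to reduce the étaleness of $\ft^*/(W_I,\bullet) \to \ft^*/(W,\bullet)$ at the image of $\ola$ to a statement about the dot-action of $W$ on $\ft^*$ that can be checked by standard invariant-theory arguments. First I would recall that, since the dot-action of $W$ on $\ft^*$ is simply a translate of the linear action by $-\overline{\rho}$, the quotient morphism $\ft^* \to \ft^*/(W,\bullet)$ is (up to this translation) the usual adjoint-type quotient $\ft^* \to \ft^*/W$; in particular it is finite and flat, and its ramification locus is the union of the reflection hyperplanes for the dot-action, i.e.\ the fixed-point sets of the reflections $s_\alpha$ (for the dot-action). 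The key point is then that étaleness of a quotient by a finite group at a point $\xi$ is governed by the stabilizer $\mathrm{Stab}_W(\xi)$ (for the dot-action): if $\mathrm{Stab}_W(\ola) = W_I$, then near $\ola$ the map $\ft^* \to \ft^*/(W,\bullet)$ factors, étale-locally, through $\ft^*/(W_I,\bullet)$, and $\ft^*/(W_I,\bullet) \to \ft^*/(W,\bullet)$ is étale at the image of $\ola$ precisely because $W_I$ is exactly the stabilizer (so no further collapsing happens in a neighbourhood). This is a standard fact about Coxeter-group quotients of affine spaces, valid provided $p$ does not divide the relevant orders — which is guaranteed here since $p$ is very good, hence in particular does not divide $|W_I|$ for any parabolic $W_I$, so Luna-type/étale-slice arguments for the finite group $W_I$ apply.

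The step that needs care is the translation between the hypothesis, which is phrased in terms of the \emph{dot-action of $\Waff$ on $\bbX$}, and what one actually needs, which is the \emph{dot-action of $W$ on $\ft^*$}. Here I would invoke Lemma~\ref{lem:weights}\eqref{it:lem-no-torsion-2} directly: it asserts that $\mathrm{Stab}_W(\ola)$ (for the dot-action on $\ft^*$) equals the image under $\Waff \twoheadrightarrow W$ of $\mathrm{Stab}_{\Waff}(\lambda)$ (for the dot-action on $\bbX$). Since by hypothesis the latter stabilizer is $W_I \subset W \subset \Waff$, its image in $W$ is again $W_I$; hence $\mathrm{Stab}_W(\ola) = W_I$. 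With this identification in hand, the étaleness claim is exactly the standard fact recalled above applied to the finite group $W$ acting (via the dot-action) on $\ft^*$ at the point $\ola$, whose stabilizer is the parabolic subgroup $W_I$.

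Finally, for the last sentence of the statement: if $\lambda$ is regular, then by definition $\mathrm{Stab}_{\Waff}(\lambda) = \{e\}$, so $W_I$ is trivial and $\ft^*/(W_I,\bullet) = \ft^*$; the first assertion then says precisely that $\ft^* \to \ft^*/(W,\bullet)$ is étale at $\ola$. I expect the main obstacle to be purely expository rather than mathematical: one must be careful that the étale-slice / local-structure statement for quotients of affine spaces by finite groups is being applied in positive characteristic, which is legitimate here only because $p \nmid |W_I|$ (a consequence of $p$ being very good for $G$), and this hypothesis should be flagged explicitly. Beyond that, everything reduces to the cited lemma plus the classical description of the ramification of $\ft^* \to \ft^*/W$ along reflection hyperplanes.
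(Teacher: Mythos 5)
Your first step is exactly the paper's: by Lemma~\ref{lem:weights}\eqref{it:lem-no-torsion-2} the stabilizer of $\ola$ for the dot-action of $W$ on $\ft^*$ is $W_I$, and the regular case is the special case $W_I=\{e\}$. The problem is in how you justify the second step. You reduce to a ``standard fact'' about finite-group quotients which you support via ramification along reflection hyperplanes and a Luna-type \'etale-slice argument, and you claim this is legitimate because $p$ very good implies $p \nmid |W_I|$. That implication is false: for $G=\mathrm{SL}_n$ with $p \nmid n$ but $p<n$ the characteristic is very good, yet $p$ divides $|W|=n!$ and can divide $|W_I|$ for a parabolic $W_I$ (the hypothesis $p>h$, which would rescue this, is only imposed in Section~\ref{sec:Hecke-action}, not in Section~\ref{sec:HCBim} where the lemma is used). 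So as written your argument does not cover all cases allowed by the standing assumptions, and the description of the ramification locus of $\ft^* \to \ft^*/(W,\bullet)$ as the union of reflection hyperplanes is itself delicate when $p$ divides $|W|$.

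The gap is repairable, and this is where the paper's route differs from yours: no tameness is needed at all. The relevant general criterion is~\cite[Exp.~V, Proposition~2.2]{sga1}: if a finite group $G$ acts on a scheme $X$ (admitting a quotient), $H \leq G$ is a subgroup, and $x \in X$ has inertia group contained in $H$, then $X/H \to X/G$ is \'etale at the image of $x$. The proof goes through strict henselizations, $\mathcal{O}^{sh}_{X/G,\bar y} \cong (\mathcal{O}^{sh}_{X,\bar x})^{D}$ and $\mathcal{O}^{sh}_{X/H,\bar z} \cong (\mathcal{O}^{sh}_{X,\bar x})^{D \cap H}$ with $D$ the inertia at $\bar x$, so once $D \subseteq H$ the two coincide, with no hypothesis on $|G|$ being prime to $p$. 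Applying this to $G=W$ (dot-action on $\ft^*$), $H=W_I$ and $x=\ola$, whose stabilizer is $W_I$ by the first step, gives the lemma. So you should replace the slice/tameness argument (and the incorrect claim about $p$ and $|W_I|$) by this criterion; the rest of your write-up, including the deduction of the regular case, is fine.
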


\begin{proof}
 By Lemma~\ref{lem:weights}\eqref{it:lem-no-torsion-2}, the stabilizer of $\ola$ for the dot-action of $W$ on $\ft^*$ is $W_I$. Hence the claim follows from the general criterion~\cite[Exp.~V, Proposition~2.2]{sga1}.
\end{proof}

\subsection{The center of the enveloping algebra}
\label{ss:center}

%
Consider the universal enveloping algebra $\Ug$ of $\fg$. Its center $Z(\cU\fg)$
can be described as follows. First we set
\[
\ZHC:=(\cU\fg)^G.
\]
(Here, the subscript ``$\mathrm{HC}$'' stands for Harish-Chandra.) Next, as the Lie algebra of an algebraic group over a field of characteristic $p$, $\fg$ admits a ``restricted $p$-th power'' operation $x \mapsto x^{[p]}$, which stabilizes the Lie algebra of any algebraic subgroup of $G$. We will denote by
\[
\ZFr
\]
the $\bk$-subalgebra of $\cU\fg$ generated by the elements of the form $x^p-x^{[p]}$ for $x \in \fg$.
Then by~\cite[Theorem~2]{mirkovic-rumynin}
multiplication induces an isomorphism
\[
\ZFr \otimes_{\ZFr \cap \ZHC} \ZHC \simto Z(\cU\fg).
\]
It is well known that $\Ug$ is finite as a $\ZFr$-algebra (hence a fortiori as a $Z(\Ug)$-algebra).

These central subalgebras can be described geometrically as follows. 
First, it is well known that the map $x \mapsto x^p-x^{[p]}$ induces a $\bk$-algebra isomorphism
\begin{equation}
\label{eqn:ZFr}
\scO(\fg^{*(1)}) \simto \ZFr.
\end{equation}
We also have $\ZFr \cap \ZHC = (\ZFr)^G$, and the $G$-action on $\fg^{*(1)}$ factors through the Frobenius morphism $\Fr$, so that we obtain an isomorphism
\[
\scO(\fg^*{}^{(1)} / G^{(1)}) \simto \ZFr \cap \ZHC.
\]
On the other hand, 
the ``Harish-Chandra isomorphism'' provides a $\bk$-algebra isomorphism
\begin{equation}
\label{eqn:ZHC}
\scO(\ft^*/(W,\bullet)) \simto \ZHC,
\end{equation}
see~\cite[Theorem~1(2)]{mirkovic-rumynin}.

The \emph{Artin--Schreier morphism}
\[
 \AS : \ft^* \to \ft^{*(1)}
\]
is the morphism associated with the algebra map $\scO(\ft^{*(1)}) \to \scO(\ft^*)$ defined by $h \mapsto h^p-h^{[p]}$ for $h \in \ft$. It is well known that $\AS$ is a Galois covering with Galois group $\ft^*_{\mathbb{F}_p}$ (acting on $\ft^*$ via addition). The morphism $\AS$ is $W$-equivariant, where $W$ acts on $\ft^*$ via the dot-action and on $\ft^{*(1)}$ via the natural action. It therefore induces a morphism
\begin{equation}
\label{eqn:AS-quotients}
 \ft^*/(W,\bullet) \to \ft^*{}^{(1)} / W.
\end{equation}
Recall the Chevalley isomorphism
\[
\ft^*{}^{(1)} / W \simto
\fg^*{}^{(1)} / G^{(1)}
\]
already encountered in~\S\ref{ss:Kostant-section-Abe}.
Under this identification, the embedding $\ZFr \cap \ZHC \hookrightarrow \ZHC$ is induced by~\eqref{eqn:AS-quotients}.

Combining all these descriptions, and setting
\[
\fC:= \fg^{*(1)} \times_{\ft^{*(1)} / W} \ft^*/(W,\bullet),
\]
 we therefore obtain a $\bk$-algebra isomorphism
 \begin{equation*}
 \scO(\fC) \simto Z(\Ug),
 \end{equation*}
 see~\cite[Corollary~3]{mirkovic-rumynin}.
 
 Using this identification one can consider $\Ug$ as an $\scO(\fC)$-algebra. The $G$-action on $\fC$ induced by the adjoint $G$-action on $\Ug$ is the action obtained by pullback via the Frobenius morphism $\Fr$ of the $G^{(1)}$-action on $\fC$ induced by the coadjoint $G^{(1)}$-action on $\fg^{*(1)}$. Using this action, one can therefore see $\Ug$ as a $G$-equivariant $\scO(\fC)$-algebra.
 
 \subsection{Central reductions}
 \label{ss:central-reductions}
 
 In view of~\eqref{eqn:ZFr}, the maximal ideals in $\ZFr$ are in a canonical bijection with elements in $\fg^{*(1)}$. Given $\eta \in \fg^{*(1)}$, we will denote by $\mathfrak{m}_\eta \subset \ZFr$ the corresponding maximal ideal, and set
 \[
 \cU_{\eta}\fg := \Ug / \mathfrak{m}_\eta \cdot \Ug.
 \]
 Similarly, in view of~\eqref{eqn:ZHC} the maximal ideals in $\ZHC$ are in a canonical bijection with closed points in $\ft^*/(W,\bullet)$, i.e.~with $(W,\bullet)$-orbits in $\ft^*$. Given a closed point $\xi \in \ft^*/(W,\bullet)$, we will denote by $\mathfrak{m}^\xi \subset \ZHC$ the corresponding maximal ideal, and set
 \[
 \cU^\xi \fg := \Ug / \mathfrak{m}^\xi \cdot \Ug.
 \]
 If $\eta$ and $\xi$ have the same image in $\ft^{*(1)}/W$, then $\mathfrak{m}_\eta \cdot Z(\Ug) + \mathfrak{m}^\xi \cdot Z(\Ug)$ is a maximal ideal in $Z(\Ug)$, and we can also set
 \[
 \cU_{\eta}^\xi\fg := \Ug / (\mathfrak{m}_\eta \cdot \Ug + \mathfrak{m}^\xi \cdot \Ug).
 \]
 
In the cases we will encounter more specifically below, the point $\xi$ will often be the image $\tilde{\lambda}$ of the differential of a character $\lambda \in \bbX$. In this setting we will write $\mathfrak{m}^\lambda$, $\cU^\lambda \fg$ and $\cU_{\eta}^\lambda\fg$ instead of $\mathfrak{m}^{\tilde{\lambda}}$, $\cU^{\tilde{\lambda}} \fg$ and $\cU_{\eta}^{\tilde{\lambda}}\fg$. The image of any element of $\ft^*_{\mathbb{F}_p}$ under the Artin--Schreier map is $0$; therefore, if we denote
by
 \[
 \cN^* \subset \fg^*
 \]
 the preimage under the coadjoint morphism $\fg^* \to \ft^*/W$ of the image of $0$, then given any $\lambda \in \bbX$ the elements $\eta \in \fg^{*(1)}$ whose image in $\ft^{*(1)}/W$ coincides with that of $\tilde{\lambda}$ are exactly those in $\cN^{*(1)}$. 

 \subsection{Harish-Chandra bimodules}
 \label{ss:HCBim}

We will denote by $\HCBim$ the category whose objects are the $\Ug$-bimodules $V$ endowed with an (algebraic) action of $G$ which satisfy the following conditions:
\begin{enumerate}
\item the action morphisms $\Ug \otimes V \to V$ and $V \otimes \Ug \to V$ are $G$-equivariant (for the diagonal actions on $\Ug \otimes V$ and $V \otimes \Ug$);
\item 
\label{it:def-HC-2}
the $\fg$-action on $V$ obtained by differentiating the $G$-action is given by $(x,v) \mapsto x \cdot v - v \cdot x$;
\item 
\label{it:def-HC-3}
$V$ is finitely generated both as a left and as a right $\Ug$-module.
\end{enumerate}
Morphisms in the category $\HCBim$ are morphisms of bimodules which also commute with the $G$-actions. Objects in this category are called Harish-Chandra bimodules. 
It is easily seen that the tensor product $\otimes_{\Ug}$ of bimodules endows $\HCBim$ with the structure of a monoidal category, where the $G$-action on the tensor product is the diagonal action. 

If $M$ belongs to $\HCBim$, the $\Ug$-action obtained by differentiating the $G$-action must vanish on $\ZFr \cap (\fg \cdot \Ug)$. In view of condition~\eqref{it:def-HC-2} above, this implies that the two actions of $\ZFr$ on $M$ obtained by restriction of the left and right $\Ug$-actions coincide; in other words, the action of $\Ug \otimes_\bk \Ug^{\mathrm{op}}$ on $M$ must factor through an action of $\Ug \otimes_{\ZFr} \Ug^{\mathrm{op}}$. However, the two actions of $\ZHC$ on a Harish-Chandra bimodule might differ.
Note that $\Ug \otimes_{\ZFr} \Ug^{\mathrm{op}}$ is in a natural way a finite algebra over the commutative ring
\[
\cZ := Z(\Ug) \otimes_{\ZFr} Z(\Ug) =  \ZHC \otimes_{\ZFr \cap \ZHC} \ZFr \otimes_{\ZFr \cap \ZHC} \ZHC \cong \scO(\fC \times_{\fg^{*(1)}} \fC).
\]
Note also that since $\Ug \otimes_{\ZFr} \Ug^{\mathrm{op}}$ is finitely generated both as a left and as a right $\Ug$-module, condition~\eqref{it:def-HC-3} above can be equivalently replaced by the condition that the object is finitely generated as a $\Ug$-bimodule (or as a left $\Ug$-module, or as a right $\Ug$-module). It is easily seen that forgetting the right, resp.~left, action of $\Ug$ defines an equivalence of categories
\begin{equation}
\label{eqn:HCBim-Mod}
\HCBim \simto \Mod^G_{\mathrm{fg}}(\Ug), \quad \mathrm{resp.} \quad \HCBim \simto \Mod^G_{\mathrm{fg}}(\Ug^{\mathrm{op}}),
\end{equation}
where $\Mod^G_{\mathrm{fg}}(\Ug)$ is the category of $G$-equivariant finitely generated $\Ug$-modules, and similarly for $\Mod^G_{\mathrm{fg}}(\Ug^{\mathrm{op}})$. For instance, for the first functor, one can reconstruct the right action of $\Ug$ on a $G$-equivariant $\Ug$-module $M$ by setting $m \cdot x := x \cdot m -\varrho(x)(m)$ for $m \in M$ and $x \in \fg$, where $\varrho$ denotes the differential of the $G$-action.


We will also denote by $\Mod^G_{\mathrm{fg}}(\Ug \otimes_{\ZFr} \Ug^{\mathrm{op}})$ the category of $G$-equivariant finitely generated (left) modules over $\Ug \otimes_{\ZFr} \Ug^{\mathrm{op}}$. As above, since $\Ug \otimes_{\ZFr} \Ug^{\mathrm{op}}$ is finitely generated both as a left and as a right $\Ug$-module, the tensor product $\otimes_{\Ug}$ endows this category with a monoidal structure, and as explained above we have a fully faithful monoidal functor
\begin{equation}
\label{eqn:HCBim-Bim}
\HCBim \to \Mod^G_{\mathrm{fg}}(\Ug \otimes_{\ZFr} \Ug^{\mathrm{op}}).
\end{equation}
If $M$ belongs to $\Mod^G_{\mathrm{fg}}(\Ug \otimes_{\ZFr} \Ug^{\mathrm{op}})$, then one obtains an extra $\cU_0 \fg$-action on $M$ by setting $x \cdot m = (x \otimes 1 - 1 \otimes x)m$ for $x \in \fg$ and $m \in M$. Since $\cU_0 \fg$ identifies canonically with the distribution algebra $\mathrm{Dist}(G_1)$ of the kernel $G_1$ of $\Fr$, $M$ becomes in this way a $G \ltimes G_1$-equivariant $\Ug \otimes_{\ZFr} \Ug^{\mathrm{op}}$-module, where the action of $G \ltimes G_1$ on $\Ug \otimes_{\ZFr} \Ug^{\mathrm{op}}$ is obtained from the $G$-action by composition with the product morphism $G \ltimes G_1 \to G$. As for~\eqref{eqn:HCBim-Mod}, forgetting the right, resp.~left, action of $\Ug$ defines an equivalence of categories
\begin{multline*}
\Mod^G_{\mathrm{fg}}(\Ug \otimes_{\ZFr} \Ug^{\mathrm{op}}) \simto \Mod^{G \ltimes G_1}_{\mathrm{fg}}(\Ug), \\ \mathrm{resp.} \quad \Mod^G_{\mathrm{fg}}(\Ug \otimes_{\ZFr} \Ug^{\mathrm{op}}) \simto \Mod^{G \ltimes G_1}_{\mathrm{fg}}(\Ug^{\mathrm{op}}).
\end{multline*}
From the point of view of these equivalences and those in~\eqref{eqn:HCBim-Mod}, the essential image of~\eqref{eqn:HCBim-Bim} consists of equivariant modules on which the action of $G \ltimes G_1$ factors through the product morphism $G \ltimes G_1 \to G$.

One can construct interesting objects in $\HCBim$ from $G$-modules as follows. 
Given $V$ in $\Rep(G)$ we consider the Harish-Chandra bimodule
\[
V \otimes \Ug,
\]
where the left $\Ug$-action is diagonal (with respect to the action on $V$ obtained by differentiation, and the action on $\Ug$ by left multiplication), the right $\Ug$-action is induced by right multiplication on $\Ug$, and the $G$-action is diagonal (with respect to the given action on $V$ and the adjoint action on $\Ug$). In particular, for $x,y,z \in \Ug$ and $v \in V$ we have
\[
x \cdot (v \otimes z) \cdot y = (x_{(1)} \cdot v) \otimes (x_{(2)} zy),
\]
where we use Sweedler's notation for the comultiplication in the Hopf algebra $\Ug$. It is easily seen that the map $(x \otimes y) \otimes v \mapsto (x_{(1)} \cdot v) \otimes (x_{(2)} y)$ induces an isomorphism of $G$-equivariant $\Ug$-bimodules
\[
(\Ug \otimes \Ug^\op) \otimes_{\Ug} V \simto V \otimes \Ug,
\]
where the tensor product over $\Ug$ in the left-hand side is taken with respect to the morphism $\Ug \to \Ug \otimes \Ug^\op$ defined by $x \mapsto x_{(1)} \otimes S(x_{(2)})$, where $S$ is the antipode. In particular, the modules $V \otimes \Ug$ are ``induced from the diagonal.'' For $V,V'$ in $\Rep(G)$, we have a canonical isomorphism of Harish-Chandra bimodules
\begin{equation}
\label{eqn:isom-tensor-product-diag}
(V \otimes \Ug) \otimes_{\Ug} (V' \otimes \Ug) \simto (V \otimes V') \otimes \Ug.
\end{equation}

One can similarly consider, again for $V$ in $\Rep(G)$, the Harish-Chandra bimodule
\[
\Ug \otimes V
\]
where now the actions of $\Ug$ are defined by
\[
x \cdot (z \otimes v) \cdot y = (xzy_{(1)}) \otimes (S(y_{(2)}) \cdot v)
\]
for $x,y,z \in \Ug$ and $v \in V$
(and the $G$-action is still diagonal).
As above we have an isomorphism of $G$-equivariant $\Ug$-bimodules
\[
(\Ug \otimes \Ug^\op) \otimes_{\Ug} V \simto \Ug \otimes V,
\]
now given by $(x \otimes y) \otimes v \mapsto (xy_{(1)}) \otimes (S(y_{(2)}) \cdot v)$ for $x,y \in \Ug$ and $v \in V$. In particular, the objects $V \otimes \Ug$ and $\Ug \otimes V$ are isomorphic; explicitly the isomorphism is given by
\[
 v \otimes x \mapsto x \otimes (S(x) \cdot v)
\]
for $x \in \Ug$ and $v \in V$.

 \subsection{Completed Harish-Chandra bimodules}
 \label{ss:HCBim-completed}

Now, we need to adapt the considerations of~\S\ref{ss:HCBim} to the setting of completed Harish-Chandra characters.

Let us set
\[
\mathfrak{D} := \mathrm{Spec}(\ZHC \otimes_{\ZHC \cap \ZFr} \ZHC) \cong \ft^*/(W,\bullet) \times_{\ft^{*(1)}/W} \ft^*/(W,\bullet),
\]
so that $\mathcal{Z} = \scO(\fg^{*(1)} \times_{\ft^{*(1)}/W} \mathfrak{D})$. For $\lambda,\mu \in \bbX$, we will also set
\[
\cI^{\lambda,\mu} := \mathfrak{m}^\lambda \otimes_{\ZHC \cap \ZFr} \ZHC + \ZHC \otimes_{\ZHC \cap \ZFr} \mathfrak{m}^\mu \cdot \ZHC,
\]
and will denote by $\mathfrak{D}^{\hat{\lambda},\hat{\mu}}$ the spectrum of the completion of $\scO(\mathfrak{D})$ with respect to the maximal ideal $\cI^{\lambda,\mu}$. Finally, we set
\begin{multline*}
\cU^{\hat{\lambda},\hat{\mu}} := \bigl( \Ug \otimes_{\ZFr} \Ug^{\mathrm{op}} \bigr) \otimes_{\mathcal{Z}} \scO(\fg^{*(1)} \times_{\ft^{*(1)}/W} \mathfrak{D}^{\hat{\lambda},\hat{\mu}}) \\
\cong \bigl( \Ug \otimes_{\ZFr} \Ug^{\mathrm{op}} \bigr) \otimes_{\scO(\mathfrak{D})} \scO(\mathfrak{D}^{\hat{\lambda},\hat{\mu}}).
\end{multline*}
(Note that $\cU^{\hat{\lambda},\hat{\mu}}$ is \emph{not} the completion of $\Ug \otimes_{\ZFr} \Ug^{\mathrm{op}}$ with respect to the ideal generated by $\cI^{\lambda,\mu}$, since $\Ug \otimes_{\ZFr} \Ug^{\mathrm{op}}$ is not of finite type as an $\scO(\mathfrak{D})$-module.) 

The algebra $\scO(\mathfrak{D}^{\hat{\lambda},\hat{\mu}})$ is Noetherian, see~\cite[\href{https://stacks.math.columbia.edu/tag/05GH}{Tag 05GH}]{stacks-project}, hence $\scO(\fg^{*(1)} \times_{\ft^{*(1)}/W} \mathfrak{D}^{\hat{\lambda},\hat{\mu}})$ is Noetherian too, being finitely generated over $\scO(\mathfrak{D}^{\hat{\lambda},\hat{\mu}})$, see~\cite[\href{https://stacks.math.columbia.edu/tag/00FN}{Tag 00FN}]{stacks-project}. Finally, since $\cU^{\hat{\lambda},\hat{\mu}}$ is finitely generated as an $\scO(\fg^{*(1)} \times_{\ft^{*(1)}/W} \mathfrak{D}^{\hat{\lambda},\hat{\mu}})$-module it is left and right Noetherian (as a noncommutative ring), and a $\cU^{\hat{\lambda},\hat{\mu}}$-module is finitely generated if and only if it is finitely generated as an $\scO(\fg^{*(1)} \times_{\ft^{*(1)}/W} \mathfrak{D}^{\hat{\lambda},\hat{\mu}})$-module.

The $G$-action on $\Ug \otimes_{\ZFr} \Ug^{\mathrm{op}}$ induces an algebraic $G$-module structure on $\cU^{\hat{\lambda},\hat{\mu}}$, and we can consider the category of $G$-equivariant finitely generated modules over this algebra; this (abelian) category will be denoted $\Mod^G_{\mathrm{fg}}(\cU^{\hat{\lambda},\hat{\mu}})$. The full subcategory whose objects are the modules $M$ such that the differential of the $G$-action coincides with the action given by $x \cdot m=xm-mx$ for $x \in \fg$ and $m \in M$ will be denoted $\HCBim^{\hat{\lambda},\hat{\mu}}$; its objects will be called \emph{completed Harish-Chandra bimodules}.

Given $\lambda,\mu \in \bbX$,
we have a natural exact functor
\[
\mathsf{C}^{\lambda,\mu} : \Mod^G_{\mathrm{fg}}(\Ug \otimes_{\ZFr} \Ug^{\mathrm{op}}) \to \Mod^G_{\mathrm{fg}}(\cU^{\hat{\lambda},\hat{\mu}}),
\]
defined by
\[
\mathsf{C}^{\lambda,\mu}(M) = \scO(\mathfrak{D}^{\hat{\lambda},\hat{\mu}}) \otimes_{\scO(\mathfrak{D})} M,
\]
which restricts to a functor $\HCBim \to \HCBim^{\hat{\lambda},\hat{\mu}}$.
(Exactness of this functor follows from the fact that $\scO(\mathfrak{D}^{\hat{\lambda},\hat{\mu}})$ is flat over $\scO(\mathfrak{D})$, see~\cite[\href{https://stacks.math.columbia.edu/tag/00MB}{Tag 00MB}]{stacks-project}.)
%
We will denote by
\[
 \HCBim^{\hat{\lambda},\hat{\mu}}_\diag
\]
the full additive subcategory of $\HCBim^{\hat{\lambda},\hat{\mu}}$ whose objects are direct summands of objects of the form $\mathsf{C}^{\lambda,\mu} (V \otimes \Ug)$ with $V$ in $\Rep(G)$.
(In view of the comments at the end of~\S\ref{ss:HCBim}, objects in this category will sometimes be referred to as completed diagonally induced Harish-Chandra bimodules.)
In case $\lambda=\mu$, we will set
\[
\cU^{\hat{\lambda}} = \mathsf{C}^{\lambda,\lambda}(\bk \otimes \Ug),
\]
where here $\bk$ is the trivial $G$-module.

For later use, we also introduce some completed bimodules which are closely related to the translation functors for $G$-modules (see~\S\ref{ss:translation} below for details). 
Recall that a weight $\lambda \in \bbX$ is said to belong to the \emph{fundamental alcove}, resp.~to the \emph{closure of the fundamental alcove}, if it satisfies
\[
0 < \langle \lambda+\rho, \alpha^\vee \rangle < p, \quad \text{resp.} \quad 0 \leq \langle \lambda+\rho, \alpha^\vee \rangle \leq p,
\]
for any positive root $\alpha$. With this notation, the set of weights which belong to the closure of the fundamental alcove is a fundamental domain for the $(\Waff,\bullet)$-action on $\bbX$. Moreover, if $\lambda \in \bbX$ belongs to the closure of the fundamental alcove, then its stabilizer in $\Waff$ is the parabolic subgroup generated by the elements $s \in \Saff$ such that $s \bullet \lambda=\lambda$; see~\cite[\S II.6.3]{jantzen}.

\begin{rmk}
\label{rmk:Cox-number}
The weight lattice $\bbX$ contains weights which belong to the fundamental alcove iff $p \geq h$, where $h$ is the Coxeter number of $G$, see~\cite[\S 6.2]{jantzen}. Even though this condition will be imposed only in Section~\ref{sec:Hecke-action}, some of our statements in Section~\ref{sec:Ug-D} involve weights in the fundamental alcove; these statements will simply be empty in case $p<h$.
\end{rmk}

Let $\bbX^+ \subset \bbX$ be the subset of dominant weights determined by $\fR^+$.
For any $\nu \in \bbX^+$, we will denote by $\Sim(\nu)$ the simple $G$-module with highest weight $\nu$, i.e.~the unique simple submodule in $\Ind_{B}^G(\nu)$.
Given two weights $\lambda,\mu \in \bbX$ which belong to the closure of the fundamental alcove, we set
\[
\sfP^{\lambda,\mu} := \mathsf{C}^{\lambda,\mu}(\Sim(\nu) \otimes \Ug) \quad \in \HCBim_\diag^{\hat{\lambda},\hat{\mu}},
\]
where $\nu$ is the unique dominant $W$-translate of $\lambda-\mu$.

\subsection{Comparison of completions}
\label{ss:comparison}


For notational simplicity, let us now fix a subset $\Lambda \subset \bbX$ such that the map $\lambda \mapsto \tla$ restricts to a bijection $\Lambda \simto \ft^*_{\mathbb{F}_p}/(W,\bullet)$. (In other words, $\Lambda$ is a set of representatives for the $\bullet$-action of $\Wext$ on $\bbX$.)

We will denote by $\cI \subset \scO(\ft^{*(1)}/W)=\ZHC \cap \ZFr$ the maximal ideal corresponding to the image of $0 \in \ft^{*(1)}$. Then in the notation of~\S\ref{ss:center} $\cI \cdot \ZFr$ is the ideal of definition of $\cN^{*(1)} \subset \fg^{*(1)}$, and each ideal $\cI^{\lambda,\mu}$ contains $\cI \cdot \scO(\mathfrak{D})$. We will denote by $\fD^\wedge$ the spectrum of the completion of $\scO(\fD)$ with respect to the ideal $\cI \cdot \scO(\fD)$. Note that since $\scO(\fD)$ is finite as an $\scO(\ft^{*(1)}/W)$-module (because the morphisms $\ft^* \to \ft^{*(1)}$ and $\ft^{*(1)} \to \ft^{*(1)}/W$ are finite), if we denote by $\scO(\ft^{*(1)}/W)^\wedge$ the completion of $\scO(\ft^{*(1)}/W)$ with respect to $\cI$ we have a canonical isomorphism
\begin{equation}
\label{eqn:Dwedge-tensor-prod}
\scO(\ft^{*(1)}/W)^\wedge \otimes_{\scO(\ft^{*(1)}/W)} \scO(\fD) \simto \scO(\fD^\wedge),
\end{equation}
see~\cite[\href{https://stacks.math.columbia.edu/tag/00MA}{Tag 00MA}]{stacks-project}.

\begin{lem}
\label{lem:isom-completions}
The natural morphism
\[
\scO(\fD^\wedge) \to \prod_{\lambda,\mu \in \Lambda} \scO(\fD^{\hat{\lambda},\hat{\mu}})
\]
is a ring isomorphism.
\end{lem}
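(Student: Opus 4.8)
The plan is to reduce the claim to the structure of the finite scheme $\ft^{*(1)}/W \times_{\ft^{*(1)}/W}$ (more precisely, to a decomposition of $\scO(\fD^\wedge)$ over the finitely many maximal ideals lying above $\cI$) and then to match the factors with the completions $\scO(\fD^{\hat\lambda,\hat\mu})$. First I would observe that the scheme $\fD = \ft^*/(W,\bullet) \times_{\ft^{*(1)}/W} \ft^*/(W,\bullet)$ is finite over $\ft^{*(1)}/W$ (since $\ft^* \to \ft^{*(1)}$ is finite, being the Artin--Schreier covering, and $\ft^{*(1)} \to \ft^{*(1)}/W$ is finite), so that by~\eqref{eqn:Dwedge-tensor-prod} the ring $\scO(\fD^\wedge)$ is the completion of the finite $\scO(\ft^{*(1)}/W)$-algebra $\scO(\fD)$ along the maximal ideal $\cI$. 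For a finite module over a Noetherian local ring, completion along the maximal ideal of the base decomposes as the product of the completions along the maximal ideals of the module's support lying above it; concretely $\scO(\fD^\wedge) \cong \prod_{\mathfrak{n}} \scO(\fD)^\wedge_{\mathfrak{n}}$, the product running over the maximal ideals $\mathfrak{n}$ of $\scO(\fD)$ containing $\cI \cdot \scO(\fD)$ (see~\cite[\href{https://stacks.math.columbia.edu/tag/07N9}{Tag 07N9}]{stacks-project} or the usual structure theory of complete semilocal rings).

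Next I would identify the indexing set. The closed points of $\fD$ above the point $0 \in \ft^{*(1)}/W$ are pairs $(\tla, \tmu)$ of $(W,\bullet)$-orbits in $\ft^*$ whose images in $\ft^{*(1)}/W$ both equal $0$; by the discussion in~\S\ref{ss:central-reductions} (the Artin--Schreier map kills $\ft^*_{\F_p}$), these are exactly the pairs $(\tla,\tmu)$ with $\lambda,\mu \in \Lambda$, and the pair $(\lambda,\mu) \in \Lambda \times \Lambda$ is precisely the data recorded by the maximal ideal $\cI^{\lambda,\mu}$ of $\scO(\fD)$. Thus the maximal ideals $\mathfrak{n}$ above $\cI$ are in canonical bijection with $\Lambda \times \Lambda$, and $\scO(\fD^{\hat\lambda,\hat\mu})$ is by definition the completion of $\scO(\fD)$ at $\cI^{\lambda,\mu}$, i.e.\ the factor $\scO(\fD)^\wedge_{\mathfrak{n}}$. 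Assembling: $\scO(\fD^\wedge) \cong \prod_{\mathfrak{n}} \scO(\fD)^\wedge_{\mathfrak{n}} \cong \prod_{\lambda,\mu \in \Lambda} \scO(\fD^{\hat\lambda,\hat\mu})$, and one checks that under these identifications the composite agrees with the natural map in the statement (each projection is the canonical completion map, and the product map is determined by these).

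The main point requiring care — the only genuine obstacle — is the verification that every maximal ideal of $\scO(\fD)$ lying above $\cI$ is of the form $\cI^{\lambda,\mu}$ for a \emph{unique} $(\lambda,\mu) \in \Lambda \times \Lambda$, and that distinct pairs give distinct (hence comaximal) ideals. This is where the hypothesis that $\Lambda$ maps bijectively onto $\ft^*_{\F_p}/(W,\bullet)$ is used: a maximal ideal of $\scO(\fD)$ above $\cI$ is a pair of maximal ideals $\mathfrak{m}^\xi, \mathfrak{m}^{\xi'}$ of $\ZHC$ having the same image (namely $0$) in $\ft^{*(1)}/W$, and the fiber of $\ft^*/(W,\bullet) \to \ft^{*(1)}/W$ over $0$ is exactly $\ft^*_{\F_p}/(W,\bullet)$ since the Artin--Schreier map sends all of $\ft^*_{\F_p}$ to $0$ and its fibers elsewhere avoid $\ft^*_{\F_p}$; comaximality of the $\cI^{\lambda,\mu}$ for distinct pairs is then automatic from their being distinct maximal ideals. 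Once this bookkeeping is settled, the isomorphism is a formal consequence of the structure theory of completions of finite modules over a complete local ring, and no further computation is needed.
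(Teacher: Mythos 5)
Your proof is correct and follows essentially the same route as the paper's: both rest on the observation that the fiber of the Artin--Schreier map over $0$ is $\ft^*_{\mathbb{F}_p}$, so that the maximal ideals of $\scO(\fD)$ containing $\cI \cdot \scO(\fD)$ are exactly the ideals $\cI^{\lambda,\mu}$ with $(\lambda,\mu) \in \Lambda \times \Lambda$, and then on the decomposition of the completion of the finite $\scO(\ft^{*(1)}/W)$-algebra $\scO(\fD)$ at $\cI$ into the product of its completions at these finitely many maximal ideals. The only difference is presentational: where you invoke the standard structure theory of completions of finite algebras over a local base (complete semilocal rings), the paper proves that decomposition by hand, working with the Artin quotients $\scO(\fD)/(\cI^n \cdot \scO(\fD))$ and constructing the inverse morphism explicitly.
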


\begin{proof}
The lemma will basically follow from the observation that the closed points in the fiber of the morphism~\eqref{eqn:AS-quotients} over the (closed) point corresponding to $\cI$ are those corresponding to the ideals $\mathfrak{m}^\lambda$ with $\lambda \in \bbX$, which itself follows from the fact that the fiber of $\AS : \ft^* \to \ft^{*(1)}$ over $0$ is $\ft^*_{\mathbb{F}_p}$.

More precisely, the morphism considered in this statement is the product of the morphisms $\scO(\fD^\wedge) \to \scO(\fD^{\hat{\lambda},\hat{\mu}})$ induced by the natural morphisms $\scO(\fD)/(\cI^n \cdot \scO(\fD)) \twoheadrightarrow \scO(\fD)/(\cI^{\lambda,\mu})^n$. This morphism is clearly a ring morphism; to prove that it is invertible we will construct its inverse.

Let us fix some $n \geq 1$, and consider the quotient $\scO(\fD)/(\cI^n \cdot \scO(\fD))$.
Here as explained above $\scO(\fD)=\ZHC \otimes_{\ZHC \cap \ZFr} \ZHC$ is a finite $\scO(\ft^{*(1)}/W)$-module;
therefore this algebra is finite-dimensional. Its maximal ideals are in bijection with the maximal ideals of $\scO(\fD)$ containing $\cI \cdot \scO(\fD)$, hence with $\Lambda \times \Lambda$ through
$(\lambda,\mu) \mapsto \cI^{\lambda,\mu}  / \cI^n \cdot \scO(\fD)$.
 In view of the general theory of Artin rings (see e.g.~\cite[Chap.~8]{amcd}), for any $\lambda,\mu \in \Lambda$ the quotient
\[
\scO(\fD)/(\cI^n \cdot \scO(\fD) + (\cI^{\lambda,\mu})^m)
\]
does not depend on $m$ for $m \gg 0$, and the natural morphism from $\scO(\fD)/(\cI^n \cdot \scO(\fD))$ to the product of these rings (over $\Lambda \times \Lambda$)
is an isomorphism. 

Now we are ready to define the wished-for inverse morphism
\[
\prod_{\lambda,\mu \in \Lambda} \scO(\fD^{\hat{\lambda},\hat{\mu}}) \to \scO(\fD^\wedge).
\]
For this it suffices to define, for any $n \geq 1$, a ring morphism
\begin{equation}
\label{eqn:morphism-completions}
\prod_{\lambda,\mu \in \Lambda} \scO(\fD^{\hat{\lambda},\hat{\mu}}) \to \scO(\fD)/(\cI^n \cdot \scO(\fD)).
\end{equation}
We fix $m$ such that the natural morphism
\begin{equation}
\label{eqn:morphism-completions-2}
\scO(\fD)/(\cI^n \cdot \scO(\fD)) \to \prod_{\lambda,\mu \in \Lambda} \scO(\fD)/(\cI^n \cdot \scO(\fD) + (\cI^{\lambda,\mu})^m)
\end{equation}
is an isomorphism. Then we have natural ring morphisms
\[
\prod_{\lambda,\mu \in \Lambda} \scO(\fD^{\hat{\lambda},\hat{\mu}}) \to \prod_{\lambda,\mu \in \Lambda} \scO(\fD) / (\cI^{\lambda,\mu})^m \to \prod_{\lambda,\mu \in \Lambda} \scO(\fD)/(\cI^n \cdot \scO(\fD) + (\cI^{\lambda,\mu})^m).
\]
Composing with the inverse of~\eqref{eqn:morphism-completions-2} we deduce the desired map~\eqref{eqn:morphism-completions}.

It is easy (and left to the reader) to check that the two morphisms considered above are inverse to each other.
\end{proof}

\begin{rmk}
\label{rmk:completion-tensor-product-ZHC}
If we denote by $\ZHC^\wedge$ the completion of $\ZHC$ with respect to the ideal $\cI \cdot \ZHC$, then as in~\eqref{eqn:Dwedge-tensor-prod} we have a canonical isomorphism $\scO(\ft^{*(1)}/W)^\wedge \otimes_{\scO(\ft^{*(1)}/W)} \ZHC \simto \ZHC^\wedge$, and therefore a canonical isomorphism
\[
\scO(\fD^\wedge) \simto \ZHC^\wedge \otimes_{\scO(\ft^{*(1)}/W)^\wedge} \ZHC^\wedge.
\]
Denoting by $\ZHC^{\hat{\lambda}}$ the completion of $\ZHC$ with respect to the ideal $\mathfrak{m}^\lambda$, for $\lambda \in \bbX$, the same considerations as for Lemma~\ref{lem:isom-completions} show that we have a canonical isomorphism
\[
\ZHC^\wedge \simto \prod_{\lambda \in \Lambda} \ZHC^{\hat{\lambda}},
\]
from which we obtain a decomposition
\[
\scO(\fD^\wedge) \simto \prod_{\lambda,\mu \in \Lambda} \ZHC^{\hat{\lambda}} \otimes_{\scO(\ft^{*(1)}/W)^\wedge} \ZHC^{\hat{\mu}}.
\]
This decomposition is in fact ``the same'' as the decomposition of Lemma~\ref{lem:isom-completions}, in the sense that for any $\lambda,\mu \in \Lambda$ we have an identification
\begin{equation}
\label{eqn:D-la-mu-tens}
\scO(\fD^{\hat{\lambda},\hat{\mu}}) \simto \ZHC^{\hat{\lambda}} \otimes_{\scO(\ft^{*(1)}/W)^\wedge} \ZHC^{\hat{\mu}}.
\end{equation}
\end{rmk}


We will also set 
\[
\cU^\wedge := \bigl( \cU\fg \otimes_{\ZFr} \cU\fg^\op \bigr) \otimes_{\scO(\fD)} \scO(\fD^\wedge) \cong \bigl( \cU\fg \otimes_{\ZFr} \cU\fg^\op \bigr) \otimes_{\scO(\ft^{*(1)}/W)} \scO(\ft^{*(1)}/W)^\wedge,
\]
where the isomorphism uses~\eqref{eqn:Dwedge-tensor-prod}.
Then, as in~\S\ref{ss:HCBim-completed}, $\cU^\wedge$ is a left and right Noetherian ring, endowed with a structure of algebraic $G$-module, and Lemma~\ref{lem:isom-completions} implies that the natural morphism
\begin{equation}
\label{eqn:isom-completions-U}
\cU^\wedge \to \prod_{\lambda,\mu \in \Lambda} \cU^{\hat{\lambda},\hat{\mu}}
\end{equation}
is an algebra isomorphism. Below we will consider various categories of $\cU^\wedge$-modules; in fact we have
\begin{multline*}
\cU^\wedge \cong \bigl( \Ug \otimes_{\scO(\ft^{*(1)}/W)} \scO(\ft^{*(1)}/W)^\wedge \bigr) \otimes_{\ZFr \otimes_{\scO(\ft^{*(1)}/W)} \scO(\ft^{*(1)}/W)^\wedge}\\
 \bigl( \Ug \otimes_{\scO(\ft^{*(1)}/W)} \scO(\ft^{*(1)}/W)^\wedge \bigr)^\op;
\end{multline*}
a $\cU^\wedge$-module is therefore the same as a $(\Ug \otimes_{\scO(\ft^{*(1)}/W)} \scO(\ft^{*(1)}/W)^\wedge)$-bimodule on which the left and right actions of $\ZFr \otimes_{\scO(\ft^{*(1)}/W)} \scO(\ft^{*(1)}/W)^\wedge$ coincide.

We will denote by $\Mod^G_{\fin}(\cU^\wedge)$ the abelian category of $G$-equivariant finitely generated $\cU^\wedge$-modules.
In view of~\eqref{eqn:isom-completions-U}, we have a canonical equivalence of categories
\begin{equation}
\label{eqn:Mod-Uwedge-sum}
\Mod^G_{\fin}(\cU^\wedge) \cong \bigoplus_{\lambda,\mu \in \Lambda} \Mod^G_{\fin}(\cU^{\hat{\lambda},\hat{\mu}}).
\end{equation}
We also have a canonical exact functor
\begin{equation}
\label{eqn:Cwedge}
\mathsf{C}^\wedge : \Mod^G_{\fin}(\Ug \otimes_{\ZFr} \Ug^{\mathrm{op}}) \to \Mod^G_{\fin}(\cU^\wedge)
\end{equation}
defined by $\mathsf{C}^\wedge(M)=\scO(\fD^\wedge) \otimes_{\scO(\fD)} M$.
For the same reasons as above, for any $M$ in $\Mod^G_{\fin}(\Ug \otimes_{\ZFr} \Ug^{\mathrm{op}})$ we have a canonical isomorphism
\[
\mathsf{C}^\wedge(M) \cong \bigoplus_{\lambda,\mu \in \Lambda} \mathsf{C}^{\lambda,\mu}(M).
\]

An object $M$ in $\Mod^G_{\fin}(\cU^\wedge)$ will be called a \emph{completed Harish-Chandra bimodule} if the differential of the $G$-action coincides with the action given by $x \cdot m = xm-mx$ for $x \in \fg$ and $m \in M$, and we will denote by $\HCBim^\wedge$ the full subcategory of $\Mod^G_{\fin}(\cU^\wedge)$ consisting of such objects; this terminology is compatible with that of~\S\ref{ss:HCBim-completed} in the sense that~\eqref{eqn:Mod-Uwedge-sum} restricts to an equivalence of categories
\[
\HCBim^\wedge \cong \bigoplus_{\lambda,\mu \in \Lambda} \HCBim^{\hat{\lambda},\hat{\mu}}.
\]
We will denote by $\HCBim^\wedge_\diag$ the full additive subcategory of $\Mod^G_{\fin}(\cU^\wedge)$ whose objects are the direct summands of objects of the form $\mathsf{C}^\wedge(V \otimes \Ug)$ with $V$ in $\Rep(G)$. With this definition,~\eqref{eqn:Mod-Uwedge-sum} restricts further to an equivalence of categories
\[
\HCBim^\wedge_\diag \cong \bigoplus_{\lambda,\mu \in \Lambda} \HCBim^{\hat{\lambda},\hat{\mu}}_\diag.
\]

\subsection{Monoidal structure}
\label{ss:monoidal-bimodules}

We now want to define some analogue of the monoidal structure on $\Mod^G_{\mathrm{fg}}(\Ug \otimes_{\ZFr} \Ug^{\mathrm{op}})$ for the categories $\Mod^G_{\mathrm{fg}}(\cU^{\hat{\lambda},\hat{\mu}})$. More specifically,
given $\lambda,\mu,\nu$ in $\bbX$ we want to define a canonical bifunctor
\begin{equation}
\label{eqn:hatotimes}
(-) \hatotimes_\Ug (-) : \Mod^G_{\mathrm{fg}}(\cU^{\hat{\lambda},\hat{\mu}}) \times \Mod^G_{\mathrm{fg}}(\cU^{\hat{\mu},\hat{\nu}}) \to \Mod^G_{\mathrm{fg}}(\cU^{\hat{\lambda},\hat{\nu}})
\end{equation}
right exact on both sides, which restricts to a bifunctor
\[
\HCBim^{\hat{\lambda},\hat{\mu}} \times \HCBim^{\hat{\mu},\hat{\nu}} \to \HCBim^{\hat{\lambda},\hat{\nu}},
\]
these bifunctors satisfying natural unit and associativity axioms. Explicitly, we require that:
\begin{itemize}
\item
 in case $\mu=\lambda$ we have a canonical isomorphism of functors
\[
\cU^{\hat{\lambda}} \hatotimes_\Ug (-) \cong \id,
\]
and in case $\nu=\mu$ we have a canonical isomorphism
\[
(-) \hatotimes_{\Ug} \cU^{\hat{\mu}} \cong \id;
\]
\item
for four weights $\lambda,\mu,\nu,\eta \in \bbX$ we have an isomorphism
\[
\bigl( (-) \hatotimes_\Ug (-) \bigr) \hatotimes_\Ug (-) \simto (-) \hatotimes_\Ug \bigl( (-) \hatotimes_\Ug (-) \bigr)
\]
of functors from
\[
\Mod^G_{\mathrm{fg}}(\cU^{\hat{\lambda},\hat{\mu}}) \times \Mod^G_{\mathrm{fg}}(\cU^{\hat{\mu},\hat{\nu}}) \times \Mod^G_{\mathrm{fg}}(\cU^{\hat{\nu},\hat{\eta}})
\]
to $\Mod^G_{\mathrm{fg}}(\cU^{\hat{\lambda},\hat{\eta}})$.
\end{itemize}
 In particular, in case $\lambda=\mu=\nu$, this construction will equip $\Mod^G_{\mathrm{fg}}(\cU^{\hat{\lambda},\hat{\lambda}})$ with the structure of a monoidal category.
 
 For this we can assume that all the weights involved belong to the subset $\Lambda$ chosen in~\S\ref{ss:comparison}. It therefore suffices to construct a monoidal structure of the category $\Mod^G_{\fin}(\cU^\wedge)$, with monoidal unit $\mathsf{C}^\wedge(\bk \otimes \Ug)$; the bifunctor~\eqref{eqn:hatotimes} will then be deduced by restriction to the factor $\Mod^G_{\mathrm{fg}}(\cU^{\hat{\lambda},\hat{\mu}}) \times \Mod^G_{\mathrm{fg}}(\cU^{\hat{\mu},\hat{\nu}})$ in the decomposition~\eqref{eqn:Mod-Uwedge-sum}.
 
Recall that we have
\[
\cU^\wedge = \bigl( \cU\fg \otimes_{\ZFr} \cU\fg^\op \bigr) \otimes_{\scO(\ft^{*(1)}/W)} \scO(\ft^{*(1)}/W)^\wedge.
\]
Given $M,N$ in $\Mod^G_{\fin}(\cU^\wedge)$, we set
\[
M \hatotimes_\Ug N := M \otimes_{\Ug \otimes_{\scO(\ft^{*(1)}/W)} \scO(\ft^{*(1)}/W)^\wedge} N,
\]
where the right action of $\Ug \otimes_{\scO(\ft^{*(1)}/W)} \scO(\ft^{*(1)}/W)^\wedge$ on $M$ is induced by the action of the second copy of $\Ug$, and the left action on $N$ is induced by the action of the first copy of $\Ug$. This tensor product
admits compatible actions of $\cU\fg \otimes_{\ZFr} \cU\fg^\op$ (induced by the action of the first copy of $\Ug$ on $M$, and of the second copy of $\Ug$ on $N$) and of $\scO(\ft^{*(1)}/W)^\wedge$, hence of $\cU^\wedge$. This modules is moreover finitely generated, since it is finitely generated over $\scO(\fg^{*(1)}) \otimes_{\scO(\ft^{*(1)}/W)} \scO(\ft^{*(1)}/W)^\wedge$, and it admits a canonical (diagonal) $G$-module structure. 
In this way we obtain a bifunctor
\[
\Mod^G_{\mathrm{fg}}(\cU^\wedge) \times \Mod^G_{\mathrm{fg}}(\cU^\wedge) \to \Mod^G_{\mathrm{fg}}(\cU^\wedge),
\]
which is easily seen to provide a monoidal structure with monoidal unit $\mathsf{C}^\wedge(\bk \otimes \Ug)$. It is easily seen also that the functor~\eqref{eqn:Cwedge} has a canonical monoidal structure; using~\eqref{eqn:isom-tensor-product-diag} we deduce that the full subcategory $\HCBim^\wedge_\diag$ is a monoidal subcategory.

If $\lambda,\mu,\nu \in \bbX$ and if $M$ belongs to $\Mod^G_{\fin}(\cU^{\hat{\lambda},\hat{\mu}})$ and $N$ belongs to $\Mod^G_{\fin}(\cU^{\hat{\mu},\hat{\nu}})$, then seeing $M$ and $N$ as objects in $\Mod^G_{\fin}(\cU^\wedge)$ via~\eqref{eqn:Mod-Uwedge-sum} the product $M \hatotimes_{\Ug} N$ belongs to the factor $\Mod^G_{\fin}(\cU^{\hat{\lambda},\hat{\nu}})$, which provides the desired bifunctor~\eqref{eqn:hatotimes}. (In fact, the action of the left copy of $\ZHC$ factors through an action of $\ZHC^{\hat{\lambda}}$, and that of the right copy factors through an action of $\ZHC^{\hat{\nu}}$, which justifies the claim in view of~\eqref{eqn:D-la-mu-tens}.) From the corresponding property for $\Mod^G_{\fin}(\cU^\wedge)$ we deduce that the subcategories $\HCBim^{\hat{\lambda},\hat{\mu}}$ and $\HCBim^{\hat{\lambda},\hat{\mu}}_\diag$ are stable (in the obvious sense) under this bifunctor. In this setting the functor $\mathsf{C}^{\hat{\lambda},\hat{\mu}}$ satisfies
 \begin{equation*}
 \mathsf{C}^{\hat{\lambda},\hat{\mu}}(M \otimes_{\Ug} N) \cong \bigoplus_{\nu \in \Lambda} \mathsf{C}^{\hat{\lambda},\hat{\nu}}(M) \hatotimes_{\Ug} \mathsf{C}^{\hat{\nu},\hat{\mu}}(N)
 \end{equation*}
 for any $M,N$ in $\Mod^G_{\fin}(\Ug \otimes_{\ZFr} \Ug^{\mathrm{op}})$.

 \subsection{Restriction to the Kostant section}
 \label{ss:centralizer-Kostant}
 
 Recall the constructions of~\S\ref{ss:Kostant-section-Abe} applied to the group $\bG=G^{(1)}$. In particular, we have a Kostant section $\bS^* \subset \bg^*=\fg^{*(1)}$, and group schemes $\bbJ^*_\reg$ over $\fg_\reg^{*(1)}$ and $\bbJ^*_\bS$ over $\bS^*$.

We also set
\[
\bbI^*_\bS := (G \times \bS^{*}) \times_{G^{(1)} \times \bS^{*}} \bbJ^{*}_\bS.
\]
where the map $G \times \bS^{*} \to G^{(1)} \times \bS^{*}$
 is the product of the Frobenius morphism of $G$ and the identity of $\bS^{*}$.
 Since $G$ is smooth its Frobenius morphism is flat (see e.g.~\cite[\S 1.1]{bk}), and therefore $\bbI^*_\bS$
 is a flat affine group scheme over $\bS^{*}$.
 By construction $\bbI^*_\bS$ contains $G_1 \times \bS^*$ as a normal subgroup, and the quotient identifies with $\bbJ^{*}_\bS$.
Note also that the morphism~\eqref{eqn:morph-J-T} induces (after restriction to $\bS$ and  composition with the projection $\bbI^*_\bS \to \bbJ^*_\bS$) a group-scheme morphism
\begin{equation}
 \label{eqn:morph-I-T}
 \ft^{*(1)} \times_{\ft^{*(1)}/W} \bbI^*_\bS \to (\ft^{*(1)} \times_{\ft^{*(1)}/W} \bS^*) \times T^{(1)}.
\end{equation}

Finally, we set
\[
\cU_{\bS}\fg := \Ug \otimes_{\ZFr} \scO(\bS^{*}),
\]
where $\scO(\bS^{*})$ is seen as a $\ZFr$-algebra via the identification~\eqref{eqn:ZFr}. If we set
\[
\fC_\bS := \bS^{*} \times_{\ft^{*(1)} / W} \ft^*/(W,\bullet),
\]
then the projection $\fC_\bS \to \ft^*/(W,\bullet)$ is an isomorphism, and
$\cU_{\bS}\fg$ is an $\scO(\fC_\bS)$-algebra. Recall that the algebra $\cU\fg$ can be seen as a $G$-equivariant $\scO(\fC)$-algebra (see~\S\ref{ss:center}). Using the general construction recalled in~\cite[\S 2.2]{mr}, from this we deduce on $\cU_{\bS} \fg$ a natural structure of module for the flat affine group scheme $\fC_\bS \times_{\bS^{*}} \bbI^*_\bS$ over $\fC_\bS$, such that the multiplication morphism is equivariant.

\subsection{(Completed) Harish-Chandra bimodules for \texorpdfstring{$\cU_{\bS}\fg$}{USg}}
\label{ss:HCBim-S}

We now want to define, given $\lambda,\mu \in \bbX$, categories analogous to $\Mod^G_{\fin}(\cU^{\hat{\lambda},\hat{\mu}})$ and $\HCBim^{\hat{\lambda},\hat{\mu}}$ but for the algebra $\cU_{\bS}\fg$ in place of $\Ug$.
We start with the non-completed version.

Let us consider the category $\Mod^{\bbI}_{\mathrm{fg}}(\cU_\bS \fg \otimes_{\scO(\bS^{*})} \cU_\bS \fg^{\mathrm{op}})$ of finitely generated $\cU_\bS \fg \otimes_{\scO(\bS^{*})} \cU_\bS \fg^{\mathrm{op}}$-modules endowed with a compatible structure of $\bbI^*_\bS$-module. Since $\bbI^*_\bS$ is flat over $\bS^{*}$, this category is abelian.
Here $\cU_\bS \fg \otimes_{\scO(\bS^{*})} (\cU_\bS \fg)^{\mathrm{op}}$ is an algebra over
\[
\cZ_\bS := \scO(\ft^*/(W,\bullet)) \otimes_{\scO(\ft^{*(1)}/W)} \scO(\bS^{*}) \otimes_{\scO(\ft^{*(1)}/W)} \scO(\ft^*/(W,\bullet)),
\]
which identifies with $\scO(\fD)$ via the composition of natural algebra morphisms
\[
\scO(\fD) \to \scO(\fg^{*(1)} \times_{\ft^{*(1)}/W} \fD) = \cZ \to \cZ_\bS. 
\]
As in~\S\ref{ss:HCBim} the tensor product $\otimes_{\cU_\bS \fg}$ defines a monoidal structure on this category, and using the construction of~\cite[\S 2.2]{mr} considered above the functor $\scO(\bS^{*}) \otimes_{\ZFr} (-)$ defines a monoidal functor
\begin{equation}
\label{eqn:rest-bimodules}
\Mod^G_{\mathrm{fg}}(\Ug \otimes_{\ZFr} \Ug^{\mathrm{op}}) \to \Mod^{\bbI}_{\mathrm{fg}}(\cU_\bS \fg \otimes_{\scO(\bS^{*})} \cU_\bS \fg^{\mathrm{op}}).
\end{equation}
An object $M$ in $\Mod^{\bbI}_{\mathrm{fg}}(\cU_\bS \fg \otimes_{\scO(\bS^{*})} \cU_\bS \fg^{\mathrm{op}})$ will be called a \emph{Harish-Chandra $\cU_\bS \fg$-bimodule} if the restriction of the action of $\bbI^*_\bS$ to $G_1 \times \bS^*$, seen as an action of the algebra $\mathrm{Dist}(G_1) = \cU_0 \fg$, coincides with the action determined by the rule $x \cdot m = xm-mx$ for $x \in \fg$ and $m \in M$. We will denote by $\HCBim_\bS$ the full subcategory of $\Mod^{\bbI}_{\mathrm{fg}}(\cU_\bS \fg \otimes_{\scO(\bS^{*})} \cU_\bS \fg^{\mathrm{op}})$ consisting of such objects; then the functor~\eqref{eqn:rest-bimodules} restricts to a functor
\[
\HCBim \to \HCBim_{\bS}.
\]
As in~\S\ref{ss:HCBim}, for any $M$ in $\Mod^{\bbI}_{\mathrm{fg}}(\cU_\bS \fg \otimes_{\scO(\bS^{*})} \cU_\bS \fg^{\mathrm{op}})$ the action of $\bbI^*_\bS$ on $M$ extends to an action of the semi-direct product
\[
\bbI^*_\bS \ltimes G_1 = \bbI^*_\bS \ltimes_\bS (G_1 \times \bS);
\]
the Harish-Chandra $\cU_\bS \fg$-bimodules are those objects on which this action factors through the multiplication morphism $\bbI^*_\bS \ltimes G_1 \to \bbI^*_\bS$.

Now we add completions to the picture. Given $\lambda,\mu \in \bbX$, we will denote by $\cZ_\bS^{\hat{\lambda},\hat{\mu}}$ the completion of $\cZ_\bS$ with respect to the maximal ideal $\cI^{\lambda,\mu}_\bS := \cI^{\lambda,\mu} \cdot \cZ_\bS$, so that we have a canonical isomorphism $\scO(\fD^{\hat{\lambda},\hat{\mu}}) \simto \cZ_\bS^{\hat{\lambda},\hat{\mu}}$.
We also set
\[
\cU^{\hat{\lambda},\hat{\mu}}_\bS := \cZ_\bS^{\hat{\lambda},\hat{\mu}} \otimes_{\cZ_\bS} \bigl( \cU_\bS \fg \otimes_{\scO(\bS^{*})} \cU_\bS \fg^\op \bigr).
\]
In this setting $\cU_\bS \fg \otimes_{\scO(\bS^{*})} \cU_\bS \fg^\op$ is finitely generated as a $\cZ_\bS$-module, so that $\cU^{\hat{\lambda},\hat{\mu}}_\bS$ identifies with
the completion of $\cU_\bS \fg \otimes_{\scO(\bS^{*})} \cU_\bS \fg^\op$ with respect to the ideal
$\cI^{\lambda,\mu} \cdot (\cU_\bS \fg \otimes_{\scO(\bS^{*})} \cU_\bS \fg^\op)$.
If we denote by $\bbI^{\hat{\lambda},\hat{\mu}}_\bS$ the pullback of $\bbI^*_\bS$ under the natural morphism $\mathrm{Spec}(\cZ_\bS^{\hat{\lambda},\hat{\mu}}) \to \bS^{*}$, then $\bbI^{\hat{\lambda},\hat{\mu}}_\bS$ is a flat group scheme over $\mathrm{Spec}(\cZ_\bS^{\hat{\lambda},\hat{\mu}})$, and
the $\bbI^*_\bS$-module structure on $\cU_\bS \fg$ induces a natural $\bbI^{\hat{\lambda},\hat{\mu}}_\bS$-module structure on $\cU^{\hat{\lambda},\hat{\mu}}_\bS$.

The algebra $\cU^{\hat{\lambda},\hat{\mu}}_\bS$ is left and right Noetherian, and
we will denote by $\Mod^{\bbI}_{\mathrm{fg}}(\cU_\bS^{\hat{\lambda},\hat{\mu}})$ the abelian category of $\bbI^{\hat{\lambda},\hat{\mu}}_\bS$-equivariant finitely generated $\cU_\bS^{\hat{\lambda},\hat{\mu}}$-modules.
Note that we have
\[
\cU^{\hat{\lambda},\hat{\mu}}_\bS = \cU^{\hat{\lambda},\hat{\mu}} \otimes_{\ZFr} \scO(\bS^{*});
\]
in particular, the functor $\scO(\bS^{*}) \otimes_{\ZFr} (-)$ defines a natural functor
\begin{equation}
\label{eqn:restr-S-bimodules}
\Mod^G_{\mathrm{fg}}(\cU^{\hat{\lambda},\hat{\mu}}) \to \Mod^{\bbI}_{\mathrm{fg}}(\cU_\bS^{\hat{\lambda},\hat{\mu}}).
\end{equation}
(Standard arguments show that this functor is exact, but this will not play any role below.)
If $\lambda,\mu \in \bbX$ belong to the closure of the fundamental alcove, we will also set
\[
 \sfP^{\lambda,\mu}_\bS := \scO(\bS^{*}) \otimes_{\scO(\fg^{*(1)})} \sfP^{\lambda,\mu}.
\]

One defines the notion of \emph{completed Harish-Chandra $\cU_\bS \fg$-bimodules} by imposing the same condition as for $\HCBim_\bS$. The full subcategory of $\Mod^{\bbI}_{\mathrm{fg}}(\cU_\bS^{\hat{\lambda},\hat{\mu}})$ consisting of such objects will be denoted $\HCBim_\bS^{\hat{\lambda},\hat{\mu}}$; it is clear that the functor~\eqref{eqn:restr-S-bimodules} restricts to a functor
\[
 \HCBim^{\hat{\lambda},\hat{\mu}} \to \HCBim_\bS^{\hat{\lambda},\hat{\mu}}.
\]


Using considerations similar to those of~\S\ref{ss:monoidal-bimodules} one constructs, again for $\lambda,\mu,\nu \in \bbX$, a canonical bifunctor
\begin{equation}
\label{eqn:hatotimes-S}
(-) \hatotimes_{\cU_\bS \fg} (-) : \Mod^{\bbI}_{\mathrm{fg}}(\cU_\bS^{\hat{\lambda},\hat{\mu}}) \times \Mod^{\bbI}_{\mathrm{fg}}(\cU_\bS^{\hat{\mu},\hat{\nu}}) \to \Mod^{\bbI}_{\mathrm{fg}}(\cU_\bS^{\hat{\lambda},\hat{\nu}})
\end{equation}
which factors through a bifunctor
\[
\HCBim_\bS^{\hat{\lambda},\hat{\mu}} \times \HCBim_\bS^{\hat{\mu},\hat{\nu}} \to \HCBim_\bS^{\hat{\lambda},\hat{\nu}},
\]
this construction being unital, associative, and compatible in the natural way with the bifunctors $(-) \hatotimes_{\cU \fg} (-)$ via the functors~\eqref{eqn:restr-S-bimodules}. More explicitly, one remarks that if $\cZ_\bS^\wedge$ is the completion of $\cZ_\bS$ with respect to the ideal $\cI \cdot \cZ_\bS$, then if we set
\[
\cU_\bS^\wedge := \cZ_\bS^\wedge \otimes_{\cZ_\bS} \bigl( \cU_\bS \fg \otimes_{\scO(\bS^{*})} \cU_\bS \fg^\op \bigr) = \scO(\bS^{*(1)}) \otimes_{\scO(\fg^{*(1)})} \cU^\wedge,
\]
 as in~\eqref{eqn:isom-completions-U} we have a canonical algebra isomorphism
\[
\cU_\bS^\wedge \simto \prod_{\lambda,\mu \in \Lambda} \cU_\bS^{\hat{\lambda},\hat{\mu}}.
\]
If we denote by $\bbI_\bS^\wedge$ the pullback of $\bbI^*_\bS$ to $\mathrm{Spec}(\cZ_\bS^\wedge)$, then one can consider the abelian 
category $\Mod_{\fin}^{\mathbb{I}}(\cU_\bS^\wedge)$ of finitely generated $\bbI_\bS^\wedge$-equivariant $\cU_\bS^\wedge$-modules, and the bifunctor
\begin{equation}
\label{eqn:hatotimes-S-wedge}
(-) \hatotimes_{\cU_\bS \fg} (-) : \Mod_{\fin}^{\bbI}(\cU_\bS^\wedge) \times \Mod_{\fin}^{\bbI}(\cU_\bS^\wedge) \to \Mod_{\fin}^{\bbI}(\cU_\bS^\wedge)
\end{equation}
defined by
\[
M \hatotimes_{\cU_\bS \fg} N = M \otimes_{\cU_\bS \fg \otimes_{\scO(\ft^{*(1)}/W)} \scO(\ft^{*(1)}/W)^\wedge} N
\]
defines a monoidal structure on this category. Note that any finitely generated $\cU_\bS^\wedge$-module $M$ is also finitely generated as a $\cZ_\bS^\wedge$-module, so that the natural morphism $M \to \varprojlim_n M/\cI^n \cdot M$ is an isomorphism (see~\cite[\href{https://stacks.math.columbia.edu/tag/00MA}{Tag 00MA}]{stacks-project}); the monoidal product considered above therefore satisfies
\[
M \hatotimes_{\cU_\bS \fg} N \cong \varprojlim_{n \geq 1} \, (M/\cI^n \cdot M) \otimes_{\cU_\bS \fg} (N / \cI^n \cdot N)
\]
as $\cU_\bS^\wedge$-modules,
for any $M,N$ in $\Mod_{\fin}^{\bbI}(\cU_\bS^\wedge)$. The functor $\scO(\bS^{*}) \otimes_{\ZFr} (-)$ also induces a functor
\begin{equation}
\label{eqn:rest-wedge-S}
\Mod^G_{\fin}(\cU^\wedge) \to \Mod_{\fin}^{\bbI}(\cU_\bS^\wedge)
\end{equation}
which admits a canonical monoidal structure. The composition of this functor with the functor $\mathsf{C}^\wedge$ of~\eqref{eqn:Cwedge} will be denoted $\mathsf{C}^\wedge_\bS$.

As in~\eqref{eqn:Mod-Uwedge-sum}
we have
\begin{equation}
\label{eqn:Mod-UwedgeS-sum}
\Mod^{\bbI}_{\fin}(\cU_\bS^\wedge) \cong \bigoplus_{\lambda,\mu \in \Lambda} \Mod^{\bbI}_{\fin}(\cU_\bS^{\hat{\lambda},\hat{\mu}}),
\end{equation}
and the bifunctor~\eqref{eqn:hatotimes-S} is then obtained by restriction of~\eqref{eqn:hatotimes-S-wedge} to the appropriate summands. In case $\lambda=\mu=\nu$, this bifunctor equips $\Mod^{\bbI}_{\fin}(\cU_\bS^{\hat{\lambda},\hat{\lambda}})$ with a structure of monoidal category, with unit object
\[
\cU_\bS^{\hat{\lambda}} := \scO(\bS^{*}) \otimes_{\scO(\fg^{*(1)})} \cU^{\hat{\lambda}}.
\]
With this definition, it is clear that the functor~\eqref{eqn:restr-S-bimodules} is compatible with the bifunctors $(-) \hatotimes_{\cU \fg} (-)$ and $(-) \hatotimes_{\cU_\bS \fg} (-)$ in the obvious way.

\begin{lem}
\label{lem:convolution-adjoint}
For any $\lambda,\mu \in \bbX$ which belong to the closure of the fundamental alcove and any $\nu \in \bbX$, the functor
\[
\sfP^{\lambda,\mu}_\bS \hatotimes_{\cU_\bS \fg} (-) : \Mod^{\bbI}_{\mathrm{fg}}(\cU_\bS^{\hat{\mu},\hat{\nu}}) \to \Mod^{\bbI}_{\mathrm{fg}}(\cU_\bS^{\hat{\lambda},\hat{\nu}})
\]
is both left and right adjoint to the functor
\[
\sfP^{\mu,\lambda}_\bS \hatotimes_{\cU_\bS \fg} (-) : \Mod^{\bbI}_{\mathrm{fg}}(\cU_\bS^{\hat{\lambda},\hat{\nu}}) \to \Mod^{\bbI}_{\mathrm{fg}}(\cU_\bS^{\hat{\mu},\hat{\nu}}).
\]
A similar property holds for the functors $(-) \hatotimes_{\cU_\bS \fg} \sfP^{\lambda,\mu}_\bS$ and $(-) \hatotimes_{\cU_\bS \fg} \sfP^{\mu,\lambda}_\bS$.
\end{lem}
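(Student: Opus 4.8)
The plan is to reduce the statement to the classical biadjointness of tensoring with a finite-dimensional $G$-module and with its dual, transported to the present setting of completed $\cU_\bS\fg$-bimodules. First I would identify the convolution functors concretely. Recall that $\sfP^{\lambda,\mu}_\bS = \scO(\bS^{*}) \otimes_{\scO(\fg^{*(1)})} \mathsf{C}^{\lambda,\mu}(\Sim(\nu) \otimes \Ug)$, where $\nu$ is the dominant $W$-translate of $\lambda-\mu$, so that $\sfP^{\lambda,\mu}_\bS$ is a block summand of the completed ``diagonally induced'' bimodule attached to $\Sim(\nu)$. Using the isomorphism \eqref{eqn:isom-tensor-product-diag}, the monoidality of the functors $\mathsf{C}^\wedge_\bS$ and $\hatotimes_{\cU_\bS \fg}$, and the observation that for $M$ finitely generated over the Noetherian ring $\cU_\bS^{\hat\mu,\hat\nu}$ the space $\Sim(\nu)\otimes_\bk M$ is again finitely generated and splits as a finite direct sum indexed by the Harish-Chandra character of the left $\ZHC$-action, I would show that $\sfP^{\lambda,\mu}_\bS \hatotimes_{\cU_\bS \fg} (-)$ is naturally isomorphic to $\Sim(\nu) \otimes_\bk (-)$ followed by the projection onto the summand $\Mod^{\bbI}_{\mathrm{fg}}(\cU_\bS^{\hat\lambda,\hat\nu})$ of \eqref{eqn:Mod-UwedgeS-sum}, where $\Sim(\nu)\otimes_\bk M$ carries the evident diagonal $\bbI^*_\bS$-equivariant bimodule structure; the analogous identification for $(-)\hatotimes_{\cU_\bS \fg}\sfP^{\lambda,\mu}_\bS$ uses the isomorphism $\Sim(\nu)\otimes\Ug \cong \Ug\otimes\Sim(\nu)$ from~\S\ref{ss:HCBim}.

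Next I would record that $\Sim(\nu)^* \cong \Sim(\nu')$ as $G$-modules, where $\nu' = -w_0\nu$ is exactly the dominant $W$-translate of $\mu-\lambda$ (indeed $w_0(\mu-\lambda) = -w_0(\lambda-\mu)$); hence convolution with $\sfP^{\mu,\lambda}_\bS$ is identified with $\Sim(\nu)^*\otimes_\bk(-)$ post-composed with the projection onto $\Mod^{\bbI}_{\mathrm{fg}}(\cU_\bS^{\hat\mu,\hat\nu})$. It then remains to observe that, for finite-dimensional $V$, the functor $V\otimes_\bk(-)$ is both left and right adjoint to $V^*\otimes_\bk(-)$ on the categories at hand: this is the usual tensor--hom biadjunction, whose units and counits are induced by the coevaluation $\bk \to V\otimes V^*$ and the evaluation $V^*\otimes V\to\bk$. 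Since these are morphisms of $G$-modules, they induce morphisms in the relevant $\bbI^*_\bS$-equivariant bimodule categories, and the triangle identities hold because they already hold over $\bk$. Composing with the (formal) biadjunction between the inclusion of a block summand of \eqref{eqn:Mod-UwedgeS-sum} and the projection onto it yields the asserted biadjointness of $\sfP^{\lambda,\mu}_\bS\hatotimes_{\cU_\bS\fg}(-)$ and $\sfP^{\mu,\lambda}_\bS\hatotimes_{\cU_\bS\fg}(-)$; the statement for right convolution follows by the symmetric argument.

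The main obstacle I anticipate is not the adjunction itself -- which becomes formal once the identification is in place -- but the first step: checking that the completions, the restriction to the Kostant section, and the $\bbI^*_\bS$-equivariance all interact with the diagonal-induction isomorphism \eqref{eqn:isom-tensor-product-diag} as expected, so that $\sfP^{\lambda,\mu}_\bS\hatotimes_{\cU_\bS\fg}(-)$ is genuinely computed by $\Sim(\nu)\otimes_\bk(-)$ together with a block projection, and that under this identification the monoidal unit $\cU_\bS^{\hat\lambda}$ corresponds to the trivial block summand, so that the unit and counit really do define morphisms in $\Mod^{\bbI}_{\mathrm{fg}}(\cU_\bS^{\hat\mu,\hat\mu})$, resp.~$\Mod^{\bbI}_{\mathrm{fg}}(\cU_\bS^{\hat\lambda,\hat\lambda})$. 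This is essentially bookkeeping, but it is the point at which the definitions of \S\S\ref{ss:HCBim}--\ref{ss:HCBim-S} all have to be used simultaneously.
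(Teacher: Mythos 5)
Your argument is correct and follows essentially the same route as the paper's own proof: identify $\sfP^{\lambda,\mu}_\bS \hatotimes_{\cU_\bS \fg}(-)$ with tensoring by $\Sim(\nu)$ (via the diagonally induced bimodule $\mathsf{C}^\wedge_\bS(\Sim(\nu)\otimes\Ug)$) composed with the block inclusion/projection from~\eqref{eqn:Mod-UwedgeS-sum}, and then compose the tensor--dual biadjunction for $\Sim(\nu)$ and $\Sim(\nu)^*\cong\Sim(-w_0\nu)$ with the formal biadjunction of inclusion and projection. The paper states the first identification only as a parenthetical remark, so your more explicit bookkeeping is fine but adds nothing structurally new.
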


\begin{proof}
We prove the case of convolution on the left; convolution on the right can be treated similarly. We remark that for any $V \in \Rep(G)$, the functor
\[
\mathsf{C}_\bS^\wedge(V \otimes \Ug) \hatotimes_{\cU_\bS \fg} (-) : \Mod_{\fin}^{\bbI}(\cU_\bS^\wedge) \to \Mod_{\fin}^{\bbI}(\cU_\bS^\wedge)
\]
is both left and right adjoint to the functor
\[
\mathsf{C}_\bS^\wedge(V^* \otimes \Ug) \hatotimes_{\cU_\bS \fg} (-) : \Mod_{\fin}^{\bbI}(\cU_\bS^\wedge) \to \Mod_{\fin}^{\bbI}(\cU_\bS^\wedge).
\]
(In fact, these functors can be realized more concretely as tensor product with $V$ and $V^*$ respectively.) On the other hand, the inclusion functor
\[
\Mod^{\bbI}_{\mathrm{fg}}(\cU_\bS^{\hat{\lambda},\hat{\nu}}) \to \Mod_{\fin}^{\bbI}(\cU_\bS^\wedge)
\]
(see~\eqref{eqn:Mod-UwedgeS-sum})
is both left and right adjoint to the corresponding projection functor
\[
\Mod_{\fin}^{\bbI}(\cU_\bS^\wedge) \to \Mod^{\bbI}_{\mathrm{fg}}(\cU_\bS^{\hat{\lambda},\hat{\nu}}),
\]
and similarly for $\mu$ in place of $\lambda$. The desired claim follows, since the functors $\sfP^{\lambda,\mu}_\bS \hatotimes_{\cU_\bS \fg} (-)$ and $\sfP^{\mu,\lambda}_\bS \hatotimes_{\cU_\bS \fg} (-)$ are isomorphic to compositions of functors of this form. (More specifically, if $\nu \in \bbX$ is the only dominant $W$-translate of $\mu-\lambda$, the functor $\sfP^{\lambda,\mu}_\bS \hatotimes_{\cU_\bS \fg} (-)$ involves the module $\Sim(\nu)$, and the functor $\sfP^{\mu,\lambda}_\bS \hatotimes_{\cU_\bS \fg} (-)$ involves the module $\Sim(-w_0(\nu))$; here we fix an isomorphism $\Sim(\nu)^* \cong \Sim(-w_0(\nu))$.)
\end{proof}

\subsection{Restriction to the Kostant section for diagonally induced bimodules}

In this subsection we aim at proving the following claim.

\begin{prop}
\label{prop:rest-S-fully-faithful}
For any $\lambda,\mu \in \bbX$, the functor~\eqref{eqn:restr-S-bimodules} is fully faithful on the subcategory $\HCBim^{\hat{\lambda},\hat{\mu}}_\diag$.
\end{prop}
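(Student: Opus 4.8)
The plan is to reduce the statement to an explicit comparison of invariants, using that every object of $\HCBim^{\hat\lambda,\hat\mu}_\diag$ is a direct summand of one of the form $\mathsf{C}^{\lambda,\mu}(V\otimes\Ug)$ with $V\in\Rep(G)$. First I would invoke the elementary fact that an additive functor which is fully faithful on a family of objects is automatically fully faithful on the additive--Karoubian subcategory they generate; so it suffices to show that for all $V,V'\in\Rep(G)$ the map
\[
\Hom_{\HCBim^{\hat\lambda,\hat\mu}}\bigl(\mathsf{C}^{\lambda,\mu}(V\otimes\Ug),\mathsf{C}^{\lambda,\mu}(V'\otimes\Ug)\bigr)\longrightarrow\Hom_{\HCBim_\bS^{\hat\lambda,\hat\mu}}\bigl(\scO(\bS^{*})\otimes_{\ZFr}\mathsf{C}^{\lambda,\mu}(V\otimes\Ug),\ \scO(\bS^{*})\otimes_{\ZFr}\mathsf{C}^{\lambda,\mu}(V'\otimes\Ug)\bigr)
\]
induced by~\eqref{eqn:restr-S-bimodules} is an isomorphism.

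\textbf{Rewriting the Hom spaces via ``induction from the diagonal''.} The isomorphism of~\S\ref{ss:HCBim} shows $V\otimes\Ug\cong(\Ug\otimes_{\ZFr}\Ug^\op)\otimes_{\cU_0\fg}V$, the map $\cU_0\fg\to\Ug\otimes_{\ZFr}\Ug^\op$ being $x\mapsto x\otimes1-1\otimes x$; completing and restricting to $\bS^{*}$ this gives $\mathsf{C}^{\lambda,\mu}(V\otimes\Ug)\cong\cU^{\hat\lambda,\hat\mu}\otimes_{\cU_0\fg}V$ and $\scO(\bS^{*})\otimes_{\ZFr}\mathsf{C}^{\lambda,\mu}(V\otimes\Ug)\cong\cU_\bS^{\hat\lambda,\hat\mu}\otimes_{\cU_0\fg}V$. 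Since the subspace $1\otimes V$ is stable under (the $\cU_0\fg$--, hence) the $G$--action on the former and under the $\bbI^*_\bS$--action on the latter, and is isomorphic to $V$ there (the $\bbI^*_\bS$--action factoring through $\bbI^*_\bS\to G$), the extension--restriction adjunction along $\cU_0\fg\hookrightarrow\cU^{\hat\lambda,\hat\mu}$ (resp.\ $\cU_0\fg\hookrightarrow\cU_\bS^{\hat\lambda,\hat\mu}$) yields natural identifications $\Hom_{\HCBim^{\hat\lambda,\hat\mu}}(\mathsf{C}^{\lambda,\mu}(V\otimes\Ug),N)\cong\Hom_{\Rep(G)}(V,N)$ for $N\in\HCBim^{\hat\lambda,\hat\mu}$, and $\Hom_{\HCBim_\bS^{\hat\lambda,\hat\mu}}(\scO(\bS^{*})\otimes_{\ZFr}\mathsf{C}^{\lambda,\mu}(V\otimes\Ug),N')\cong\Hom_{\bbI^*_\bS}(V,N')$ for $N'$ in the target category, where $V$ is viewed via $\bbI^*_\bS\to G$. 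Taking $N=\mathsf{C}^{\lambda,\mu}(V'\otimes\Ug)$, $N'=\scO(\bS^{*})\otimes_{\ZFr}N$, and replacing $V$ by $\uHom(V,V')$, I am reduced to proving that for every $W\in\Rep(G)$ the canonical restriction map
\[
\bigl(\mathsf{C}^{\lambda,\mu}(W\otimes\Ug)\bigr)^{G}\longrightarrow\bigl(\scO(\bS^{*})\otimes_{\ZFr}\mathsf{C}^{\lambda,\mu}(W\otimes\Ug)\bigr)^{\bbI^*_\bS}
\]
is an isomorphism of $\scO(\fD^{\hat\lambda,\hat\mu})$--modules.

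\textbf{The core computation.} Since $\scO(\fD^{\hat\lambda,\hat\mu})$ is flat over $\scO(\fD)$ and taking $G$-- (resp.\ $\bbI^*_\bS$--) invariants commutes with this flat base change in the $\fD$--direction, the left-hand side is $\scO(\fD^{\hat\lambda,\hat\mu})\otimes_{\scO(\fD)}(W\otimes\Ug)^{G}$ and the right-hand side is $\scO(\fD^{\hat\lambda,\hat\mu})\otimes_{\scO(\fD)}(W\otimes\cU_\bS\fg)^{\bbI^*_\bS}$; so it suffices to prove that the natural map $(W\otimes\Ug)^{G}\to(W\otimes\cU_\bS\fg)^{\bbI^*_\bS}$ is an isomorphism. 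Here I would use that $\cU_\bS\fg$ is an Azumaya algebra over its center $\ZHC=\scO(\ft^*/(W,\bullet))$ (Proposition~\ref{prop:Ureg-Azumaya}, recalling $\bS^{*}$ lies in the regular part of $\fg^{*(1)}$), together with the structure of the equivariant group scheme from~\S\ref{ss:centralizer-Kostant}: $G_1\times\bS^{*}$ is normal in $\bbI^*_\bS$ with quotient $\bbJ^*_\bS$, the $G_1$--part acting by the inner-type operator $x\otimes1-1\otimes x$ and each $j\in\bbJ^*_\bS$ acting by a center-fixing (hence, by Skolem--Noether, fibrewise inner) automorphism of the Azumaya algebra. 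This lets one compute $(W\otimes\cU_\bS\fg)^{\bbI^*_\bS}$ as sections over $\ft^*/(W,\bullet)$ of a coherent sheaf controlled by the Harish--Chandra center, and to match it with $(W\otimes\Ug)^{G}=\bigl((W\otimes\Ug)^{G_1}\bigr)^{G^{(1)}}$, where the adjoint action differentiates to $x\mapsto\mathrm{ad}(x)$, $(W\otimes\Ug)^{G_1}$ is governed by the centralizer of $\fg$ in $\Ug$ (the center $Z(\Ug)\cong\scO(\fC)$ of~\S\ref{ss:center}), and the residual $G^{(1)}$--invariants are taken along $\fg^{*(1)}\to\fg^{*(1)}/G^{(1)}\cong\bS^{*}$. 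The comparison map is then simply restriction along $\bS^{*}\hookrightarrow\fg^{*(1)}$, and it is bijective because the invariants in play only involve the diagonal copy of the center, whose behaviour is dictated by $\ZHC$ and is therefore insensitive to cutting the Frobenius direction down to the Kostant section.

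\textbf{Main obstacle.} The delicate point is precisely this last comparison: one must check that replacing $\Ug$ --- which is \emph{not} Azumaya over all of $\fg^{*(1)}$, and for which $W\otimes\Ug$ is by no means supported on the regular locus --- by its restriction $\cU_\bS\fg$ to the Kostant section does not alter these invariants. The Azumaya property of $\cU_\bS\fg$ is what makes the $\bbI^*_\bS$--invariants on the restricted side computable at all, and the argument has to be arranged so that it applies even though the usual ``equivalence after restriction to $\bS^{*}$'' only holds on the regular part. The remaining work --- checking that the module structure over $\fC_\bS\times_{\bS^{*}}\bbI^*_\bS$ built in~\S\ref{ss:centralizer-Kostant} genuinely matches the naive $G$--equivariance under the functor $\scO(\bS^{*})\otimes_{\ZFr}(-)$, and that all the identifications above are compatible with the completions and with the monoidal structures --- is bookkeeping rather than substance.
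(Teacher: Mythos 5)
Your reduction steps (passing to the additive--Karoubi envelope, Frobenius reciprocity for diagonally induced bimodules, flat base change along $\scO(\fD)\to\scO(\fD^{\hat\lambda,\hat\mu})$) are sound and parallel to what the paper does with its Lemma on equivariant Hom spaces. But the heart of the matter, which you correctly isolate as the statement that
\[
(W\otimes\Ug)^{G}\;\longrightarrow\;(W\otimes\cU_\bS\fg)^{\bbI^*_\bS}
\]
is an isomorphism, is exactly the point you do not prove, and the tools you propose for it do not suffice. The Azumaya property of $\cU_\bS\fg$ and Skolem--Noether tell you something about automorphisms and bimodules over the Kostant section itself, but they give no control whatsoever over the restriction map from global invariants on $\fg^{*(1)}$ to invariants over $\bS^{*}$: in particular they cannot produce surjectivity (why should an $\bbI^*_\bS$-invariant section over $\bS^{*}$ extend to a $G$-invariant section of $W\otimes\Ug$ over all of $\fg^{*(1)}$?), and the heuristic that ``the invariants only involve the diagonal copy of the center, governed by $\ZHC$'' is not correct as a statement about $(W\otimes\Ug)^{G_1}$ for general $W$. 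Notably, the Azumaya property plays no role in the paper's proof of this proposition; it only enters later, for the splitting bundle.

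The mechanism that actually makes the comparison work is geometric and has two ingredients, both missing from your sketch. First, since $\Ug$ is free over $\ZFr=\scO(\fg^{*(1)})$ and the complement of $\fg^{*(1)}_\reg$ has codimension at least $2$ in the smooth variety $\fg^{*(1)}$, the adjunction map $V\otimes\Ug\to j_*j^*(V\otimes\Ug)$ for the open embedding $j:\fg^{*(1)}_\reg\hookrightarrow\fg^{*(1)}$ is an isomorphism; hence restriction to the regular locus induces an isomorphism on $\Hom(-,V\otimes\Ug)$ (this is where one extends invariant sections across the non-regular locus, i.e.\ the surjectivity you need). Second, restriction from $\fg^{*(1)}_\reg$ to the Kostant section is an equivalence $\Coh^{G}(\fg^{*(1)}_\reg)\simto\Rep(\bbI^*_\bS)$ (the descent statement of~\cite[Proposition~3.3.11]{riche-kostant}, adapted using faithful flatness of the Frobenius of $G$), compatible with tensor products and sending $\scU_\reg$ to $\scU_\bS$, so that invariants over the regular locus match $\bbI^*_\bS$-invariants over $\bS^{*}$. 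Without these two steps your argument does not close, so as written there is a genuine gap at the decisive point.
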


The proof of this proposition will use some standard properties stated in the following lemma. Here, $k$ is a commutative ring, $A$ is a left Noetherian $k$-algebra, and $H$ is an affine $k$-group scheme. For any commutative $k$-algebra $k'$ we set $H_{k'}:=\mathrm{Spec}(k') \times_{\mathrm{Spec}(k)} H$. If $A$ admits an action of $H$ by algebra automorphisms, we denote by $\Mod^H(A)$ the category of $H$-equivariant $A$-modules. (This category is abelian if $H$ is flat over $k$.)

\begin{lem}
\phantomsection
\label{lem:modules}
\begin{enumerate}
\item 
\label{it:lem-mod-1}
If $M,N$ are $A$-modules with $M$ finitely generated, for any flat $k$-module $V$ we have a canonical isomorphism
\[
\Hom_A(M,N \otimes_k V) \simto \Hom_A(M,N) \otimes_k V
\]
where $N \otimes_k V$ is regarded as an $A$-module for the action on the first factor.
\item 
\label{it:lem-mod-2}
If $H$ is flat over $k$, and if $M,N$ are $H$-equivariant $A$-modules, with $M$ finitely generated as an $A$-module, then the $k$-module $\Hom_A(M,N)$ admits a canonical structure of $H$-module, and we have a canonical isomorphism
\[
\Hom_{\Mod^H(A)}(M,N) \simto \bigl( \Hom_A(M,N) \bigr)^H.
\]
\item 
\label{it:lem-mod-3}
If $k'$ is a flat commutative $k$-algebra, for any $H$-module we have a canonical isomorphism
\[
(k' \otimes_k M)^{H_{k'}} \simto k' \otimes_k M^H.
\]
\end{enumerate}
\end{lem}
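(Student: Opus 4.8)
For \eqref{it:lem-mod-1}, \eqref{it:lem-mod-2} and \eqref{it:lem-mod-3}: these are standard facts about equivariant modules, and I would prove them in order, each building on the previous one. For \eqref{it:lem-mod-1}, the plan is to use that, $A$ being left Noetherian and $M$ finitely generated, $M$ is finitely presented; I would fix a presentation $A^{\oplus n}\to A^{\oplus m}\to M\to 0$, so that applying the left-exact functor $\Hom_A(-,N\otimes_k V)$ identifies $\Hom_A(M,N\otimes_k V)$ with the kernel of a $k$-linear map $(N\otimes_k V)^{\oplus m}\to(N\otimes_k V)^{\oplus n}$, while applying $\Hom_A(-,N)$ and then $(-)\otimes_k V$ --- exact because $V$ is flat --- identifies $\Hom_A(M,N)\otimes_k V$ with the kernel of $N^{\oplus m}\otimes_k V\to N^{\oplus n}\otimes_k V$. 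Under the canonical isomorphisms $N^{\oplus j}\otimes_k V\cong(N\otimes_k V)^{\oplus j}$ these two maps coincide, so their kernels are canonically identified, and I would check that the resulting isomorphism is the evident natural transformation $\Hom_A(M,N)\otimes_k V\to\Hom_A(M,N\otimes_k V)$ (which is manifestly an isomorphism when $M$ is free of finite rank).

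For \eqref{it:lem-mod-2}, since $H$ is flat over $k$ the coordinate ring $\scO(H)$ is a flat $k$-module, so part \eqref{it:lem-mod-1} applied with $V=\scO(H)$ gives a canonical isomorphism $\Hom_A(M,N)\otimes_k\scO(H)\simto\Hom_A(M,N\otimes_k\scO(H))$. Using the coactions of $M$ and $N$ together with the antipode of $\scO(H)$, I would write down a canonical $k$-linear map $\Phi\colon\Hom_A(M,N)\to\Hom_A(M,N\otimes_k\scO(H))$ which on $R$-points (for a $k$-algebra $R$ and $h\in H(R)$) sends $f$ to $m\mapsto h\cdot f(h^{-1}\cdot m)$; the fact that the image of $\Phi$ consists of $A$-linear maps is exactly where I would use the compatibility of the $H$-actions with the $A$-action together with the antipode axiom. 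Composing $\Phi$ with the inverse of the isomorphism above yields a coaction on $\Hom_A(M,N)$, and I would verify its comodule axioms by a direct computation with the Hopf structure of $\scO(H)$ and the comodule axioms of $M$ and $N$; this produces the asserted $H$-module structure. For the final identification, a morphism $M\to N$ in $\Mod^H(A)$ is by definition an $A$-linear map $f$ intertwining the coactions of $M$ and $N$, and tracing through the construction this is precisely the equation $\Phi(f)=f\otimes1$; since $\bigl(\Hom_A(M,N)\bigr)^H$ is by definition the equalizer of the constructed coaction and $f\mapsto f\otimes1$, the claim follows.

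For \eqref{it:lem-mod-3}, I would use that for an $H$-module $M$ with coaction $\rho_M\colon M\to M\otimes_k\scO(H)$ the invariants $M^H$ are the kernel of the $k$-linear map $\phi_M:=\rho_M-(m\mapsto m\otimes1)$. Since $\scO(H_{k'})=k'\otimes_k\scO(H)$ and the coaction making $k'\otimes_k M$ an $H_{k'}$-module is $\id_{k'}\otimes\rho_M$, one gets $\phi_{k'\otimes_k M}=\id_{k'}\otimes\phi_M$; as $k'$ is flat over $k$, the functor $k'\otimes_k(-)$ preserves kernels, whence $(k'\otimes_k M)^{H_{k'}}=k'\otimes_k\ker(\phi_M)=k'\otimes_k M^H$.

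The only step carrying genuine content is the middle one --- writing down the coaction on $\Hom_A(M,N)$ and checking both that it takes $A$-linear maps to $A$-linear maps and that it satisfies the comodule axioms. When $H$ is an ordinary group this is the familiar one-line check that $(h\cdot f)(m)=h\cdot f(h^{-1}m)$ is $A$-linear and defines an action; the group-scheme version will be the same computation transcribed into coactions and the antipode, and everything else in the lemma is purely formal.
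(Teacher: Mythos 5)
Your proposal is correct and follows essentially the same route as the paper: part \eqref{it:lem-mod-1} via a finite presentation of $M$ and flatness of $V$, and part \eqref{it:lem-mod-2} by applying \eqref{it:lem-mod-1} with $V=\scO(H)$ and defining the conjugation coaction on $\Hom_A(M,N)$ via the coactions of $M$, $N$ and the antipode, with the invariants identification then being definitional. The only (immaterial) difference is in \eqref{it:lem-mod-3}, where you spell out the standard flat-base-change argument (invariants as the kernel of $\rho_M-(m\mapsto m\otimes 1)$, preserved by the flat functor $k'\otimes_k(-)$) while the paper simply cites \cite[\S I.2.10, Equation (3)]{jantzen} for the same fact.
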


\begin{proof}
\eqref{it:lem-mod-1} We consider a presentation $A^{\oplus n} \to A^{\oplus m} \to M \to 0$; we then have exact sequences
\[
0 \to \Hom_A(M,N \otimes_k V) \to \Hom_A(A^{\oplus m},N \otimes_k V) \to \Hom_A(A^{\oplus n},N \otimes_k V)
\]
and
\[
0 \to \Hom_A(M,N) \otimes_k V \to \Hom_A(A^{\oplus m},N) \otimes_k V \to \Hom_A(A^{\oplus n},N) \otimes_k V,
\]
where we use the flatness of $V$. It is clear that the second and third terms in these exact sequences identify, and we deduce an identification of the first terms.

\eqref{it:lem-mod-2}
We first consider the morphism
\[
\Hom_A(M,N) \to \Hom_A(M,N \otimes_k \scO(H))
\]
which sends a morphism $\varphi : M \to N$ to the composition
\[
M \xrightarrow{(\id \otimes S) \otimes \Delta_M} M \otimes_k \scO(H) \xrightarrow{\varphi \otimes \id} N \otimes_k \scO(H) \to N \otimes_k \scO(H)
\]
where the third morphism sends $n \otimes g$ to $n_{(1)} \otimes f_{(2)} g$ in Sweedler's notation. By~\eqref{it:lem-mod-1} we have $\Hom_A(M,N \otimes_k \scO(H)) \cong \Hom_A(M,N) \otimes_k \scO(H)$, so that this morphism can be seen as a morphism $\Hom_A(M,N) \to \Hom_A(M,N) \otimes_k \scO(H)$, which can be checked to provide an $\scO(H)$-comodule structure (i.e.~an $H$-module structure) on $\Hom_A(M,N)$. The isomorphism $\Hom_{\Mod^H(A)}(M,N) \simto ( \Hom_A(M,N) )^H$ is then clear from definitions.

\eqref{it:lem-mod-3}
See~\cite[\S I.2.10, Equation (3)]{jantzen}.
\end{proof}

With these tools we can give the proof of the proposition.

\begin{proof}[Proof of Proposition~\ref{prop:rest-S-fully-faithful}]
To prove the proposition, it suffices to prove that the functor~\eqref{eqn:rest-wedge-S} is fully faithful on the subcategory $\HCBim^\wedge_\diag$, which will follow if we prove that it induces an isomorphism
\[
\Hom_{\Mod^G_{\fin}(\cU^\wedge)}(\mathsf{C}^\wedge(M),\mathsf{C}^\wedge(V \otimes \Ug)) \simto \Hom_{\Mod^{\bbI}_{\fin}(\cU_\bS^\wedge)}(\mathsf{C}_\bS^\wedge(M),\mathsf{C}_\bS^\wedge(V \otimes \Ug))
\]
for any $M$ in $\Mod^G_\fin(\Ug \otimes_{\ZFr} \Ug^\op)$ and $V \in \Rep(G)$.

For $N$ in $\Mod^G_\fin(\Ug \otimes_{\ZFr} \Ug^\op)$, we have
\begin{multline*}
\Hom_{\Mod^G_{\fin}(\cU^\wedge)}(\mathsf{C}^\wedge(M),\mathsf{C}^\wedge(N)) \cong \bigl( \Hom_{\cU^\wedge}(\mathsf{C}^\wedge(M),\mathsf{C}^\wedge(N)) \bigr)^G \\
\cong \bigl( \Hom_{\Ug \otimes_{\ZFr} \Ug^\op}(M,\mathsf{C}^\wedge(N)) \bigr)^G \cong \bigl( \Hom_{\Ug \otimes_{\ZFr} \Ug^\op}(M,N) \otimes_{\scO(\fD)} \scO(\fD^\wedge) \bigr)^G,
\end{multline*}
where the first isomorphism uses Lemma~\ref{lem:modules}\eqref{it:lem-mod-2}, and the third one uses Lem\-ma~\ref{lem:modules}\eqref{it:lem-mod-1}. Using Lemma~\ref{lem:modules}\eqref{it:lem-mod-3}, we deduce isomorphisms
\begin{multline*}
\Hom_{\Mod^G_{\fin}(\cU^\wedge)}(\mathsf{C}^\wedge(M),\mathsf{C}^\wedge(N)) \cong \bigl( \Hom_{\Ug \otimes_{\ZFr} \Ug^\op}(M,N) \bigr)^G \otimes_{\scO(\fD)} \scO(\fD^\wedge) \\
\cong \Hom_{\Mod^G(\Ug \otimes_{\ZFr} \Ug^\op)}(M,N) \otimes_{\scO(\fD)} \scO(\fD^\wedge).
\end{multline*}

Assuming now that $N=V \otimes \Ug$, we claim that the functor $\scO(\bS^{*}) \otimes_{\ZFr} (-)$ induces an isomorphism
\begin{multline}
\label{eqn:res-isom}
\Hom_{\Mod^G_\fin(\Ug \otimes_{\ZFr} \Ug^\op)}(M,V \otimes \Ug) \simto \\
 \Hom_{\Mod^{\bbI}_\fin(\cU_\bS \fg \otimes_{\scO(\bS^{*})} \cU_\bS \fg^\op)}(\scO(\bS^{*}) \otimes_{\ZFr} M, V \otimes \cU_{\bS} \fg).
\end{multline}
In fact, the algebra $\Ug \otimes_{\ZFr} \Ug^\op$ is a $G$-equivariant finite $\scO(\fg^{*(1)})$-algebra. Therefore, it identifies with the global sections of a $G$-equivariant coherent sheaf of $\scO_{\fg^{*(1)}}$-algebras $\scU$ on $\fg^{*(1)}$. Moreover, the restriction $\scU_\bS$ of $\scU$ to $\bS^{*}$ is an $\bbI^*_\bS$-equivariant sheaf of $\scO_{\bS^{*}}$-algebras on $\bS^{*}$, whose global sections are $\cU_\bS \fg \otimes_{\scO(\bS^{*})} (\cU_\bS \fg)^\op$. Consider the open embedding $j : \fg^{*(1)}_\reg \hookrightarrow \fg^{*(1)}$, and set $\scU_\reg := j^*(\scU)$. Let us denote by $\QCoh^G(\fg^{*(1)}_\reg, \scU_\reg)$ the category of $G$-equivariant quasi-coherent sheaves on $\fg^{*(1)}_\reg$ equipped with a structure of $\scU_\reg$-module, compatible with the $G$-equivariant structure in the natural way, and by $\Coh^G(\fg^{*(1)}_\reg, \scU_\reg)$ the subcategory of coherent modules. Then we have a restriction functor
\[
 j^* : \Mod^G(\Ug \otimes_{\ZFr} \Ug^\op) \to \QCoh^G(\fg^{*(1)}_\reg, \scU_\reg)
\]
which admits a right adjoint
\[
 j_* : \QCoh^G(\fg^{*(1)}_\reg, \scU_\reg) \to \Mod^G(\Ug \otimes_{\ZFr} \Ug^\op)
\]
coinciding with the usual pushforward functor at the level of quasi-coherent sheaves on $\fg^{*(1)}_\reg$ and $\fg^{*(1)}$. Since the complement of $\fg^{*(1)}_\reg$ has codimension $\geq 2$, the natural morphism $\scO_{\fg^{*(1)}} \to j_* j^* \scO_{\fg^{*(1)}}$ is an isomorphism. Since $\Ug$ is free over $\ZFr$ it follows that the morphism $V \otimes \Ug \to j_* j^* (V \otimes \Ug)$ is also an isomorphism, and then that the functor $j^*$ induces an isomorphism
\[
\Hom_{\Mod^G_{\fin}(\Ug \otimes_{\ZFr} \Ug^\op)}(M,V \otimes \Ug) \simto \Hom_{\Coh^G(\fg^{*(1)}_\reg, \scU_\reg)}(j^* M, j^*(V \otimes \Ug)).
\]
It is a standard observation (see~\cite[Proposition~3.3.11]{riche-kostant}) that restriction to $\bS^{*}$ induces an equivalence of abelian categories
\[
 \Coh^{G^{(1)}}(\fg_\reg^{*(1)}) \simto \Rep(\bbJ^*_\bS),
\]
where the right-hand side denotes the category of representations of the affine group scheme $\bbJ^*_{\bS}$ (see~\S\ref{ss:centralizer-Kostant}) on coherent $\scO_{\bS^{*}}$-modules. The same considerations provide an equivalence of categories
\[
 \Coh^{G}(\fg_\reg^{*(1)}) \simto \Rep(\bbI^*_\bS).
\]
(Here we use the fact that the Frobenius morphism of $G$ is flat and surjective, hence faithfully flat.)
This equivalence is monoidal with respect to the natural tensor product on each side, and the image of the algebra $\scU_\reg$ is $\scU_\bS$; therefore it induces an equivalence of abelian categories
\[
 \Coh^G(\fg^{*(1)}_\reg, \scU_\reg) \simto \Mod^{\bbI}_{\fin}(\cU_\bS \fg \otimes_{\scO(\bS^{*})} \cU_\bS \fg^\op).
\]
From this we finally obtain that~\eqref{eqn:res-isom} is an isomorphism.

Combining~\eqref{eqn:res-isom} with the preceding isomorphisms we obtain a canonical isomorphism
\begin{multline*}
\Hom_{\Mod^G_{\fin}(\cU^\wedge)}(\mathsf{C}^\wedge(M),\mathsf{C}^\wedge(V \otimes \Ug)) \cong \\
 \Hom_{\Mod^{\bbI}_\fin(\cU_\bS \fg \otimes_{\scO(\bS^{*})} \cU_\bS \fg^\op)}(\scO(\bS^{*}) \otimes_{\ZFr} M, V \otimes \cU_{\bS} \fg) \otimes_{\scO(\fD)} \scO(\fD^\wedge).
\end{multline*}
Considerations similar to those of the beginning of the proof allow to identify the right-hand side with
\[
\Hom_{\Mod^{\bbI}_{\fin}(\cU_\bS^\wedge)}(\mathsf{C}_\bS^\wedge(M),\mathsf{C}_\bS^\wedge(V \otimes \Ug)),
\]
which finishes the proof.
\end{proof}

\section{Localization for Harish-Chandra bimodules}
\label{sec:localization}

\subsection{Azumaya algebras}
\label{ss:Azumaya-alg}

We start by recalling the basic theory of Azumaya algebras.

Let $R$ be a commutative ring. Recall that an $R$-module $P$ is called \emph{faithfully projective} if it is projective of finite type and if moreover the only $R$-module $M$ such that $P \otimes_R M = 0$ is $M=0$. By~\cite[Chap.~I, Lemme~6.2]{ko} this condition is equivalent to requiring that $P$ is projective of finite type and faithful (i.e.~its annihilator in $R$ is trivial). An $R$-module $P$ is finitely generated and projective iff it is finitely presented and moreover the localization $P_{\mathfrak{p}}$ is free over $R_{\mathfrak{p}}$ for any $\mathfrak{p} \in \Spec(R)$, see~\cite[Chap.~I, Lemme~5.2]{ko} or~\cite[\href{https://stacks.math.columbia.edu/tag/00NX}{Tag 00NX}]{stacks-project}. In this setting, $P$ is faithful iff the rank of $P_{\mathfrak{p}}$ is positive for any $\mathfrak{p}$, see~\cite[Chap.~I, Lemme~6.1]{ko}. This notion is important in Morita theory since if $P$ is a faithfully projective $R$-module, then we obtain quasi-inverse equivalences of categories
\[
\xymatrix{
\Mod(R) \ar@<0.5ex>[r] & \Mod(\End_R(P)) \ar@<0.5ex>[l]
}
\]
given by $M \mapsto P \otimes_R M$ and $N \mapsto \Hom_R(P,R) \otimes_{\End_R(P)} N$ where $\Mod(A)$ is the category of left $A$-modules for any ring $A$; see~\cite[Chap.~I, Lemme~7.2]{ko}. In case $R$ is Noetherian, the ring $\End_R(P)$ is left Noetherian (as a noncommutative ring), and these equivalences restrict to equivalences
\begin{equation}
\label{eqn:equiv-faith-proj}
\xymatrix{
\Mod_{\mathrm{fg}}(R) \ar@<0.5ex>[r] & \Mod_{\mathrm{fg}}(\End_R(P)) \ar@<0.5ex>[l]
}
\end{equation}
between subcategories of finitely generated modules. (Here, a left $\End_R(P)$-module is finitely generated iff it is finitely generated as an $R$-module.)

Let $A$ be an $R$-algebra.\footnote{Let us insist that by an $R$-algebra we mean a (not necessarily commutative) ring $A$ endowed with a ring morphism from $R$ to the \emph{center} of $A$.}
Recall (see~\cite[\S III.5]{ko}) that $A$ is called an \emph{Azumaya $R$-algebra} if it satisfies one of the following equivalent conditions:
\begin{itemize}
\item $A$ is faithfully projective as an $R$-module, and the morphism sending $a \otimes b$ to the map $x \mapsto axb$ is an isomorphism of $R$-algebras
\begin{equation*}
A \otimes_R A^{\mathrm{op}} \simto \mathrm{End}_R(A) ;
\end{equation*}
\item $A$ is finite as an $R$-module, the ring morphism $R \to A$ is injective, and moreover for any maximal ideal $\mathfrak{m} \subset R$ the finite-dimensional $R/\mathfrak{m}$-algebra $A/\mathfrak{m} A$ is a central simple algebra.
\end{itemize}
In particular, the first characterization and the facts recalled above show that in this case we have canonical equivalences of categories
\[
\xymatrix{
\Mod(R) \ar@<0.5ex>[r] & \Mod(A \otimes_R A^{\mathrm{op}}). \ar@<0.5ex>[l]
}
\]
 \subsection{Azumaya property of \texorpdfstring{$\cU_\bS\fg$}{Ug}}
 \label{ss:Azumaya-Ug}
 
 
 The following property is standard (see~\cite{brown-goodearl, brown-gordon}); we recall its proof for the reader's convenience.
 
 \begin{prop}
 \label{prop:Ureg-Azumaya}
The $\scO(\fC_\bS)$-algebra $\cU_\bS \fg$ is Azumaya.
 \end{prop}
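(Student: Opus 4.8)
The plan is to verify the second of the two equivalent characterizations of an Azumaya algebra recalled in~\S\ref{ss:Azumaya-alg}: namely that $\cU_\bS\fg$ is finite as an $\scO(\fC_\bS)$-module, that the structure map $\scO(\fC_\bS)\to\cU_\bS\fg$ is injective, and that for every maximal ideal $\mathfrak m\subset\scO(\fC_\bS)$ the fibre $\cU_\bS\fg/\mathfrak m\cdot\cU_\bS\fg$ is a central simple algebra over the residue field. The first point is immediate: $\cU_\bS\fg=\Ug\otimes_{\ZFr}\scO(\bS^*)$ and $\Ug$ is finite over $\ZFr$, so $\cU_\bS\fg$ is finite over $\scO(\bS^*)=\scO(\fC_\bS)$ (recall the projection $\fC_\bS\to\ft^*/(W,\bullet)$ is an isomorphism, but it is cleanest to use the $\scO(\bS^*)$-structure via $\ZFr$). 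Injectivity of $\scO(\fC_\bS)\to\cU_\bS\fg$ follows because $\Ug$ is free — hence faithfully flat — over $\ZFr\cong\scO(\fg^{*(1)})$, so base change along $\scO(\fg^{*(1)})\to\scO(\bS^*)$ keeps the unit $1$ a nonzerodivisor generating a free rank-one submodule.

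The heart of the matter is the central-simplicity of the fibres. A maximal ideal $\mathfrak m\subset\scO(\fC_\bS)=\scO(\bS^*)$ corresponds to a closed point $s\in\bS^*\subset\fg^{*(1)}_\reg$; since $\bk$ is algebraically closed the residue field is $\bk$ and the fibre is $\cU_\bS\fg/\mathfrak m\cdot\cU_\bS\fg=\cU_\eta\fg$ where $\eta\in\fg^{*(1)}$ is the regular element corresponding to $s$. So I must show: for $\eta\in\fg^{*(1)}$ \emph{regular}, the reduced enveloping algebra $\cU_\eta\fg=\Ug/\mathfrak m_\eta\cdot\Ug$ is a central simple $\bk$-algebra. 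Its dimension is $p^{\dim\fg}$ and its centre contains (hence, by a dimension count using that $\Ug$ is a free $\ZFr$-module of rank $p^{\dim\fg}$ and that the Harish-Chandra centre acts) — one shows the centre is exactly $\bk$ by the standard argument: $\cU_\eta\fg$ is free of rank $p^{\dim\ft}$ over its image of $\ZHC$, and for regular $\eta$ the relevant fibre of $\fC\to\fg^{*(1)}$ is a single reduced point, forcing $Z(\cU_\eta\fg)=\bk$. Semisimplicity then follows because $\cU_\eta\fg$ has a simple module of dimension $p^{(\dim\fg-\dim\ft)/2}$ (the baby Verma / Kac–Weisfeiler picture for a regular $p$-character, valid since $p$ is very good), whose endomorphism ring is $\bk$, and $(p^{(\dim\fg-\dim\ft)/2})^2=p^{\dim\fg}=\dim_\bk\cU_\eta\fg$; a finite-dimensional algebra with a faithful simple module whose dimension-squared equals the algebra dimension is a matrix algebra, hence central simple. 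Concretely I would invoke the Kac–Weisfeiler/Friedlander–Parshall results on reduced enveloping algebras at regular semisimple $p$-characters together with the fact that any regular character is conjugate to one supported on the Kostant section, reducing to the regular semisimple case; alternatively cite~\cite{brown-goodearl, brown-gordon} directly as the paper signals.

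A cleaner packaging, which I would prefer to present, goes through the first characterization in~\S\ref{ss:Azumaya-alg}: using the identification $\cU_\bS\fg\otimes_{\scO(\fC_\bS)}(\cU_\bS\fg)^\op\cong\cU_\bS\fg\otimes_{\scO(\bS^*)}(\cU_\bS\fg)^\op=\scO(\bS^*)\otimes_{\scO(\fg^{*(1)})}\bigl(\Ug\otimes_{\ZFr}\Ug^\op\bigr)$, I reduce the claim to showing that over $\fg^{*(1)}_\reg$ the sheaf of algebras associated with $\Ug\otimes_{\ZFr}\Ug^\op$ is an Azumaya algebra over $\scO_{\fg^{*(1)}_\reg}$ — equivalently that $\Ug$ is Azumaya over $\ZFr$ after restriction to the regular locus — and then base-change to $\bS^*$ along the faithfully flat map $\bS^*\hookrightarrow\fg^{*(1)}_\reg$. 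Azumaya-ness is local for the flat (even étale) topology on the base, so faithfully projective descent along $\Fr$ or along a slice reduces everything to the statement that $\Ug$ restricted to a neighbourhood of any regular element is Azumaya over its Frobenius centre — which is the original Brown–Goodearl / Brown–Gordon theorem (and is the content of~\cite[\S III.5]{ko}-style arguments combined with the regularity of $\fg_\reg$).

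\textbf{Main obstacle.} The genuinely nontrivial input is the central simplicity of $\cU_\eta\fg$ for \emph{regular} $\eta$ (not just regular semisimple): establishing that the centre is no larger than $\bk$ and that the algebra is simple. This is exactly where one uses that $\fC\to\fg^{*(1)}$ is an isomorphism over the regular locus (so no extra central parameters survive) together with the existence of the expected simple module of dimension $p^{N}$, $N=\tfrac12(\dim\fg-\dim\ft)$, whose existence over a regular $p$-character rests on the theory of~\cite{bmr,bmr2} or on Premet's theorem — and here the very-good hypothesis on $p$ is essential. Since the paper explicitly flags this as ``classical'' and points to~\cite{brown-goodearl, brown-gordon}, the expected proof simply cites these, checks the finiteness and injectivity bookkeeping, and records the reduction to the regular locus of $\fg^{*(1)}$; I would write it that way rather than reproving Kac–Weisfeiler.
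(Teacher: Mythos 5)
Your overall plan---verify the second characterization of Azumaya algebras and outsource the hard input to \cite{ps} and \cite{brown-goodearl, brown-gordon}---is indeed the paper's, but the execution breaks down at the decisive point: you identify $\scO(\fC_\bS)$ with $\scO(\bS^*)$. This is false. We have $\fC_\bS = \bS^* \times_{\ft^{*(1)}/W} \ft^*/(W,\bullet) \cong \ft^*/(W,\bullet)$, and the projection $\fC_\bS \to \bS^* \cong \ft^{*(1)}/W$ is the finite map of degree $p^{\dim \ft}$ induced by the Artin--Schreier morphism, not an isomorphism. Consequently the fibre of $\cU_\bS\fg$ at a maximal ideal of $\scO(\fC_\bS)$ is not the reduced enveloping algebra $\cU_\eta\fg$ but the further central reduction $\cU_\eta^\xi\fg$ with respect to a Harish-Chandra character $\xi$ as well; and the statement you set out to prove---that $\cU_\eta\fg$ is central simple over $\bk$ for regular $\eta$---is simply not true: $Z(\cU_\eta\fg)$ contains the image of $\ZHC$, a finite algebra of dimension $p^{\dim\ft}$ rather than $\bk$ (already visible for $\mathfrak{sl}_2$ with $\eta$ regular nilpotent). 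Your supporting dimension count also fails: $(p^{\#\fR^+})^2 = p^{\dim\fg-\dim\ft} \neq p^{\dim\fg} = \dim_\bk \cU_\eta\fg$. The same confusion infects your ``cleaner packaging'': $\Ug$ is \emph{not} Azumaya over $\ZFr$ on the regular locus (its fibres there are exactly the $\cU_\eta\fg$, which are not matrix algebras); it is Azumaya over its full centre $Z(\Ug)=\scO(\fC)$ restricted to $\fC_\reg$, and it is the base change of \emph{that} statement to $\fC_\bS$ which yields the proposition. (Also, $\bS^* \hookrightarrow \fg^{*(1)}_\reg$ is a closed embedding, not faithfully flat, though this particular slip is harmless since Azumaya algebras are stable under arbitrary base change.)

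Concretely, the paper's proof considers maximal ideals $\mathfrak{m} \subset Z(\Ug)$ lying in $\fC_\reg = \fg^{*(1)}_\reg \times_{\ft^{*(1)}/W} \ft^*/(W,\bullet)$ and proves that $\Ug/\mathfrak{m}\Ug$ is a matrix algebra of size $p^{\#\fR^+}$, by combining \cite[Theorem~4.4]{ps} (the maximal dimension of a simple $\Ug$-module is $p^{\#\fR^+}$), \cite[Theorem~5.6]{ps} (for $\mathfrak{m}$ in the regular locus every simple $\Ug/\mathfrak{m}\Ug$-module has dimension divisible by, hence equal to, $p^{\#\fR^+}$) and \cite[Proposition~3.1]{brown-goodearl}. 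Note also that your injectivity argument, resting on freeness of $\Ug$ over $\ZFr$, only shows that the subalgebra $\scO(\bS^*)$ injects; to control the Harish-Chandra part of $\scO(\fC_\bS)$ one needs a genuine argument (the paper restricts the embedding $Z(\Ug) \hookrightarrow \Ug$ along the flat morphism $G \times \bS^* \to \fg^{*(1)}$). As written, therefore, the proposal does not establish the proposition; the repair is precisely to work with the fibres over $\fC_\reg$, i.e.\ with the double central reductions $\cU_\eta^\xi\fg$, rather than with the $\cU_\eta\fg$ over $\fg^{*(1)}_\reg$.
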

 
 \begin{proof}
 We will use the second characterization of Azumaya algebras recalled in~\S\ref{ss:Azumaya-alg}.
 Since $\Ug$ is finite over $Z(\Ug)=\scO(\fC)$, $\cU_\bS \fg$ is finite over $\scO(\fC_\bS)$. To prove that the morphism $\scO(\fC_\bS) \to \cU_\bS \fg$ is injective, we consider the composition
 \[
 G \times \bS^{*} \to G^{(1)} \times \bS^{*} \to \fg^{*(1)}_\reg \to \fg^{*(1)}.
 \]
 Here the first morphism is flat since $G$ is smooth (see~\S\ref{ss:centralizer-Kostant}), the second morphism is smooth by~\cite[Lemma~3.3.1]{riche-kostant}, and the third one is an open embedding; this composition is therefore flat. The algebras $Z(\Ug)$ and $\Ug$ can be considered as $G$-equivariant coherent sheaves on $\fg^{*(1)}$, and the induced morphism $\scO(\fC_\bS) \to \cU_\bS \fg$ is obtained from the embedding $Z(\Ug) \to \Ug$ by the pullback functor
 \[
 \Coh^G(\fg^{*(1)}) \to \Coh^G(G \times \bS^{*})
 \]
 associated with the flat morphism considered above,
 followed by the obvious equivalences $\Coh^G(G \times \bS^{*}) \cong \Coh(\bS^{*}) \cong \Mod_{\fin}(\scO(\bS^{*}))$; it is therefore injective.
 
 What is left to prove is that if $\mathfrak{m} \subset Z(\Ug)$ is a maximal ideal which belongs to $\fC_\bS=\bS^{*(1)} \times_{\ft^{*(1)} / W} \ft^*/(W,\bullet)$, then  $\Ug/\mathfrak{m} \Ug$ is a central simple algebra. In fact, this property holds more generally if $\mathfrak{m}$ belongs to $\fC_\reg := \fg_\reg^{*(1)} \times_{\ft^*{}^{(1)} / W} \ft^*/(W,\bullet)$. Indeed,
let $N$ be the maximal dimension of a simple $\Ug$-module. 
 By~\cite[Proposition~3.1]{brown-goodearl}, 
 if $\mathfrak{m} \subset Z(\Ug)$ is a maximal ideal such that $\Ug/\mathfrak{m} \Ug$ admits a simple module $V$ of dimension $N$, then $\Ug/\mathfrak{m} \Ug$ is a central simple algebra; more specifically, the algebra morphism $\Ug/\mathfrak{m} \Ug \to \End_\bk(V)$ is an isomorphism. Now by~\cite[Theorem~4.4]{ps} we have $N=p^{\# \fR^+}$. And by~\cite[Theorem~5.6]{ps}, if $\mathfrak{m}$ belongs to $\fC_\reg$ then any simple $\Ug/\mathfrak{m} \Ug$-module has dimension divisible by $p^{\# \fR^+}$ hence equal to $N$.
 \end{proof}
 

It follows in particular from Proposition~\ref{prop:Ureg-Azumaya}
that $\cU_{\bS} \fg$ is faithfully projective as an $\scO(\fC_\bS)$-module.
 
Below we will use a slightly more concrete version of Proposition~\ref{prop:Ureg-Azumaya}, as follows. 
 First we need to recall the definition of baby Verma modules. 
 Consider some element $\eta \in \fg^{*(1)}$, and some Borel subgroup $B' \subset G$ with unipotent radical $U'$ such that $\eta$ vanishes on $\Lie(U')^{(1)}$. (Such a Borel subgroup exists for any $\eta$, see~\cite[Lemma~6.6]{jantzen-Lie}.) Then $\eta$ defines an element $\eta'$ in $(\Lie(B')/\Lie(U'))^{*(1)}$.
 Let $\xi \in \ft^*$ be an element whose image under the map
 \[
  \ft^* \cong (\Lie(B)/\Lie(U))^* \simto (\Lie(B')/\Lie(U'))^* \to (\Lie(B')/\Lie(U'))^{*(1)}
 \]
is $\eta'$, where the second map is induced by conjugation by an element $g \in G$ such that $gBg^{-1}=B'$ (it is well known that the isomorphism does not depend on the choice of $g$), and the second one is the Artin--Schreier map associated with the torus $B'/U'$.
Then we can consider the associated \emph{baby Verma module}
\[
 \Ver_{\eta,B'}(\xi) := \cU_\eta\fg \otimes_{\cU_\eta \Lie(B')} \bk_\xi,
\]
where $\cU_\eta \Lie(B')$ is the central reduction (with respect to the Frobenius center) of the enveloping algebra of $\Lie(B')$ at the image of $\eta$ in $\Lie(B')^{*(1)}$, and $\bk_\xi$ is its $1$-dimensional module defined by the image of $\xi$ in $(\Lie(B')/\Lie(U'))^*$.
%
 This module has dimension $p^{\# \fR^+}$; if we assume furthermore that $\eta \in \fg^{*(1)}_\reg$, then the considerations in the proof of Proposition~\ref{prop:Ureg-Azumaya} therefore imply that this module is simple, and that the algebra morphism
 \begin{equation}
 \label{eqn:U-End-Z}
 \cU^{\xi'}_{\eta} \fg \to \End_\bk(\Ver_{\eta,B'}(\xi))
 \end{equation}
 is an isomorphism, where we denote by $\xi'$ the image of $\xi$ in $\ft^*/(W,\bullet)$.


\subsection{Splitting bundles for the algebras \texorpdfstring{$\cU_\bS^{\hat{\lambda},\hat{\mu}}$}{US}}
\label{ss:splitting-bundles}


Our goal in this section is to construct some tools that will allow us to study the categories $\Mod^G_{\mathrm{fg}}(\cU^{\hat{\lambda},\hat{\mu}})$ and $\HCBim^{\hat{\lambda},\hat{\mu}}$ via geometric methods. First we introduce the categories of modules that will be involved in these constructions. Our model will be the category $\Coh^G(\fC \times_{\fg^{*(1)}} \fC)$ of $G$-equivariant coherent sheaves on $\fC \times_{\fg^{*(1)}} \fC$, or in other words of $G$-equivariant finitely generated $\cZ$-modules, which is a monoidal category for the operation sending a pair $(M,N)$ to
\[
M \otimes_{Z(\Ug)} N,
\]
where in the tensor product $Z(\Ug)$ acts on $M$ via the right action and on $N$ via the left action. The $\cZ$-action on $M \otimes_{Z(\Ug)} N$ comes from the left action of $Z(\Ug)$ on $M$ and the right action on $N$. In practice however, we will have to restrict to $\bS^{*}$, and add generalized characters to this picture.

First, recall the group scheme $\bbI^\wedge_\bS$ over $\mathrm{Spec}(\cZ_\bS^\wedge)$ introduced in~\S\ref{ss:HCBim-S}. We consider the abelian category $\Mod^{\bbI}_{\fin}(\cZ_\bS^\wedge)$ of representations of the flat affine group scheme $\bbI^\wedge_\bS$ on finitely generated $\cZ_\bS^\wedge$-modules. Here a $\cZ_\bS^\wedge$-module is a $\ZHC \otimes_{\scO(\ft^{*(1)}/W)} \scO(\ft^{*(1)}/W)^\wedge$-bimodule on which the left and right actions of $\scO(\ft^{*(1)}/W)^\wedge$ coincide; the category of such modules therefore admits a canonical tensor product, which preserves finitely generated modules, and induces a monoidal product 
\[
 (-) \hatstar_{\bS} (-) : \Mod^{\bbI}_{\fin}(\cZ_\bS^\wedge) \times \Mod^{\bbI}_{\fin}(\cZ_\bS^\wedge) \to \Mod^{\bbI}_{\fin}(\cZ_\bS^\wedge)
\]
with unit object $\ZHC \otimes_{\scO(\ft^{*(1)}/W)} \scO(\ft^{*(1)}/W)^\wedge$, with diagonal $\cZ_\bS^\wedge$-module structure and trivial action of $\bbI^\wedge_\bS$. If we set
\[
\bbJ_\bS^\wedge := \mathrm{Spec}(\cZ_\bS^\wedge) \times_\bS \bbJ^*_\bS,
\]
then one can also consider the abelian category $\Mod^{\bbJ}_{\fin}(\cZ_\bS^\wedge)$ of representations of $\bbJ_\bS^\wedge$ on on finitely generated $\cZ_\bS^\wedge$-modules; the quotient morphism $\bbI_\bS^\wedge \to \bbJ^\wedge_\bS$ induces an exact and fully faithful functor
\[
\Mod^{\bbJ}_{\fin}(\cZ_\bS^\wedge) \to \Mod^{\bbI}_{\fin}(\cZ_\bS^\wedge)
\]
whose image is stabilized by the convolution product $\hatstar_\bS$.

Next, for $\lambda,\mu \in \bbX$ we have the affine group scheme $\bbI_\bS^{\hat{\lambda},\hat{\mu}}$ over $\mathrm{Spec}(\cZ_\bS^{\hat{\lambda},\hat{\mu}})$ also introduced in~\S\ref{ss:HCBim-S}, and we can consider the abelian category $\Mod^{\bbI}_{\fin}(\cZ_\bS^{\hat{\lambda},\hat{\mu}})$ of representations of this affine group scheme on finitely generated $\cZ_\bS^{\hat{\lambda},\hat{\mu}}$-modules. As in~\eqref{eqn:Mod-UwedgeS-sum} we have a decomposition
\[
\Mod^{\bbI}_{\fin}(\cZ_\bS^\wedge) \cong \bigoplus_{\lambda,\mu \in \Lambda} \Mod^{\bbI}_{\fin}(\cZ_\bS^{\hat{\lambda},\hat{\mu}}).
\]
Given $M$ in $\Mod^{\bbI}_{\fin}(\cZ_\bS^{\hat{\lambda},\hat{\mu}})$ and $N$ in $\Mod^{\bbI}_{\fin}(\cZ_\bS^{\hat{\mu},\hat{\nu}})$, seen as objects in $\Mod^{\bbI}_{\fin}(\cZ_\bS^\wedge)$, the object $M \hatstar_{\bS} N$ belongs to $\Mod^{\bbI}_{\fin}(\cZ_\bS^{\hat{\lambda},\hat{\nu}})$; in other words, the bifunctor $\hatstar_{\bS}$ restricts to a bifunctor
\begin{equation}
\label{eqn:convolution-ZS}
\Mod^{\bbI}_{\fin}(\cZ_\bS^{\hat{\lambda},\hat{\mu}}) \times \Mod^{\bbI}_{\fin}(\cZ_\bS^{\hat{\mu},\hat{\nu}}) \to \Mod^{\bbI}_{\fin}(\cZ_\bS^{\hat{\lambda},\hat{\nu}})
\end{equation}
for any $\lambda,\mu,\nu \in \bbX$. In particular, each category $\Mod^{\bbI}_{\fin}(\cZ_\bS^{\hat{\lambda},\hat{\lambda}})$ admits a monoidal structure; the unit object $\cZ_\bS^{\hat{\lambda}}$ for this structure is $\ZHC^{\hat{\lambda}}$, endowed with the natural structure of $\cZ_\bS^{\hat{\lambda},\hat{\lambda}}$-module (induced by the product morphism $\ZHC \otimes_{\ZHC \cap \ZFr} \ZHC \to \ZHC$) and the trivial structure as a representation of $\bbI^{\hat{\lambda},\hat{\lambda}}_\bS$.

One can also play the same game starting with $\bbJ^*_\bS$ in place of $\bbI^*_\bS$; one obtains in this way affine group schemes $\bbJ_\bS^{\hat{\lambda},\hat{\mu}}$, categories $\Mod^{\bbJ}_{\fin}(\cZ_\bS^{\hat{\lambda},\hat{\mu}})$ and exact fully faithful functors
\[
\Mod^{\bbJ}_{\fin}(\cZ_\bS^{\hat{\lambda},\hat{\mu}}) \to \Mod^{\bbI}_{\fin}(\cZ_\bS^{\hat{\lambda},\hat{\mu}})
\]
whose essential images are stabilized by the bifunctor~\eqref{eqn:convolution-ZS} in the obvious sense.

\begin{rmk}
In view of~\eqref{eqn:D-la-mu-tens}, for any $\lambda,\mu$ we have an algebra isomorphism
\[
\cZ_\bS^{\hat{\lambda},\hat{\mu}} \simto \ZHC^{\hat{\lambda}} \otimes_{\scO(\ft^{*(1)}/W)^\wedge} \ZHC^{\hat{\mu}}.
\]
From this point of view, the bifunctor~\eqref{eqn:convolution-ZS} is induced by the tensor product $(-) \otimes_{\ZHC^{\hat{\mu}}} (-)$.
\end{rmk}

Recall that a weight $\lambda \in \bbX$ is said to belong to the \emph{lower closure of the fundamental alcove} if it satisfies
\[
0 \leq \langle \lambda+\rho, \alpha^\vee \rangle < p
\]
for any positive root $\alpha$.
Recall also the completed bimodules introduced in~\S\ref{ss:HCBim-completed}. In particular, given $\lambda,\mu \in \bbX$ which belong to the lower closure of the fundamental alcove, we have the objects
\begin{align*}
\sfP^{\lambda,-\rho} &= \mathsf{C}^{\lambda,-\rho} \bigl( \Sim(\lambda+\rho) \otimes \Ug \bigr) \quad \in \HCBim^{\hat{\lambda},\hat{-\rho}}_\diag, \\
\sfP^{-\rho,\mu} &= \mathsf{C}^{-\rho,\mu} \bigl(\Sim(-w_0 \mu+\rho) \otimes \Ug \bigr) \quad \in \HCBim^{\hat{-\rho},\hat{\mu}}_\diag.
\end{align*}
We set
\[
\sfM^{\lambda,\mu} := \sfP^{\lambda,-\rho} \hatotimes_{\Ug} \sfP^{-\rho,\mu} \quad \in \HCBim^{\hat{\lambda},\hat{\mu}}_\diag.
\]
We also set $\sfM^{\lambda,\mu}_\bS := \scO(\bS^{*}) \otimes_{\scO(\fg^{*(1)})} \sfM^{\lambda,\mu}$, so that
\[
\sfM^{\lambda,\mu}_\bS = \sfP^{\lambda,-\rho}_\bS \hatotimes_{\cU_{\bS} \fg} \sfP^{-\rho,\mu}_\bS
\]
where we use the notation of~\S\ref{ss:HCBim-S}.

The main technical result of this section is the following theorem. Its proof will be given in~\S\ref{ss:proof-splitting}, after some preliminaries treated in~\S\ref{ss:study-fibers}.

\begin{thm}
\label{thm:splitting}
For any $\lambda,\mu \in \bbX$ in the lower closure of the fundamental alcove, the 
$\cZ_\bS^{\hat{\lambda},\hat{\mu}}$-module
$\sfM^{\lambda,\mu}_\bS$ is faithfully projective, and the natural algebra morphism
\[
\cU_\bS^{\hat{\lambda},\hat{\mu}} \to \End_{\cZ_\bS^{\hat{\lambda},\hat{\mu}}}(\sfM^{\lambda,\mu}_\bS)
\]
is an isomorphism.
\end{thm}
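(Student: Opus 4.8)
The strategy is to reduce the statement to the Azumaya property of $\cU_\bS\fg$ over $\scO(\fC_\bS)$ (Proposition~\ref{prop:Ureg-Azumaya}) by exhibiting $\sfM^{\lambda,\mu}_\bS$ as a ``translation-composition'' bimodule that implements a Morita equivalence between the central reductions at $(\lambda,\mu)$ and at the most singular character $-\rho$. Since $\cU_\bS\fg$ is Azumaya, its completion $\cU_\bS^{\hat{-\rho},\hat{-\rho}}$ is Azumaya over $\cZ_\bS^{\hat{-\rho},\hat{-\rho}}$, and the baby Verma module of~\eqref{eqn:U-End-Z} (at a regular $\eta$, as we may arrange after restricting to $\bS^*$) provides an explicit splitting bundle: $\cU_\bS^{\hat{-\rho},\hat{-\rho}} \cong \End_{\cZ_\bS^{\hat{-\rho},\hat{-\rho}}}(\sfL)$ for a suitable faithfully projective module $\sfL$. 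The key point is then that $\sfP^{\lambda,-\rho}_\bS$ and $\sfP^{-\rho,\mu}_\bS$ are the bimodules realizing translation functors to and from the wall $-\rho$, and that translation onto (and off of) $-\rho$ is known, at the level of the relevant abelian categories, to be an equivalence when $\lambda,\mu$ lie in the lower closure of the fundamental alcove — this is the classical fact that translation to a wall followed by translation back is an equivalence in exactly this range. Concretely, I would check first that $\sfP^{\lambda,-\rho}_\bS \hatotimes \sfP^{-\rho,\lambda}_\bS \cong \cU_\bS^{\hat\lambda}$ using Lemma~\ref{lem:convolution-adjoint} (so the convolution functors $\sfP^{\lambda,-\rho}_\bS \hatotimes -$ and $\sfP^{-\rho,\lambda}_\bS \hatotimes -$ are mutually inverse equivalences), which identifies $\sfP^{\lambda,-\rho}_\bS$ as an invertible $(\cU_\bS^{\hat\lambda},\cU_\bS^{\hat{-\rho}})$-bimodule; likewise on the other side for $\mu$.

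**Executing the reduction.** Given those invertibility statements, I would argue as follows. By the analogue of Proposition~\ref{prop:Ureg-Azumaya} in the completed setting, $\cU_\bS^{\hat\lambda,\hat\lambda} \cong \End_{\cZ_\bS^{\hat\lambda,\hat\lambda}}(\sfP^{\lambda,-\rho}_\bS)$, i.e.\ $\sfP^{\lambda,-\rho}_\bS$ is a splitting bundle for $\cU_\bS^{\hat\lambda,\hat\lambda}$ — but I actually want it as a one-sided module, so the cleaner formulation is: the functor $\sfP^{\lambda,-\rho}_\bS \hatotimes_{\cU_\bS\fg}(-)$ is an equivalence $\Mod(\cU_\bS^{\hat{-\rho},\hat\nu}) \simto \Mod(\cU_\bS^{\hat\lambda,\hat\nu})$ for every $\nu$ (this is Lemma~\ref{lem:convolution-adjoint} together with the unit computation above). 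Therefore $\sfM^{\lambda,\mu}_\bS = \sfP^{\lambda,-\rho}_\bS \hatotimes_{\cU_\bS\fg} \sfP^{-\rho,\mu}_\bS$ is obtained from the ``$(-\rho,-\rho)$-unit-like'' object $\cU_\bS^{\hat{-\rho}}$-bimodule picture by applying an equivalence on the left (changing $-\rho$ to $\lambda$) and an equivalence on the right (changing $-\rho$ to $\mu$). Since for $\lambda=\mu=-\rho$ the object $\sfM^{-\rho,-\rho}_\bS = \sfP^{-\rho,-\rho}_\bS \hatotimes \sfP^{-\rho,-\rho}_\bS$ is exactly $\cU_\bS^{\hat{-\rho}}$ — the unit — which is visibly faithfully projective over $\cZ_\bS^{\hat{-\rho},\hat{-\rho}}$ and satisfies $\cU_\bS^{\hat{-\rho},\hat{-\rho}} \simto \End_{\cZ_\bS^{\hat{-\rho},\hat{-\rho}}}(\cU_\bS^{\hat{-\rho}})$ by the Azumaya property of $\cU_\bS\fg$ (Proposition~\ref{prop:Ureg-Azumaya}, after base change along $\bS^* \times_{\ft^{*(1)}/W}\ft^*/(W,\bullet)$ and completion), the general case follows by transport of structure: faithful projectivity is preserved by the Morita equivalences $\sfP^{\lambda,-\rho}_\bS\hatotimes(-)$ and $(-)\hatotimes\sfP^{-\rho,\mu}_\bS$ (one has to check these carry $\cZ_\bS^{\hat{-\rho},\hat\nu}$-faithfully-projective modules to $\cZ_\bS^{\hat\lambda,\hat\nu}$-faithfully-projective ones, which holds because the change-of-central-ring is compatible), and the endomorphism-algebra isomorphism is then the image of the $\lambda=\mu=-\rho$ isomorphism under the same equivalences.

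**The main obstacle.** The real work is proving that $\sfP^{\lambda,-\rho}_\bS \hatotimes_{\cU_\bS\fg} \sfP^{-\rho,\lambda}_\bS \cong \cU_\bS^{\hat\lambda}$ — i.e.\ that translating onto the $(-\rho)$-wall and back is an equivalence in the lower-closure range — at the level of these completed, $\bbI_\bS$-equivariant Harish-Chandra bimodules over the Kostant section. In the classical (non-equivariant, non-completed) theory this is Jantzen's translation principle, but here one must run it after restriction to $\bS^*$ and completion, keeping track of the regularity of the Frobenius parameter. The clean way is to pass to the residue algebras: tensoring with $\cZ_\bS^{\hat\lambda,\hat\lambda}/\cI^n$ and using that $\cU_\bS\fg$ is Azumaya, each $\sfP^{\bullet,\bullet}_\bS$ becomes, fiberwise over a closed point $\eta\in\bS^{*(1)}_\reg$, an honest bimodule between matrix algebras $\End_\bk(\Ver_{\eta,B'}(\cdot))$, where the baby Verma modules are simple (by the proof of Proposition~\ref{prop:Ureg-Azumaya}); there the composition $\sfP^{\lambda,-\rho}\hatotimes\sfP^{-\rho,\lambda}$ is computed as $\Hom_\bk(\Ver(-\rho),\Ver(\lambda))\otimes\Hom_\bk(\Ver(\lambda),\Ver(-\rho))$-type expressions, and one needs the nonvanishing/rank-one statement $\dim\Hom_{\cU_\eta\fg}(\Ver_{\eta,B'}(-\rho),\sfP^{\lambda,-\rho}\otimes\text{something})=\dim\Ver$, equivalently that $\Sim(\lambda+\rho)\otimes\Sim(-w_0\lambda+\rho)$ contains the Steinberg-type module with multiplicity one on the relevant graded piece. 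This is where I expect to spend the most effort; modulo that input — which I would extract from the already-cited structure of $\sfP^{\lambda,\mu}$ as a diagonally-induced bimodule built from $\Sim(\nu)$ and from the adjointness in Lemma~\ref{lem:convolution-adjoint} — the rest is formal Morita theory as in~\S\ref{ss:Azumaya-alg}.
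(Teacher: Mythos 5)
Your reduction rests entirely on the claim that $\sfP^{\lambda,-\rho}_\bS \hatotimes_{\cU_\bS\fg} \sfP^{-\rho,\lambda}_\bS \cong \cU_\bS^{\hat{\lambda}}$, i.e.\ that the translation bimodules to and from the most singular character $-\rho$ are mutually inverse Morita equivalences; this is false, and it is a fatal gap rather than a fixable detail. Lemma~\ref{lem:convolution-adjoint} only gives biadjointness, and the ``classical fact'' you invoke does not exist: translation to a more singular facet and back is a wall-crossing-type functor, not the identity (already for $G=\mathrm{SL}_2$, $\lambda=0$, the composite $T^{0}_{-\rho}T^{-\rho}_{0}$ sends the trivial module to the $2p$-dimensional tilting module of highest weight $2p-2$). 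The same failure persists after restriction to the Kostant section and completion: the composition you need is by definition $\sfM^{\lambda,\lambda}_\bS$, and the two Harish-Chandra characters on it genuinely differ, whereas they coincide on $\cU_\bS^{\hat{\lambda}}$, which is killed by the (nonzero, since $\lambda$ is regular) kernel of $\cZ_\bS^{\hat{\lambda},\hat{\lambda}} \twoheadrightarrow \cZ_\bS^{\hat{\lambda}}$; equivalently, under the equivalence of Corollary~\ref{cor:equiv-ModUg-ModZ} these two objects correspond to $\cZ_\bS^{\hat{\lambda},\hat{\lambda}}$ and to $\cZ_\bS^{\hat{\lambda}}$ (see~\eqref{eqn:L-units}), which are not isomorphic. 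One can also see directly that the collapsing you want is unavailable: Lemma~\ref{lem:translation-bimod-B} requires one of the two weights to be regular, and the $\cU_\bS\fg$-version of Lemma~\ref{lem:B-transitivity} requires the \emph{middle} weight to be regular, which $-\rho$ is not; indeed the proof of Proposition~\ref{prop:L-monoidal} shows $\sfP^{-\rho,\lambda}_\bS \hatotimes_{\cU_\bS\fg} \sfP^{\lambda,-\rho}_\bS \cong \sfQ_{-\rho,-\rho}$, which is strictly larger than $\cU_\bS^{\hat{-\rho}}$ because $-\rho$ is $W$-fixed. Finally, your isomorphism would lift through the fully faithful functor of Proposition~\ref{prop:rest-S-fully-faithful} and, via Lemma~\ref{lem:translation-bimodules}, would force $T^{\lambda}_{-\rho}T^{-\rho}_{\lambda}\cong\mathrm{id}$ on $\Rep_{[\lambda]}(G)$, which is absurd. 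With the invertibility gone, the transport of structure from the case $\lambda=\mu=-\rho$ (which is indeed immediate from Proposition~\ref{prop:Ureg-Azumaya}) never gets started, so neither the faithful projectivity nor the endomorphism isomorphism is established.

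A further point: the Azumaya property is not really the issue. Since Azumaya algebras are stable under base change, $\cU_\bS^{\hat{\lambda},\hat{\mu}}$ is automatically Azumaya over $\cZ_\bS^{\hat{\lambda},\hat{\mu}}$, and even abstractly split over this complete local ring; the actual content of the theorem is that the \emph{specific, equivariant} module $\sfM^{\lambda,\mu}_\bS$ is a splitting bundle, and this must be proved by a direct rank computation, not by abstract Morita theory. What you call the ``main obstacle'' is where essentially the whole proof lives, and it is carried out in the paper as follows: one realizes $\sfM^{\lambda,\mu}_\bS$ as the completion, along an \'etale chart at $(\widetilde{\lambda},\widetilde{-\rho},\widetilde{\mu})$ (Lemma~\ref{lem:morph-etale}), of an explicit sheaf on a reduced finite-type open locus; one computes its fibres at \emph{all} closed points of that locus via Propositions~\ref{prop:fiber-Verma} and~\ref{prop:fiber-Verma-prime} (this is where the translation-principle combinatorics --- the choice of $\xi\in\ft^*_\circ$ and \cite[Lemma~II.7.7]{jantzen} --- enter, giving fibres $\Ver\otimes\Ver^*$ of constant dimension $p^{2\#\fR^+}$); local freeness then follows from semicontinuity and density of closed points (Lemma~\ref{lem:projective-rk}), and only afterwards is the map to the endomorphism algebra checked to be an isomorphism by reduction to the closed fibre via~\eqref{eqn:U-End-Z}. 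Your fibrewise sketch, besides serving the false invertibility claim, is confined to the unique closed point of $\Spec(\cZ_\bS^{\hat{\lambda},\hat{\mu}})$, and a fibre computation at that single point does not yield freeness over the complete local ring without the detour through a finite-type scheme (or some substitute flatness argument); this is precisely why the paper works with $\fD_\circ(\lambda)\times_{\ft^*/(W,\bullet)}\fD'_\circ(-w_0\mu)$ before completing.
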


\subsection{Study of some stalks}
\label{ss:study-fibers}

Recall that $\Spec(\cZ_\bS)$ identifies naturally with
\[
 \fD=\ft^*/(W,\bullet) \times_{\ft^{*(1)}/W} \ft^*/(W,\bullet),
\]
see~\S\ref{ss:HCBim-S}.
We also set
\[
 \tfD := \ft^* \times_{\ft^{*(1)}} \ft^*.
\]

Since the Artin--Schreier map $\ft^* \to \ft^{*(1)}$ is a Galois covering with Galois group $\ft^*_{\mathbb{F}_p}$, we have a canonical isomorphism
\[
 \ft^*_{\mathbb{F}_p} \times \ft^* \simto \tfD
\]
defined by $(\eta,\xi) \mapsto (\eta+\xi,\xi)$. For $\lambda \in \bbX$ we will denote by $\tfD(\lambda)$ the image of $\{\overline{\lambda + \rho}\} \times \ft^*$ in $\tfD$; if $\widetilde{\Lambda} \subset \bbX$ is a subset of representatives for the quotient $\ft^*_{\mathbb{F}_p}=\bbX/p \bbX$, we then have
\[
 \tfD = \bigsqcup_{\lambda \in \widetilde{\Lambda}} \tfD(\lambda).
\]

Recall that if $A$ is a finitely generated $\bk$-algebra, by~\cite[\href{https://stacks.math.columbia.edu/tag/02J6}{Tag 02J6}]{stacks-project} $\Spec(A)$ is a Jacobson space in the sense of~\cite[\href{https://stacks.math.columbia.edu/tag/005T}{Tag 005T}]{stacks-project}; in other words, closed points are dense in any closed subset of $\Spec(A)$. Here
we have a natural finite morphism
\begin{equation}
\label{eqn:morphism-tfD-fD}
 \tfD \to \fD,
\end{equation}
and it is easily seen that the image of this morphism contains all the closed points of $\fD$; this morphism is therefore surjective.
 For any $\lambda \in \bbX$ we denote by $\fD(\lambda)$ the scheme-theoretic image of $\tfD(\lambda)$ in $\fD$; in other words $\scO(\fD(\lambda))$ is the image of the composition
 \[
 \scO(\fD) \to \scO(\tfD) \twoheadrightarrow \scO(\tfD(\lambda)),
\] 
see~\cite[\href{https://stacks.math.columbia.edu/tag/056A}{Tag 056A}]{stacks-project}. The morphism~\eqref{eqn:morphism-tfD-fD} then factors through a finite morphism $\tfD(\lambda) \to \fD(\lambda)$, which is surjective since its image is closed and dense, see~\cite[\href{https://stacks.math.columbia.edu/tag/01R8}{Tag 01R8}]{stacks-project}.
 Since $\tfD(\lambda)$ is integral, so is $\fD(\lambda)$. Moreover, one can check that
\[
 \fD(\lambda) = \fD(\mu) \quad \text{iff} \quad \tla=\tmu,
\]
where the operation $\lambda \mapsto \tla$ is as in~\S\ref{ss:weights}.
If $\Lambda \subset \bbX$ is (as in~\S\ref{ss:comparison}) a subset of representatives for $\ft^*_{\mathbb{F}_p}/(W,\bullet)$, we therefore have
\[
 \fD = \bigcup_{\lambda \in \Lambda} \fD(\lambda),
\]
and this constitues the decomposition of $\fD$ into its irreducible components.

Let us consider the open subset
\[
 \ft^*_\circ := \{ \xi \in \ft^* \mid \forall w \in W, \, w \bullet \xi - \xi \notin \ft^*_{\mathbb{F}_p} \smallsetminus \{0\}\} \subset \ft^*.
\]
Then $\ft^*_\circ$ is stable under the $(W,\bullet)$-action, and is in fact the pullback of an open subset of $\ft^*/(W,\bullet)$, which therefore identifies with the quotient $\ft^*_\circ / (W,\bullet)$.

Recall the Grothendieck resolution $\tbg$\footnote{Of course, $\tbg$ is the Frobenius twist of the Grothendieck resolution attached to the group $G$.} (for the reductive group $\bG=G^{(1)}$) and the morphism $\vartheta : \tbg \to \ft^{*(1)}$, see~\S\ref{ss:Kostant-section-Abe} and~\S\ref{ss:centralizer-Kostant}.
If we denote by $\widetilde{\bS}^*$ the (scheme-theoretic) preimage of $\bS^*$ in $\tbg$, then by~\cite[Proposition~3.5.5]{riche-kostant} the morphism $\vartheta$ restricts to an isomorphism $\widetilde{\bS}^* \simto \ft^{*(1)}$. In concrete terms, this means that given $\zeta \in \ft^{*(1)}/W$ identified with an element in $\bS^{*}$, the datum of a preimage of $\zeta$ in $\ft^{*(1)}$ is equivalent to the datum of a Borel subgroup $B' \subset G$ such that $\zeta_{|\Lie(U')^{(1)}}=0$, where $U'$ is the unipotent radical of $B'$.

\begin{prop}
\label{prop:fiber-Verma}
Let $\lambda \in \bbX$ be a weight which belongs to the lower closure of the fundamental alcove.
Consider some element $\xi \in \ft^*_\circ$, and denote by $(\zeta_1,\zeta_2) \in \fD(\lambda)$ the image of $(\xi + \overline{\lambda+\rho},\xi) \in \tfD(\lambda)$ in $\fD$
and by $\eta \in \bS^{*}$ the element corresponding to the images of $\zeta_1$ and $\zeta_2$ in $\ft^{*(1)}/W$.
As explained above the image of $\xi$ in $\ft^{*(1)}$ determines a Borel subgroup $B' \subset G$ with unipotent radical $U'$ such that $\eta_{|\Lie(U')^{(1)}}=0$.

If we denote by $i : \Spec(\bk) \to \fD$ the morphism defined by $(\zeta_1,\zeta_2)$, there exists an
isomorphism of $\cU_\eta^{\zeta_1} \fg \otimes (\cU_\eta^{\zeta_2} \fg)^{\op}$-modules
\[
 i^* \bigl( \Sim(\lambda+\rho) \otimes \cU_\bS \fg \bigr) \cong \Ver_{\eta,B'}(\xi+\overline{\lambda+\rho}) \otimes \Ver_{\eta,B'}(\xi)^*.
\]
\end{prop}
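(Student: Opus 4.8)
The plan is to compute $i^*\bigl(\Sim(\lambda+\rho)\otimes\cU_\bS\fg\bigr)$ directly using the definition of the completed diagonally-induced bimodule and the fact that, after restriction to the Kostant section and specialization at a closed point, the algebra $\cU_\bS\fg$ becomes a matrix algebra splitting onto a baby Verma module. First I would unwind what $i^*$ means: since $\cU_\bS\fg = \Ug\otimes_{\ZFr}\scO(\bS^*)$ and the point $(\zeta_1,\zeta_2)\in\fD$ specifies a maximal ideal of $\scO(\fD)=\ZHC\otimes_{\ZHC\cap\ZFr}\ZHC$, the pullback $i^*\bigl(\Sim(\lambda+\rho)\otimes\cU_\bS\fg\bigr)$ is $\bigl(\Sim(\lambda+\rho)\otimes\cU_\bS\fg\bigr)\otimes_{\scO(\fD)}\bk$, where the left copy of $\ZHC$ acts through $\mathfrak m^{\zeta_1}$ and the right copy through $\mathfrak m^{\zeta_2}$; concretely this is $\Sim(\lambda+\rho)\otimes \bigl(\cU_\eta^{\zeta_1}\fg \otimes_{\cU_0\fg}(\cdots)\bigr)$ — more precisely, using the isomorphism $(\Ug\otimes\Ug^\op)\otimes_\Ug V\simto V\otimes\Ug$ of \S\ref{ss:HCBim} (``induction from the diagonal''), I would first rewrite $\Sim(\lambda+\rho)\otimes\cU_\bS\fg$ as $(\cU_\bS\fg\otimes_{\scO(\bS^*)}\cU_\bS\fg^\op)\otimes_{\cU_\bS\fg}\Sim(\lambda+\rho)$, so that after specialization at $(\zeta_1,\zeta_2)$ it becomes $\bigl(\cU_\eta^{\zeta_1}\fg\otimes(\cU_\eta^{\zeta_2}\fg)^\op\bigr)\otimes_{\cU_\eta\fg}\Sim(\lambda+\rho)$, where the $\cU_\eta\fg$ in the middle acts on $\Sim(\lambda+\rho)$ via the differentiated $G$-action.

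The key point is then that $\eta\in\bS^*\subset\fg^{*(1)}_\reg$, so by Proposition~\ref{prop:Ureg-Azumaya} (in the concrete form of~\eqref{eqn:U-End-Z}) the algebra $\cU_\eta^{\zeta_j}\fg$ is isomorphic to $\End_\bk\bigl(\Ver_{\eta,B'}(\xi_j)\bigr)$, where $\xi_1 = \xi+\overline{\lambda+\rho}$ and $\xi_2=\xi$ (one must check that these are indeed the elements of $\ft^*$ lifting $\eta'\in(\Lie(B')/\Lie(U'))^{*(1)}$ and mapping to $\zeta_1,\zeta_2$ respectively under $\ft^*\to\ft^*/(W,\bullet)$; this is exactly the content of the setup in the statement, where $B'$ is determined by the image of $\xi$ in $\ft^{*(1)}$ via the isomorphism $\widetilde\bS^*\simto\ft^{*(1)}$). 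Hence $\cU_\eta^{\zeta_1}\fg\otimes(\cU_\eta^{\zeta_2}\fg)^\op \cong \End_\bk\bigl(\Ver_{\eta,B'}(\xi_1)\bigr)\otimes\End_\bk\bigl(\Ver_{\eta,B'}(\xi_2)\bigr)^\op \cong \End_\bk\bigl(\Ver_{\eta,B'}(\xi_1)\otimes\Ver_{\eta,B'}(\xi_2)^*\bigr)$, a matrix algebra. By Morita theory for this matrix algebra, every module is a direct sum of copies of the unique simple module $\Ver_{\eta,B'}(\xi_1)\otimes\Ver_{\eta,B'}(\xi_2)^*$, and the multiplicity is recovered as $\dim$ of the module divided by $p^{2\#\fR^+}$ — or, better, by computing $\Hom$ into the simple. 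So it remains to identify the image of $\Sim(\lambda+\rho)$ under the Morita equivalence, i.e.\ to show $\bigl(\cU_\eta^{\zeta_1}\fg\otimes(\cU_\eta^{\zeta_2}\fg)^\op\bigr)\otimes_{\cU_\eta\fg}\Sim(\lambda+\rho)$ is a \emph{single} copy of the simple bimodule.

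For this last step I would compute the dimension: $\Sim(\lambda+\rho)$ has some dimension $d$, the tensor $\bigl(\cU_\eta^{\zeta_1}\fg\otimes(\cU_\eta^{\zeta_2}\fg)^\op\bigr)\otimes_{\cU_\eta\fg}\Sim(\lambda+\rho)$ — using that $\cU_\eta\fg\cong\End_\bk(\Ver_{\eta,B'}(\xi_1))$ acts on $\Sim(\lambda+\rho)$, which as a module over this matrix algebra is $\Ver_{\eta,B'}(\xi_1)^{\oplus d/p^{\#\fR^+}}$ provided $p^{\#\fR^+}\mid d$, and $\Ver_{\eta,B'}(\xi_2)^*$ tensored appropriately — works out to $\Ver_{\eta,B'}(\xi_1)\otimes\Ver_{\eta,B'}(\xi_2)^*$ exactly once, using $\End_\bk(V)\otimes_V W \cong \Hom_\bk(V,\bk)\otimes_{\bk}\cdots$; the bookkeeping here is that $\End_\bk(\Ver_{\eta,B'}(\xi_1))\otimes_{\End_\bk(\Ver_{\eta,B'}(\xi_1))}\Sim(\lambda+\rho)\cong\Sim(\lambda+\rho)$ and one then applies $\Ver_{\eta,B'}(\xi_1)\otimes_{\Sim(\lambda+\rho)}(-)$... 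I expect the cleanest route is: the module $\Sim(\lambda+\rho)$, viewed via the diagonal $\fg$-action as a $\cU_\eta\fg$-module, is isomorphic to $\Ver_{\eta,B'}(\xi_1)\otimes\Hom_{\cU_\eta\fg}(\Ver_{\eta,B'}(\xi_1),\Sim(\lambda+\rho))$ (Morita), and the outer $\Hom$-space has dimension $d/p^{\#\fR^+}$; then tensoring up over $\cU_\eta\fg$ with $\cU_\eta^{\zeta_1}\fg\otimes(\cU_\eta^{\zeta_2}\fg)^\op$ and using that as a right $\cU_\eta\fg$-module this is $\Ver_{\eta,B'}(\xi_1)^*\otimes\cU_\eta^{\zeta_2}\fg \cong \Hom_\bk(\Ver_{\eta,B'}(\xi_1),\bk)\otimes\End_\bk(\Ver_{\eta,B'}(\xi_2))$ gives the answer. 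The main obstacle will be this final multiplicity/dimension bookkeeping — getting the $\fg$-action on $\Sim(\lambda+\rho)$ that appears in the tensor product right (it is the \emph{differentiated} action of the diagonal $G$, not a naive one), and verifying that $\Sim(\lambda+\rho)$ has Frobenius character exactly $\eta$ when $\lambda$ is in the lower closure of the fundamental alcove so that baby Verma modules of highest weight $\xi_1$ genuinely appear. The regularity of $\eta$ and the linkage of $\lambda+\rho$ to $\xi_1$ are what make this go through, and I would lean on~\cite{ps} and~\cite{jantzen-Lie} (already cited in the proof of Proposition~\ref{prop:Ureg-Azumaya}) for the dimension count $\dim\Ver_{\eta,B'}(\xi)=p^{\#\fR^+}$ and for $\Sim(\lambda+\rho)$ being killed by $\mathfrak m_\eta$.
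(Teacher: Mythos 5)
Your opening reduction is essentially the paper's: using \eqref{eqn:U-End-Z} for the right-hand character one identifies $\cU_\eta^{\zeta_2} \fg \cong \End_\bk(\Ver_{\eta,B'}(\xi)) \cong \Ver_{\eta,B'}(\xi) \otimes \Ver_{\eta,B'}(\xi)^*$, splits off the factor $\Ver_{\eta,B'}(\xi)^*$, and reduces everything to showing that $\bk_{\zeta_1} \otimes_{\ZHC} \bigl( \Sim(\lambda+\rho) \otimes \Ver_{\eta,B'}(\xi) \bigr) \cong \Ver_{\eta,B'}(\xi+\overline{\lambda+\rho})$ as $\cU_\eta^{\zeta_1}\fg$-modules, i.e.\ to a multiplicity-one statement for the unique simple module over the matrix algebra $\cU_\eta^{\zeta_1}\fg$. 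Up to that point the two arguments agree.

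The way you then compute the multiplicity does not work, and it bypasses the actual content of the proposition. You repeatedly treat $\Sim(\lambda+\rho)$ as a $\cU_\eta\fg$-module (``$\cU_\eta\fg \cong \End_\bk(\Ver_{\eta,B'}(\xi_1))$ acts on $\Sim(\lambda+\rho)$'', ``$\Sim(\lambda+\rho)$ has Frobenius character exactly $\eta$'', ``$\Sim(\lambda+\rho)$ being killed by $\mathfrak{m}_\eta$''). This is false: $\Sim(\lambda+\rho)$ is an algebraic $G$-module, so $x^p-x^{[p]}$ acts on it by $0$ and its Frobenius character is $0$, whereas $\eta \in \bS^*$ is regular, hence nonzero since $G$ is semisimple. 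Accordingly, the diagonal copy of $\Ug$ over which you form $(\cU_\bS\fg \otimes_{\scO(\bS^*)} \cU_\bS\fg^\op) \otimes_{\Ug} \Sim(\lambda+\rho)$ acts through $\cU_0\fg$ (the two Frobenius characters cancel), not through $\cU_\eta\fg$; in particular $\Sim(\lambda+\rho)$ is not a direct sum of baby Verma modules and no Morita/dimension count of the shape $d/p^{\#\fR^+}$ is available. Symptomatically, your argument never uses the hypotheses $\xi \in \ft^*_\circ$ and $\lambda$ in the lower closure of the fundamental alcove, yet these are exactly what force the multiplicity to be one; for general $\xi$ or $\lambda$ the $\zeta_1$-generalized eigenspace can pick up several baby Verma constituents. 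The missing step is the paper's central computation: by adjunction the multiplicity equals $\dim \Hom_{\cU_\eta\fg}\bigl(\Ver_{\eta,B'}(\xi), \Sim(-w_0\lambda+\rho) \otimes \Ver_{\eta,B'}(\xi+\overline{\lambda+\rho})\bigr)$; the tensor identity together with a $B'$-stable filtration of $\Sim(-w_0\lambda+\rho)$ gives a filtration of the latter module with subquotients $\Ver_{\eta,B'}(\xi+\overline{\lambda+\rho+\mu})$, $\mu$ running over the weights of $\Sim(-w_0\lambda+\rho)$; and then the genericity of $\xi$ (definition of $\ft^*_\circ$), the no-torsion condition \eqref{eqn:no-torsion} via Lemma~\ref{lem:weights}, and \cite[Lemma~II.7.7]{jantzen} combined with the alcove condition on $\lambda$ show that exactly one subquotient (the one for the multiplicity-one weight $\mu=-\lambda-\rho$) has generalized Harish-Chandra character $\zeta_2$, so the $\Hom$-space is one-dimensional. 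Without this (or an equivalent) argument the multiplicity-one step, and hence the proof, fails.
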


\begin{proof}
 By definition we have
 \[
 i^* \bigl( \Sim(\lambda+\rho) \otimes \cU_\bS \fg \bigr) \cong \bk_{\zeta_1} \otimes_{\ZHC} \bigl( \Sim(\lambda+\rho) \otimes \cU_\eta^{\zeta_2} \fg \bigr).
\]
By construction,
the image of $\xi$ in $\ft^{*(1)}$ corresponds to the element in the space $(\Lie(B')/\Lie(U'))^{*(1)}$ defined by $\eta$;
by~\eqref{eqn:U-End-Z}, we therefore have a canonical isomorphism
\[
 \cU_\eta^{\zeta_2} \fg \simto \End_{\bk}(\Ver_{\eta, B'}(\xi)) \cong \Ver_{\eta, B'}(\xi) \otimes \Ver_{\eta, B'}(\xi)^*,
\]
under which the action of $\cU\fg$ induced by left multiplication on the left-hand side corresponds to the natural action on $\Ver_{\eta, B'}(\xi)$. We deduce an isomorphism
\[
 i^* \bigl( \Sim(\lambda+\rho) \otimes \cU_\bS \fg \bigr) \cong \bk_{\zeta_1} \otimes_{\ZHC} \bigl( \Sim(\lambda+\rho) \otimes \Ver_{\eta, B'}(\xi) \bigr) \otimes \Ver_{\eta, B'}(\xi)^*,
 \]
 which shows that to conclude the proof it suffices to construct an isomorphism of $\cU_\eta^{\zeta_1}$-modules
 \begin{equation}
\label{eqn:fiber-Vermas}
\bk_{\zeta_1} \otimes_{\ZHC} \bigl( \Sim(\lambda+\rho) \otimes \Ver_{\eta, B'}(\xi) \bigr) \cong \Ver_{\eta,B'}(\xi + \overline{\lambda+\rho}).
 \end{equation}

 As above we have a canonical isomorphism
 \[
  \cU_\eta^{\zeta_1} \fg \simto \End_{\bk}(\Ver_{\eta, B'}(\xi+\overline{\lambda+\rho}));
 \]
therefore, any $\cU_\eta^{\zeta_1} \fg$-module is isomorphic to a direct sum of copies of $\Ver_{\eta, B'}(\xi+\overline{\lambda+\rho})$. To analyze how many copies we have for the specific module in the left-hand side of~\eqref{eqn:fiber-Vermas}, we observe that
\begin{multline*}
 \Hom_{\cU_\eta^{\zeta_1} \fg}(\bk_{\zeta_1} \otimes_{\ZHC} \bigl( \Sim(\lambda+\rho) \otimes \Ver_{\eta, B'}(\xi) \bigr), \Ver_{\eta,B'}(\xi + \overline{\lambda+\rho})) = \\
 \Hom_{\cU_\eta \fg}( \Sim(\lambda+\rho) \otimes \Ver_{\eta, B'}(\xi), \Ver_{\eta,B'}(\xi + \overline{\lambda+\rho})) \cong \\
 \Hom_{\cU_\eta \fg}( \Ver_{\eta, B'}(\xi), \Sim(-w_0\lambda+\rho) \otimes \Ver_{\eta,B'}(\xi + \overline{\lambda+\rho})).
\end{multline*}

We now consider the $\cU_\eta \fg$-module $\Sim(-w_0\lambda+\rho) \otimes \Ver_{\eta,B'}(\xi + \overline{\lambda+\rho})$, and more specifically the direct summand on which $\ZHC$ acts with a generalized character corresponding to $\zeta_2$.
We have a canonical isomorphism of $\cU_\eta \fg$-modules
\[
 \Sim(-w_0\lambda+\rho) \otimes \Ver_{\eta, B'}(\xi+\overline{\lambda+\rho}) \cong \cU_\eta \fg \otimes_{\cU_\eta \Lie(B')} \bigl( \Sim(-w_0\lambda+\rho)_{|B'} \otimes \bk_{\xi+\overline{\lambda+\rho}} \bigr).
\]
The $B'$-module $\Sim(-w_0\lambda+\rho)_{|B'}$ admits a filtration
\[
 0 \subset M_1 \subset \cdots \subset M_n = \Sim(-w_0\lambda+\rho)_{|B'}
\]
where each $M_i/M_{i-1}$ is $1$-dimensional; moreover these modules are associated with the characters of $B'/U' \cong B/U \cong T$ corresponding to the $T$-weights of $\Sim(-w_0\lambda+\rho)$, counted with multiplicities. This filtration induces a filtration of $\Sim(-w_0\lambda+\rho)_{|B'} \otimes \bk_{\xi+\overline{\lambda+\rho}}$, and then of $\Sim(-w_0\lambda+\rho) \otimes \Ver_{\eta, B'}(\xi+\overline{\lambda+\rho})$, whose subquotients are of the form $\Ver_{\eta,B'}(\xi+\overline{\lambda+\rho+\mu})$, where $\mu$ runs over the $T$-weights of $\Sim(-w_0\lambda+\rho)$, counted with multiplicities.

We claim that there exists exactly one subquotient in this filtration on which $\ZHC$ acts via the character $\zeta_2$, corresponding to the multiplicity-$1$ weight $-\lambda-\rho$ of $\Sim(-w_0\lambda+\rho)$. Indeed, assume that $\ZHC$ acts with character $\zeta_2$ on $\Ver_{\eta,B'}(\xi+\overline{\lambda+\rho+\mu})$. Then there exists $w \in W$ such that $\xi+\overline{\lambda+\rho+\mu}=w \bullet \xi$. 
Since $\xi$ belongs to $\ft^*_\circ$, this condition implies that $\xi+\overline{\lambda+\rho+\mu}=\xi$, hence that $\lambda+\rho+\mu \in p \bbX$. On the other hand, $\mu$ is a weight of $\Sim(-w_0\lambda+\rho)$, hence it belongs to $-w_0\lambda+\rho + \Z\fR = -\lambda - \rho+\Z\fR$. 
In view of~\eqref{eqn:no-torsion}
these conditions imply that $\lambda+\rho+\mu \in p\Z\fR$, i.e.~that $\lambda+\mu \in -\rho+p\Z\fR=\Waff \bullet (-\rho)$. By~\cite[Lemma~II.7.7]{jantzen} (applied to the pair of elements $(\lambda,-\rho)$), there must then exist $w \in \Waff$ such that $w \bullet \lambda =\lambda$ and $\lambda+\mu= w \bullet(-\rho)$. Here, since $\lambda$ belongs to the lower closure of the fundamental alcove, the first condition implies that $w \in W$ (see~\S\ref{ss:HCBim-completed}); it follows that $w \bullet (-\rho)=-\rho$, hence that $\lambda+\mu=-\rho$, which finishes the proof of our claim.


This claim implies that the direct summand of $\Sim(-w_0\lambda+\rho) \otimes \Ver_{\eta,B'}(\xi + \overline{\lambda+\rho})$ corresponding to the generalized character of $\ZHC$ given by $\zeta_2$ is isomorphic to $\Ver_{\eta, B'}(\xi)$; it follows that
\[
 \Hom_{\cU_\eta^{\zeta_1} \fg} \Bigl( \bk_{\zeta_1} \otimes_{\ZHC} \bigl( \Sim(\lambda+\rho) \otimes \Ver_{\eta, B'}(\xi) \bigr), \Ver_{\eta,B'}(\xi + \overline{\lambda+\rho}) \Bigr)
\]
is $1$-dimensional, which finally proves~\eqref{eqn:fiber-Vermas}.
\end{proof}


The statement of Proposition~\ref{prop:fiber-Verma} is not symmetric, in that the conditions we impose imply that $\zeta_2$ necessarily belongs to $\ft^*_\circ / (W,\bullet)$, whereas $\zeta_1$ might not. Below we will also need the other variant of this statement, in which the \emph{first} component has to belong to $\ft^*_\circ / (W,\bullet)$. Its proof is analogous to that of Proposition~\ref{prop:fiber-Verma}. (More precisely, in this case the counterpart of~\eqref{eqn:fiber-Vermas} can be obtained directly, without recourse to the computation in the paragraph following this equation.)

\begin{prop}
\label{prop:fiber-Verma-prime}
Let $\mu \in \bbX$ be a weight which belongs to the lower closure of the fundamental alcove.
Consider some element $\xi \in \ft^*_\circ$, and denote by
$(\zeta_1,\zeta_2) \in \fD(-w_0 \mu)$ the image of $(\xi ,\xi + \overline{\mu+\rho}) \in \tfD(-w_0 \mu)$ in $\fD$
and by $\eta \in \bS^{*}$ the element corresponding to the images of $\zeta_1$ and $\zeta_2$ in $\ft^{*(1)}/W$. As explained above Proposition~\ref{prop:fiber-Verma} the image of $\xi$ in $\ft^{*(1)}$ determines a Borel subgroup $B' \subset G$ with unipotent radical $U'$ such that $\eta_{|\Lie(U')^{(1)}}=0$.

If we denote by $i : \Spec(\bk) \to \fD$ the morphism defined by $(\zeta_1,\zeta_2)$, there exists an
isomorphism of $\cU_\eta^{\zeta_1} \fg \otimes (\cU_\eta^{\zeta_2} \fg)^{\op}$-modules
\[
 i^* \bigl( \Sim(-w_0\mu+\rho) \otimes \cU_\bS \fg \bigr) \cong \Ver_{\eta,B'}(\xi) \otimes \Ver_{\eta,B'}(\xi+\overline{\mu+\rho})^*.
\]
\end{prop}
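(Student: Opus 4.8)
The plan is to run the proof of Proposition~\ref{prop:fiber-Verma} with the two generalized characters interchanged: the key feature here is that it is $\zeta_1$, and not $\zeta_2$, that corresponds to a point of $\ft^*_\circ/(W,\bullet)$, and this is exactly what makes available the shortcut signalled in the remark preceding the statement. First I would unwind the stalk. Since $(\zeta_1,\zeta_2)$ is the image of $(\xi,\xi+\overline{\mu+\rho})$ in $\fD$, reducing the right-hand copy of $\ZHC$ modulo $\mathfrak{m}^{\zeta_2}$ gives a canonical isomorphism
\[
 i^*\bigl(\Sim(-w_0\mu+\rho)\otimes\cU_\bS\fg\bigr)\cong\bk_{\zeta_1}\otimes_{\ZHC}\bigl(\Sim(-w_0\mu+\rho)\otimes\cU_\eta^{\zeta_2}\fg\bigr).
\]
Since the Artin--Schreier map kills $\ft^*_{\mathbb{F}_p}$, the weights $\xi$ and $\xi+\overline{\mu+\rho}$ have the same image in $\ft^{*(1)}$; hence the Borel subgroup $B'$ attached to $\xi$ in the statement is simultaneously adapted to $\zeta_2$, and~\eqref{eqn:U-End-Z} applies with this same $B'$ to give $\cU_\eta^{\zeta_2}\fg\simto\End_\bk(\Ver_{\eta,B'}(\xi+\overline{\mu+\rho}))\cong\Ver_{\eta,B'}(\xi+\overline{\mu+\rho})\otimes\Ver_{\eta,B'}(\xi+\overline{\mu+\rho})^*$, with left multiplication corresponding to the natural action on the first tensor factor.

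Substituting, the proposition reduces to the construction of an isomorphism of $\cU_\eta^{\zeta_1}\fg$-modules
\[
 \bk_{\zeta_1}\otimes_{\ZHC}\bigl(\Sim(-w_0\mu+\rho)\otimes\Ver_{\eta,B'}(\xi+\overline{\mu+\rho})\bigr)\cong\Ver_{\eta,B'}(\xi),
\]
the exact counterpart of~\eqref{eqn:fiber-Vermas}; granting it, reassembling the identifications above (and noting that the right $(\cU_\eta^{\zeta_2}\fg)^{\op}$-action on the dual factor is never altered) yields the asserted isomorphism of $\cU_\eta^{\zeta_1}\fg\otimes(\cU_\eta^{\zeta_2}\fg)^{\op}$-modules. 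To prove this last isomorphism I would argue directly, without the Frobenius-reciprocity step used for~\eqref{eqn:fiber-Vermas}, precisely because the target character $\zeta_1$ is now the generic one. Filtering $\Sim(-w_0\mu+\rho)_{|B'}$ by one-dimensional $B'$-submodules indexed by the $T$-weights of $\Sim(-w_0\mu+\rho)$ (counted with multiplicity) produces a filtration of $\Sim(-w_0\mu+\rho)\otimes\Ver_{\eta,B'}(\xi+\overline{\mu+\rho})$ whose subquotients are the $\Ver_{\eta,B'}(\xi+\overline{\mu+\rho+\nu})$. A subquotient carries generalized $\ZHC$-character $\zeta_1$ iff $\xi+\overline{\mu+\rho+\nu}\in W\bullet\xi$; since $\xi\in\ft^*_\circ$ and $\overline{\mu+\rho+\nu}\in\ft^*_{\mathbb{F}_p}$ this forces $\overline{\mu+\rho+\nu}=0$, i.e.\ $\mu+\rho+\nu\in p\bbX$, and as $\nu$ is congruent mod $\Z\fR$ to the lowest weight $-\mu-\rho$ of $\Sim(-w_0\mu+\rho)$, equation~\eqref{eqn:no-torsion} gives $\mu+\rho+\nu\in p\Z\fR$, i.e.\ $\mu+\nu\in\Waff\bullet(-\rho)$. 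As in the proof of Proposition~\ref{prop:fiber-Verma}, \cite[Lemma~II.7.7]{jantzen} applied to $(\mu,-\rho)$ then produces $w\in\Waff$ fixing $\mu$ with $\mu+\nu=w\bullet(-\rho)$; since $\mu$ lies in the lower closure of the fundamental alcove its $\Waff$-stabilizer is contained in $W$ (see~\S\ref{ss:HCBim-completed}), so $w\bullet(-\rho)=-\rho$ and $\nu=-\mu-\rho$, the multiplicity-one lowest weight. Thus exactly one subquotient of the filtration has character $\zeta_1$, and it is $\Ver_{\eta,B'}(\xi)$; as the generalized $\zeta_1$-eigenspace is a module over $\cU_\eta^{\zeta_1}\fg\cong\End_\bk(\Ver_{\eta,B'}(\xi))$, hence a direct sum of copies of $\Ver_{\eta,B'}(\xi)$, it must be a single copy, which is what we wanted.

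I expect the only genuinely delicate point to be the bookkeeping in the first step: verifying that the Borel $B'$, which is attached to $\zeta_1$, also computes $\cU_\eta^{\zeta_2}\fg$ via~\eqref{eqn:U-End-Z}, and tracking that the diagonal left $\Ug$-action on $\Sim(-w_0\mu+\rho)\otimes\cU_\bS\fg$ corresponds, after all the identifications, to the natural action on the first $\Ver$-factor. Everything else is a faithful transcription of the corresponding steps in the proof of Proposition~\ref{prop:fiber-Verma}.
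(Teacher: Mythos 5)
Your proof is correct and coincides with the paper's, which is given only as the remark that the argument of Proposition~\ref{prop:fiber-Verma} applies verbatim, with the counterpart of~\eqref{eqn:fiber-Vermas} obtained directly and without the Hom-space computation --- precisely the shortcut you identify and carry out (same reduction via~\eqref{eqn:U-End-Z} with the same Borel $B'$, same filtration of $\Sim(-w_0\mu+\rho)\otimes\Ver_{\eta,B'}(\xi+\overline{\mu+\rho})$, same weight analysis using~\eqref{eqn:no-torsion} and~\cite[Lemma~II.7.7]{jantzen} together with the fact that the $\Waff$-stabilizer of $\mu$ lies in $W$). The only point worth making explicit is that $\bk_{\zeta_1}\otimes_{\ZHC}(-)$ is a priori the $\mathfrak{m}^{\zeta_1}$-coinvariants rather than the generalized $\zeta_1$-eigenspace, but the two agree here since your filtration shows that this eigenspace is a single copy of $\Ver_{\eta,B'}(\xi)$, on which $\ZHC$ acts through the honest character $\zeta_1$.
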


\subsection{Proof of Theorem~\ref{thm:splitting}}
\label{ss:proof-splitting}

The proof of Theorem~\ref{thm:splitting} will require two more preliminary lemmas.

\begin{lem}
\label{lem:projective-rk}
 Let $X$ be a reduced scheme locally of finite type over $\bk$, and let $\scF$ be a coherent sheaf on $X$. Assume that there exists $d \geq 0$ such that for any morphism $i : \Spec(\bk) \to X$ the pullback $i^*(\scF) \in \Coh(\Spec(\bk))=\mathsf{Vect}_\bk$ has dimension $d$. Then $\scF$ is a locally free $\scO_X$-module of rank $d$.
\end{lem}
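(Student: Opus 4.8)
The plan is to reduce to an affine local statement and then combine upper semicontinuity of fibre dimensions with the generic freeness of finitely generated modules over domains. The point that makes the argument work is that, since $\bk$ is algebraically closed and $X$ is locally of finite type over $\bk$, the morphisms $i : \Spec(\bk) \to X$ hit exactly the closed points of $X$, and every nonempty open subscheme of a $\bk$-variety contains such a point.

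First I would reduce to the case $X = \Spec(A)$ with $A$ a reduced finitely generated $\bk$-algebra and $\scF = \widetilde{M}$ for a finitely generated $A$-module $M$, and consider the function
\[
\phi \colon \Spec(A) \to \Z_{\geq 0}, \qquad \mathfrak{p} \mapsto \dim_{\kappa(\mathfrak{p})} \bigl( M \otimes_A \kappa(\mathfrak{p}) \bigr),
\]
which by Nakayama's lemma is the minimal number of generators of the localization $M_{\mathfrak{p}}$, and which is upper semicontinuous in the sense that $\phi(\mathfrak{p}) \leq \phi(\mathfrak{p}')$ whenever $\mathfrak{p} \subseteq \mathfrak{p}'$ (see~\cite[\href{https://stacks.math.columbia.edu/tag/01PC}{Tag 01PC}]{stacks-project}). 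The hypothesis says precisely that $\phi(\mathfrak{m}) = d$ for every maximal ideal $\mathfrak{m} \subset A$.

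The main step is to prove that $\phi \equiv d$ on all of $\Spec(A)$. The inequality $\phi \leq d$ is immediate, since every prime is contained in a maximal ideal. For the reverse inequality I would first treat a minimal prime $\mathfrak{q}$ of $A$: the ring $A/\mathfrak{q}$ is a domain of finite type over $\bk$, and generic freeness provides a nonempty basic open subset of $\Spec(A/\mathfrak{q})$ on which $M/\mathfrak{q}M$ is free of some rank $r = \phi(\mathfrak{q})$; this open subset contains a $\bk$-point $\mathfrak{m}$, whence $\phi(\mathfrak{m}) = r$ and therefore $r = d$ by hypothesis. Then for an arbitrary prime $\mathfrak{p}$ one chooses a minimal prime $\mathfrak{q} \subseteq \mathfrak{p}$ and obtains $\phi(\mathfrak{p}) \geq \phi(\mathfrak{q}) = d$. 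I expect this step — more precisely, keeping the successive reductions to reduced rings, minimal primes and generic freeness organised correctly — to be the only real obstacle, although nothing deep is involved.

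Finally, knowing that $\phi$ is constant equal to $d$ and that $A$ is reduced, I would conclude that $M$ is locally free of rank $d$. Working at a maximal ideal $\mathfrak{m}$, pick a surjection $A_{\mathfrak{m}}^{\oplus d} \twoheadrightarrow M_{\mathfrak{m}}$ (possible since $M_{\mathfrak{m}}$ is generated by $d$ elements); localising at each minimal prime $\mathfrak{q}$ of $A_{\mathfrak{m}}$ turns this into a surjection of $d$-dimensional $\kappa(\mathfrak{q})$-vector spaces, hence an isomorphism, so the kernel vanishes after localising at every minimal prime. As $A_{\mathfrak{m}}$ is reduced it embeds into the product of the residue fields at its minimal primes, which forces the kernel to be zero. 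Thus $M_{\mathfrak{m}}$ is free of rank $d$ for every $\mathfrak{m}$, and since $A$ is Noetherian this shows that $\scF$ is a locally free $\scO_X$-module of rank $d$ (see~\cite[\href{https://stacks.math.columbia.edu/tag/00NX}{Tag 00NX}]{stacks-project}). Alternatively, once $\phi$ is shown to be constant one may directly invoke the standard fact that a coherent sheaf on a reduced locally Noetherian scheme with locally constant fibre dimension is locally free.
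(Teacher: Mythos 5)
Your proof is correct, and its overall skeleton coincides with the paper's: reduce to $X=\Spec(A)$ with $A$ reduced and of finite type over $\bk$, identify the $\bk$-points with the closed points, show that the fibre-dimension function $\mathfrak{p} \mapsto \dim_{\kappa(\mathfrak{p})} M \otimes_A \kappa(\mathfrak{p})$ is constant equal to $d$ on all of $\Spec(A)$, and then use reducedness to upgrade constancy of fibre dimension to local freeness. Where you differ is in the middle step: the paper quotes the full upper semicontinuity theorem (openness of the loci where the fibre dimension is $\leq n$, citing Peskine) and then uses the Jacobson property twice, once to see that the open locus $\{\leq d\}$ containing all closed points is everything and once to see that the open locus $\{\leq d-1\}$ containing no closed point is empty; you instead use only the trivial specialization inequality (the fibre dimension can only drop under generization, by Nakayama) to get the bound $\leq d$, and generic freeness of $M/\mathfrak{q}M$ over the domain $A/\mathfrak{q}$ for each minimal prime $\mathfrak{q}$, combined with the existence of a closed point in the free locus, to pin down the value $d$ at the minimal primes and hence the bound $\geq d$ everywhere. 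Your route is somewhat more elementary (no semicontinuity-of-open-loci theorem is needed, only generic freeness and the Nullstellensatz), at the cost of an extra pass through the irreducible components; the paper's is shorter because it delegates both the semicontinuity statement and the final step to references, whereas you also spell out the final step (kernel of a surjection $A_{\mathfrak{m}}^{\oplus d} \twoheadrightarrow M_{\mathfrak{m}}$ vanishes after localizing at the minimal primes, and a reduced ring injects into the product of those localizations), which is precisely the content of the Stacks Project result [\href{https://stacks.math.columbia.edu/tag/0FWG}{Tag 0FWG}] that the paper cites; your closing remark correctly identifies that alternative. One small point of care: the reference you give for semicontinuity is not needed for what you actually use, since you only invoke the specialization form, which you justify directly; make sure the final citation step also records that freeness at every maximal ideal implies freeness at every prime (a localization of a free module is free) before applying the finitely-presented-plus-locally-free criterion.
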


\begin{proof}
 Of course we can assume that $X$ is also affine and of finite type, i.e.~that $X=\Spec(A)$ for some finitely generated reduced $\bk$-algebra $A$. 
With this notation, recall that closed points are dense in any closed subset of $\Spec(A)$, see~\S\ref{ss:study-fibers}.
 
 Let us denote by $M$ the $A$-module corresponding to $\scF$. In this setting the datum of a morphism $i : \Spec(\bk) \to X$ is equivalent to the datum of a maximal ideal $\mathfrak{m} \subset A$, and we have $i^*(\scF)=M/\mathfrak{m} \cdot M$. In view of~\cite[\href{https://stacks.math.columbia.edu/tag/0FWG}{Tag 0FWG}]{stacks-project}, to show that $M$ is locally free of rank $d$ it suffices to prove that for any $\mathfrak{p} \in \Spec(A)$ we have
 \[
  \dim_{A_{\mathfrak{p}}/\mathfrak{p} A_{\mathfrak{p}}}(M_{\mathfrak{p}} / \mathfrak{p} M_{\mathfrak{p}})=d.
 \]
Now by~\cite[Theorem~7.33]{peskine}, the function
\[
 \mathfrak{p} \mapsto \dim_{A_{\mathfrak{p}}/\mathfrak{p} A_{\mathfrak{p}}}(M_{\mathfrak{p}} / \mathfrak{p} M_{\mathfrak{p}})
\]
is upper semi-continuous. By assumption, this function is constant (equal to $d$) on the subset of $\Spec(A)$ consisting of maximal ideals, i.e.~of closed points. Hence the open subset
\[
\{\mathfrak{p} \in \Spec(A) \mid \dim_{A_{\mathfrak{p}}/\mathfrak{p} A_{\mathfrak{p}}}(M_{\mathfrak{p}} / \mathfrak{p} M_{\mathfrak{p}}) \leq d \}
\]
contains all closed points, hence is the whole of $\Spec(A)$. On the other hand the open subset
\[
\{\mathfrak{p} \in \Spec(A) \mid \dim_{A_{\mathfrak{p}}/\mathfrak{p} A_{\mathfrak{p}}}(M_{\mathfrak{p}} / \mathfrak{p} M_{\mathfrak{p}}) \leq d-1 \}
\]
does not contain any closed point, hence it is empty.
\end{proof}


\begin{lem}
\label{lem:morph-etale}
 The morphism
 \[
  \ft^*/(W,\bullet) \times_{\ft^{*(1)}/W} \ft^*/(W,\bullet) \times_{\ft^{*(1)}/W} \ft^*/(W,\bullet) \to \fD
 \]
 induced by projection on the first and third factors
is \'etale at any point of the form $(\widetilde{\lambda},\widetilde{-\rho},\widetilde{\mu})$ with $\lambda,\mu \in \bbX$.
\end{lem}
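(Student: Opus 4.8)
The plan is to identify the morphism of the statement as a base change of the natural map $f\colon \ft^*/(W,\bullet)\to\ft^{*(1)}/W$ induced by the Artin--Schreier map on quotients, and then to prove that $f$ is \'etale at the image of the differential $\overline{-\rho}$ of $-\rho$.

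First I would set $Y:=\ft^*/(W,\bullet)$ and $Z:=\ft^{*(1)}/W$, so that $\fD=Y\times_Z Y$. The two projections $Y\times_Z Y\to Y\to Z$ coincide, giving a canonical morphism $g\colon Y\times_Z Y\to Z$, and one checks on functors of points that $((a,c),b)\mapsto(a,b,c)$ is an isomorphism $(Y\times_Z Y)\times_Z Y\simto Y\times_Z Y\times_Z Y$ (the fibre product on the left being formed with $g$ on the first object and the structure map $f$ on the second) under which the projection to $Y\times_Z Y$ corresponds to the morphism of the statement, i.e.\ to projection onto the first and third factors. Thus that morphism is the base change of $f$ along $g$; since base change preserves \'etaleness at a point, it is \'etale at $(\tla,\widetilde{-\rho},\tmu)$ provided $f$ is \'etale at the image of $\overline{-\rho}$ in $Y$, which is the coordinate of this point in the ``last factor'' $Y$. (Note that $(\tla,\widetilde{-\rho},\tmu)$ does define a point of the triple fibre product, because $\overline{\lambda}$, $\overline{-\rho}$, $\overline{\mu}$ all lie in $\ft^*_{\mathbb{F}_p}$, hence all map to $0$ under $\AS$.)

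To prove that $f$ is \'etale at the image of $\overline{-\rho}$, I would use three facts: $\overline{-\rho}=-\overline{\rho}$ is fixed by the dot-action of $W$ on $\ft^*$ (immediate from $w\bullet\xi=w(\xi+\overline{\rho})-\overline{\rho}$); $\AS(\overline{-\rho})=0$; and $0$ is fixed by the natural $W$-action on $\ft^{*(1)}$. Since $\AS$ is \'etale and $W$-equivariant and $\bk$ is algebraically closed, it induces a $W$-equivariant isomorphism of complete local rings $\widehat{\scO}_{\ft^{*(1)},0}\simto\widehat{\scO}_{\ft^*,\overline{-\rho}}$. On the other hand, since $\overline{-\rho}$ is the whole fibre of $\ft^*\to Y$ over its image (because $\overline{-\rho}$ is $W$-fixed), since $\scO(\ft^*)$ is finite over $\scO(Y)=\scO(\ft^*)^W$, and since forming $W$-invariants $\Hom_{\bk[W]}(\bk,-)$ commutes with the flat base change given by completion, one gets canonical identifications $\widehat{\scO}_{Y,\overline{-\rho}}\cong(\widehat{\scO}_{\ft^*,\overline{-\rho}})^W$ and likewise $\widehat{\scO}_{Z,0}\cong(\widehat{\scO}_{\ft^{*(1)},0})^W$. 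Taking $W$-invariants in the isomorphism above then shows that $f$ induces an isomorphism on complete local rings at the image of $\overline{-\rho}$ and at $0$. As $f$ is of finite type between Noetherian schemes and $\bk$ is algebraically closed, this forces $f$ to be flat at that point (by faithfully flat descent) and unramified there (since $\mathfrak{m}_0$ then generates $\mathfrak{m}_{\overline{-\rho}}$, which may be checked after completion), hence \'etale, as desired.

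The main obstacle I anticipate is the bookkeeping in the last step: verifying cleanly that the $W$-equivariant \'etale map $\AS$ really descends to an isomorphism on the complete local rings of the \emph{quotients} at the points in question --- equivalently, that forming $W$-invariants is compatible with the relevant completions and that the fibres of $\ft^*\to Y$ and $\ft^{*(1)}\to Z$ over these points are single points. The base-change reduction of the first part and the final implication ``isomorphism on complete local rings $\Rightarrow$ \'etale at the point'' are standard.
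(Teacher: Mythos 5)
Your proof is correct, and the reduction step (projection on the first and third factors is a base change of $\ft^*/(W,\bullet) \to \ft^{*(1)}/W$, so it suffices to prove \'etaleness of the latter at $\widetilde{-\rho}$) is exactly the reduction the paper makes. Where you diverge is in how you prove that \'etaleness: the paper introduces the action of the semi-direct product $\ft^*_{\mathbb{F}_p} \rtimes W$ on $\ft^*$ defined by $(\ola w) \bullet \xi = w(\xi+\overline{\rho})-\overline{\rho}+\ola$, observes that the composite $\ft^* \to \ft^*/(W,\bullet) \to \ft^{*(1)}/W$ is the quotient map for this action and that the stabilizer of $\overline{-\rho}$ is precisely $W$, and then invokes the general criterion of~\cite[Exp.~V, Proposition~2.2]{sga1} — the same criterion that underlies Lemma~\ref{lem:quotient-etale}, so the argument is uniform with the rest of the paper and takes one line. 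You instead argue directly on complete local rings: the Artin--Schreier map is finite \'etale and $W$-equivariant, $\overline{-\rho}$ and $0$ are $W$-fixed so the fibres of $\ft^* \to \ft^*/(W,\bullet)$ and $\ft^{*(1)} \to \ft^{*(1)}/W$ over the relevant points are singletons, invariants commute with the flat base change given by completion, and an isomorphism on completed local rings at closed points (over an algebraically closed field, for a finite-type morphism) forces \'etaleness at the point. This is more elementary and self-contained — no semi-direct product and no appeal to SGA1 — but it essentially reproves by hand the special case of that quotient criterion where the point is fixed by the subgroup, at the cost of the extra bookkeeping you yourself flag (transitivity of $W$ on the fibres, compatibility of invariants with completion, and the ``iso on completions $\Rightarrow$ flat and unramified'' step). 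Both routes are sound.
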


\begin{proof}
 To prove this claim it suffices to prove that the morphism $\ft^*/(W,\bullet) \to \ft^{*(1)}/W$ is \'etale at $\widetilde{-\rho}$. The dot-action of $W$ and the natural action of $\ft^*_{\mathbb{F}_p}$ on $\ft^*$ combine to provide an action of the semi-direct product $\ft^*_{\mathbb{F}_p} \rtimes W$ (where $W$ acts on $\ft^*_{\mathbb{F}_p}$ through the natural, unshifted, action) defined by $(\ola w) \bullet \xi = w(\xi+\overline{\rho})-\overline{\rho}+\ola$ for $\ola \in \ft^*_{\mathbb{F}_p}$ and $w \in W$. Moreover, the composition $\ft^* \to \ft^*/(W,\bullet) \to \ft^{*(1)}/W$ is the quotient morphism for this action. Since $\overline{-\rho}$ is stabilized by $W$, the claim then follows from~\cite[Exp.~V, Proposition~2.2]{sga1}.
\end{proof}

For $\lambda \in \bbX$, whose image in $\ft^*_{\mathbb{F}_p}/(W,\bullet)$ is that of $\lambda' \in \Lambda$, we set
\[
 \fD_\circ(\lambda) := \Bigl(\ft^* / (W,\bullet) \times_{\ft^{*(1)}/W} \ft^*_\circ / (W,\bullet) \Bigr) \smallsetminus \left( \bigcup_{\mu \in \Lambda  \smallsetminus \{\lambda'\}} \fD(\mu) \right).
\]
Then $\fD_\circ(\lambda)$ is an open subset of $\fD$, contained in $\fD(\lambda)$. We will denote by $j_\lambda : \fD_\circ(\lambda) \to \fD$ the embedding.
Continuing with the same notation, we also set 
\[
 \fD_\circ'(\lambda) := \Bigl(\ft_\circ^* / (W,\bullet) \times_{\ft^{*(1)}/W} \ft^* / (W,\bullet) \Bigr) \smallsetminus \left( \bigcup_{\mu \in \Lambda  \smallsetminus \{\lambda'\}} \fD(\mu) \right),
\]
and we denote by $j_\lambda' : \fD_\circ'(\lambda) \to \fD$ the open embedding.

\begin{proof}[Proof of Theorem~\ref{thm:splitting}]
 Let $\lambda,\mu \in \bbX$ belong to the lower closure of the fundamental alcove. 
The vector $\overline{-\rho}$ belongs to $\ft^*_\circ$ since this point is stable under the dot-action of $W$. On the other hand, if $\nu \in \bbX$ is such that $(\widetilde{\lambda},\widetilde{-\rho}) \in \fD(\nu)$, then there exists $\xi \in \ft^*$ such that the point $(\xi+\overline{\nu+\rho},\xi) \in \tfD$  has image $(\widetilde{\lambda},\widetilde{-\rho})$ in $\fD$; we then have $\xi \in W \bullet \overline{-\rho} = \{\overline{-\rho}\}$ and $\xi + \overline{\nu+\rho} \in W \bullet \overline{\lambda}$, so that $\tla=\widetilde{\nu}$. We have finally checked that $(\widetilde{\lambda},\widetilde{-\rho}) \in \fD_\circ(\lambda)$; similar considerations show that $(\widetilde{-\rho},\widetilde{\mu}) \in \fD'_\circ(-w_0\mu)$.
 
 By construction, $\fD_\circ(\lambda) \times_{\ft^*/(W,\bullet)} \fD'_\circ(-w_0\mu)$ is an open subscheme in the fiber product $\ft^*/(W,\bullet) \times_{\ft^{*(1)}/W} \ft^*/(W,\bullet) \times_{\ft^{*(1)}/W} \ft^*/(W,\bullet)$.
 Consider the morphism
 \[
  f : \fD_\circ(\lambda) \times_{\ft^*/(W,\bullet)} \fD'_\circ(-w_0\mu) \to \ft^*/(W,\bullet)
 \]
induced by projection on the middle factor. The algebra $\cU_\bS \fg$ is an $\scO(\ft^*/(W,\bullet))$-algebra; it therefore defines a coherent sheaf of $\scO_{\ft^*/(W,\bullet)}$-algebras $\mathscr{A}$ on $\ft^*/(W,\bullet)$. Consider also the projections
\begin{align*}
 p &: \fD_\circ(\lambda) \times_{\ft^*/(W,\bullet)} \fD'_\circ(-w_0\mu) \to \fD_\circ(\lambda), \\
 q &: \fD_\circ(\lambda) \times_{\ft^*/(W,\bullet)} \fD'_\circ(-w_0\mu) \to \fD'_\circ(-w_0 \mu).
\end{align*}
The sheaves $p^* j_\lambda^* ( \Sim(\lambda+\rho) \otimes \cU_\bS \fg )$ and $q^* (j'_{-w_0\mu})^* ( \Sim(-w_0\mu+\rho) \otimes \cU_\bS \fg)$ are naturally sheaves of (right and left, respectively) modules for $f^* \mathscr{A}$, so that we can consider the tensor product
\begin{equation}
\label{eqn:splitting-bundle}
 p^* j_\lambda^* ( \Sim(\lambda+\rho) \otimes \cU_\bS \fg ) \otimes_{f^* \mathscr{A}} q^* (j'_{-w_0\mu})^* ( \Sim(-w_0\mu+\rho) \otimes \cU_\bS \fg ).
\end{equation}

We claim that this sheaf is a locally free $\scO_{\fD_\circ(\lambda) \times_{\ft^*/(W,\bullet)} \fD'_\circ(-w_0\mu)}$-module, of rank $p^{2\# \fR^+}$. In fact, by Lemma~\ref{lem:projective-rk}, to prove this it suffices to prove that for any closed point $(\zeta_1,\zeta_2,\zeta_3) \in \fD_\circ(\lambda) \times_{\ft^*/(W,\bullet)} \fD'_\circ(-w_0\mu)$, denoting by $i : \Spec(\bk) \to \fD_\circ(\lambda) \times_{\ft^*/(W,\bullet)} \fD'_\circ(-w_0\mu)$ the corresponding morphism, the vector space
\begin{equation}
\label{eqn:fiber-splitting-bundle}
 i^* \bigl( p^* j_\lambda^* ( \Sim(\lambda+\rho) \otimes \cU_\cS \fg ) \otimes_{f^* \mathscr{A}} q^* (j'_{-w_0\mu})^* ( \Sim(-w_0\mu+\rho) \otimes \cU_\cS \fg ) \bigr)
\end{equation}
has dimension $p^{2\#\fR^+}$. If we denote by $i_1 : \Spec(\bk) \to \fD$ and $i_2 : \Spec(\bk) \to \fD$ the embeddings of the points $(\zeta_1,\zeta_2)$ and $(\zeta_2,\zeta_3)$ respectively, then this vector space can be written as
\[
 i_1^* \bigl( \Sim(\lambda+\rho) \otimes \cU_\bS \fg \bigr) \otimes_{\cU_\eta^{\zeta_2} \fg} i_2^* \bigl( \Sim(-w_0\mu+\rho) \otimes \cU_\bS \fg \bigr),
\]
where $\eta \in \bS^{*}$ is the image of the $\zeta_i$'s. Let $\xi \in \ft^*$ be such that $(\zeta_1,\zeta_2)$ is the image of $(\xi + \overline{\lambda+\rho},\xi)$, and let $B' \subset G$ be the Borel subgroup with unipotent radical $U'$ such that $\eta_{|\Lie(U')^{(1)}}=0$ determined by the image of $\xi$ in $\ft^{*(1)}$ (see the comments above Proposition~\ref{prop:fiber-Verma}). By Proposition~\ref{prop:fiber-Verma}  we have
\[
 i_1^* \bigl( \Sim(\lambda+\rho) \otimes \cU_\bS \fg \bigr) \cong \Ver_{\eta,B'}(\xi+\overline{\lambda+\rho}) \otimes \Ver_{\eta,B'}(\xi)^*.
\]
Similarly, if $\xi' \in \ft^*$ is such that $(\zeta_2,\zeta_3)$ is the image of $(\xi',\xi' + \overline{\mu+\rho})$, and if $B'' \subset G$ is the Borel subgroup with unipotent radical $U''$ such that $\eta_{|\Lie(U'')^{(1)}}=0$ determined by the image of $\xi'$ in $\ft^{*(1)}$, then by Proposition~\ref{prop:fiber-Verma-prime} we have
\[
 i_2^* \bigl( \Sim(-w_0 \mu+\rho) \otimes \cU_\bS \fg \bigr) \cong \Ver_{\eta,B''}(\xi') \otimes \Ver_{\eta,B''}(\xi'+\overline{\mu+\rho})^*.
\]
Here $\Ver_{\eta,B'}(\xi)$ and $\Ver_{\eta,B''}(\xi')$ are two simple modules over the matrix algebra $\cU_\eta^{\zeta_2} \fg$, see~\S\ref{ss:Azumaya-Ug}; they must therefore be isomorphic. Fixing an isomorphism $\varphi : \Ver_{\eta,B'}(\xi) \simto \Ver_{\eta,B''}(\xi')$, we obtain a pairing
\[
 \Ver_{\eta,B'}(\xi)^* \otimes \Ver_{\eta,B''}(\xi') \to \bk
\]
defined by $f \otimes v \mapsto f(\varphi^{-1}(v))$, which induces an isomorphism
\[
 \Ver_{\eta,B'}(\xi)^* \otimes_{\cU_\eta^{\zeta_2} \fg} \Ver_{\eta,B''}(\xi') \simto \bk.
\]
Combining these observations we obtain that the vector space in~\eqref{eqn:fiber-splitting-bundle} is isomorphic to
\[
 \Ver_{\eta,B'}(\xi+\overline{\lambda+\rho}) \otimes \Ver_{\eta,B''}(\xi'+\overline{\mu+\rho})^*,
\]
hence has dimension $p^{2\#\fR^+}$, as desired.
 
 Now we consider the morphism
 \[
  \fD_\circ(\lambda) \times_{\ft^*/(W,\bullet)} \fD'_\circ(-w_0\mu) \to \fD
 \]
obtained from that of Lemma~\ref{lem:morph-etale} by restriction to the open subset 
\[
\fD_\circ(\lambda) \times_{\ft^*/(W,\bullet)} \fD'_\circ(-w_0\mu) \subset \ft^*/(W,\bullet) \times_{\ft^{*(1)}/W} \ft^*/(W,\bullet) \times_{\ft^{*(1)}/W} \ft^*/(W,\bullet).
\]
This lemma ensures that this morphism is \'etale at $(\widetilde{\lambda},\widetilde{-\rho},\widetilde{\mu})$; it therefore identifies the completion of $\fD_\circ(\lambda) \times_{\ft^*/(W,\bullet)} \fD'_\circ(-w_0\mu)$ at $(\widetilde{\lambda},\widetilde{-\rho},\widetilde{\mu})$ with the completion of $\fD$ at $(\widetilde{\lambda},\widetilde{\mu})$, i.e.~with the spectrum of $\cZ_\bS^{\hat{\lambda},\hat{\mu}}$. By construction the completion of the sheaf~\eqref{eqn:splitting-bundle} at $(\widetilde{\lambda},\widetilde{-\rho},\widetilde{\mu})$ is $\sfM^{\lambda,\mu}_\bS$; since this sheaf is locally free this proves that $\sfM^{\lambda,\mu}_\bS$ is faithfully projective. In fact, since the ring $\cZ_\bS^{\hat{\lambda},\hat{\mu}}$ is local, this module is even free (of rank $p^{2\#\fR^+}$) by~\cite[\href{https://stacks.math.columbia.edu/tag/00NZ}{Tag 00NZ}]{stacks-project}.

Finally we consider the natural morphism
\[
\cU_\bS^{\hat{\lambda},\hat{\mu}} \to \End_{\cZ_\bS^{\hat{\lambda},\hat{\mu}}}(\sfM^{\lambda,\mu}_\bS).
\]
Here, both sides are finite free as modules over $\cZ_\bS^{\hat{\lambda},\hat{\mu}}$. In fact, for the right-hand side this follows from the same property for the module $\sfM^{\lambda,\mu}_\bS$, which we have seen above. For the left-hand side, we observe that $\cU_\bS \fg$ is finite projective over $\scO(\fC_\bS)$ by Proposition~\ref{prop:Ureg-Azumaya}; it follows that $\cU_\bS \fg \otimes_{\scO(\bS^{*})} \cU_\bS \fg^\op$ is finite projective over $\cZ_\bS$, and finally that $\cU_\bS^{\hat{\lambda},\hat{\mu}}$ is finite projective, hence finite free (again by~\cite[\href{https://stacks.math.columbia.edu/tag/00NZ}{Tag 00NZ}]{stacks-project}), over the local ring $\cZ_\bS^{\hat{\lambda},\hat{\mu}}$. Given this property, to prove that our morphism is an isomorphism it suffices to prove that it is invertible after application of the functor $\bk \otimes_{\cZ_\bS^{\hat{\lambda},\hat{\mu}}} (-)$. Now if we denote by $\chi \in \fg^{*(1)}$ the point corresponding to the image of $0$ in $\ft^{*(1)}/W$ under the identification $\bS^* \simto \ft^{*(1)}/W$, we have
\[
 \bk \otimes_{\cZ_\bS^{\hat{\lambda},\hat{\mu}}} \cU_\bS^{\hat{\lambda},\hat{\mu}} = \cU_\chi^\lambda \fg \otimes (\cU_\chi^\mu \fg)^\op.
\]
On the other hand, since $\sfM^{\lambda,\mu}_\bS$ is a free module we have
\[
 \bk \otimes_{\cZ_\bS^{\hat{\lambda},\hat{\mu}}} \End_{\cZ_\bS^{\hat{\lambda},\hat{\mu}}}(\sfM^{\lambda,\mu}_\bS) \cong \End_\bk( \bk \otimes_{\cZ_\bS^{\hat{\lambda},\hat{\mu}}} \sfM^{\lambda,\mu}_\bS),
\]
and applying the considerations above with $\xi=\xi'=\overline{-\rho}$
we have
\begin{equation}
\label{eqn:fiber-M-lambda-mu}
 \bk \otimes_{\cZ_\bS^{\hat{\lambda},\hat{\mu}}} \sfM^{\lambda,\mu}_\bS \cong \Ver_{\chi,B'}(\overline{\lambda}) \otimes \Ver_{\chi,B'}(\overline{\mu})^*,
\end{equation}
where $B' \subset G$ is the unique Borel subgroup with unipotent radical $U'$ such that $\chi_{|\Lie(U')^{(1)}}=0$.
By~\eqref{eqn:U-End-Z} our morphism is indeed an isomorphism, which finishes the proof.
\end{proof}

\subsection{Localization for Harish-Chandra bimodules}
\label{ss:localization-HC}

The main consequence of Theorem~\ref{thm:splitting} that will be used below is the following statement. (See~\S\ref{ss:HCBim-S} for the definition of $\cU_\bS^{\hat{\lambda}}$, and~\S\ref{ss:splitting-bundles} for that of $\cZ_\bS^{\hat{\lambda}}$.)

\begin{cor}
\label{cor:equiv-ModUg-ModZ}
For any $\lambda,\mu \in \bbX$ in the lower closure of the fundamental alcove, the functor $\sfM^{\lambda,\mu}_\bS \otimes_{\cZ_\bS^{\hat{\lambda},\hat{\mu}}} (-)$ induces an equivalence of abelian categories
\[
\mathscr{L}_{\lambda,\mu} : \Mod^{\bbI}_{\fin}(\cZ_\bS^{\hat{\lambda},\hat{\mu}}) \simto \Mod^{\bbI}_{\mathrm{fg}}(\cU_\bS^{\hat{\lambda},\hat{\mu}})
\]
which restricts to an equivalence of abelian subcategories
\[
\Mod^{\bbJ}_{\fin}(\cZ_\bS^{\hat{\lambda},\hat{\mu}}) \simto \HCBim_\bS^{\hat{\lambda},\hat{\mu}}.
\]
Moreover, 
in case $\lambda=\mu$, there exists a canonical isomorphism
\begin{equation}
\label{eqn:L-units}
\mathscr{L}_{\lambda,\lambda}(\cZ_\bS^{\hat{\lambda}}) \cong \cU_\bS^{\hat{\lambda}}.
\end{equation}
\end{cor}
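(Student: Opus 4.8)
The plan is to read Corollary~\ref{cor:equiv-ModUg-ModZ} off from Theorem~\ref{thm:splitting} by a Morita-theoretic argument carried out equivariantly. First I would fix $\lambda,\mu\in\bbX$ in the lower closure of the fundamental alcove. Since $\cZ_\bS^{\hat\lambda,\hat\mu}$ is central in $\cU_\bS^{\hat\lambda,\hat\mu}$ and Noetherian, Theorem~\ref{thm:splitting}---which asserts that $\sfM^{\lambda,\mu}_\bS$ is faithfully projective over $\cZ_\bS^{\hat\lambda,\hat\mu}$ and that the action map $\cU_\bS^{\hat\lambda,\hat\mu}\simto\End_{\cZ_\bS^{\hat\lambda,\hat\mu}}(\sfM^{\lambda,\mu}_\bS)$ is an isomorphism---places us exactly in the situation of the Morita statement~\eqref{eqn:equiv-faith-proj} of~\S\ref{ss:Azumaya-alg}. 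Thus the functor $\sfM^{\lambda,\mu}_\bS\otimes_{\cZ_\bS^{\hat\lambda,\hat\mu}}(-)$ is an equivalence $\Mod_{\mathrm{fg}}(\cZ_\bS^{\hat\lambda,\hat\mu})\simto\Mod_{\mathrm{fg}}(\cU_\bS^{\hat\lambda,\hat\mu})$, with quasi-inverse $\Hom_{\cZ_\bS^{\hat\lambda,\hat\mu}}(\sfM^{\lambda,\mu}_\bS,\cZ_\bS^{\hat\lambda,\hat\mu})\otimes_{\cU_\bS^{\hat\lambda,\hat\mu}}(-)$.

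Next I would add equivariance and then cut down to the Harish--Chandra subcategories. The module $\sfM^{\lambda,\mu}_\bS$ lies in $\HCBim_\bS^{\hat\lambda,\hat\mu}$, hence carries an $\bbI^{\hat\lambda,\hat\mu}_\bS$-equivariant structure compatible with its $\cU_\bS^{\hat\lambda,\hat\mu}$-module structure; as $\bbI^{\hat\lambda,\hat\mu}_\bS$ is a group scheme over $\Spec(\cZ_\bS^{\hat\lambda,\hat\mu})$ it acts trivially on $\cZ_\bS^{\hat\lambda,\hat\mu}$, so $\sfM^{\lambda,\mu}_\bS$ is an $\bbI^{\hat\lambda,\hat\mu}_\bS$-equivariant $(\cU_\bS^{\hat\lambda,\hat\mu},\cZ_\bS^{\hat\lambda,\hat\mu})$-bimodule, and the isomorphism $\cU_\bS^{\hat\lambda,\hat\mu}\simto\End_{\cZ_\bS^{\hat\lambda,\hat\mu}}(\sfM^{\lambda,\mu}_\bS)$ is equivariant. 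It then follows formally that both Morita functors promote to functors between the $\bbI$-equivariant categories---using Lemma~\ref{lem:modules}\eqref{it:lem-mod-2} to equip the $\Hom$-functor with its equivariant structure and Lemma~\ref{lem:modules}\eqref{it:lem-mod-1},\eqref{it:lem-mod-3} to see that unit and counit are maps of equivariant modules---giving the equivalence $\mathscr{L}_{\lambda,\mu}:=\sfM^{\lambda,\mu}_\bS\otimes_{\cZ_\bS^{\hat\lambda,\hat\mu}}(-):\Mod^{\bbI}_{\fin}(\cZ_\bS^{\hat\lambda,\hat\mu})\simto\Mod^{\bbI}_{\mathrm{fg}}(\cU_\bS^{\hat\lambda,\hat\mu})$. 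For the refinement, recall that $\Mod^{\bbJ}_{\fin}(\cZ_\bS^{\hat\lambda,\hat\mu})$ is the full subcategory on which the normal subgroup $G_1\subset\bbI^{\hat\lambda,\hat\mu}_\bS$ acts trivially, while $\HCBim_\bS^{\hat\lambda,\hat\mu}$ consists of those $\cU_\bS^{\hat\lambda,\hat\mu}$-modules on which $G_1$ acts via $x\cdot m=xm-mx$. Since $\sfM^{\lambda,\mu}_\bS$ itself satisfies this Harish--Chandra condition, for $N$ with trivial $G_1$-action the $G_1$-action on $\sfM^{\lambda,\mu}_\bS\otimes_{\cZ_\bS^{\hat\lambda,\hat\mu}}N$ is $x(m\otimes n)=(xm-mx)\otimes n$, which is precisely $x\cdot(m\otimes n)-(m\otimes n)\cdot x$ for the $\cU_\bS\fg$-bimodule structure of $\mathscr{L}_{\lambda,\mu}(N)$, that structure being inherited entirely from the $\sfM^{\lambda,\mu}_\bS$-factor; hence $\mathscr{L}_{\lambda,\mu}(N)\in\HCBim_\bS^{\hat\lambda,\hat\mu}$. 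Conversely, for $M\in\HCBim_\bS^{\hat\lambda,\hat\mu}$ the $G_1$-action on $\mathscr{L}_{\lambda,\mu}^{-1}(M)$ is trivial, because a $\cU_\bS^{\hat\lambda,\hat\mu}$-linear (hence $\cU_\bS\fg$-bilinear) morphism between two objects satisfying the Harish--Chandra condition intertwines the inner $\fg$-actions, so is $\fg$-invariant---just as in the equivalences of~\S\ref{ss:HCBim}. This yields the equivalence $\Mod^{\bbJ}_{\fin}(\cZ_\bS^{\hat\lambda,\hat\mu})\simto\HCBim_\bS^{\hat\lambda,\hat\mu}$.

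Finally, for $\lambda=\mu$, I would produce the unit isomorphism as follows. The counit of the adjunction of Lemma~\ref{lem:convolution-adjoint} applied to the pair $(\lambda,-\rho)$ (note $-\rho$ lies in the closure of the fundamental alcove) gives a natural morphism $\sfP^{\lambda,-\rho}_\bS\hatotimes_{\cU_\bS\fg}\sfP^{-\rho,\lambda}_\bS\hatotimes_{\cU_\bS\fg}(-)\to(-)$; evaluating at the unit object $\cU_\bS^{\hat\lambda}$ and using that $\cZ_\bS^{\hat\lambda,\hat\lambda}$ acts on $\cU_\bS^{\hat\lambda}$ through the diagonal quotient $\cZ_\bS^{\hat\lambda,\hat\lambda}\twoheadrightarrow\ZHC^{\hat\lambda}=\cZ_\bS^{\hat\lambda}$, this descends to a morphism $\mathscr{L}_{\lambda,\lambda}(\cZ_\bS^{\hat\lambda})=\sfM^{\lambda,\lambda}_\bS\otimes_{\cZ_\bS^{\hat\lambda,\hat\lambda}}\cZ_\bS^{\hat\lambda}\to\cU_\bS^{\hat\lambda}$. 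Both sides are finite free of rank $p^{2\#\fR^+}$ over the local ring $\ZHC^{\hat\lambda}$ (for $\cU_\bS^{\hat\lambda}$ by the faithful projectivity in Proposition~\ref{prop:Ureg-Azumaya}; for $\mathscr{L}_{\lambda,\lambda}(\cZ_\bS^{\hat\lambda})$ by the freeness of $\sfM^{\lambda,\lambda}_\bS$ over $\cZ_\bS^{\hat\lambda,\hat\lambda}$ established in the proof of Theorem~\ref{thm:splitting}). A computation with the adjunction counit, combined with the fiber identifications~\eqref{eqn:fiber-M-lambda-mu} and~\eqref{eqn:U-End-Z}, shows that modulo the maximal ideal this morphism becomes the canonical isomorphism $\Ver_{\chi,B'}(\ola)\otimes\Ver_{\chi,B'}(\ola)^*\simto\cU_\chi^\lambda\fg$; by Nakayama's lemma it is therefore surjective, and a surjection between finite free modules of equal rank over a local ring is an isomorphism.

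The hard part will be the equivariant bookkeeping in the second paragraph: carefully matching the $G_1$-action coming from $\bbI^{\hat\lambda,\hat\mu}_\bS$ with the inner action $x\cdot m=xm-mx$ on both sides of the equivalence, and verifying that the two quasi-inverse Morita functors remain mutually quasi-inverse after being promoted to equivariant categories (this is where Lemma~\ref{lem:modules} does the real work). The final step, though elementary, also requires some care to see that the adjunction counit restricts to the evaluation/trace map on the fiber at $\chi$.
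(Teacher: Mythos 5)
Your proposal is correct and follows essentially the same route as the paper: Morita theory applied to the splitting bundle of Theorem~\ref{thm:splitting}, an equivariant upgrade of the resulting equivalence, identification of the Harish--Chandra subcategories via the $G_1$-part of the $\bbI$-action (the paper phrases this through triviality of the kernel of $\bbI^{\hat{\lambda},\hat{\mu}}_\bS \ltimes G_1 \to \bbI^{\hat{\lambda},\hat{\mu}}_\bS$ together with freeness of $\sfM^{\lambda,\mu}_\bS$, while you check the two directions separately, but the content is the same), and the unit isomorphism obtained from the adjunction morphism of Lemma~\ref{lem:convolution-adjoint} checked modulo the maximal ideal via~\eqref{eqn:fiber-M-lambda-mu} and~\eqref{eqn:U-End-Z}.
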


\begin{proof}
The properties stated in Theorem~\ref{thm:splitting} ensure that the functor
\[
\sfM^{\lambda,\mu}_\bS \otimes_{\cZ_\bS^{\hat{\lambda},\hat{\mu}}} (-)
\]
induces an equivalence of abelian categories
\[
\Mod_{\mathrm{fg}}(\cZ_\bS^{\hat{\lambda},\hat{\mu}}) \simto \Mod_{\mathrm{fg}}(\cU_\bS^{\hat{\lambda},\hat{\mu}}),
\]
see~\eqref{eqn:equiv-faith-proj}. Adding the $\bbI^{\hat{\lambda},\hat{\mu}}_\bS$-actions in the picture we obtain the desired equivalence
\[
\Mod^{\mathbb{I}}_{\fin}(\cZ_\bS^{\hat{\lambda},\hat{\mu}}) \simto \Mod^{\bbI}_{\mathrm{fg}}(\cU_\bS^{\hat{\lambda},\hat{\mu}}).
\]

Let us now identify the subcategory corresponding to completed Harish-Chandra $\cU_\bS \fg$-bimodules under this equivalence. Recall that any object in $\Mod^{\bbI}_{\mathrm{fg}}(\cU_\bS^{\hat{\lambda},\hat{\mu}})$ has a canonical action of $\bbI^{\hat{\lambda},\hat{\mu}}_\bS \ltimes G_1$, and that such an object is a Harish-Chandra bimodule iff this action factors through the product morphism $\bbI^{\hat{\lambda},\hat{\mu}}_\bS \ltimes G_1 \to \bbI^{\hat{\lambda},\hat{\mu}}_\bS$, i.e.~iff the action of the kernel $\mathbb{K}_\bS^{\hat{\lambda},\hat{\mu}}$ of this map is trivial. Now if $V$ is in $\Mod^{\mathbb{I}}_{\fin}(\cZ_\bS^{\hat{\lambda},\hat{\mu}})$, the action of $\bbI^{\hat{\lambda},\hat{\mu}}_\bS \ltimes G_1$ on $\sfM^{\lambda,\mu}_\bS \otimes_{\cZ_\bS^{\hat{\lambda},\hat{\mu}}} V$ is diagonal, induced by the action on $\sfM^{\lambda,\mu}_\bS$ and the action on $V$ obtained by pullback under the morphism $\bbI^{\hat{\lambda},\hat{\mu}}_\bS \ltimes G_1 \to \bbI^{\hat{\lambda},\hat{\mu}}_\bS$ given by projection on the first factor. By construction the module $\sfM^{\lambda,\mu}_\bS$ is a completed Harish-Chandra $\cU_\bS \fg$-bimodule, so that the action on this factor does factor though the product morphism $\bbI^{\hat{\lambda},\hat{\mu}}_\bS \ltimes G_1 \to \bbI^{\hat{\lambda},\hat{\mu}}_\bS$, and moreover $\sfM^{\lambda,\mu}_\bS$ is free over $\cZ_\bS^{\hat{\lambda},\hat{\mu}}$. Hence $\sfM^{\lambda,\mu}_\bS \otimes_{\cZ_\bS^{\hat{\lambda},\hat{\mu}}} V$ is a completed Harish-Chandra bimodule iff the action of $\mathbb{K}_\bS^{\hat{\lambda},\hat{\mu}}$ on $V$ is trivial, or in other words iff the action of the subgroup scheme $G_1 \times \Spec(\cZ_\bS^{\hat{\lambda},\hat{\mu}})$ on $V$ is trivial, or finally iff the action of $\bbI^{\hat{\lambda},\hat{\mu}}_\bS$ factor through the quotient morphism $\bbI^{\hat{\lambda},\hat{\mu}}_\bS \to \bbJ^{\hat{\lambda},\hat{\mu}}_\bS$. This proves that our equivalence restricts to an equivalence $\Mod^{\bbJ}_{\fin}(\cZ_\bS^{\hat{\lambda},\hat{\mu}}) \simto \HCBim_\bS^{\hat{\lambda},\hat{\mu}}$.

Finally, we consider the special case $\lambda=\mu$, and construct a canonical isomorphism
$\mathscr{L}_{\lambda,\lambda}(\cZ_\bS^{\hat{\lambda}}) \cong \cU_\bS^{\hat{\lambda}}$.
Adjunction (see Lemma~\ref{lem:convolution-adjoint}) provides a canonical morphism
\[
\sfP_\bS^{\lambda,-\rho} \hatotimes_{\cU_\bS \fg} \sfP_\bS^{-\rho,\lambda} \to \cU_\bS^{\hat{\lambda}},
\]
which factors through a morphism
\[
\mathscr{L}_{\lambda,\lambda}(\cZ_\bS^{\hat{\lambda}}) = \sfM_\bS^{\lambda,\lambda} \otimes_{\cZ_\bS^{\hat{\lambda},\hat{\lambda}}} \cZ_\bS^{\hat{\lambda}} \to \cU_\bS^{\hat{\lambda}}.
\]
Here, by the same considerations as in the proof of Theorem~\ref{thm:splitting}, both sides are finite free modules over the local ring $\cZ_\bS^{\hat{\lambda}}$; to prove that this morphism is an isomorphism it therefore suffices to check that the induced morphism
\[
\left( \sfM_\bS^{\lambda,\lambda} \otimes_{\cZ_\bS^{\hat{\lambda},\hat{\lambda}}} \cZ_\bS^{\hat{\lambda}} \right) \otimes_{\cZ_\bS^{\hat{\lambda}}} \bk \to \cU_\bS^{\hat{\lambda}} \otimes_{\cZ_\bS^{\hat{\lambda}}} \bk
\]
is invertible. The right-hand side identifies with $\cU_\chi^{\lambda} \fg$, where $\chi$ is as at the end of the proof of Theorem~\ref{thm:splitting}, and by~\eqref{eqn:fiber-M-lambda-mu} the left-hand side identifies with
$\Ver_{\chi,B'}(\overline{\lambda}) \otimes \Ver_{\chi,B'}(\overline{\lambda})^*$,
where $B' \subset G$ is the unique Borel subgroup with unipotent radical $U'$ such that $\chi_{|\Lie(U')^{(1)}}=0$; the desired claim is therefore clear from the isomorphism~\eqref{eqn:U-End-Z}.
\end{proof}

\begin{rmk}
\label{rmk:L-monoidal}
 We will prove later (at least in the special case when $\mu$ belongs to the fundamental alcove, see~\S\ref{ss:monoidality-L}) that for any $\lambda,\mu,\nu \in \bbX$ in the lower closure of the fundamental alcove the equivalences of Corollary~\ref{cor:equiv-ModUg-ModZ} intertwine the bifunctors
 \[
 \hatotimes_{\cU_\bS \fg} : \Mod^{\bbI}_{\fin}(\cU_\bS^{\hat{\lambda},\hat{\mu}}) \times \Mod^{\bbI}_{\fin}(\cU_\bS^{\hat{\mu},\hat{\nu}}) \to \Mod^{\bbI}_{\fin}(\cU_\bS^{\hat{\lambda},\hat{\nu}})
 \]
 (see~\S\ref{ss:HCBim-S}) and
 \[
 \hatstar_\bS : \Mod^{\bbI}_{\fin}(\cZ_\bS^{\hat{\lambda},\hat{\mu}}) \times \Mod^{\bbI}_{\fin}(\cZ_\bS^{\hat{\mu},\hat{\nu}}) \to \Mod^{\bbI}_{\fin}(\cZ_\bS^{\hat{\lambda},\hat{\nu}})
 \]
(see~\S\ref{ss:splitting-bundles}).
\end{rmk}

\section{\texorpdfstring{$\Ug$}{Ug} and differential operators on the flag variety}
\label{sec:Ug-D}

In this section we study the equivalences $\mathscr{L}_{\lambda,\mu}$ of Corollary~\ref{cor:equiv-ModUg-ModZ} further, using the relation between the algebra $\Ug$ and differential operators on the flag variety of $G$.

\subsection{Universal twisted differential operators}
\label{ss:Ug-D-reg}

Set $\cB:=G/B$, and consider the natural projection morphism
\[
 \omega : G/U \to \cB.
\]
Here $G/U$ admits a natural action of $T$ induced by multiplication on the right on $G$, and $\omega$ is a (Zariski locally trivial) $T$-torsor.
The sheaf of universal twisted differential operators on $\cB$ is the quasi-coherent sheaf of algebras
\[
 \tD := \omega_*(\mathscr{D}_{G/U})^T,
\]
where the exponent means $T$-invariants. The actions of $G$ and $T$ on $G/U$ induce a canonical algebra morphism
\begin{equation}
\label{eqn:morph-Ug-D}
 \Ug \otimes_{\ZHC} \scO(\ft^*) \to \Gamma(\cB,\tD),
\end{equation}
see~\cite[Lemma~3.1.5]{bmr}. 

Recall the Grothendieck resolution $\tbg$ for the group $\bG=G^{(1)}$ and the morphism $\vartheta : \tbg \to \ft^{*(1)}$, see~\S\ref{ss:Kostant-section-Abe} and~\S\ref{ss:centralizer-Kostant}.
Consider the Frobenius morphism $\Fr_\cB : \cB \to \cB^{(1)}$ and the natural morphism $f : \tbg \times_{\ft^{*(1)}} \ft^* \to \cB^{(1)}$. As explained in~\cite[\S 2.3]{bmr}, there exists a canonical algebra morphism
\[
 f_* \scO_{\tbg \times_{\ft^{*(1)}} \ft^*} \to (\Fr_\cB)_* \tD
\]
which takes values in the center of $(\Fr_\cB)_* \tD$, and which makes $(\Fr_\cB)_* \tD$ a locally finitely generated $f_* \scO_{\tbg \times_{\ft^{*(1)}} \ft^*}$-module. Since all the morphisms involved in this construction are affine, using this morphism one can consider $\tD$ as a coherent sheaf of $\scO_{\tbg \times_{\ft^{*(1)}} \ft^*}$-algebras on $\tbg \times_{\ft^{*(1)}} \ft^*$. (We will not introduce a different notation for this sheaf of algebras.)

Recall also (see~\S\ref{ss:study-fibers}) that we denote by $\widetilde{\bS}^*$ the preimage of $\bS^*$ under the natural morphism $\pi : \tbg \to \fg^{*(1)}$, and that the morphism $\vartheta$ restricts to an isomorphism $\widetilde{\bS}^* \simto \ft^{*(1)}$; in particular, $\widetilde{\bS}^*$ is an affine scheme.
We set
\[
 \tD_\bS := \tD_{|\widetilde{\bS}^{*} \times_{\ft^{*(1)}} \ft^*}.
\]
We will
also set
\[
 \widetilde{\cU}_\bS\fg := \cU_\bS\fg \otimes_{\ZHC} \scO(\ft^*).
\]

\begin{lem}
\label{lem:US-D}
 The morphism~\eqref{eqn:morph-Ug-D} induces an algebra isomorphism
 \[
  \widetilde{\cU}_\bS\fg \simto \Gamma \bigl( \widetilde{\bS}^{*} \times_{\ft^{*(1)}} \ft^*, \tD_\bS \bigr).
 \]
\end{lem}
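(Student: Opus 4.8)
The plan is to reduce the statement to the classical description of $\Ug$ in terms of twisted differential operators on $\cB$ and then restrict to the Kostant section.

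First I would recall the global statement underlying \eqref{eqn:morph-Ug-D}: by~\cite[Theorem~3.2]{bmr} (see also~\cite[\S 3.1]{bmr}) the morphism~\eqref{eqn:morph-Ug-D} is an isomorphism, so that $\Ug \otimes_{\ZHC} \scO(\ft^*) \simto \Gamma(\cB, \tD)$. Equivalently, viewing everything as sheaves of algebras over $\tbg \times_{\ft^{*(1)}} \ft^*$ as explained above, $\tD$ is the sheaf whose pushforward to $\fg^{*(1)} \times_{\ft^{*(1)}/W} \ft^*/(W,\bullet) \times_{\ft^*/(W,\bullet)} \ft^*$ has global sections $\Ug \otimes_{\ZHC} \scO(\ft^*)$. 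Here the relevant base change is along the morphism $\tbg \times_{\ft^{*(1)}} \ft^* \to \fg^{*(1)} \times_{\ft^{*(1)}/W} \ft^*/(W,\bullet) \times_{\ft^*/(W,\bullet)} \ft^*$ obtained from $\pi$ and $\vartheta$ together with the Artin--Schreier identifications.

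Then I would take the pullback of this identification along the locally closed embedding $\widetilde{\bS}^* \times_{\ft^{*(1)}} \ft^* \hookrightarrow \tbg \times_{\ft^{*(1)}} \ft^*$, which corresponds on the base to the base change along $\bS^* \hookrightarrow \fg^{*(1)}$ (using $\widetilde{\bS}^* \simto \ft^{*(1)}$ from~\cite[Proposition~3.5.5]{riche-kostant}). The key point making the passage to global sections harmless is that all the morphisms in play are affine: $\widetilde{\bS}^*$ is an affine scheme (being isomorphic to $\ft^{*(1)}$), and the structure morphism of $\tD$ as a quasi-coherent sheaf over $\tbg \times_{\ft^{*(1)}} \ft^*$ is affine, so restriction commutes with taking global sections. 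Concretely, $\Gamma(\widetilde{\bS}^* \times_{\ft^{*(1)}} \ft^*, \tD_\bS)$ is obtained from $\Gamma(\tbg \times_{\ft^{*(1)}} \ft^*, \tD) = \Ug \otimes_{\ZHC} \scO(\ft^*)$ by the base change $\scO(\bS^*) \otimes_{\scO(\fg^{*(1)})} (-)$, i.e.\ by $\scO(\bS^*) \otimes_{\ZFr} (-)$ via~\eqref{eqn:ZFr}. This yields
\[
\Gamma(\widetilde{\bS}^* \times_{\ft^{*(1)}} \ft^*, \tD_\bS) \cong \scO(\bS^*) \otimes_{\ZFr} \bigl(\Ug \otimes_{\ZHC} \scO(\ft^*)\bigr) = \bigl(\scO(\bS^*) \otimes_{\ZFr} \Ug\bigr) \otimes_{\ZHC} \scO(\ft^*) = \cU_\bS\fg \otimes_{\ZHC} \scO(\ft^*),
\]
which is $\widetilde{\cU}_\bS\fg$ by definition, and one checks directly that this identification is the one induced by~\eqref{eqn:morph-Ug-D}.

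The main obstacle I anticipate is bookkeeping rather than anything deep: one must be careful that the two central subalgebras $\ZFr$ and $\ZHC$ play distinct roles (the restriction to $\bS^*$ happens over the Frobenius center, while the extra $\scO(\ft^*)$ twist happens over the Harish-Chandra center), and that the sheaf-theoretic picture over $\tbg \times_{\ft^{*(1)}} \ft^*$ used in~\cite{bmr} matches the scheme-theoretic preimage conventions for $\widetilde{\bS}^*$ here. Verifying that the isomorphism is compatible with the algebra structures and with the morphism~\eqref{eqn:morph-Ug-D} is formal once one knows each relevant morphism is affine and flatness/base-change issues are under control (here one uses that $\Ug$, hence $\tD$, is finite over the appropriate center, so no higher cohomology intervenes). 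I would therefore present the argument as: (1) recall the global isomorphism from~\cite{bmr}; (2) identify $\tD$ with the associated sheaf of algebras on $\tbg \times_{\ft^{*(1)}} \ft^*$; (3) restrict to $\widetilde{\bS}^* \times_{\ft^{*(1)}} \ft^*$, using affineness to pass to global sections; (4) match the result with $\widetilde{\cU}_\bS\fg$ and with~\eqref{eqn:morph-Ug-D}.
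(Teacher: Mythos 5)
Your outline follows the route of the paper's remark after the lemma (start from the global isomorphism $\Ug \otimes_{\ZHC} \scO(\ft^*) \simto \Gamma(\cB,\tD)$ of~\cite{bmr} and restrict to the Kostant section), rather than the paper's main proof, which instead invokes the localization statement~\cite[Proposition~5.2.1]{bmr} to get a sheaf-level isomorphism $h^*(\cU_\bS\fg) \simto \tD_\bS$ over $\bS^{*} \times_{\ft^{*(1)}/W} \ft^*/(W,\bullet)$ and then simply takes global sections. The alternative route is viable, but your justification of its key step has a genuine gap.

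The problem is the claim that ``restriction commutes with taking global sections'' because ``all the morphisms in play are affine.'' The morphism $g : \tbg \times_{\ft^{*(1)}} \ft^* \to \fg^{*(1)} \times_{\ft^{*(1)}/W} \ft^*$ induced by $\pi$ is proper and \emph{not} affine ($\tbg$ contains complete Springer-type fibers), and the embedding $\bS^* \hookrightarrow \fg^{*(1)}$ is closed, hence not flat; so the comparison $i^* g_* \tD \to g'_* (\tD_{|\widetilde{\bS}^* \times_{\ft^{*(1)}} \ft^*})$, which is exactly what you need to equate $\scO(\bS^*) \otimes_{\scO(\fg^{*(1)})} \bigl( \Ug \otimes_{\ZHC} \scO(\ft^*) \bigr)$ with $\Gamma \bigl( \widetilde{\bS}^{*} \times_{\ft^{*(1)}} \ft^*, \tD_\bS \bigr)$, is not automatic. (The affineness of $\widetilde{\bS}^*$ only helps at the very last step, and the finiteness of $\tD$ over its center says nothing about higher cohomology or base change on the non-affine scheme $\tbg \times_{\ft^{*(1)}} \ft^*$; the vanishing $H^{>0}(\cB,\tD)=0$ is itself a nontrivial theorem of~\cite{bmr}, and even granted it, base change along a non-flat closed embedding does not follow formally.) The missing ingredient, which is precisely what the paper's remark records, is that $\bS^* \subset \fg^{*(1)}_\reg$ and that $g$ restricts to an isomorphism over the regular locus by~\eqref{eqn:tfgreg}: one first restricts the identification to the open subset $\fg^{*(1)}_\reg \times_{\ft^{*(1)}/W} \ft^*$ (where pushforward trivially commutes with open restriction), over which $g_*\tD$ is just $\tD$ transported along an isomorphism, and only then restricts to the closed subscheme over $\bS^*$, where everything is a statement about modules over affine rings. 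If you add this regular-locus step (or replace your base-change claim by the appeal to~\cite[Proposition~5.2.1]{bmr} as in the paper's main proof), the argument is complete; as written, the central identification is unjustified.
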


\begin{proof}
 Consider the natural morphism
 \[
  h : \widetilde{\bS}^{*} \times_{\ft^{*(1)}} \ft^* \to \bS^{*} \times_{\ft^{*(1)}/W} \ft^*/(W,\bullet).
 \]
If we still denote by $\cU_\bS\fg$ the sheaf of $\scO_{\bS^{*} \times_{\ft^{*(1)}/W} \ft^*/(W,\bullet)}$-algebras associated with this $\scO(\bS^{*} \times_{\ft^{*(1)}/W} \ft^*/(W,\bullet))$-algebra, then as in~\cite[Proposition~5.2.1]{bmr} the morphism~\eqref{eqn:morph-Ug-D} induces a canonical isomorphism of sheaves of algebras
\[
 h^*(\cU_\bS\fg) \simto \tD_\bS.
\]
Now $h$ induces an isomorphism
\[
 \widetilde{\bS}^{*} \times_{\ft^{*(1)}} \ft^* \to \bigl( \bS^{*} \times_{\ft^{*(1)}/W} \ft^*/(W,\bullet) \bigr) \times_{\ft^*/(W,\bullet)} \ft^*
\]
(in fact, both sides identify canonically with $\ft^*$)
so that the claim follows by taking global sections.
\end{proof}

\begin{rmk}
 One can give a different proof of Lemma~\ref{lem:US-D} as follows. By~\cite[Proposition~3.4.1]{bmr}, the morphism~\eqref{eqn:morph-Ug-D} is an isomorphism; in other words, identifying quasi-coherent sheaves on $\fg^{*(1)}$ and $\scO(\fg^{*(1)})$-modules, we have a canonical isomorphism of sheaves of $\scO_{\fg^{*(1)}}$-algebras
 \[
  g_* \tD \cong \Ug \otimes_{\ZHC} \scO(\ft^*),
 \]
where $g : \tbg \times_{\ft^{*(1)}} \ft^* \to \fg^{*(1)} \times_{\ft^{*(1)}/W} \ft^*$ is the morphism induced by $\pi$. Restricting this isomorphism first to $\fg^{*(1)}_\reg \times_{\ft^{*(1)}/W} \ft^*$ and then to $\bS^{*} \times_{\ft^{*(1)}/W} \ft^*$ we deduce the isomorphism of the lemma, since $g$ restricts to an isomorphism on the preimage of $\fg^{*(1)}_\reg \times_{\ft^{*(1)}/W} \ft^*$ (see~\eqref{eqn:tfgreg}).
\end{rmk}

%
%

\subsection{Study of some equivariant \texorpdfstring{$\cU_\bS\fg$}{USg}-bimodules}
\label{ss:Ug-D-bimodules}

Given any $\lambda \in \bbX$, we have a line bundle $\scO_{\cB}(\lambda)$ on $\cB$ attached naturally to $\lambda$. (Our normalization is that of~\cite{jantzen}, so that line bundles attached to dominant weights are ample.) This line bundle identifies with the direct summand of $\omega_* \scO_{G/U}$ consisting of sections which have weight $\lambda$ for the $T$-action induced by right multiplication on $G$; it therefore admits a natural action of the sheaf of algebras $\tD$. Using this action and the natural action on $\tD$, we obtain a left action of $\tD$ on the tensor product
\[
 \scO_\cB(\lambda) \otimes_{\scO_\cB} \tD.
\]
As for $\tD$ itself, this module can be also considered as a sheaf of modules on $\tbg \times_{\ft^{*(1)}} \ft^*$. We 
can therefore consider the $\bk$-vector space
\begin{equation}
\label{eqn:def-Dlambda}
 \Gamma \bigl( \widetilde{\bS}^{*} \times_{\ft^{*(1)}} \ft^*, (\scO_\cB(\lambda) \otimes_{\scO_\cB} \tD)_{|\widetilde{\bS}^{*} \times_{\ft^{*(1)}} \ft^*} \bigr),
\end{equation}
which in view of Lemma~\ref{lem:US-D} admits a natural left action of $\widetilde{\cU}_\bS\fg$.
The tensor product $\scO_\cB(\lambda) \otimes_{\scO_\cB} \tD$ also admits a natural right action of $\tD$, induced by right multiplication on the second factor. The action of $\pi_* \scO_{\tbg}$ on $(\Fr_\cB)_* \scO_\cB(\lambda)$ being trivial, the two actions of this subalgebra on $(\Fr_\cB)_* (\scO_\cB(\lambda) \otimes_{\scO_\cB} \tD)$ coincide, and the space~\eqref{eqn:def-Dlambda} therefore also admits a right action of $\widetilde{\cU}_\bS\fg$; moreover these actions combine to provide an action of $\widetilde{\cU}_\bS\fg \otimes_{\scO(\bS^{*})} (\widetilde{\cU}_\bS\fg)^\op$.
By construction the action of the central subalgebra
\[
 \scO(\ft^*) \otimes_{\scO(\bS^{*})} \scO(\ft^*) \cong \scO(\ft^* \times_{\ft^{*(1)}/W} \ft^*)
\]
factors through an action of the image of the closed embedding $\ft^* \to \ft^* \times_{\ft^{*(1)}/W} \ft^*$ given by
\[
 \xi \mapsto (\xi + \ola,\xi).
\]
The object $\scO_\cB(\lambda) \otimes_{\scO_\cB} \tD$ also admits a natural structure of $G$-equivariant quasi-coherent sheaf, compatible with the actions considered above. The module~\eqref{eqn:def-Dlambda} therefore also admits a natural and compatible structure of module for the group scheme
\[
 \ft^* \times_{\ft^{*(1)}/W} \bbI_\bS^* \times_{\ft^{*(1)}/W} \ft^*,
\]
see~\S\S\ref{ss:centralizer-Kostant}--\ref{ss:HCBim-S}.

For $\lambda,\mu \in \bbX$, we will denote by
\[
 \widetilde{\cU}_\bS^{\hat{\lambda},\hat{\mu}}
\]
the completion of the $\scO(\ft^* \times_{\ft^{*(1)}/W} \ft^*)$-algebra 
$\widetilde{\cU}_\bS\fg \otimes_{\scO(\bS^{*})} (\widetilde{\cU}_\bS\fg)^\op$
at the ideal corresponding to the point $(\ola,\omu) \in \ft^* \times_{\ft^{*(1)}/W} \ft^*$. Copying the constructions in~\S\ref{ss:HCBim-S} (replacing $\cU_\bS^{\hat{\lambda},\hat{\mu}}$ by $\widetilde{\cU}_\bS^{\hat{\lambda},\hat{\mu}}$ and $\ft^*/(W,\bullet) \times_{\ft^{*(1)}/W} \bbI_\bS^* \times_{\ft^{*(1)}/W} \ft^* /(W,\bullet)$ by $\ft^* \times_{\ft^{*(1)}/W} \bbI_\bS^* \times_{\ft^{*(1)}/W} \ft^*$) we define the category $\Mod^{\bbI}_{\fin}(\widetilde{\cU}_\bS^{\hat{\lambda},\hat{\mu}})$. Copying the definition of $\hatotimes_{\cU_\bS \fg}$ we obtain, for $\lambda,\mu,\nu \in \bbX$, a bifunctor
\[
 (-) \hatotimes_{\widetilde{\cU}_\bS \fg} (-) : \Mod^{\bbI}_{\fin}(\widetilde{\cU}_\bS^{\hat{\lambda},\hat{\mu}}) \times \Mod^{\bbI}_{\fin}(\widetilde{\cU}_\bS^{\hat{\mu},\hat{\nu}}) \to \Mod^{\bbI}_{\fin}(\widetilde{\cU}_\bS^{\hat{\lambda},\hat{\nu}}).
\]
For any $\lambda,\mu \in \bbX$ we have a natural ``forgetful'' functor
\[
 \Mod^{\bbI}_{\fin}(\widetilde{\cU}_\bS^{\hat{\lambda},\hat{\mu}}) \to \Mod^{\bbI}_{\fin}(\cU_\bS^{\hat{\lambda},\hat{\mu}}),
\]
which we will usually omit from notation. In case $\lambda$ and $\mu$ are regular, this functor is an equivalence by Lemma~\ref{lem:quotient-etale}. In case $\mu$ is regular, for $M \in \Mod^{\bbI}_{\fin}(\widetilde{\cU}_\bS^{\hat{\lambda},\hat{\mu}})$ and $N \in \Mod^{\bbI}_{\fin}(\widetilde{\cU}_\bS^{\hat{\mu},\hat{\nu}})$ we also have a canonical identification
\[
 M \hatotimes_{\cU_\bS\fg} N \simto M \hatotimes_{\widetilde{\cU}_\bS\fg} N.
\]

For $\lambda,\mu \in \bbX$, we will denote by $\sfQ_{\lambda,\mu}$ the completion of the module 
\[
 \Gamma \bigl( \widetilde{\bS}^{*} \times_{\ft^{*(1)}} \ft^*, (\scO_\cB(\lambda-\mu) \otimes_{\scO_\cB} \tD)_{|\widetilde{\bS}^{*} \times_{\ft^{*(1)}} \ft^*} \bigr)
 \]
at the ideal of $\scO(\ft^* \times_{\ft^{*(1)}/W} \ft^*)$ corresponding to the element $(\ola,\omu)$. In view of the remarks above, this object can equivalently be obtained by completing this module at the ideal of $\scO(\ft^*)$ corresponding to $\ola$ for the left action, or at the ideal of $\scO(\ft^*)$ corresponding to $\omu$ for the right action. This construction provides an object in $\Mod_{\fin}^{\bbI}(\widetilde{\cU}_\bS^{\hat{\lambda},\hat{\mu}})$, hence a fortiori an object in $\Mod_{\fin}^{\bbI}(\cU_\bS^{\hat{\lambda},\hat{\mu}})$.

\begin{lem}
\label{lem:B-transitivity}
For $\lambda,\mu,\nu \in \bbX$, 
 there exists a canonical isomorphism
 \[
  \sfQ_{\lambda,\mu} \hatotimes_{\widetilde{\cU}_\bS \fg} \sfQ_{\mu,\nu} \simto \sfQ_{\lambda,\nu}
 \]
 in $\Mod_{\fin}^{\bbI}(\widetilde{\cU}_\bS^{\hat{\lambda},\hat{\nu}})$. In particular, in case $\mu$ is regular there exists a canonical isomorphism
 \[
  \sfQ_{\lambda,\mu} \hatotimes_{\cU_\bS \fg} \sfQ_{\mu,\nu} \simto \sfQ_{\lambda,\nu}
 \]
 in $\Mod_{\fin}^{\bbI}(\cU_\bS^{\hat{\lambda},\hat{\nu}})$.
\end{lem}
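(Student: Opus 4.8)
The plan is to prove a bimodule isomorphism at the level of sheaves on $\cB$, before restricting to the Kostant section and before passing to completions, and then to transport it through the two functors used to build $\sfQ_{\lambda,\mu}$. Recall that $\scO_\cB(\kappa)$ is a line bundle, stable under the action of $\tD$ on $\omega_*\scO_{G/U}$, and that $\scO_\cB(\kappa)\otimes_{\scO_\cB}\scO_\cB(\kappa')\cong\scO_\cB(\kappa+\kappa')$. First I would construct a canonical, $G$-equivariant isomorphism of $(\tD,\tD)$-bimodules on $\cB$
\[
\bigl( \scO_\cB(\lambda-\mu)\otimes_{\scO_\cB}\tD \bigr) \otimes_\tD \bigl( \scO_\cB(\mu-\nu)\otimes_{\scO_\cB}\tD \bigr) \simto \scO_\cB(\lambda-\nu)\otimes_{\scO_\cB}\tD ,
\]
compatible with the central actions of $\scO(\ft^*)$ and associative for triples. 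The tool is the standard ``absorption'' identity: for any left $\tD$-module $\scM$, the multiplication map $\bigl(\scO_\cB(\kappa)\otimes_{\scO_\cB}\tD\bigr)\otimes_\tD\scM\to\scO_\cB(\kappa)\otimes_{\scO_\cB}\scM$, $(\ell\otimes d)\otimes m\mapsto\ell\otimes dm$, is an isomorphism, which is local on $\cB$ and reduces, after trivializing $\scO_\cB(\kappa)$, to $(\scO_\cB\otimes_{\scO_\cB}\tD)\otimes_\tD\scM\cong\scM$. Applying this with $\scM=\scO_\cB(\mu-\nu)\otimes_{\scO_\cB}\tD$ and using $\scO_\cB(\lambda-\mu)\otimes_{\scO_\cB}\scO_\cB(\mu-\nu)\cong\scO_\cB(\lambda-\nu)$ gives the displayed map as $\scO_\cB$-modules.

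The main obstacle will be the verification that this map respects the \emph{left} $\tD$-module structure --- that the ``twist by $\scO_\cB(\lambda-\mu)$ followed by twist by $\scO_\cB(\mu-\nu)$'' structure on the source agrees with the ``twist by $\scO_\cB(\lambda-\nu)$'' structure on the target --- and likewise that it respects the $G$-equivariant structures and the constraint that the two central copies of $\scO(\ft^*)$ act through $\xi\mapsto(\xi+\overline{\lambda-\nu},\xi)$. This is a local computation in the twisting calculus for universal twisted differential operators, entirely parallel to the corresponding statement for the twisting bimodules $\scO_\cB(\kappa)\otimes_{\scO_\cB}\mathscr{D}_\cB$ of~\cite{bmr}; I expect only bookkeeping, no essential difficulty, and I would record the cocycle compatibility of these isomorphisms at the same time.

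It then remains to transport this along two monoidal functors. Viewing $\tD$ and the $\scO_\cB(\kappa)\otimes_{\scO_\cB}\tD$ as sheaves on $\tbg\times_{\ft^{*(1)}}\ft^*$ as in~\S\S\ref{ss:Ug-D-reg}--\ref{ss:Ug-D-bimodules}, one restricts along the closed immersion $\widetilde{\bS}^{*}\times_{\ft^{*(1)}}\ft^*\hookrightarrow\tbg\times_{\ft^{*(1)}}\ft^*$; restriction of sheaves of modules over a sheaf of algebras is monoidal and sends $\tD$ to $\tD_\bS$. Since $\vartheta$ restricts to an isomorphism $\widetilde{\bS}^{*}\simto\ft^{*(1)}$ by~\cite[Proposition~3.5.5]{riche-kostant}, the scheme $\widetilde{\bS}^{*}\times_{\ft^{*(1)}}\ft^*$ is affine, so global sections is an exact monoidal equivalence from quasi-coherent $\tD_\bS$-modules onto $\widetilde{\cU}_\bS\fg$-modules (Lemma~\ref{lem:US-D}). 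This produces a canonical isomorphism $M_{\lambda,\mu}\otimes_{\widetilde{\cU}_\bS\fg}M_{\mu,\nu}\simto M_{\lambda,\nu}$ of $(\widetilde{\cU}_\bS\fg,\widetilde{\cU}_\bS\fg)$-bimodules, equivariant for $\ft^*\times_{\ft^{*(1)}/W}\bbI^*_\bS\times_{\ft^{*(1)}/W}\ft^*$, where $M_{\kappa,\kappa'}:=\Gamma\bigl(\widetilde{\bS}^{*}\times_{\ft^{*(1)}}\ft^*,(\scO_\cB(\kappa-\kappa')\otimes_{\scO_\cB}\tD)_{|\widetilde{\bS}^{*}\times_{\ft^{*(1)}}\ft^*}\bigr)$.

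Finally, $\sfQ_{\kappa,\kappa'}$ is the completion of $M_{\kappa,\kappa'}$ at the point $(\overline{\kappa},\overline{\kappa'})\in\ft^*\times_{\ft^{*(1)}/W}\ft^*$, and the bifunctor $\hatotimes_{\widetilde{\cU}_\bS\fg}$ of~\S\ref{ss:Ug-D-bimodules} is set up (just as $\hatotimes_{\cU_\bS\fg}$ in~\S\ref{ss:HCBim-S}) so that the relevant completion functors are monoidal for it; using that $\widetilde{\cU}_\bS\fg$ and the $M_{\kappa,\kappa'}$ are finite over the Noetherian ring $\scO(\ft^*\times_{\ft^{*(1)}/W}\ft^*)$ and that completion is flat over it, I would apply the completion at $(\overline{\lambda},\overline{\nu})$ to the previous isomorphism to obtain $\sfQ_{\lambda,\mu}\hatotimes_{\widetilde{\cU}_\bS\fg}\sfQ_{\mu,\nu}\simto\sfQ_{\lambda,\nu}$ in $\Mod^{\bbI}_{\fin}(\widetilde{\cU}_\bS^{\hat{\lambda},\hat{\nu}})$. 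For the ``in particular'' statement, when $\mu$ is regular I would simply compose this with the canonical identification $X\hatotimes_{\cU_\bS\fg}Y\simto X\hatotimes_{\widetilde{\cU}_\bS\fg}Y$ available for $X\in\Mod^{\bbI}_{\fin}(\widetilde{\cU}_\bS^{\hat{\lambda},\hat{\mu}})$ and $Y\in\Mod^{\bbI}_{\fin}(\widetilde{\cU}_\bS^{\hat{\mu},\hat{\nu}})$ recorded in~\S\ref{ss:Ug-D-bimodules} (which rests on Lemma~\ref{lem:quotient-etale}).
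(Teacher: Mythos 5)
Your proposal is correct and follows essentially the same route as the paper: the paper's proof consists exactly of the absorption isomorphism $(\scO_\cB(\lambda-\mu)\otimes_{\scO_\cB}\tD)\otimes_{\tD}(\scO_\cB(\mu-\nu)\otimes_{\scO_\cB}\tD)\simto\scO_\cB(\lambda-\nu)\otimes_{\scO_\cB}\tD$ on $\cB$, followed by restriction to $\widetilde{\bS}^{*}\times_{\ft^{*(1)}}\ft^*$ and completion at $(\ola,\overline{\nu})$, with the regular-$\mu$ case handled by the identification of $\hatotimes_{\cU_\bS\fg}$ with $\hatotimes_{\widetilde{\cU}_\bS\fg}$ from \S\ref{ss:Ug-D-bimodules}. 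You simply supply more of the bookkeeping (bimodule/equivariance compatibilities and the behaviour of completion) that the paper leaves implicit.
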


\begin{proof}
 There exist canonical isomorphisms
 \begin{multline*}
  \bigl( \scO_\cB(\lambda-\mu) \otimes_{\scO_\cB} \tD \bigr) \otimes_{\tD} \bigl( \scO_\cB(\mu-\nu) \otimes_{\scO_\cB} \tD \bigr) \simto \\
  \scO_\cB(\lambda-\mu) \otimes_{\scO_\cB} \scO_\cB(\mu-\nu) \otimes_{\scO_\cB} \tD \cong \scO_\cB(\lambda-\nu) \otimes_{\scO_\cB} \tD.
 \end{multline*}
The desired isomorphism follows by restriction to $\widetilde{\bS}^{*} \times_{\ft^{*(1)}} \ft^*$ and then completion at $(\ola,\overline{\nu})$.
\end{proof}

If $\lambda \in \bbX$ is regular, Lemma~\ref{lem:quotient-etale} and Lemma~\ref{lem:US-D} imply that we have
\[
\cU_\bS^{\hat{\lambda}} \cong \sfQ_{\lambda,\lambda},
\]
where the left-hand side is as in~\S\ref{ss:HCBim-S}.
Hence the functor of convolution on the left, resp.~right, with $\sfQ_{\lambda,\lambda}$ is isomorphic to the identity functor of $\Mod_{\fin}^{\bbI}(\cU_\bS^{\hat{\lambda},\hat{\mu}})$, resp.~$\Mod_{\fin}^{\bbI}(\cU_\bS^{\hat{\mu},\hat{\lambda}})$, for any $\mu \in \bbX$. Combining this observation with
Lemma~\ref{lem:B-transitivity}, we see that if $\lambda \in \bbX$ belongs to the fundamental alcove, then for any $w \in \Wext$ the object $\sfQ_{\lambda, w \bullet \lambda}$ is invertible in the monoidal category $\Mod_{\fin}^{\bbI}(\cU_\bS^{\hat{\lambda},\hat{\lambda}})$, with inverse $\sfQ_{w \bullet \lambda, \lambda}$.

Recall from~\eqref{eqn:morph-I-T} that we have a canonical morphism of group schemes
\[
 \ft^{*(1)} \times_{\ft^{*(1)}/W} \bbI^*_\bS \to \ft^{*(1)} \times T^{(1)}.
\]
Taking the fiber product with the morphism $\ft^* \times_{\ft^{*(1)}/W} \ft^* \to \ft^* \xrightarrow{\mathrm{AS}} \ft^{*(1)}$ (where the first morphism is the first projection) we obtain a morphism of group schemes
\[
 \ft^* \times_{\ft^{*(1)}/W} \bbI_\bS^* \times_{\ft^{*(1)}/W} \ft^* \to (\ft^* \times_{\ft^{*(1)}/W} \ft^*) \times T^{(1)}.
\]
Using this morphism, for any character $\eta$ of $T^{(1)}$ we obtain a structure of representation of $\ft^* \times_{\ft^{*(1)}/W} \bbI_\bS^* \times_{\ft^{*(1)}/W} \ft^*$ on $\scO(\ft^* \times_{\ft^{*(1)}/W} \ft^*)$ defined by this character. Tensoring with this representation we obtain an autoequivalence of $\Mod_{\fin}^{\bbI}(\widetilde{\cU}_\bS^{\hat{\lambda},\hat{\mu}})$, which we denote $M \mapsto M \langle \eta \rangle$.


Recall from~\S\ref{ss:weights} that we identify the lattice of characters of $T^{(1)}$ with $p \cdot \bbX$.

\begin{lem}
\label{lem:B-translation}
 For any $\lambda,\nu \in \bbX$, there exists a canonical isomorphism
 \[
  \sfQ_{\lambda+p\nu,\lambda} \cong \sfQ_{\lambda,\lambda} \langle p\nu \rangle
 \]
 in $\Mod_{\fin}^{\bbI}(\widetilde{\cU}_\bS^{\hat{\lambda},\hat{\lambda}})$.
\end{lem}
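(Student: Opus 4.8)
The plan is to deduce the statement from the elementary geometry of the line bundle $\scO_\cB(p\nu)$ on the flag variety, following the pattern of Lemma~\ref{lem:US-D} and the remark after it. First note that the two objects are completed at the same point: since $p\nu \in p\bbX$, its differential $\overline{p\nu}$ vanishes in $\ft^*$ (the map $\lambda \mapsto \ola$ being a group homomorphism factoring through $\bbX/p\bbX$), so $\overline{\lambda+p\nu} = \ola$. Consequently the bimodule $\scO_\cB(p\nu) \otimes_{\scO_\cB} \tD$ underlying $\sfQ_{\lambda+p\nu,\lambda}$ has its left and right central $\scO(\ft^*)$-actions equal — the closed embedding $\xi \mapsto (\xi + \overline{p\nu}, \xi)$ of~\S\ref{ss:Ug-D-bimodules} being the diagonal one — exactly as for $\tD$ itself, and both $\sfQ_{\lambda+p\nu,\lambda}$ and $\sfQ_{\lambda,\lambda}$ are completions at the maximal ideal of $\scO(\ft^* \times_{\ft^{*(1)}/W} \ft^*)$ corresponding to $(\ola,\ola)$.

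Next, since $p\nu$ is divisible by $p$, the line bundle $\scO_\cB(p\nu)$ is the Frobenius pullback $\Fr_\cB^* \scL$ of a line bundle $\scL$ on $\cB^{(1)}$ (namely $\scL = \scO_{\cB^{(1)}}(\nu)$, with $\nu$ regarded as a character of $\bT = T^{(1)}$), and its $\tD$-module structure is the corresponding Frobenius-descent structure (its $p$-curvature vanishes). Viewing $\tD$ as a sheaf of algebras on $\tbg \times_{\ft^{*(1)}} \ft^*$ via its central subalgebra (as in~\S\ref{ss:Ug-D-reg}), the description of the center of $(\Fr_\cB)_* \tD$ from~\cite[\S 2.3]{bmr} — used already in the remark following Lemma~\ref{lem:US-D} — identifies $\scO_\cB(p\nu) \otimes_{\scO_\cB} \tD$, as a sheaf of $(\tD,\tD)$-bimodules on $\tbg \times_{\ft^{*(1)}} \ft^*$, with $\tD$ twisted by the invertible $\scO_{\tbg \times_{\ft^{*(1)}} \ft^*}$-module obtained by pulling $\scL$ back along the structure map $\tbg \times_{\ft^{*(1)}} \ft^* \to \cB^{(1)}$. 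Restricting to $\widetilde{\bS}^* \times_{\ft^{*(1)}} \ft^*$, which by the isomorphism $\widetilde{\bS}^* \simto \ft^{*(1)}$ recalled in~\S\ref{ss:study-fibers} is an affine space, the pulled-back line bundle becomes trivial (and, units on an affine space being constant, the induced connection is the trivial one too). Hence, after forgetting equivariant structures, $(\scO_\cB(p\nu) \otimes_{\scO_\cB} \tD)_{|\widetilde{\bS}^* \times_{\ft^{*(1)}} \ft^*} \cong \tD_\bS$ as $\widetilde{\cU}_\bS\fg \otimes_{\scO(\bS^*)} (\widetilde{\cU}_\bS\fg)^\op$-modules; taking global sections (Lemma~\ref{lem:US-D}) and completing at $(\ola,\ola)$ yields $\sfQ_{\lambda+p\nu,\lambda} \cong \sfQ_{\lambda,\lambda}$ as $\widetilde{\cU}_\bS^{\hat{\lambda},\hat{\lambda}}$-modules.

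It remains to identify the equivariant discrepancy with the twist $\langle p\nu \rangle$. The trivialisation above is not compatible with the action of $\ft^* \times_{\ft^{*(1)}/W} \bbI^*_\bS \times_{\ft^{*(1)}/W} \ft^*$; the discrepancy is, by construction, the character through which this group scheme acts on the fibre of the pulled-back line bundle $\scL$. Now the universal centralizer acts on a point $g\bB$ of $\cB^{(1)} = \bG/\bB$ through the canonical quotient $g\bB g^{-1}/g\bU g^{-1} \cong \bT$, i.e.\ through the morphism~\eqref{eqn:morph-J-T}; restricting to $\bS^*$ and passing from $\bbJ^*_\bS$ to $\bbI^*_\bS$ (note $G_1$, hence $G_1 \times \bS^*$, acts trivially on the Frobenius twist $\cB^{(1)}$), this is precisely the morphism~\eqref{eqn:morph-I-T}, and fibering it with the first projection composed with $\AS$ produces exactly the morphism $\ft^* \times_{\ft^{*(1)}/W} \bbI^*_\bS \times_{\ft^{*(1)}/W} \ft^* \to (\ft^* \times_{\ft^{*(1)}/W} \ft^*) \times T^{(1)}$ used to define $M \mapsto M\langle \eta \rangle$. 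Under the identification $X^*(T^{(1)}) = p\bbX \subset \bbX$ (so that $\scL$ corresponds to the character $p\nu$ of $T^{(1)}$) the discrepancy is therefore the twist $\langle p\nu \rangle$, and carrying this through the identifications of the previous paragraph upgrades $\sfQ_{\lambda+p\nu,\lambda} \cong \sfQ_{\lambda,\lambda}$ to the desired isomorphism $\sfQ_{\lambda+p\nu,\lambda} \cong \sfQ_{\lambda,\lambda}\langle p\nu\rangle$ in $\Mod_{\fin}^{\bbI}(\widetilde{\cU}_\bS^{\hat{\lambda},\hat{\lambda}})$.

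The main obstacle I expect is the middle step: making precise that $\scO_\cB(p\nu) \otimes_{\scO_\cB} \tD$ is $\tD$ twisted only by a line bundle over the center, with no residual first-order term, and that the twisting line bundle is the pullback of $\scL$. This is where one genuinely uses $p \mid p\nu$ and the explicit form of the central embedding $f_* \scO_{\tbg \times_{\ft^{*(1)}} \ft^*} \hookrightarrow (\Fr_\cB)_* \tD$ of~\cite[\S 2.3]{bmr}; I would isolate it as a separate lemma about $\scO_\cB(p\nu) \otimes_{\scO_\cB} \tD$ over all of $\tbg \times_{\ft^{*(1)}} \ft^*$ and only afterwards restrict to $\widetilde{\bS}^* \times_{\ft^{*(1)}} \ft^*$, exactly paralleling the two proofs given for Lemma~\ref{lem:US-D}.
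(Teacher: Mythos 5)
Your route is correct in substance but genuinely different from the paper's. The paper never states a global claim about $\scO_\cB(p\nu) \otimes_{\scO_\cB} \tD$ over all of $\tbg \times_{\ft^{*(1)}} \ft^*$: it first restricts to the big cell $U^+B/B \subset \cB$ (through which $\widetilde{\bS}^* \to \cB$ factors, by~\cite[Lemma~4.8]{mr}) and there writes down the explicit non-vanishing section $f(u_1tu_2)=(p\nu)^{-1}(t)$ of $\scO_\cB(p\nu)$, proves it is annihilated by $\fg$ and $\ft$ --- the $\fg$-invariance being checked for dominant $\nu$ via the highest-weight vector of $\Gamma(\cB,\scO_\cB(p\nu))$ lying in the Frobenius-twisted submodule $\Sim(p\nu)$, and in general via the Leibniz rule --- and then uses right multiplication against this section to trivialize the bimodule before completing; the equivariant discrepancy $\langle p\nu\rangle$ is read off from the weight of this section, just as in your last paragraph. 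Your proposal replaces this by the structural statement that, since $\scO_\cB(p\nu)=\Fr_\cB^*\scO_{\cB^{(1)}}(\nu)$ with its canonical (zero $p$-curvature) descent structure, the bimodule is $\tD$ twisted by a line bundle over the central subalgebra of~\cite[\S 2.3]{bmr}, and then invokes triviality of line bundles on the affine space $\widetilde{\bS}^*\times_{\ft^{*(1)}}\ft^*\cong\ft^*$. That statement is true and your reason for it is the right one (the $\fg$-action on $\Fr_\cB^*\scL$ factors through the canonical connection because $d\Fr=0$, which is exactly what the paper's $\Sim(p\nu)$ argument establishes for its particular section); so the step you flag as the main obstacle would not fail, but it is where all the content sits, and the paper's big-cell section is precisely a shortcut around having to formulate it globally. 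Two small points to watch if you carry your version out: a trivialization on an affine space is canonical only up to a nonzero scalar, so to get the \emph{canonical} isomorphism claimed in the lemma you should pin it down, e.g.\ by pulling back the standard trivialization of $\scO_{\cB^{(1)}}(\nu)$ on the big cell of $\cB^{(1)}$ (whose Frobenius pullback is the paper's section); and in identifying the equivariant twist with $\langle p\nu\rangle$ rather than its inverse you need to fix the sign convention for $\scO_\cB(\lambda)$ consistently with~\eqref{eqn:morph-J-T}, a point the paper's own terse final sentence also leaves implicit.
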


\begin{proof}
By definition, $\sfQ_{\lambda+p\nu,\lambda}$ is the completion at the ideal corresponding to $(\ola,\ola)$ of the $\widetilde{\cU}_\bS\fg \otimes_{\scO(\bS^{*})} (\widetilde{\cU}_\bS\fg)^\op$-module 
\[
 \Gamma \bigl( \widetilde{\bS}^{*} \times_{\ft^{*(1)}} \ft^*, (\scO_\cB(p\nu) \otimes_{\scO_\cB} \tD)_{|\widetilde{\bS}^{*} \times_{\ft^{*(1)}} \ft^*} \bigr).
 \]
If we denote by $U^+$ the unipotent radical of the Borel subgroup opposite to $B$, then $U^+B/B \subset \cB$ is an open subvariety isomorphic to $U^+$, and the projection $\widetilde{\bS}^* \to \cB$ factors through a morphism $\widetilde{\bS}^* \to U^+B/B$, see~\cite[Lemma~4.8]{mr}. As a consequence, the sheaf $(\scO_\cB(p\nu) \otimes_{\scO_\cB} \tD)_{|\widetilde{\bS}^{*} \times_{\ft^{*(1)}} \ft^*}$ can be obtained as a further restriction of $(\scO_\cB(p\nu) \otimes_{\scO_\cB} \tD)_{|U^+B/B}$.

Since $\tD$ acts on $\scO_{\cB}(p\nu)$, we have an action of the algebra $\Ug \otimes_{\ZHC} \scO(\ft^*)$ on $\Gamma(U^+B/B, \scO_\cB(p\nu))$, see~\eqref{eqn:morph-Ug-D}. 
We have
\begin{multline*}
\Gamma \bigl( U^+B/B, \scO_\cB(p\nu) \bigr)=\\
\{f : U^+B \to \bk \mid \forall b \in B, \forall x \in U^+B, \, f(xb^{-1})=(p\nu)(b) \cdot f(x)\}.
\end{multline*}
In this space we have a canonical vector, namely the function $f : U^+B \to \bk$ defined by $f(u_1 t u_2)=(p\nu)^{-1}(t)$ for all $u_1 \in U^+$, $t \in T$ and $u_2 \in U$. This section does not vanish on $U^+B/B$, hence induces an isomorphism of line bundles $\scO_{U^+B/B} \simto \scO_{\cB}(p\nu)_{|U^+B/B}$. We claim that it is furthermore annihilated by the action of $\fg \subset \Ug$ and $\ft \subset \scO(\ft^*)$. In fact, the second case is clear. For the action of $\fg$, in case $\nu \in \bbX^+$ the claim follows from the fact that our vector is the restriction of the unique (up to scalar) vector of weight $p\nu$ in $\Gamma \bigl( \cB, \scO_\cB(p\nu) \bigr)$ (see~\cite[Proof of Proposition~II.2.6]{jantzen}), and that this vector belongs to the $G$-submodule $\Sim(p\nu)$, on which the action of $\fg$ is well known to vanish. From this we deduce the general case by using the Leibniz rule for the action on tensor products of line bundles.

Tensoring this section with the unit in $\tD$ we obtain a section of $(\scO_\cB(p\nu) \otimes_{\scO_\cB} \tD)_{|U^+B/B}$. The right action on this section provides an isomorphism
\[
 \tD_{|U^+B/B} \to (\scO_\cB(p\nu) \otimes_{\scO_\cB} \tD)_{|U^+B/B},
\]
which commutes with the natural left and right actions of $\Ug \otimes_{\ZHC} \scO(\ft^*)$. Restricting further we obtain an isomorphism
\[
 \tD_{|\widetilde{\bS}^{*} \times_{\ft^{*(1)}} \ft^*} \simto (\scO_\cB(p\nu) \otimes_{\scO_\cB} \tD)_{|\widetilde{\bS}^{*} \times_{\ft^{*(1)}} \ft^*},
\]
and then taking global sections and completing an isomorphism of $\widetilde{\cU}_\bS^{\hat{\lambda},\hat{\lambda}}$-modules $\sfQ_{\lambda,\lambda} \simto \sfQ_{\lambda+p\nu,\lambda}$. Taking the action of $\ft^* \times_{\ft^{*(1)}/W} \bbI_\bS^* \times_{\ft^{*(1)}/W} \ft^*$ into account, this provides the desired isomorphism $\sfQ_{\lambda,\lambda} \langle p\nu \rangle \simto \sfQ_{\lambda+p\nu,\lambda}$.
\end{proof}

\subsection{Relation with translation bimodules}
\label{ss:relation}


Recall from~\S\ref{ss:HCBim-completed} the notation $\fD=\ft^*/(W,\bullet) \times_{\ft^{*(1)}/W} \ft^*/(W,\bullet)$. 
In our constructions below we will also have to work with variants of this scheme where one of the two copies of $\ft^*/(W,\bullet)$ is replaced by $\ft^*$. We therefore introduce the notation
\[
\fE:=\ft^* \times_{\ft^{*(1)}/W} \ft^*/(W,\bullet), \qquad \fE':=\ft^*/(W,\bullet) \times_{\ft^{*(1)}/W} \ft^*.
\]

We now explain the relation between the objects $\sfQ_{\lambda,\mu}$ and the ``translation bimodules'' introduced in~\S\ref{ss:HCBim-completed}.

\begin{lem}
\label{lem:translation-bimod-B}
 Let $\lambda,\mu \in \bbX$ in the closure of the fundamental alcove, with
 one of them in
 the fundamental alcove itself.
 Then for any $w \in \Wext$ we have
 \[
  \sfP_\bS^{\mu,\lambda} \cong \sfQ_{w \bullet \mu,w \bullet \lambda}
 \]
 in $\Mod_{\fin}^{\bbI}(\cU_\bS^{\hat{\mu},\hat{\lambda}})$.
\end{lem}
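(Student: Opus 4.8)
The plan is to identify both sides, through the Beilinson--Bernstein--type isomorphism $\cU_\bS\fg\simto\Gamma\bigl(\widetilde{\bS}^{*}\times_{\ft^{*(1)}}\ft^*,\tD_\bS\bigr)$ of Lemma~\ref{lem:US-D}, with a completed line-bundle twist of the universal twisted differential operators on $\cB$. First I would dispose of the translation part of $w$: writing $w=t_{p\gamma}v$ with $v\in W$ and $\gamma\in\bbX$, one has $w\bullet\mu-w\bullet\lambda=v(\mu-\lambda)$, and since the differential of any element of $p\bbX$ vanishes (\S\ref{ss:weights}) one also has $\overline{w\bullet\mu}=\overline{v\bullet\mu}$ and $\overline{w\bullet\lambda}=\overline{v\bullet\lambda}$; hence $\sfQ_{w\bullet\mu,w\bullet\lambda}=\sfQ_{v\bullet\mu,v\bullet\lambda}$ literally, so it suffices to treat $w=v\in W$.

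\textbf{The comparison morphism.}
By definition $\sfP^{\mu,\lambda}_\bS$ is a completion of $\scO(\bS^{*})\otimes_{\scO(\fg^{*(1)})}(\Sim(\nu')\otimes\cU_\bS\fg)$, where $\nu'$ is the dominant $W$-translate of $\mu-\lambda$. The projection $\widetilde{\bS}^{*}\to\cB$ factors through the big cell $U^+B/B$ (as in the proof of Lemma~\ref{lem:B-translation}, via~\cite[Lemma~4.8]{mr}), where every line bundle is trivial; when $v(\mu-\lambda)$ is dominant the extreme-weight vector of $\Sim(\nu')$ restricts to a trivialising section of $\scO_\cB(v(\mu-\lambda))$ there (this vector lies in $\Sim(\nu')$, whose relevant weight space coincides with that of $H^0(\nu')$). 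Multiplying by this section against the right $\tD$-action gives a morphism from (the restriction of) $\Sim(\nu')\otimes\cU_\bS\fg$ to the global sections over $\widetilde{\bS}^{*}\times_{\ft^{*(1)}}\ft^*$ of $\bigl(\scO_\cB(v(\mu-\lambda))\otimes_{\scO_\cB}\tD\bigr)_{|\widetilde{\bS}^{*}\times_{\ft^{*(1)}}\ft^*}$; since one of $\lambda,\mu$ is regular the corresponding completion is \'etale over $\ft^*/(W,\bullet)$ (Lemma~\ref{lem:quotient-etale}), so after completing at $(\overline{v\bullet\mu},\overline{v\bullet\lambda})$ one gets a canonical $\bbI^*_\bS$-equivariant morphism
\[
\sfP^{\mu,\lambda}_\bS\longrightarrow\sfQ_{v\bullet\mu,v\bullet\lambda}
\]
in $\Mod^{\bbI}_{\mathrm{fg}}(\cU_\bS^{\hat\mu,\hat\lambda})$. (When $v(\mu-\lambda)$ is not dominant, $\scO_\cB(v(\mu-\lambda))$ has no global sections; there one treats first the $v$ with $v(\mu-\lambda)$ dominant and recovers the rest from the fiber computation below together with the fact that $\cU_\bS^{\hat\mu,\hat\lambda}$ is Azumaya, hence split, over the complete local ring $\cZ_\bS^{\hat\mu,\hat\lambda}$.)

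\textbf{It is an isomorphism.}
Both $\sfP^{\mu,\lambda}_\bS$ and $\sfQ_{v\bullet\mu,v\bullet\lambda}$ are finite free over the complete local ring $\cZ_\bS^{\hat\mu,\hat\lambda}$: for the former by the argument in the proof of Theorem~\ref{thm:splitting} (using that $\cU_\bS\fg$ is finite projective over $\scO(\fC_\bS)$, Proposition~\ref{prop:Ureg-Azumaya}), and for the latter by Lemma~\ref{lem:projective-rk} once one checks that all its fibers have dimension $p^{2\#\fR^+}$. It therefore suffices to verify that the morphism is an isomorphism after applying $\bk\otimes_{\cZ_\bS^{\hat\mu,\hat\lambda}}(-)$, i.e.\ on the fiber over the closed point. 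By Propositions~\ref{prop:fiber-Verma} and~\ref{prop:fiber-Verma-prime} (and the discussion of \S\ref{ss:study-fibers}) these fibers are, on both sides, tensor products of a baby Verma module and a dual baby Verma module over a matrix algebra of the form $\cU^{\zeta}_\eta\fg$; since any two baby Vermas over this algebra with equal central characters are isomorphic (the Azumaya property, \S\ref{ss:Azumaya-Ug}), the two fibers agree, and the one-dimensional $\Hom$-space computation of \S\ref{ss:study-fibers} shows the map of fibers is invertible.

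\textbf{Main obstacle.}
The crux is the second step: showing that $\Sim(\nu')\otimes\cU_\bS\fg$, after restriction to the regular Kostant locus and completion, is precisely a line-bundle twist of the universal twisted differential operators --- a Kostant-section version of the BMR ``localisation of translation functors''. The delicate points are (i) that the bundles $\scO_\cB(v(\mu-\lambda))$ for different finite parts $v$ are genuinely different line bundles, so one must see that over the regular locus --- where the choice of Borel, equivalently of $\ft^*$-lift, equivalently of $v$, washes out, this being ultimately the content of \S\ref{ss:study-fibers} and the Azumaya property --- their completed restrictions to $\widetilde{\bS}^{*}$ coincide; and (ii) keeping track of the $\bbI^*_\bS$-equivariant, hence $G$- and $G_1$-equivariant, structure throughout the dictionary of Lemma~\ref{lem:US-D}. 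Granted the morphism, the fiber computation is bookkeeping with \S\ref{ss:study-fibers}.
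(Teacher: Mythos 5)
Your reduction to $w\in W$ and your use of Lemma~\ref{lem:US-D} together with the \'etaleness coming from the regularity of one of the two weights (Lemma~\ref{lem:quotient-etale}) are fine, and match the first step of the argument. The problem is the core of your proof: the claim that $\sfP^{\mu,\lambda}_\bS$ and $\sfQ_{v\bullet\mu,v\bullet\lambda}$ are finite \emph{free} over $\cZ_\bS^{\hat{\mu},\hat{\lambda}}$ is false in general, and with it the Nakayama-type ``check on the closed fiber'' strategy collapses. The spectrum of $\cZ_\bS^{\hat{\mu},\hat{\lambda}}$ is the completion of $\fD$ at $(\tmu,\tla)$, through which several irreducible components $\fD(\nu)$ typically pass (already for $\lambda=\mu$ regular there are at least two), while both modules in question are supported on a single branch: the central $\scO(\fD)$-action on $\sfQ_{v\bullet\mu,v\bullet\lambda}$ factors through a graph-type closed subscheme, and likewise (this is in fact part of what has to be proven) for $\sfP^{\mu,\lambda}_\bS$. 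A nonzero module supported on a proper closed subscheme cannot be free. Your appeal to ``the argument in the proof of Theorem~\ref{thm:splitting}'' does not transfer: the freeness of $\sfM^{\lambda,\mu}_\bS$ there is obtained only because the splitting bundle is a composition through $-\rho$, whose class lies in $\ft^*_\circ$, so that Propositions~\ref{prop:fiber-Verma} and~\ref{prop:fiber-Verma-prime} (which require $\xi\in\ft^*_\circ$) apply on a whole neighbourhood of the completion point and Lemma~\ref{lem:projective-rk} can be invoked. At your closed point the relevant coordinate is $\ola$ (or $\omu$) with $\lambda$ integral and regular, which does \emph{not} lie in $\ft^*_\circ$; the two fiber propositions say nothing there, and the honest fiber dimension of $\Sim(\nu)\otimes\cU_\bS\fg$ at $(\tmu,\tla)$ in general jumps (it receives contributions from every branch through the point), so even the rank comparison you envisage is not available.

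What is missing is precisely the mechanism the paper uses: filter $\Sim(\nu)\otimes\tD$ (restricted to $\widetilde{\bS}^*\times_{\ft^{*(1)}}\ft^*$) by the line-bundle twists $\scO_\cB(\eta)\otimes_{\scO_\cB}\tD$ indexed by the $T$-weights $\eta$ of $\Sim(\nu)$, note that each subquotient is supported on a single component of $\fE'$, and then show by the integrality statement~\eqref{eqn:no-torsion}, Lemma~\ref{lem:weights}\eqref{it:lem-no-torsion-1} and Jantzen's linkage lemma \cite[Lemma~II.7.7]{jantzen} (this is where the hypothesis that one of $\lambda,\mu$ lies in the open alcove enters) that after completion at $(\tmu,w\bullet\ola)$ exactly one subquotient survives, namely the one with $\eta=w(\mu-\lambda)$; this simultaneously identifies the support, gives the isomorphism for \emph{every} $w\in\Wext$, and carries the $\bbI^*_\bS$-equivariant structure along. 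Your construction via a trivializing section on the big cell only produces a map when $v(\mu-\lambda)$ is dominant, i.e.\ for essentially one $v$, and the fallback for the other $v$ (``$\cU_\bS^{\hat{\mu},\hat{\lambda}}$ is Azumaya, hence split'') cannot close the gap: splitness of the algebra does not identify two given modules over it, says nothing about which branch of the centre they live on, and does not recover the equivariant structure. So as written the proposal proves the statement at most for the special $w$ with $w(\mu-\lambda)$ dominant, modulo the unfounded freeness claim, while the general $w$ is exactly what is needed later in the paper.
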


\begin{proof}
To fix notation we assume that $\lambda$ is in the fundamental alcove; the other case 
can be obtained similarly. 
It is clear that we can assume that $w \in W$.
Let $\nu \in \bbX$ be the unique dominant weight which belongs to $W(\mu-\lambda)$. Then
 by definition, $\sfP_\bS^{\mu,\lambda}$ is the completion of the module
 \[
  \Sim(\nu) \otimes \cU_\bS\fg
 \]
at the ideal corresponding to the point $(\tmu,\tla) \in \ft^*/(W,\bullet) \times_{\ft^{*(1)}/W} \ft^*/(W,\bullet)$. Now by Lemma~\ref{lem:quotient-etale}
the quotient morphism $\ft^* \to \ft^*/(W,\bullet)$ is \'etale at $w \bullet \ola$. 
It follows that $\sfP_\bS^{\mu,\lambda}$ can also be obtained as the completion of the $\cU_\bS \fg \otimes_{\scO(\bS^{*})} (\widetilde{\cU}_\bS\fg)^\op$-module
\[
 \Sim(\nu) \otimes \widetilde{\cU}_\bS\fg
\]
with respect to the ideal of $\scO(\fE')$ corresponding to $(\tmu,w \bullet \ola)$. 

By Lemma~\ref{lem:US-D} we have canonical isomorphisms
\[
 \Sim(\nu) \otimes \widetilde{\cU}_\cS\fg \cong \Sim(\nu) \otimes \Gamma \bigl( \widetilde{\bS}^{*} \times_{\ft^{*(1)}} \ft^*, \tD_\bS \bigr) \cong \Gamma(\widetilde{\bS}^{*} \times_{\ft^{*(1)}} \ft^*, \Sim(\nu) \otimes \tD_\bS).
\]
It is a classical fact that the coherent sheaf $\Sim(\nu) \otimes \scO_\cB$ on $\cB$ admits a filtration whose subquotients have the form $\scO_\cB(\eta)$ where $\eta$ runs over the weights of $\Sim(\nu)$ (counted with multiplicities). We deduce a similar filtration for the sheaf $\Sim(\nu) \otimes \tD$, and then for its restriction to $\widetilde{\bS}^{*} \times_{\ft^{*(1)}} \ft^*$. (Here we use the fact that restriction along the closed embedding $\widetilde{\bS}^{*} \hookrightarrow \tbg$ is exact on the category $\QCoh^G(\tbg)$, which follows from the same arguments as those used at the beginning of the proof of Proposition~\ref{prop:Ureg-Azumaya}.)
In other words, we have obtained a filtration of $\Sim(\nu) \otimes \widetilde{\cU}_\bS\fg$ with subquotients 
\begin{equation}
\label{eqn:D-proof}
 \Gamma \bigl( \widetilde{\bS}^{*} \times_{\ft^{*(1)}} \ft^*, (\scO_\cB(\eta) \otimes_{\scO_\cB} \tD)_{|\widetilde{\bS}^{*} \times_{\ft^{*(1)}} \ft^*} \bigr)
\end{equation}
where $\eta$ runs over the weights of $\Sim(\nu)$ (counted with multiplicities). This filtration is clearly compatible with the action of $\cU_\bS \fg \otimes_{\scO(\bS^{*})} (\widetilde{\cU}_\bS\fg)^\op$ and the natural structure of module over the group scheme $\ft^*/(W,\bullet) \times_{\ft^{*(1)}/W} \bbI_\bS^* \times_{\ft^{*(1)}/W} \ft^*$.

Let us denote by $\varpi : \ft^* \to \ft^*/(W,\bullet)$ the quotient morphism.
The irreducible components of $\fE'$ are parametrized by $\ft^*_{\mathbb{F}_p}$, with the component corresponding to $\overline{\gamma}$ being the image of the closed embedding $\ft^* \to \fE'$ given by $\xi \mapsto (\varpi(\xi+\overline{\gamma}),\xi)$. The components containing the point $(\tmu,w \bullet \ola)$ correspond to the elements $\overline{\gamma} \in \ft^*_{\mathbb{F}_p}$ such that $w \bullet \ola+\overline{\gamma} \in W \bullet \omu$, i.e.~$\ola + w^{-1}\overline{\gamma} \in W \bullet \omu$. On the other hand, the module~\eqref{eqn:D-proof} is supported on the component corresponding to $\overline{\eta}$. Hence, after completion at $(\tmu,w \bullet \ola)$, the only subquotients that survive are those corresponding to the weights $\eta$ such that $\ola + w^{-1}\overline{\eta} \in W \bullet \omu$, i.e. $\lambda+w^{-1}\eta \in \Wext \bullet \mu$. Since $w^{-1}\eta$ is a weight of $\Sim(\nu)$, it belongs to $\mu-\lambda+\Z\fR$, so that $\lambda+w^{-1}\eta \in \mu+\Z\fR$. By Lemma~\ref{lem:weights}\eqref{it:lem-no-torsion-1} the condition that $\lambda+w^{-1}\eta \in \Wext \bullet \mu$ is therefore equivalent to $\lambda+w^{-1}\eta \in \Waff \bullet \mu$. Now by~\cite[Lemma~II.7.7]{jantzen} this condition is satisfied only when $\lambda+w^{-1}\eta=\mu$, i.e.~$\eta=w(\mu-\lambda)$. We deduce the desired isomorphism, since $w \bullet \mu-w \bullet \lambda=w(\mu-\lambda)$.
\end{proof}

\begin{rmk}
\label{rmk:translation-bimod-B}
Let $\lambda,\mu \in \bbX$ belonging to the closure of the fundamental alcove, and assume that the stabilizer of $\lambda$ for the dot-action of $\Waff$ is contained in the stabilizer of $\mu$. Then, if we denote by $\scO(\fE)^{\hat{\lambda},\hat{\mu}}$ the completion of $\scO(\fE)$ at the ideal corresponding to $(\ola,\tmu)$, the same considerations as in the proof of Lemma~\ref{lem:translation-bimod-B} show that there exists an isomorphism
\[
\sfQ_{\lambda,\mu} \cong \scO(\fE)^{\hat{\lambda},\hat{\mu}} \otimes_{\cZ_\bS^{\hat{\lambda},\hat{\mu}}} \sfP_\bS^{\lambda,\mu}.
\]
\end{rmk}

Recall that given a simple reflection $s \in \Saff$, a weight $\lambda \in \bbX$ belonging to the closure of the fundamental alcove is said to be on the \emph{wall} corresponding to $s$ if $s \bullet \lambda=\lambda$.

\begin{lem}
\label{lem:transl-bimod-ses}
 Let $\lambda,\mu \in \bbX$, with $\lambda$ belonging to the fundamental alcove and $\mu$ on exactly one wall of the fundamental alcove, attached to the simple reflection $s$. Let also $w \in W$. 
 \begin{enumerate}
 \item
 If $ws \bullet \lambda > w \bullet \lambda$, then there exists an exact sequence
 \[
  \sfQ_{w \bullet \lambda, w \bullet \lambda} \hookrightarrow \sfP_\bS^{\lambda,\mu} \hatotimes_{\cU_\bS\fg} \sfP_\bS^{\mu,\lambda} \twoheadrightarrow \sfQ_{ws \bullet \lambda, w \bullet \lambda}
 \]
in $\Mod_{\fin}^{\bbI}(\cU_\bS^{\hat{\lambda},\hat{\lambda}})$.
\item
If $ws \bullet \lambda < w \bullet \lambda$, then there exists an exact sequence
 \[
  \sfQ_{ws \bullet \lambda, w \bullet \lambda} \hookrightarrow \sfP_\bS^{\lambda,\mu} \hatotimes_{\cU_\bS\fg} \sfP_\bS^{\mu,\lambda} \twoheadrightarrow \sfQ_{w \bullet \lambda, w \bullet \lambda}
 \]
in $\Mod_{\fin}^{\bbI}(\cU_\bS^{\hat{\lambda},\hat{\lambda}})$.
\end{enumerate}
\end{lem}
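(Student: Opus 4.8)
\emph{The plan.} The idea is to use Lemma~\ref{lem:translation-bimod-B} to replace the translation bimodules by the twisted--differential--operator bimodules $\sfQ_{\kappa_1,\kappa_2}$ of~\S\ref{ss:Ug-D-bimodules}, and then to isolate the effect of the singular weight $\mu$. Since $\lambda$ lies in the fundamental alcove, Lemma~\ref{lem:translation-bimod-B} applied with the conjugating element $w$ gives isomorphisms $\sfP_\bS^{\lambda,\mu}\cong\sfQ_{w\bullet\lambda,\,w\bullet\mu}$ in $\Mod^{\bbI}_{\mathrm{fg}}(\cU_\bS^{\hat\lambda,\hat\mu})$ and $\sfP_\bS^{\mu,\lambda}\cong\sfQ_{w\bullet\mu,\,w\bullet\lambda}$ in $\Mod^{\bbI}_{\mathrm{fg}}(\cU_\bS^{\hat\mu,\hat\lambda})$, hence
\[
\sfP_\bS^{\lambda,\mu}\hatotimes_{\cU_\bS \fg}\sfP_\bS^{\mu,\lambda}\;\cong\;\sfQ_{w\bullet\lambda,\,w\bullet\mu}\hatotimes_{\cU_\bS \fg}\sfQ_{w\bullet\mu,\,w\bullet\lambda}.
\]
The point is that this convolution happens over the weight $w\bullet\mu$, which lies on exactly one wall; accordingly the stabiliser of $\overline{w\bullet\mu}$ for the dot--action of $W$ is generated by a single reflection, namely a $w$--conjugate of the reflection $\bar s\in W$ attached to $s$ (with $\bar s=s$ when $s\in W$, and $\bar s=s_\beta$ when $s=t_\beta s_\beta$).

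The next step is to compare the bifunctor $\hatotimes_{\cU_\bS \fg}$ --- a tensor product over a completion of $\cU_\bS\fg=\Ug\otimes_{\ZFr}\scO(\bS^{*})$ along the Harish--Chandra centre $\ZHC=\scO(\ft^*/(W,\bullet))$ --- with the bifunctor $\hatotimes_{\widetilde{\cU}_\bS\fg}$ of~\S\ref{ss:Ug-D-bimodules}, which is a tensor product over the larger algebra $\widetilde{\cU}_\bS\fg=\cU_\bS\fg\otimes_{\ZHC}\scO(\ft^*)$. Their comparison at the middle weight $w\bullet\mu$ involves the $\widetilde{\cU}_\bS\fg$--bimodule $\mathcal K$ whose underlying $\scO(\ft^*)$--bimodule is $\scO(\ft^*)^{\wedge}\otimes_{\ZHC^{\wedge}}\scO(\ft^*)^{\wedge}$ (completions at $\overline{w\bullet\mu}$, resp.\ $\widetilde{w\bullet\mu}$). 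Using the Artin--Schreier map $\AS\colon\ft^*\to\ft^{*(1)}$ together with Lemma~\ref{lem:quotient-etale}, this is \'etale--locally the completion of $R\otimes_{R^s}R$, with $R=\scO(\ft^{*(1)})=\scO(\bt^*)$ and $R^s$ the $s$--invariants as in~\S\ref{ss:Hecke-cat}. Lemma~\ref{lem:exact-seq-Bs} then produces, after completion, two short exact sequences
\[
\Delta_e^{\wedge}\hookrightarrow\mathcal K\twoheadrightarrow\Delta_s^{\wedge},\qquad \Delta_s^{\wedge}\hookrightarrow\mathcal K\twoheadrightarrow\Delta_e^{\wedge},
\]
where $\Delta_e^{\wedge}$ is the diagonal (unit) bimodule and $\Delta_s^{\wedge}$ its one--sided twist by $\bar s$.

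One then convolves these with $\sfQ_{w\bullet\lambda,\,w\bullet\mu}$ on the left and $\sfQ_{w\bullet\mu,\,w\bullet\lambda}$ on the right, over $\widetilde{\cU}_\bS\fg$; both $\sfQ$'s are flat (being global sections of locally free $\tD_\bS$--modules, by the restriction argument used in the proof of Proposition~\ref{prop:Ureg-Azumaya}), so exactness is preserved. The $\Delta_e^{\wedge}$--term contributes $\sfQ_{w\bullet\lambda,\,w\bullet\mu}\hatotimes_{\widetilde{\cU}_\bS\fg}\sfQ_{w\bullet\mu,\,w\bullet\lambda}\cong\sfQ_{w\bullet\lambda,\,w\bullet\lambda}$ by Lemma~\ref{lem:B-transitivity}, and $\sfQ_{w\bullet\lambda,\,w\bullet\lambda}\cong\cU_\bS^{\hat\lambda}$ since, $\lambda$ being in the fundamental alcove, $\sfQ_{w\bullet\lambda,\,w\bullet\lambda}\cong\sfQ_{w\bullet\lambda,\lambda}\hatotimes_{\cU_\bS \fg}\sfQ_{\lambda,\,w\bullet\lambda}$ is the tensor product of the invertible object $\sfQ_{w\bullet\lambda,\lambda}$ with its inverse (remark following Lemma~\ref{lem:B-translation}). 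For the $\Delta_s^{\wedge}$--term, the $\bar s$--twist of the middle weight, absorbed into the line bundles $\scO_{\cB}(w(\lambda-\mu))$ and $\scO_{\cB}(w(\mu-\lambda))$ carried by the two $\sfQ$'s, yields the bimodule with underlying sheaf $\scO_{\cB}\bigl(w(s\bullet\lambda-\lambda)\bigr)\otimes_{\scO_\cB}\tD=\scO_{\cB}\bigl((ws\bullet\lambda)-(w\bullet\lambda)\bigr)\otimes_{\scO_\cB}\tD$, using the identity $s\bullet\lambda-\lambda=(\lambda-\mu)-\bar s(\lambda-\mu)$ --- valid precisely because $\mu$ lies on the wall of $s$, so the relevant pairing of $\mu+\rho$ vanishes and $\bar s$ fixes $\overline{\mu}$ --- and, once the completion point and the $\bbI$--equivariant structure are matched, this term is identified with $\sfQ_{ws\bullet\lambda,\,w\bullet\lambda}$. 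Feeding in the sequence that is compatible with the normalisation then gives, according to the sign of $ws\bullet\lambda-w\bullet\lambda$, the two exact sequences of the statement.

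\emph{Main obstacle.} The delicate step is the last one: identifying the $\Delta_s^{\wedge}$--term with $\sfQ_{ws\bullet\lambda,\,w\bullet\lambda}$ on the nose --- the naive description of the convolution of the twisted piece is a ``$\bar s$--rotated'' rather than a ``translated'' graph, and one must use the triviality of line bundles on $\widetilde{\bS}^{*}$ (as in the proof of Lemma~\ref{lem:B-translation}) together with the regularity of $w\bullet\lambda$ to recognise it --- and, relatedly, determining which of the two short exact sequences for $\mathcal K$ is the one compatible with the convolution, so that the outcome matches the Bruhat--order hypothesis. This is where the dichotomy $ws\bullet\lambda\gtrless w\bullet\lambda$ enters essentially rather than cosmetically, since the invertible object $\sfQ_{ws\bullet\lambda,\,w\bullet\lambda}$ is not isomorphic to its inverse $\sfQ_{w\bullet\lambda,\,ws\bullet\lambda}$ (for $p$ odd), so only one of the two sequences can hold for a given $w$.
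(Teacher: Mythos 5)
Your route is genuinely different from the paper's, but it has a real gap exactly at the step you flag as the ``main obstacle,'' and the sketch you give for it would not work as stated. After replacing both factors by $\sfQ_{w\bullet\lambda,w\bullet\mu}$ and $\sfQ_{w\bullet\mu,w\bullet\lambda}$ and inserting the kernel $\mathcal K=\scO(\ft^*)^{\wedge}\otimes_{\ZHC^{\wedge}}\scO(\ft^*)^{\wedge}$, the subquotient coming from the twisted piece $\Delta_{\sigma}$ (with $\sigma=w\bar s w^{-1}$) is, in the $\ft^*\times\ft^*$ coordinates furnished by the very $\widetilde{\cU}_\bS\fg$-structures you use to define the convolution, supported on the graph of an \emph{affine reflection} fixing $\overline{w\bullet\lambda}$ (compose translation by $-w(\overline{\lambda-\mu})$, then $\sigma\bullet$, then translation back), not on a translated graph; in particular its left $\scO(\ft^*)$-completion sits at $\overline{w\bullet\lambda}$, whereas $\sfQ_{ws\bullet\lambda,w\bullet\lambda}$ has left completion at $\overline{ws\bullet\lambda}$. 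So the ``absorption into the line bundles'' cannot identify the two on the nose; the identification can only hold after forgetting the $\ft^*$-refinement down to the Harish--Chandra structure (where indeed the two supports have the same image, since the reflected graph is the $(W,\bullet)$-translate of the translated one), and producing an actual isomorphism of $\bbI$-equivariant $\cU_\bS^{\hat\lambda,\hat\lambda}$-modules there is precisely the nontrivial content, which your sketch does not supply. Note also that your key identity has the sign reversed: $s\bullet\lambda-\lambda=\bar s(\lambda-\mu)-(\lambda-\mu)$, not $(\lambda-\mu)-\bar s(\lambda-\mu)$.

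A second problem is your account of where the dichotomy $ws\bullet\lambda\gtrless w\bullet\lambda$ enters. Lemma~\ref{lem:exact-seq-Bs} gives \emph{both} short exact sequences for $R\otimes_{R^s}R$, both survive completion at the fixed point, and convolving with the (flat) $\sfQ$'s preserves both; so your construction, if completed, would produce both orders for every $w$ and never uses the hypothesis on the sign --- your closing claim that ``only one of the two sequences can hold'' is false (it would contradict, e.g., the classical two-sided $\Delta$-filtrations of $B_s$), so the proposal contains no mechanism matching the order to the sign, which is what the statement records. The paper proceeds differently and gets both the two subquotients and their order at once: it converts only the \emph{right} factor via Lemma~\ref{lem:translation-bimod-B}, keeps $\sfP_\bS^{\lambda,\mu}$ in the form of a completion of $\Sim(\nu)\otimes(-)$, filters $\Sim(\nu)\otimes\tD$ restricted to $\widetilde{\bS}^*\times_{\ft^{*(1)}}\ft^*$ by line bundles $\scO_\cB(w\bullet\mu-w\bullet\lambda+\eta_i)$ along a weight-ordered enumeration of the $T$-weights of $\Sim(\nu)$, and shows via Lemma~\ref{lem:weights}\eqref{it:lem-no-torsion-1} and \cite[Lemma~II.7.7]{jantzen} (the stabilizer of $\mu$ being $\{e,s\}$) that exactly two subquotients survive the completion at $(\tla,w\bullet\ola)$, namely $\sfQ_{w\bullet\lambda,w\bullet\lambda}$ and $\sfQ_{ws\bullet\lambda,w\bullet\lambda}$, the order being read off from the chosen weight ordering. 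If you want to salvage your kernel-insertion approach, you would need to actually prove the identification of the twisted subquotient (for instance by a fiberwise or localization computation in the spirit of \S\ref{ss:convolution-trans-bimod}), and accept that the order of the filtration must then be established by a separate argument.
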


\begin{proof}
 By Lemma~\ref{lem:translation-bimod-B} we have
 \[
  \sfP_\bS^{\lambda,\mu} \hatotimes_{\cU_\bS\fg} \sfP_\bS^{\mu,\lambda} \cong \sfP_\bS^{\lambda,\mu} \hatotimes_{\cU_\bS\fg} \sfQ_{w \bullet \mu, w \bullet \lambda}.
 \]
Hence, if we denote by $\nu$ the unique dominant weight in $W(\lambda-\mu)$, this object can be obtained by completing the bimodule
\begin{multline*}
 \Sim(\nu) \otimes \Gamma \bigl( \widetilde{\bS}^{*} \times_{\ft^{*(1)}} \ft^*, (\scO_\cB(w \bullet \mu - w \bullet \lambda) \otimes_{\scO_\cB} \tD)_{|\widetilde{\bS}^{*} \times_{\ft^{*(1)}} \ft^*} \bigr) \cong \\
 \Gamma \bigl( \widetilde{\bS}^{*} \times_{\ft^{*(1)}} \ft^*, \Sim(\nu) \otimes (\scO_\cB(w \bullet \mu - w \bullet \lambda) \otimes_{\scO_\cB} \tD)_{|\widetilde{\bS}^{*} \times_{\ft^{*(1)}} \ft^*} \bigr)
\end{multline*}
with respect to the ideal of $\scO(\fE')$ corresponding to $(\tla, w \bullet \ola)$. 
Hence, as in the proof of Lemma~\ref{lem:translation-bimod-B}, if we choose an enumeration $\eta_1, \cdots, \eta_n$ of the $T$-weights of $\Sim(\nu)$ (counted with multiplicities) such that $\eta_i < \eta_j$ implies $i<j$, then this bimodule admits a filtration
\begin{multline*}
 \{0\}=M_0 \subset M_1 \subset \cdots \\
 \subset M_n = \Gamma \bigl( \widetilde{\bS}^{*} \times_{\ft^{*(1)}} \ft^*, (\scO_\cB(w \bullet \mu - w \bullet \lambda) \otimes_{\scO_\cB} \tD)_{|\widetilde{\bS}^{*} \times_{\ft^{*(1)}} \ft^*} \bigr)
\end{multline*}
such that
\[
M_i/M_{i-1} \cong \Gamma \bigl( \widetilde{\bS}^{*} \times_{\ft^{*(1)}} \ft^*, (\scO_\cB(w \bullet \mu - w \bullet \lambda+ \eta_i) \otimes_{\scO_\cB} \tD)_{|\widetilde{\bS}^{*} \times_{\ft^{*(1)}} \ft^*} \bigr)
\]
for any $i$. The subquotient $M_i/M_{i-1}$ survives after completion at the ideal corresponding to $(\tla, w \bullet \ola)$ iff
\[
 w \bullet \omu - w \bullet \ola + \overline{\eta_i} \in W \bullet \ola - w \bullet \ola,
\]
i.e.~iff
\[
 \mu + w^{-1}\eta_i \in \Wext \bullet \lambda.
\]
Here $w^{-1} \eta_i$ is a weight of $\Sim(\nu)$, hence $\mu + w^{-1} \eta_i$ belongs to $\lambda+\Z\fR$; in view of Lemma~\ref{lem:weights}\eqref{it:lem-no-torsion-1}, this condition is therefore equivalent to $\mu + w^{-1}\eta_i \in \Waff \bullet \lambda$. Since the stabilizer of $\mu$ for the dot-action of $\Waff$ is $\{e,s\}$, by~\cite[Lemma~II.7.7]{jantzen} this condition is satisfied for two values of $\eta_i$, corresponding to
\[
 \mu + w^{-1}\eta_i = \lambda \quad \text{and} \quad \mu + w^{-1}\eta_i=s \bullet \lambda,
\]
i.e.
\[
 w \bullet \mu + \eta_i = w \bullet \lambda \quad \text{and} \quad w \bullet \mu + \eta_i=ws \bullet \lambda.
\]
Hence $\sfP_\bS^{\lambda,\mu} \hatotimes_{\cU_\bS\fg} \sfP_\bS^{\mu,\lambda}$ admits a two-step filtration, with subquotients isomorphic respectively to $\sfQ_{w \bullet \lambda, w \bullet \lambda}$ and $\sfQ_{ws \bullet \lambda, w \bullet \lambda}$. The order in which these subquotients appear depends on wether $ws \bullet \lambda > w \bullet \lambda$ or $ws \bullet \lambda < w \bullet \lambda$, and are as indicated in the statement.
\end{proof}

\subsection{Convolution with translation bimodules}
\label{ss:convolution-trans-bimod}


Let $\lambda,\mu \in \bbX$, and assume that $\lambda$ is regular. Then there exists a canonical algebra morphism
\begin{equation}
\label{eqn:morph-completions}
 \cZ_\bS^{\hat{\mu},\hat{\lambda}} \to \cZ_\bS^{\hat{\lambda},\hat{\lambda}}
\end{equation}
which can be defined as follows. The algebra $\cZ_\bS^{\hat{\mu},\hat{\lambda}}$ is by definition the completion of $\scO(\fD)$ at the ideal corresponding to $(\tmu,\tla)$. Hence it admits a canonical morphism to the completion $\scO(\fE)^{\hat{\mu},\hat{\lambda}}$ of $\scO(\fE)$ at the ideal corresponding to $(\omu,\tla)$. Now the morphism $\ft^* \to \ft^*$ defined by $\xi \mapsto \xi+\omu-\ola$ induces an automorphism of
 $\fE$
sending $(\ola,\tla)$ to $(\omu,\tla)$, which therefore induces an isomorphism
\begin{equation}
\label{eqn:isom-completions-1}
 \scO(\fE)^{\hat{\mu},\hat{\lambda}} \simto \scO(\fE)^{\hat{\lambda},\hat{\lambda}},
\end{equation}
where the right-hand side is the completion of $\scO(\fE)$ at the ideal corresponding to $(\ola,\tla)$. Finally, the natural morphism
\begin{equation}
\label{eqn:isom-completions-2}
\cZ_\bS^{\hat{\lambda},\hat{\lambda}} \to \scO(\fE)^{\hat{\lambda},\hat{\lambda}}
\end{equation}
is an isomorphism by Lemma~\ref{lem:quotient-etale}, since $\lambda$ is regular; combining these constructions we obtain the wished-for morphism~\eqref{eqn:morph-completions}.


Our goal in this subsection is to prove the following claim, which involves the equivalence constructed in Corollary~\ref{cor:equiv-ModUg-ModZ}.

\begin{prop}
\label{prop:localization-wall-crossing}
 Let $\lambda,\mu \in \bbX$, with $\lambda$ belonging to the fundamental alcove and $\mu$ on exactly one wall of the fundamental alcove, attached to a simple reflection $s$ which belongs to $W$. Then there exists an isomorphism
 \[
  \sfP_\bS^{\lambda,\mu} \hatotimes_{\cU_\bS\fg} \sfP_\bS^{\mu,\lambda} \cong \mathscr{L}_{\lambda,\lambda} \left( \cZ_\bS^{\hat{\lambda},\hat{\lambda}} \otimes_{\cZ_\bS^{\hat{\mu},\hat{\lambda}}} \cZ_\bS^{\hat{\lambda}} \right)
 \]
 in $\Mod^{\bbI}_{\fin}(\cU_\bS^{\hat{\lambda},\hat{\lambda}})$, where $\cZ_\bS^{\hat{\lambda}}$ is regarded as a $\cZ_\bS^{\hat{\mu},\hat{\lambda}}$-module via the morphism~\eqref{eqn:morph-completions} and $\cZ_\bS^{\hat{\lambda},\hat{\lambda}} \otimes_{\cZ_\bS^{\hat{\mu},\hat{\lambda}}} \cZ_\bS^{\hat{\lambda}}$ is endowed with the trivial structure as a representation.
\end{prop}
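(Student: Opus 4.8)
The plan is to compare the two sides through the exact equivalence $\mathscr{L}_{\lambda,\lambda}$ of Corollary~\ref{cor:equiv-ModUg-ModZ}, using Lemma~\ref{lem:transl-bimod-ses} on the left and an explicit unwinding of the morphism~\eqref{eqn:morph-completions} on the right. To begin with, write $s=s_\alpha$ with $\alpha\in\fRs$; since $\lambda$ lies in the fundamental alcove one has $s\bullet\lambda=\lambda-\langle\lambda+\rho,\alpha^\vee\rangle\alpha<\lambda$, so Lemma~\ref{lem:transl-bimod-ses}(2) with $w=e$ gives a short exact sequence
\[
0\to\sfQ_{s\bullet\lambda,\lambda}\to\sfP_\bS^{\lambda,\mu}\hatotimes_{\cU_\bS\fg}\sfP_\bS^{\mu,\lambda}\to\sfQ_{\lambda,\lambda}\to0
\]
in $\Mod^{\bbI}_{\fin}(\cU_\bS^{\hat\lambda,\hat\lambda})$. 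As $\lambda$ is regular, the comment after Lemma~\ref{lem:B-translation} gives $\sfQ_{\lambda,\lambda}\cong\cU_\bS^{\hat\lambda}=\mathscr{L}_{\lambda,\lambda}(\cZ_\bS^{\hat\lambda})$ (using~\eqref{eqn:L-units}), and, by Lemma~\ref{lem:B-transitivity} together with the same comment applied to the regular weight $s\bullet\lambda$, the object $\sfQ_{s\bullet\lambda,\lambda}$ is invertible with inverse $\sfQ_{\lambda,s\bullet\lambda}$.

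Next I would compute the right-hand side. Since $\mu$ lies on exactly one wall, its stabiliser for the dot-action of $\Waff$ is $\{e,s\}$, so Lemma~\ref{lem:quotient-etale} identifies formal neighbourhoods of $\overline{\mu}$ in $\ft^*/(W,\bullet)$ and in $\ft^*/(\{e,s\},\bullet)$, and (again because $\lambda$ is regular) a formal neighbourhood of $\overline{\lambda}$ in $\ft^*/(W,\bullet)$ with one in $\ft^*$. Unwinding~\eqref{eqn:morph-completions} through the definition of $\fE$ and the translation isomorphism~\eqref{eqn:isom-completions-1}, and setting $A:=\cZ_\bS^{\hat\lambda}=\ZHC^{\hat\lambda}$, $R_0:=\scO(\ft^{*(1)}/W)^\wedge$, and letting $\sigma$ be the involution of $A$ induced by $s$ (via the Artin--Schreier covering, $\overline{\mu}$ and $\overline{\lambda}$ both mapping to $0$, and the translation), the morphism $\cZ_\bS^{\hat\mu,\hat\lambda}\to\cZ_\bS^{\hat\lambda,\hat\lambda}$ is identified with the inclusion $A^\sigma\otimes_{R_0}A\hookrightarrow A\otimes_{R_0}A$. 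Consequently
\[
\cZ_\bS^{\hat\lambda,\hat\lambda}\otimes_{\cZ_\bS^{\hat\mu,\hat\lambda}}\cZ_\bS^{\hat\lambda}\;\cong\;A\otimes_{A^\sigma}A ,
\]
which is free of rank $2$ over the first copy of $A$ (as $A$ is free of rank $2$ over $A^\sigma$, $p$ being odd). Transporting the first exact sequence of Lemma~\ref{lem:exact-seq-Bs} from $(R,R^s)$ to $(A,A^\sigma)$ produces
\[
0\to\Delta_\sigma\to A\otimes_{A^\sigma}A\to A\to0
\]
in $\Mod^{\bbJ}_{\fin}(\cZ_\bS^{\hat\lambda,\hat\lambda})$, where $\Delta_\sigma$ is $A$ with right action twisted by $\sigma$; note $\Delta_\sigma\hatstar_\bS\Delta_\sigma\cong\cZ_\bS^{\hat\lambda}$, so $\Delta_\sigma$ is invertible.

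Finally, applying the exact functor $\mathscr{L}_{\lambda,\lambda}$ to this last sequence yields an extension of $\sfQ_{\lambda,\lambda}$ by $\mathscr{L}_{\lambda,\lambda}(\Delta_\sigma)$, with the \emph{same} quotient as in the first sequence, so it remains to identify $\mathscr{L}_{\lambda,\lambda}(\Delta_\sigma)\cong\sfQ_{s\bullet\lambda,\lambda}$ and to match the two extension classes. For the first point, $\mathscr{L}_{\lambda,\lambda}(\Delta_\sigma)$ and $\sfQ_{s\bullet\lambda,\lambda}$ are both invertible and both supported on the single non-diagonal component of $\Spec(\cZ_\bS^{\hat\lambda,\hat\lambda})$ — the one indexed by $s$ in the local ``Steinberg'' picture of $\fD$ at $(\tla,\tla)$ — whence they agree; here one uses only the differential-operator description of the $\sfQ_{\bullet,\bullet}$ and the relations of Lemmas~\ref{lem:B-transitivity}--\ref{lem:B-translation}, which involve the \emph{regular} weights $\lambda,s\bullet\lambda$ only, so the monoidality of $\mathscr{L}$ (not yet available at the singular weight $\mu$, cf.\ Remark~\ref{rmk:L-monoidal}) is not needed. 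For the second point, both extensions are non-split — the left one because the wall-crossing bimodule $\sfP_\bS^{\lambda,\mu}\hatotimes\sfP_\bS^{\mu,\lambda}$ is indecomposable, the right one because $A\otimes_{A^\sigma}A$ is indecomposable over the local ring $A$ — and $\Ext^1_{\Mod^{\bbJ}_{\fin}(\cZ_\bS^{\hat\lambda,\hat\lambda})}(\cZ_\bS^{\hat\lambda},\Delta_\sigma)$ is one-dimensional, forcing the two non-split extensions to coincide. The main obstacle is precisely this last paragraph: pinning down $\mathscr{L}_{\lambda,\lambda}(\Delta_\sigma)\cong\sfQ_{s\bullet\lambda,\lambda}$ and the equivalence of extensions in the absence of monoidality for $\mathscr{L}$ at $\mu$. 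A possibly cleaner alternative is to first rewrite $\sfP_\bS^{\lambda,\mu}\hatotimes_{\cU_\bS\fg}\sfP_\bS^{\mu,\lambda}\cong\sfQ_{\lambda,\mu}\hatotimes_{\cU_\bS\fg}\sfQ_{\mu,\lambda}$ via Lemma~\ref{lem:translation-bimod-B} and to realise this convolution as the completion of a $\tD$-bimodule, exploiting that $\cU_\bS\fg$ and $\widetilde{\cU}_\bS\fg$ differ at the character $\tmu$ exactly by the ramified degree-$2$ extension $\ZHC^{\hat\mu}\hookrightarrow\scO(\ft^*)^{\hat\mu}$, which produces the factor ``$A\otimes_{A^\sigma}A$'' directly.
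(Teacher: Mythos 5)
Your overall strategy is genuinely different from the paper's: the paper never compares extension classes, but instead proves the functorial statement of Lemma~\ref{lem:wall-crossing-diag} (that $\sfP_\bS^{\mu,\lambda} \hatotimes_{\cU_\bS\fg}\mathscr{L}_{\lambda,\lambda}(-)$ and $\sfP_\bS^{\lambda,\mu} \hatotimes_{\cU_\bS\fg}\mathscr{L}_{\mu,\lambda}(-)$ correspond to restriction along~\eqref{eqn:morph-completions} and to $\cZ_\bS^{\hat{\lambda},\hat{\lambda}}\otimes_{\cZ_\bS^{\hat{\mu},\hat{\lambda}}}(-)$), using the rewriting through $\sfQ_{\mu,-\rho}\hatotimes_{\cU_\bS\fg}\sfP_\bS^{-\rho,\lambda}$ and the rank-$2$ freeness of Lemma~\ref{lem:freeness}; the Proposition is then a one-line consequence together with~\eqref{eqn:L-units}, with no need to identify any individual subquotient or extension. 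Your route, via the filtration of Lemma~\ref{lem:transl-bimod-ses} and a comparison of two short exact sequences, has two genuine gaps, both at the point you yourself flag. First, the identification $\mathscr{L}_{\lambda,\lambda}(\Delta_\sigma)\cong\sfQ_{s\bullet\lambda,\lambda}$ does not follow from ``invertible and supported on the graph of $s$'': in the equivariant category $\Mod^{\bbJ}_{\fin}(\cZ_\bS^{\hat{\lambda},\hat{\lambda}})$ there is a whole family of pairwise non-isomorphic invertible objects with exactly that support, namely the (completions of the) objects $\Delta_{t_\nu s}^{\bbJ}$ for $\nu\in\bX$, which differ by a character of $T^{(1)}$ over the regular semisimple locus; pinning down which one $\mathscr{L}_{\lambda,\lambda}^{-1}(\sfQ_{s\bullet\lambda,\lambda})$ is, is precisely the content of Lemma~\ref{lem:image-braid-group-bimodule-L}, whose proof in the paper uses Proposition~\ref{prop:image-Bs-finite} and hence the very Proposition you are proving, so you cannot appeal to it without circularity, and you give no independent computation of the equivariant structure.

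Second, the extension-matching step is not justified. Non-splitness of the left-hand sequence is asserted via indecomposability of $\sfP_\bS^{\lambda,\mu} \hatotimes_{\cU_\bS\fg}\sfP_\bS^{\mu,\lambda}$, which is nowhere established (and would plausibly require the very structural description you are after). More seriously, the claim that $\Ext^1_{\Mod^{\bbJ}_{\fin}(\cZ_\bS^{\hat{\lambda},\hat{\lambda}})}(\cZ_\bS^{\hat{\lambda}},\Delta_\sigma)$ is one-dimensional is unsupported and doubtful: this $\Ext$-space is naturally a module over functions on the completed intersection of the diagonal with the graph of $s$ (the reflection hyperplane), so even granting non-splitness of both extensions their classes need only be nonzero, not unit multiples of one another, and the middle terms could differ (compare the extensions of $A$ by $\Delta_\sigma$ with class a generator versus a non-unit multiple of it). To make your approach work you would have to compute this $\Ext$-group equivariantly and show both classes are generators, which is not easier than the paper's argument. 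If you want to salvage a computation in this spirit, the cleanest fix is essentially your closing remark, which is what Lemma~\ref{lem:wall-crossing-diag} does: realize the convolution through the $\tD$-picture, use that $\scO(\fE)^{\hat{\mu},\hat{\lambda}}$ (resp.\ $\scO(\fE)^{\hat{\mu},\hat{-\rho}}$) is free of rank $2$ over $\cZ_\bS^{\hat{\mu},\hat{\lambda}}$ (resp.\ $\cZ_\bS^{\hat{\mu},\hat{-\rho}}$), and deduce the base-change isomorphism of functors directly, bypassing any discussion of subquotients or extension classes.
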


\begin{rmk}
\label{rmk:canonicity-isom}
 From the proof below one can check that the isomorphism in Proposition~\ref{prop:localization-wall-crossing} is ``canonical'' in that it depends only on the choice of an adjunction $(\sfP_\bS^{\lambda,\mu} \hatotimes_{\cU_\bS\fg} (-), \sfP_\bS^{\mu,\lambda} \hatotimes_{\cU_\bS\fg} (-))$, which can be defined by a choice of an isomorphism $\Sim(\nu)^* \cong \Sim(-w_0(\nu))$ where $\nu$ is the only $W$-translate of $\mu-\lambda$; see the proof of Lemma~\ref{lem:convolution-adjoint}. From this proof it is clear also that the morphism
 \[
  \sfP_\bS^{\lambda,\mu} \hatotimes_{\cU_\bS\fg} \sfP_\bS^{\mu,\lambda} \to \cU_\bS^{\hat{\lambda}}
 \]
defined by this adjunction corresponds under $\mathscr{L}_{\lambda,\lambda}$ to the morphism
\[
 \cZ_\bS^{\hat{\lambda},\hat{\lambda}} \otimes_{\cZ_\bS^{\hat{\mu},\hat{\lambda}}} \cZ_\bS^{\hat{\lambda}} \to \cZ_\bS^{\hat{\lambda}}
\]
given by the action of $\cZ_\bS^{\hat{\lambda},\hat{\lambda}}$ on $\cZ_\bS^{\hat{\lambda}}$.
\end{rmk}

The proof of this proposition will use two preliminary lemmas.

\begin{lem}
\label{lem:freeness}
If $s$ is a simple reflection in $W$, then
$\scO(\ft^*)$ is free of rank $2$ as a module over $\scO(\ft^*/(\{e,s\},\bullet))$.
\end{lem}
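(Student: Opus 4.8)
The plan is to reduce to the \emph{linear} action of $s$ and then exhibit an explicit basis. Write $R=\scO(\ft^*)=\mathrm{Sym}(\ft)$. Translation by $\overline{\rho}$ defines an automorphism $\tau$ of $\ft^*$ with $\tau^{-1}\circ s\circ\tau$ equal to the dot-action of $s$, so that $\tau^*$ identifies the subalgebra $\scO(\ft^*/(\{e,s\},\bullet))$ with the invariant subalgebra $R^s$ for the linear action of $s=s_\alpha$, compatibly with the inclusion into $R$. Thus it suffices to prove that $R$ is free of rank $2$ over $R^s$ for this linear action; under it, $s$ acts on $\ft\subset R$ by $s(h)=h-\langle\overline{\alpha},h\rangle\,\alpha^\vee$, where $\overline{\alpha}\in\ft^*$ and $\alpha^\vee\in\ft$ denote the differentials of the root $\alpha$ and of the corresponding coroot.

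First I would record that $\overline{\alpha}\neq 0$ and $\alpha^\vee\neq 0$; this holds because $p$ is very good for $G$, so that $\alpha\notin p\bbX$ and $\alpha^\vee\notin p\bbX^\vee$ (cf.~\eqref{eqn:no-torsion}). Hence I may choose $\delta\in\ft$ with $\langle\overline{\alpha},\delta\rangle=1$, and then $s(\delta)=\delta-\alpha^\vee\neq\delta$.

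Next I would check that $\{1,\delta\}$ generates $R$ as an $R^s$-module. Applying $s$ shows that $\delta+s(\delta)$ and $\delta\cdot s(\delta)$ both lie in $R^s$, so $\delta^2=(\delta+s(\delta))\delta-\delta\cdot s(\delta)\in R^s+R^s\delta$ and therefore $R^s+R^s\delta$ is a subring of $R$. It contains $\bk$, and it contains $\ft$, since for each $h\in\ft$ the element $h-\langle\overline{\alpha},h\rangle\delta$ is $s$-invariant; as $R$ is generated as a $\bk$-algebra by $\ft$, this forces $R=R^s+R^s\delta$. (All these identities are purely formal and never involve dividing by $2$, so the computation is uniform in $p$, including $p=2$.)

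Finally, for $R^s$-linear independence of $\{1,\delta\}$: if $a+b\delta=0$ with $a,b\in R^s$ and $b\neq 0$, then $\delta=-a/b$ would lie in $\mathrm{Frac}(R^s)$; but $\mathrm{Frac}(R^s)=\mathrm{Frac}(R)^{\langle s\rangle}$ (clear a denominator against its $s$-translate), which cannot contain $\delta$ because $s(\delta)\neq\delta$ -- a contradiction. Hence $b=0$, and then $a=0$, so $\{1,\delta\}$ is an $R^s$-basis of $R$ and the lemma follows. The only delicate point is the non-vanishing of $\overline{\alpha}$ and $\alpha^\vee$ in characteristic $p$; once that is granted, everything else is elementary and characteristic-free.
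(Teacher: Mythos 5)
Your proof is correct and follows essentially the same route as the paper: translate by $\overline{\rho}$ to reduce the dot-action to the linear action, then exhibit $\{1,\delta\}$ as an explicit basis with $\langle\overline{\alpha},\delta\rangle=1$. The only difference is that you spell out (in a characteristic-free way, with the needed non-vanishing of $\overline{\alpha}$ and $\alpha^\vee$) the "standard argument" that the paper delegates to~\cite[Claim~3.11]{ew}, and you work directly on $\ft^*$ instead of transporting to $\ft$ via $\kappa$.
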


\begin{proof}
First, translating by $\rho$ we can reduce the $\bullet$-action to the standard action; it therefore suffices to prove that $\scO(\ft^*)$ is free of rank two over the subalgebra $\scO(\ft^*)^s$ of $s$-invariants. Next, recall that we have a $W$-equivariant isomorphism $\ft \simto \ft^*$, induced by a choice of $G$-equivariant isomorphism $\fg \simto \fg^*$. We are therefore reduced to proving that $\scO(\ft)$ is free of rank two over $\scO(\ft)^s$. Now, standard arguments show that $(1,\overline{\rho})$ is a basis of $\scO(\ft)$ over $\scO(\ft)^s$; see e.g.~\cite[Claim~3.11]{ew}.
\end{proof}

\begin{lem}
\label{lem:wall-crossing-diag}
Let $\lambda,\mu \in \bbX$, with $\lambda$ belonging to the fundamental alcove and $\mu$ on exactly one wall of the fundamental alcove, attached to a simple reflection $s$ which belongs to $W$. Then there exist isomorphisms of functors which make the following diagrams commutative, where the upper horizontal arrow in the left-hand side is the restriction-of-scalars functor associated with the morphism~\eqref{eqn:morph-completions}:
 \[
\xymatrix@C=1.8cm{
\Mod_{\fin}^{\bbI}(\cZ_\bS^{\hat{\lambda},\hat{\lambda}}) \ar[d]_-{\mathscr{L}_{\lambda, \lambda}} \ar[r] & \Mod_{\fin}^{\bbI}(\cZ_\bS^{\hat{\mu},\hat{\lambda}}) \ar[d]^-{\mathscr{L}_{\mu,\lambda}} \\
\Mod_{\fin}^{\bbI}(\cU_\bS^{\hat{\lambda},\hat{\lambda}}) \ar[r]^-{\sfP_\bS^{\mu,\lambda} \hatotimes_{\cU_\bS\fg}(-)} & \Mod_{\fin}^{\bbI}(\cU_\bS^{\hat{\mu},\hat{\lambda}}),
}
\quad
\xymatrix@C=1.8cm{
\Mod_{\fin}^{\bbI}(\cZ_\cS^{\hat{\mu},\hat{\lambda}}) \ar[d]_-{\mathscr{L}_{\mu,\lambda}} \ar[r]^-{\cZ_\bS^{\hat{\lambda},\hat{\lambda}} \otimes_{\cZ_\bS^{\hat{\mu},\hat{\lambda}}} (-)} & \Mod_{\fin}^{\bbI}(\cZ_\bS^{\hat{\lambda},\hat{\lambda}}) \ar[d]^-{\mathscr{L}_{\lambda,\lambda}} \\
\Mod_{\fin}^{\bbI}(\cU_\bS^{\hat{\mu},\hat{\lambda}}) \ar[r]^-{\sfP_\bS^{\lambda,\mu} \hatotimes_{\cU_\bS\fg}(-)} & \Mod_{\fin}^{\bbI}(\cU_\bS^{\hat{\lambda},\hat{\lambda}}).
}
\]
\end{lem}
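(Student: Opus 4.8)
The plan is to reduce both commutativities to a computation at the level of $\cZ_\bS$-modules, using the identification of $\cU_\bS^{\hat\lambda,\hat\lambda}$ with an endomorphism algebra of the splitting bundle (Theorem~\ref{thm:splitting}) and the explicit description of the equivalences $\mathscr L_{\lambda,\lambda}$, $\mathscr L_{\mu,\lambda}$ from Corollary~\ref{cor:equiv-ModUg-ModZ}. Concretely, $\mathscr L_{\lambda,\lambda}(-) = \sfM^{\lambda,\lambda}_\bS \otimes_{\cZ_\bS^{\hat\lambda,\hat\lambda}}(-)$ and $\mathscr L_{\mu,\lambda}(-) = \sfM^{\mu,\lambda}_\bS \otimes_{\cZ_\bS^{\hat\mu,\hat\lambda}}(-)$, while $\sfM^{\lambda,\lambda}_\bS = \sfP^{\lambda,-\rho}_\bS \hatotimes_{\cU_\bS\fg} \sfP^{-\rho,\lambda}_\bS$ and $\sfM^{\mu,\lambda}_\bS = \sfP^{\mu,-\rho}_\bS \hatotimes_{\cU_\bS\fg}\sfP^{-\rho,\lambda}_\bS$. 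So for the left-hand square I would need a natural isomorphism
\[
 \sfP^{\mu,\lambda}_\bS \hatotimes_{\cU_\bS\fg} \bigl( \sfM^{\lambda,\lambda}_\bS \otimes_{\cZ_\bS^{\hat\lambda,\hat\lambda}} V \bigr) \;\cong\; \sfM^{\mu,\lambda}_\bS \otimes_{\cZ_\bS^{\hat\mu,\hat\lambda}} V
\]
functorially in $V \in \Mod^{\bbI}_{\fin}(\cZ_\bS^{\hat\lambda,\hat\lambda})$, where on the right $V$ is restricted along~\eqref{eqn:morph-completions}. Since $\sfM^{\lambda,\lambda}_\bS$ is free over $\cZ_\bS^{\hat\lambda,\hat\lambda}$, the tensor product $\sfM^{\lambda,\lambda}_\bS \otimes_{\cZ_\bS^{\hat\lambda,\hat\lambda}} V$ is exact in $V$; and convolution $\sfP^{\mu,\lambda}_\bS\hatotimes_{\cU_\bS\fg}(-)$ is exact by Lemma~\ref{lem:convolution-adjoint} (being two-sided adjoint to $\sfP^{\lambda,\mu}_\bS\hatotimes_{\cU_\bS\fg}(-)$). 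Hence both composite functors are exact, and by the usual dévissage it suffices to construct the isomorphism on the generators, i.e.\ when $V$ is (a summand of) a module of the form $\mathsf C^{\lambda,\lambda}_\bS$-image of $V'\otimes\Ug$; ultimately it suffices to match the two functors applied to the unit $\cZ_\bS^{\hat\lambda}$ together with compatibility with the $\cZ_\bS^{\hat\lambda,\hat\lambda}$-action.

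The key computational input is the transitivity and translation identities for the $\sfQ_{\lambda,\mu}$'s from~\S\ref{ss:Ug-D-bimodules}, combined with Lemma~\ref{lem:translation-bimod-B} (which identifies $\sfP^{\mu,\lambda}_\bS$ with $\sfQ_{w\bullet\mu,w\bullet\lambda}$) and Remark~\ref{rmk:translation-bimod-B} (which expresses $\sfQ_{\lambda,\mu}$ as $\scO(\fE)^{\hat\lambda,\hat\mu}\otimes_{\cZ_\bS^{\hat\lambda,\hat\mu}}\sfP^{\lambda,\mu}_\bS$, the bridge between the ``$\fE$-picture'' and the ``$\fD$-picture''). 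The point is that $\sfP^{\mu,\lambda}_\bS\hatotimes_{\cU_\bS\fg}(-)$ applied to $\mathscr L_{\lambda,\lambda}(\cZ_\bS^{\hat\lambda})=\cU_\bS^{\hat\lambda}$ gives, using~\eqref{eqn:L-units} and Lemma~\ref{lem:translation-bimod-B}, exactly $\sfP^{\mu,\lambda}_\bS$, which by Theorem~\ref{thm:splitting} and the description of $\sfM^{\mu,\lambda}_\bS$ is $\mathscr L_{\mu,\lambda}$ of the rank-one projective $\cZ_\bS^{\hat\mu,\hat\lambda}$-module $\cZ_\bS^{\hat\mu,\hat\lambda}\otimes_{\cZ_\bS^{\hat\lambda,\hat\lambda}}\cZ_\bS^{\hat\lambda}$ — and Lemma~\ref{lem:freeness} guarantees that restriction of scalars along~\eqref{eqn:morph-completions} (étale by Lemma~\ref{lem:quotient-etale} on the $\lambda$ side, degree-$2$ finite flat on the $\mu$ side) sends $\cZ_\bS^{\hat\lambda}$ to precisely this module. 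The commutativity then propagates from the unit to all objects because every object of $\Mod^{\bbI}_{\fin}(\cZ_\bS^{\hat\lambda,\hat\lambda})$ is built from the unit by the convolution $\hatstar_\bS$ and both sides are compatible with this operation (here I would invoke Remark~\ref{rmk:L-monoidal}, or, to stay self-contained, simply the $\cZ_\bS^{\hat\lambda,\hat\lambda}$-linearity of all functors involved together with the fact that any finitely generated $\cZ_\bS^{\hat\lambda,\hat\lambda}$-module has a presentation by free modules).

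For the right-hand square the strategy is formally identical, running the same identities in the other direction: $\sfP^{\lambda,\mu}_\bS\hatotimes_{\cU_\bS\fg}(-)$ is two-sided adjoint to $\sfP^{\mu,\lambda}_\bS\hatotimes_{\cU_\bS\fg}(-)$, so it is exact; applying it to $\mathscr L_{\mu,\lambda}(\cZ_\bS^{\hat\mu,\hat\lambda})$ and using Lemma~\ref{lem:B-transitivity}, Lemma~\ref{lem:translation-bimod-B} and~\eqref{eqn:L-units} one computes that the result is $\mathscr L_{\lambda,\lambda}(\cZ_\bS^{\hat\lambda,\hat\lambda}\otimes_{\cZ_\bS^{\hat\mu,\hat\lambda}}\cZ_\bS^{\hat\mu,\hat\lambda})=\mathscr L_{\lambda,\lambda}(\cZ_\bS^{\hat\lambda,\hat\lambda})$, and more generally the composite $\mathscr L_{\lambda,\lambda}\circ(\cZ_\bS^{\hat\lambda,\hat\lambda}\otimes_{\cZ_\bS^{\hat\mu,\hat\lambda}}-)$ matches $(\sfP^{\lambda,\mu}_\bS\hatotimes_{\cU_\bS\fg}-)\circ\mathscr L_{\mu,\lambda}$. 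I expect the main obstacle to be \emph{naturality}: producing the isomorphism of functors rather than merely a fiberwise or objectwise bijection. The cleanest way around this is to exhibit both composites as $\mathscr L_{\mu,\lambda}$ (resp.\ $\mathscr L_{\lambda,\lambda}$) composed with an honest $\cZ_\bS$-module functor — namely restriction, resp.\ extension, of scalars along~\eqref{eqn:morph-completions} — which is manifestly functorial, and then check the identification of these $\cZ_\bS$-module functors by their effect on the unit object using the explicit sections constructed in Lemmas~\ref{lem:B-translation} and~\ref{lem:translation-bimod-B}. The freeness statement of Lemma~\ref{lem:freeness} is what makes the $\mu$-side extension of scalars well-behaved (finite free of rank $2$), which is precisely why the hypothesis $s\in W$ is used here.
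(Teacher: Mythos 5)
You have assembled the right ingredients (Lemma~\ref{lem:translation-bimod-B}, Remark~\ref{rmk:translation-bimod-B}, Lemma~\ref{lem:B-transitivity}, Lemma~\ref{lem:freeness}, the \'etale statements, and the role of $s\in W$ via the rank-$2$ freeness), but the reduction you actually propose has a genuine gap. You want to check the isomorphism of functors on the unit $\cZ_\bS^{\hat{\lambda}}$ and propagate, claiming ``every object is built from the unit by $\hatstar_\bS$''; this is vacuous (convolving with the unit is the identity), the unit is not a generator of $\Mod_{\fin}^{\bbI}(\cZ_\bS^{\hat{\lambda},\hat{\lambda}})$, and two right-exact $\cZ_\bS^{\hat{\lambda},\hat{\lambda}}$-linear functors can agree on $\cZ_\bS^{\hat{\lambda}}$ without being isomorphic (also, Remark~\ref{rmk:L-monoidal} is only proved later, as Proposition~\ref{prop:L-monoidal}, and would not supply this propagation anyway). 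The correct reduction, which is what the paper does, is to note that the composite $\sfP_\bS^{\mu,\lambda} \hatotimes_{\cU_\bS\fg} \mathscr{L}_{\lambda,\lambda}(-)$ is \emph{literally} of the form $(\sfP_\bS^{\mu,\lambda} \hatotimes_{\cU_\bS\fg} \sfP_\bS^{\lambda,-\rho} \hatotimes_{\cU_\bS\fg} \sfP_\bS^{-\rho,\lambda}) \otimes_{\cZ_\bS^{\hat{\lambda},\hat{\lambda}}} (-) \cong (\sfQ_{\mu,-\rho} \hatotimes_{\cU_\bS\fg} \sfP_\bS^{-\rho,\lambda}) \otimes_{\cZ_\bS^{\hat{\lambda},\hat{\lambda}}} (-)$, i.e.\ to evaluate at the free module rather than at the unit, so that naturality is automatic and everything boils down to one bimodule isomorphism $(\sfP_\bS^{\mu,-\rho} \hatotimes_{\cU_\bS\fg} \sfP_\bS^{-\rho,\lambda}) \otimes_{\cZ_\bS^{\hat{\mu},\hat{\lambda}}} \scO(\fE)^{\hat{\mu},\hat{\lambda}} \simto \sfQ_{\mu,-\rho} \hatotimes_{\cU_\bS\fg} \sfP_\bS^{-\rho,\lambda}$, built from the canonical map $\sfP_\bS^{\mu,-\rho} \to \sfQ_{\mu,-\rho}$ of Remark~\ref{rmk:translation-bimod-B} and proved invertible using the rank-$2$ freeness of Lemma~\ref{lem:freeness}. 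Your ``presentation by free modules'' aside points in this direction, but you never carry out that comparison; the only computation you sketch is at the unit, and even there the identification $\mathscr{L}_{\mu,\lambda}^{-1}(\sfP_\bS^{\mu,\lambda}) \cong$ (restriction of $\cZ_\bS^{\hat{\lambda}}$) is exactly what needs proving, not an input.

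For the right square your direct computation would actually fail: since $\mu$ and $-\rho$ are both singular, $\sfP_\bS^{\mu,-\rho}$ is \emph{not} isomorphic to $\sfQ_{\mu,-\rho}$ (Lemma~\ref{lem:translation-bimod-B} requires one of the weights to lie in the open alcove; by Remark~\ref{rmk:translation-bimod-B} the two differ by a rank-$2$ extension of scalars), and the $\hatotimes_{\cU_\bS\fg}$-transitivity of Lemma~\ref{lem:B-transitivity} needs a \emph{regular} middle weight, so ``using Lemma~\ref{lem:B-transitivity}, Lemma~\ref{lem:translation-bimod-B} and~\eqref{eqn:L-units} one computes\ldots'' does not go through with $\mu$ in the middle -- this singular/regular discrepancy is precisely what the freeness argument is there to handle. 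The efficient route, which the paper takes and you only half-mention, is to prove the left square by the bimodule comparison above and then deduce the right square formally by adjunction, combining Lemma~\ref{lem:convolution-adjoint} with the (extension, restriction)-of-scalars adjunction along~\eqref{eqn:morph-completions}, rather than by a second computation.
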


\begin{proof}
By definition we have
\[
\sfP_\bS^{\mu,\lambda} \hatotimes_{\cU_\bS\fg} \mathscr{L}_{\lambda,\lambda}(-) \cong \left( \sfP_\bS^{\mu,\lambda} \hatotimes_{\cU_\bS\fg} \sfP_\bS^{\lambda,-\rho} \hatotimes_{\cU_\bS\fg} \sfP_\bS^{-\rho,\lambda} \right) \otimes_{\cZ_\bS^{\hat{\lambda},\hat{\lambda}}} (-).
\]
Using Lemma~\ref{lem:B-transitivity} and Lemma~\ref{lem:translation-bimod-B}, we deduce that
\[
\sfP_\bS^{\mu,\lambda} \hatotimes_{\cU_\bS\fg} \mathscr{L}_{\lambda,\lambda}(-) \cong \left( \sfQ_{\mu,-\rho} \hatotimes_{\cU_\bS\fg} \sfP_\bS^{-\rho,\lambda} \right) \otimes_{\cZ_\bS^{\hat{\lambda},\hat{\lambda}}} (-).
\]
Hence to prove the commutativity of the diagram of the left it suffices to construct an isomorphism
\[
\left( \sfP_\bS^{\mu,-\rho} \hatotimes_{\cU_\bS\fg} \sfP_\bS^{-\rho,\lambda} \right) \otimes_{\cZ_\bS^{\hat{\mu},\hat{\lambda}}} \cZ_\bS^{\hat{\lambda},\hat{\lambda}} \simto \sfQ_{\mu,-\rho} \hatotimes_{\cU_\bS\fg} \sfP_\bS^{-\rho,\lambda}
\]
or in other words (in view of~\eqref{eqn:isom-completions-1} and~\eqref{eqn:isom-completions-2}) an isomorphism
\begin{equation}
\label{eqn:morph-commutative-diag-1}
\left( \sfP_\bS^{\mu,-\rho} \hatotimes_{\cU_\bS\fg} \sfP_\bS^{-\rho,\lambda} \right) \otimes_{\cZ_\bS^{\hat{\mu},\hat{\lambda}}} \scO(\fE)^{\hat{\mu},\hat{\lambda}} \simto \sfQ_{\mu,-\rho} \hatotimes_{\cU_\bS\fg} \sfP_\bS^{-\rho,\lambda}.
\end{equation}

To construct a morphism as in~\eqref{eqn:morph-commutative-diag-1} it suffices to construct a morphism
\begin{equation}
\label{eqn:morph-commutative-diag-2}
\sfP_\bS^{\mu,-\rho} \hatotimes_{\cU_\bS\fg} \sfP_\bS^{-\rho,\lambda} \to \sfQ_{\mu,-\rho} \hatotimes_{\cU_\bS\fg} \sfP_\bS^{-\rho,\lambda}
\end{equation}
in $\Mod_{\fin}^{\bbI}(\cU_\bS^{\hat{\mu},\hat{\lambda}})$. By Remark~\ref{rmk:translation-bimod-B} we have
\begin{equation}
\label{eqn:morph-commutative-diag-3}
\sfQ_{\mu,-\rho} \cong \scO(\fE)^{\hat{\mu},\hat{-\rho}} \otimes_{\cZ_\bS^{\hat{\mu},\hat{-\rho}}} \sfP_\bS^{\mu,-\rho};
\end{equation}
in particular there exists a natural morphism $\sfP_\bS^{\mu,-\rho} \to \sfQ_{\mu,-\rho}$, which allows to define the wished-for morphism~\eqref{eqn:morph-commutative-diag-2}, hence the morphism~\eqref{eqn:morph-commutative-diag-1}. 

Now we claim that $\scO(\fE)^{\hat{\mu},\hat{\lambda}}$, resp.~$\scO(\fE)^{\hat{\mu},\hat{-\rho}}$, is free of rank $2$ over the algebra $\cZ_\bS^{\hat{\mu},\hat{\lambda}}$, resp.~$\cZ_\bS^{\hat{\mu},\hat{-\rho}}$, which in view of~\eqref{eqn:morph-commutative-diag-3} will imply that the morphism~\eqref{eqn:morph-commutative-diag-1} is an isomorphism. The two cases are similar, so that we only consider $\scO(\fE)^{\hat{\mu},\hat{\lambda}}$. 
It follows from Lemma~\ref{lem:quotient-etale}
that $\cZ_\bS^{\hat{\mu},\hat{\lambda}}$ identifies with the completion
\[
\scO(\ft^*/(\{e,s\},\bullet) \times_{\ft^{*(1)}/W} \ft^*/(W,\bullet))^{\hat{\mu},\hat{\lambda}}
\]
of $\scO(\ft^*/(\{e,s\},\bullet) \times_{\ft^{*(1)}/W} \ft^*/(W,\bullet))$ with respect to the ideal corresponding to the image of $(\omu,\tla)$. Now $\scO(\fE)$ is free of rank $2$ over $\scO(\ft^*/(\{e,s\},\bullet) \times_{\ft^{*(1)}/W} \ft^*/(W,\bullet))$ by Lemma~\ref{lem:freeness}, and its completion with respect to the ideal corresponding to $(\omu,\tla)$ coincides with its completion with respect to the ideal of $\scO(\ft^*/(\{e,s\},\bullet) \times_{\ft^{*(1)}/W} \ft^*/(W,\bullet))$ corresponding to the image of $(\omu,\tla)$ (because $(\omu,\tla)$ is the only closed point in the fiber over its image in $\ft^*/(\{e,s\},\bullet) \times_{\ft^{*(1)}/W} \ft^*/(W,\bullet)$). The desired claim follows.

We have finally proved the commutativity of the left diagram of the lemma. The commutativity of the right diagram follows from that of the left one by adjunction, in view of Lemma~\ref{lem:convolution-adjoint}.
\end{proof}

\begin{proof}[Proof of Proposition~\ref{prop:localization-wall-crossing}]
Lemma~\ref{lem:wall-crossing-diag} provides isomorphisms
\[
\sfP_\bS^{\lambda,\mu} \hatotimes_{\cU_\bS\fg} \sfP_\bS^{\mu,\lambda} \cong \mathscr{L}_{\lambda,\lambda} (\cZ_\bS^{\hat{\lambda},\hat{\lambda}} \otimes_{\cZ_\bS^{\hat{\mu},\hat{\lambda}}} \mathscr{L}_{\mu,\lambda}^{-1} (\sfP_\bS^{\mu,\lambda})) \cong \mathscr{L}_{\lambda,\lambda} (\cZ_\bS^{\hat{\lambda},\hat{\lambda}} \otimes_{\cZ_\bS^{\hat{\mu},\hat{\lambda}}} \mathscr{L}_{\lambda,\lambda}^{-1} (\cU_\bS^{\hat{\lambda}})).
\]
The desired claim follows, using the isomorphism~\eqref{eqn:L-units}.
\end{proof}

\subsection{Monoidality of the functors \texorpdfstring{$\mathscr{L}_{\lambda,\lambda}$}{L}}
\label{ss:monoidality-L}

Our goal in this subsection is to prove the following claim, announced in Remark~\ref{rmk:L-monoidal}.

\begin{prop}
\label{prop:L-monoidal}
 Let $\lambda,\nu \in \bbX$ in the lower closure of the fundamental alcove, and let $\mu \in \bbX$ be in the fundamental alcove. Then for $M \in \Mod_{\fin}^{\bbI}(\cZ_\bS^{\hat{\lambda},\hat{\mu}})$ and $N \in \Mod_{\fin}^{\bbI}(\cZ_\bS^{\hat{\mu},\hat{\nu}})$ there exists a canonical (in particular, bifunctorial) isomorphism
 \[
  \mathscr{L}_{\lambda,\nu}(M \hatstar_\bS N) \cong \mathscr{L}_{\lambda,\mu}(M) \hatotimes_{\cU_\bS\fg} \mathscr{L}_{\mu,\nu}(N).
 \]
In case $\lambda=\mu=\nu$, this isomorphism and~\eqref{eqn:L-units} define on $\mathscr{L}_{\lambda,\lambda}$ the structure of a monoidal functor.
\end{prop}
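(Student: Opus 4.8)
The plan is to reduce the statement to a single isomorphism of ``universal'' bimodules, namely
\[
\sfM^{\lambda,\mu}_\bS \hatotimes_{\cU_\bS \fg} \sfM^{\mu,\nu}_\bS \;\cong\; \sfM^{\lambda,\nu}_\bS \otimes_{\scO(\ft^{*(1)}/W)^\wedge} \ZHC^{\hat{\mu}},
\]
which we call $(\star)$, read as an isomorphism in $\Mod^{\bbI}_{\mathrm{fg}}(\cU_\bS^{\hat{\lambda},\hat{\nu}})$ compatible with the two residual actions of $\ZHC^{\hat{\mu}}$: on the left-hand side, the ``middle'' copy of $\ZHC$ surviving the convolution over $\cU_\bS\fg$; on the right-hand side, the displayed free factor. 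Granting $(\star)$, the proposition follows by a formal manipulation. Since $\sfM^{\lambda,\mu}_\bS$ and $\sfM^{\mu,\nu}_\bS$ are faithfully projective (hence flat) over $\cZ_\bS^{\hat{\lambda},\hat{\mu}}$, resp.~$\cZ_\bS^{\hat{\mu},\hat{\nu}}$, by Theorem~\ref{thm:splitting}, and the representations $M$, $N$ see $\cU_\bS\fg$ only through its centre, associativity and commutativity of tensor products (together with~\eqref{eqn:D-la-mu-tens}) give a natural isomorphism
\[
\mathscr{L}_{\lambda,\mu}(M) \hatotimes_{\cU_\bS \fg} \mathscr{L}_{\mu,\nu}(N) \;\cong\; \bigl( \sfM^{\lambda,\mu}_\bS \hatotimes_{\cU_\bS \fg} \sfM^{\mu,\nu}_\bS \bigr) \otimes_{\cZ_\bS^{\hat{\lambda},\hat{\mu}} \otimes_{\ZHC^{\hat{\mu}}} \cZ_\bS^{\hat{\mu},\hat{\nu}}} (M \otimes_{\ZHC^{\hat{\mu}}} N),
\]
and $M \otimes_{\ZHC^{\hat{\mu}}} N = M \hatstar_\bS N$ by the description of $\hatstar_\bS$ recalled in~\S\ref{ss:splitting-bundles}. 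Substituting $(\star)$, the free factor $\ZHC^{\hat{\mu}}$ cancels against the contraction $\otimes_{\ZHC^{\hat{\mu}}}$ inside $M \hatstar_\bS N$, and one is left with $\sfM^{\lambda,\nu}_\bS \otimes_{\cZ_\bS^{\hat{\lambda},\hat{\nu}}} (M \hatstar_\bS N) = \mathscr{L}_{\lambda,\nu}(M \hatstar_\bS N)$; bifunctoriality is clear as every step is natural.

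The heart of the argument is the proof of $(\star)$, and this is where the hypothesis $\mu \in$ fundamental alcove enters. Expanding $\sfM^{\lambda,\mu}_\bS = \sfP^{\lambda,-\rho}_\bS \hatotimes_{\cU_\bS \fg} \sfP^{-\rho,\mu}_\bS$ and $\sfM^{\mu,\nu}_\bS = \sfP^{\mu,-\rho}_\bS \hatotimes_{\cU_\bS \fg} \sfP^{-\rho,\nu}_\bS$, the left-hand side of $(\star)$ becomes $\sfP^{\lambda,-\rho}_\bS \hatotimes_{\cU_\bS \fg} \bigl( \sfP^{-\rho,\mu}_\bS \hatotimes_{\cU_\bS \fg} \sfP^{\mu,-\rho}_\bS \bigr) \hatotimes_{\cU_\bS \fg} \sfP^{-\rho,\nu}_\bS$. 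By Lemma~\ref{lem:translation-bimod-B} (applicable since one of $-\rho$, $\mu$ lies in the open alcove) we have $\sfP^{-\rho,\mu}_\bS \cong \sfQ_{-\rho,\mu}$ and $\sfP^{\mu,-\rho}_\bS \cong \sfQ_{\mu,-\rho}$; since $\mu$ is then regular, Lemma~\ref{lem:B-transitivity} yields $\sfQ_{-\rho,\mu} \hatotimes_{\cU_\bS \fg} \sfQ_{\mu,-\rho} \cong \sfQ_{-\rho,-\rho}$. By Remark~\ref{rmk:translation-bimod-B}, applied with $\lambda = \mu = -\rho$ (the stabiliser condition being vacuous), $\sfQ_{-\rho,-\rho} \cong \scO(\fE)^{\hat{-\rho},\hat{-\rho}} \otimes_{\cZ_\bS^{\hat{-\rho},\hat{-\rho}}} \sfP^{-\rho,-\rho}_\bS$; here $\sfP^{-\rho,-\rho}_\bS$ equals $\cU_\bS^{\hat{-\rho}}$ (the relevant dominant weight is $0$ and $\Sim(0) = \bk$), i.e.~the monoidal unit of $\Mod^{\bbI}_{\mathrm{fg}}(\cU_\bS^{\hat{-\rho},\hat{-\rho}})$. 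Convolving the unit with the two outer factors leaves $\sfP^{\lambda,-\rho}_\bS \hatotimes_{\cU_\bS \fg} \sfP^{-\rho,\nu}_\bS = \sfM^{\lambda,\nu}_\bS$, base-changed along the finite flat extension $\cZ_\bS^{\hat{-\rho},\hat{-\rho}} \to \scO(\fE)^{\hat{-\rho},\hat{-\rho}}$ of rank $\#W$. One then identifies this extension: it is the completed $W$-coinvariant extension at the dot-fixed weight $-\rho$, and under the étale identification of the completions of $\ft^*/(W,\bullet)$ at a regular weight with those of $\ft^{*(1)}$ over $\ft^{*(1)}/W$ (Lemma~\ref{lem:quotient-etale}) it matches $\scO(\ft^{*(1)}/W)^\wedge \to \ZHC^{\hat{\mu}}$, which is again a completed $W$-coinvariant extension; this gives $(\star)$. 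All the isomorphisms invoked are $\bbI^*_\bS$-equivariant, so $(\star)$ holds equivariantly.

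Finally, in the case $\lambda = \mu = \nu$ one must check that the isomorphism of the proposition together with~\eqref{eqn:L-units} satisfies the unit triangles and the pentagon axiom. This is a diagram chase: passing to $\Mod^{\bbI}_{\mathrm{fg}}(\cU_\bS^\wedge)$ (where all the relevant summands live) and using that both $\hatotimes_{\cU_\bS \fg}$ and $\hatstar_\bS$ are ordinary tensor products of (bi)modules, the coherences reduce to the standard associativity and unit isomorphisms for such tensor products, which are then transported along the equivalence $\mathscr{L}_{\lambda,\lambda}$. I expect the main obstacle to be the proof of $(\star)$, and within it the bookkeeping of completed centres and the precise matching of the rank-$\#W$ ``middle'' factor $\scO(\fE)^{\hat{-\rho},\hat{-\rho}}$ produced by the $\sfQ$-calculus at the singular weight $-\rho$ with the extension $\scO(\ft^{*(1)}/W)^\wedge \to \ZHC^{\hat{\mu}}$ at the regular weight $\mu$; it is exactly here that the hypothesis $\mu \in$ fundamental alcove (i.e.~$\mu$ regular) is indispensable, both for Lemma~\ref{lem:B-transitivity} and for the étale comparison of completions.
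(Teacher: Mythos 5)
Your argument is correct and follows essentially the same route as the paper's proof: after the same formal rewriting of $\mathscr{L}_{\lambda,\mu}(M) \hatotimes_{\cU_\bS\fg} \mathscr{L}_{\mu,\nu}(N)$ as a four-fold tensor product of translation bimodules against $M \otimes_{\ZHC^{\hat{\mu}}} N$, both proofs collapse the middle factor $\sfP_\bS^{-\rho,\mu} \hatotimes_{\cU_\bS\fg} \sfP_\bS^{\mu,-\rho}$ to $\sfQ_{-\rho,-\rho}$ via Lemmas~\ref{lem:translation-bimod-B} and~\ref{lem:B-transitivity}, and then dispose of the resulting middle completed centre using the regularity of $\mu$ (Lemma~\ref{lem:quotient-etale}) together with the isomorphism $\scO(\ft^{*(1)}/W)^\wedge \simto \ZHC^{\hat{-\rho}}$ at the $W$-fixed weight $-\rho$. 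The only difference is organizational: you isolate the universal bimodule isomorphism $(\star)$ and then base-change, whereas the paper carries $M$ and $N$ through the computation; the key inputs and the handling of the rank-$\#W$ middle factor are the same.
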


\begin{proof}
Recall the completion $\scO(\ft^{*(1)}/W)^\wedge$ introduced in~\S\ref{ss:comparison}.
 By definition and Remark~\ref{rmk:completion-tensor-product-ZHC}, if we set $\cU_\bS\fg^\wedge :=\cU_\bS \fg \otimes_{\scO(\ft^{*(1)}/W)} \scO(\ft^{*(1)}/W)^\wedge$ we have
 \[
  \mathscr{L}_{\lambda,\mu}(M) = \sfM_\bS^{\lambda,\mu} \otimes_{\cZ_\bS^{\hat{\lambda},\hat{\mu}}} M = \left( \sfP_\bS^{\lambda,-\rho} \otimes_{\cU_\bS\fg^\wedge} \sfP_\bS^{-\rho,\mu} \right) \otimes_{\ZHC^{\hat{\lambda}} \otimes_{\scO(\ft^{*(1)}/W)^\wedge} \ZHC^{\hat{\mu}}} M
 \]
and
\[
  \mathscr{L}_{\mu,\nu}(N) = \sfM_\bS^{\mu,\nu} \otimes_{\cZ_\bS^{\hat{\mu},\hat{\nu}}} M = \left( \sfP_\bS^{\mu,-\rho} \otimes_{\cU_\bS\fg^\wedge} \sfP_\bS^{-\rho,\nu} \right) \otimes_{\ZHC^{\hat{\mu}} \otimes_{\scO(\ft^{*(1)}/W)^\wedge} \ZHC^{\hat{\nu}}} N.
 \]
 We deduce that
 \begin{multline*}
 \mathscr{L}_{\lambda,\mu}(M) \hatotimes_{\cU_\bS\fg} \mathscr{L}_{\mu,\nu}(N) \cong \\
 \bigl( \sfP_\bS^{\lambda,-\rho} \otimes_{\cU_\bS\fg^\wedge} \sfP_\bS^{-\rho,\mu} \otimes_{\cU_\bS\fg^\wedge} \sfP_\bS^{\mu,-\rho} \otimes_{\cU_\bS\fg^\wedge} \sfP_\bS^{-\rho,\nu}\bigr) \\
 \otimes_{\ZHC^{\hat{\lambda}} \otimes_{\scO(\ft^{*(1)}/W)^\wedge} \ZHC^{\hat{\mu}} \otimes_{\scO(\ft^{*(1)}/W)^\wedge} \ZHC^{\hat{\nu}}} (M \otimes_{\ZHC^{\hat{\mu}}} N).
  \end{multline*}
  (Here, $\ZHC^{\hat{\lambda}} \otimes_{\scO(\ft^{*(1)}/W)^\wedge} \ZHC^{\hat{\mu}} \otimes_{\scO(\ft^{*(1)}/W)^\wedge} \ZHC^{\hat{\nu}}$ identifies with the completion of $\scO(\ft^*/(W,\bullet) \times_{\ft^{*(1)}/W} \ft^*/(W,\bullet) \times_{\ft^{*(1)}/W} \ft^*/(W,\bullet))$ with respect to the ideal corresponding to $(\tla,\tmu,\widetilde{\nu})$.)
%
By Lemma~\ref{lem:translation-bimod-B} and Lemma~\ref{lem:B-transitivity} we have
\[
 \sfP_\bS^{-\rho,\mu} \hatotimes_{\cU_\bS\fg} \sfP_\bS^{\mu,-\rho} \cong \sfQ_{-\rho,\mu} \hatotimes_{\cU_\bS\fg} \sfQ_{\mu,-\rho} \cong \sfQ_{-\rho,-\rho}.
\]
By Lemma~\ref{lem:US-D}, $\sfQ_{-\rho,-\rho}$ identifies with the completion of $\widetilde{\cU}_\bS\fg$ with respect to the ideal of $\scO(\ft^*)$ corresponding to $\overline{-\rho}$. Via this identification, the action of $\ZHC^{\hat{\mu}}$ is given by the composition
\[
\ZHC^{\hat{\mu}} \to \scO(\ft^*)^{\hat{\mu}} \simto \scO(\ft^*)^{\hat{-\rho}}
\]
where $\scO(\ft^*)^{\hat{\mu}}$ and $\scO(\ft^*)^{\hat{-\rho}}$ are the completions of $\scO(\ft^*)$ with respect to the ideals corresponding to $\overline{\lambda}$ and $\overline{-\rho}$ respectively, the first map is induced by the embedding $\ZHC \cong \scO(\ft^*/(W,\bullet)) \to \scO(\ft^*)$, and the second one is defined in terms of translation, in a way similar to that considered at the beginning of~\S\ref{ss:convolution-trans-bimod}. Here the first map is an isomorphism by Lemma~\ref{lem:quotient-etale}, from which we obtain an isomorphism
 \begin{multline*}
 \mathscr{L}_{\lambda,\mu}(M) \hatotimes_{\cU_\bS\fg} \mathscr{L}_{\mu,\nu}(N) \cong \\
 \bigl( \sfP_\bS^{\lambda,-\rho} \otimes_{\cU_\bS\fg^\wedge} \sfP_\bS^{-\rho,\nu}\bigr)
 \otimes_{\ZHC^{\hat{\lambda}} \otimes_{\scO(\ft^{*(1)}/W)^\wedge} \ZHC^{\hat{-\rho}} \otimes_{\scO(\ft^{*(1)}/W)^\wedge} \ZHC^{\hat{\nu}}} (M \otimes_{\ZHC^{\hat{\mu}}} N).
  \end{multline*}
  Finally we use the fact that the morphism $\scO(\ft^{*(1)}/W)^\wedge \to \ZHC^{\hat{-\rho}}$ is an isomorphism (see the proof of Lemma~\ref{lem:morph-etale}) to deduce the wished-for isomorphism
   \[
 \mathscr{L}_{\lambda,\mu}(M) \hatotimes_{\cU_\bS\fg} \mathscr{L}_{\mu,\nu}(N) \cong   \mathscr{L}_{\lambda,\nu}(M \hatstar_\bS N).
 \]

  In case $\lambda=\mu=\nu$, the fact that the relevant isomorphisms define a monoidal structure on $\mathscr{L}_{\lambda,\lambda}$ is clear from constructions.
\end{proof}

\begin{rmk}
 Proposition~\ref{prop:L-monoidal} also holds in case $\mu$ is singular (in the lower closure of the fundamental alcove). This case can be treated using the methods of~\S\ref{ss:Ug-D-sing} below; since it is not needed in this paper, we omit the details.
\end{rmk}

\subsection{Singular analogues}
\label{ss:Ug-D-sing}

Let $I \subset \fRs$ be a subset, and let $P_I \subset G$ be the associated standard (i.e., containing $B$) parabolic subgroup of $G$. (In practice, only the case $\#I=1$ will be considered below.) Let $U_I \subset P_I$ be the unipotent radical of $P_I$, and let $L_I$ be the Levi factor containing $T$, so that $P_I \cong L_I \ltimes U_I$. Let $\cP_I:=G/P_I$, and consider the natural projection
\[
 \omega_I : G/U_I \to \cP_I.
\]
The group $L_I$ acts naturally on $G/U_I$ on the right, via the action induced by multiplication on the right on $G$; this action makes $\omega_I$ a (Zariski locally trivial) $L_I$-torsor. We set
\[
 \tD_I := (\omega_I)_* (\mathscr{D}_{G/U_I})^{L_I},
\]
where the exponent means $L_I$-invariants. The actions of $G$ and $L_I$ on $G/U_I$ induce a canonical algebra morphism
\begin{equation}
\label{eqn:morph-Ug-D-I}
 \Ug \otimes_{\ZHC} \scO(\ft^*/(W_I,\bullet)) \to \Gamma(\cP_I,\tD_I),
\end{equation}
see~\cite[Proposition~1.2.3]{bmr2}.

Let also $\bP_I:=P_I^{(1)}$, a parabolic subgroup of $\bG=G^{(1)}$ with unipotent radical $\bU_I:=U_I^{(1)}$.
Let $\tbg_I$ be the parabolic Grothendieck resolution (for the group $\bG$) associated with $I$, defined as
\[
 \tbg_I := \bG \times^{\bP_I} (\bg/\Lie(\bU_I))^*.
\]
Here $\tbg_I$ is a vector bundle over $\bG/\bP_I = \cP_I^{(1)}$, and if $\bL_I:=L_I^{(1)}$ there is a natural morphism
\[
\tbg_I \to \Lie(\bL_I)^*/\bL_I \cong \ft^{*(1)}/W_I,
\]
where $W_I \subset W$ is as in~\S\ref{ss:weights} (seen here as the Weyl group of $(\bL_I,\bT)$). 

Consider the induced morphism $f_I : \tbg_I \times_{\ft^{*(1)}/W_I} \ft^*/(W_I,\bullet) \to \cP_I^{(1)}$, and the Frobenius morphism $\Fr_{\cP_I} : \cP_I \to \cP_I^{(1)}$. As explained in~\cite[\S 1.2.1]{bmr2}, there exists a canonical algebra morphism
\[
 (f_I)_* \scO_{\tbg_I \times_{\ft^{*(1)}/W_I} \ft^*/(W_I,\bullet)} \to (\Fr_{\cP_I})_* \tD_I,
\]
where the morphism $\ft^*/(W_I,\bullet) \to \ft^{*(1)}/W_I$ is induced by the Artin--Schreier map. This morphism takes values in the center of $(\Fr_{\cP_I})_* \tD_I$, and makes $(\Fr_{\cP_I})_* \tD_I$ a locally finitely generated $(f_I)_* \scO_{\tbg_I \times_{\ft^{*(1)}/W_I} \ft^*/(W_I,\bullet)}$-module. Since all the morphisms involved in this construction are affine, using this morphism one can consider $\tD_I$ as a coherent sheaf of $\scO_{\tbg_I \times_{\ft^{*(1)}/W_I} \ft^*/(W_I,\bullet)}$-algebras on $\tbg_I \times_{\ft^{*(1)}/W_I} \ft^*/(W_I,\bullet)$. (We will not introduce a different notation for this sheaf of algebras.)

We also have a canonical morphism $\tbg_I \to \fg^{*(1)}$, and we denote by $\widetilde{\bS}^*_I$ the (scheme-theoretic) inverse image of $\bS^*$ under this morphism. As in the case $I=\varnothing$, using~\cite[Remark~3.5.4]{riche-kostant} one can check that the morphism $\tbg_I \to \ft^{*(1)}/W_I$ considered above restricts to an isomorphism $\widetilde{\bS}^*_I \simto \ft^{*(1)}/W_I$; in particular, this scheme is affine. We set
\[
 \tD_{I,\bS} := (\tD_I)_{|\widetilde{\bS}_I^{*} \times_{\ft^{*(1)}/W_I} \ft^*/(W_I,\bullet)}.
\]

The following lemma is a parabolic analogue of Lemma~\ref{lem:US-D}, for which the same proof applies.

\begin{lem}
\label{lem:US-D-I}
 The morphism~\eqref{eqn:morph-Ug-D-I} induces an algebra isomorphism
 \[
  \cU_\cS\fg \otimes_{\ZHC} \scO(\ft^*/(W_I,\bullet)) \simto \Gamma(\widetilde{\bS}_I^{*} \times_{\ft^{*(1)}/W_I} \ft^*/(W_I,\bullet), \tD_{I,\bS}).
 \]
\end{lem}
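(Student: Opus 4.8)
The plan is to follow, mutatis mutandis, the proof of Lemma~\ref{lem:US-D}, replacing the full flag variety by $\cP_I$, the torus by the Levi $L_I$, and the Weyl group $W$ by $W_I$ wherever appropriate. First I would introduce the natural morphism
\[
 h_I : \widetilde{\bS}_I^{*} \times_{\ft^{*(1)}/W_I} \ft^*/(W_I,\bullet) \to \bS^{*} \times_{\ft^{*(1)}/W} \ft^*/(W,\bullet) = \fC_\bS,
\]
built from the map $\widetilde{\bS}_I^{*} \to \bS^{*}$ obtained by restricting $\tbg_I \to \fg^{*(1)}$, together with the quotient map $\ft^*/(W_I,\bullet) \to \ft^*/(W,\bullet)$, these being compatible over $\ft^{*(1)}/W$.

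Next I would invoke the parabolic counterpart of~\cite[Proposition~5.2.1]{bmr} (established along the same lines in~\cite{bmr2}, and underlying the morphism~\eqref{eqn:morph-Ug-D-I}): writing also $\cU_\bS\fg$ for the sheaf of $\scO_{\fC_\bS}$-algebras attached to the $\scO(\fC_\bS)$-algebra $\cU_\bS\fg$, the morphism~\eqref{eqn:morph-Ug-D-I} induces a canonical isomorphism of sheaves of algebras $h_I^*(\cU_\bS\fg) \simto \tD_{I,\bS}$ on $\widetilde{\bS}_I^{*} \times_{\ft^{*(1)}/W_I} \ft^*/(W_I,\bullet)$. Then, using that the morphism $\tbg_I \to \ft^{*(1)}/W_I$ restricts to an isomorphism $\widetilde{\bS}_I^{*} \simto \ft^{*(1)}/W_I$ (stated just above the lemma, from~\cite[Remark~3.5.4]{riche-kostant}) and that $\bS^{*} \simto \ft^{*(1)}/W$, one checks that $h_I$ identifies its source with $\fC_\bS \times_{\ft^*/(W,\bullet)} \ft^*/(W_I,\bullet)$, both schemes being canonically isomorphic to $\ft^*/(W_I,\bullet)$. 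Taking global sections of the isomorphism $h_I^*(\cU_\bS\fg) \simto \tD_{I,\bS}$ then yields
\[
 \Gamma\bigl(\widetilde{\bS}_I^{*} \times_{\ft^{*(1)}/W_I} \ft^*/(W_I,\bullet),\, \tD_{I,\bS}\bigr) \cong \cU_\bS\fg \otimes_{\scO(\fC_\bS)} \scO(\ft^*/(W_I,\bullet)) \cong \cU_\bS\fg \otimes_{\ZHC} \scO(\ft^*/(W_I,\bullet)),
\]
the last isomorphism using $\scO(\fC_\bS) \cong \ZHC$ via the isomorphism $\fC_\bS \simto \ft^*/(W,\bullet)$; finally one verifies that this composite coincides with the algebra map induced by~\eqref{eqn:morph-Ug-D-I}.

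The only point requiring genuine care --- and hence the (mild) main obstacle --- is the first citation: verifying that the sheaf-of-algebras localization statement of~\cite{bmr} really does carry over to the parabolic twisted differential operators $\tD_I$, compatibly with the algebra structures (and, if one wishes, with the $G$-equivariant structures). This is essentially the content of the parabolic differential-operator calculus recalled in~\cite[\S 1.2]{bmr2}, so no new idea is needed; everything else is formal manipulation of fiber products and global sections, exactly as in the proof of Lemma~\ref{lem:US-D}.
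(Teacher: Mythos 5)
Your proposal is correct and is essentially the paper's own argument: the paper proves this lemma simply by saying that the proof of Lemma~\ref{lem:US-D} applies verbatim, and your write-up is exactly that proof transported to the parabolic setting (the morphism $h_I$, the parabolic localization isomorphism $h_I^*(\cU_\bS\fg) \simto \tD_{I,\bS}$ from~\cite[\S 1.2]{bmr2}, the identification $\widetilde{\bS}_I^{*} \simto \ft^{*(1)}/W_I$, and passage to global sections over the resulting affine scheme). No gaps; the only genuinely nontrivial input you flag, the parabolic analogue of~\cite[Proposition~5.2.1]{bmr}, is the same input the paper implicitly relies on via~\cite{bmr2}.
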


Let
\[
 \bbX_I := \{ \lambda \in \bbX \mid \forall \alpha \in I, \, \langle \lambda,\alpha^\vee \rangle = 0\},
\]
so that $\bbX_I$ identifies with the character lattice of $L_I$.
Then any $\lambda \in \bbX_I$ defines a line bundle $\scO_{\cP_I}(\lambda)$ on $\cP_I$, from which one can define the space
\begin{equation}
\label{eqn:DI-def}
 \Gamma \bigl( \widetilde{\bS}_I^{*} \times_{\ft^{*(1)}/W_I} \ft^*/(W_I,\bullet), (\scO_{\cP_I}(\lambda) \otimes \tD_I)_{|\widetilde{\bS}_I^{*} \times_{\ft^{*(1)}/W_I} \ft^*/(W_I,\bullet)} \bigr).
\end{equation}
This object admits a natural action of the algebra
\[
 (\cU_\bS\fg \otimes_{\ZHC} \scO(\ft^*/(W_I,\bullet))) \otimes_{\scO(\bS^{*})} (\cU_\bS\fg^\op \otimes_{\ZHC} \scO(\ft^*/(W_I,\bullet)))
\]
and of the group scheme 
\[
 \ft^*/(W_I,\bullet) \times_{\ft^{*(1)}/W} \bbI_\bS^* \times_{\ft^{*(1)}/W} \ft^*/(W_I,\bullet).
\]
Since $\lambda$ is $W_I$-invariant, the map $\xi \mapsto \ola + \xi$ factors through an isomorphism
\[
\tau_\lambda^I : \ft^*/(W_I,\bullet) \simto \ft^*/(W_I,\bullet),
\]
and 
the action of the subalgebra $\scO(\ft^*/(W_I,\bullet) \times_{\ft^{*(1)}/W} \ft^*/(W_I,\bullet))$ on~\eqref{eqn:DI-def} factors through 
the morphism induced by the closed embedding
\[
\tau^I_\lambda \times \id : \ft^*/(W_I,\bullet) \to \ft^*/(W_I,\bullet) \times_{\ft^{*(1)}/W} \ft^*/(W_I,\bullet).
\]

Given $\lambda,\mu \in \bbX$ such that $\lambda-\mu \in \bbX_I$, one can then define the object
\[
 \sfQ^I_{\lambda,\mu} \in \Mod^{\bbI}_\fin(\cU_\bS^{\hat{\lambda},\hat{\mu}})
\]
as the completion of the module
\[
\Gamma \bigl( \widetilde{\bS}_I^{*} \times_{\ft^{*(1)}/W_I} \ft^*/(W_I,\bullet), (\scO_{\cP_I}(\lambda-\mu) \otimes \tD_I)_{|\widetilde{\bS}_I^{*} \times_{\ft^{*(1)}/W_I} \ft^*/(W_I,\bullet)} \bigr)
\]
at the ideal of $\scO(\ft^*/(W_I,\bullet) \times_{\ft^{*(1)}/W} \ft^*/(W_I,\bullet))$ corresponding to the image of $(\ola,\omu)$. As for $\sfQ_{\lambda,\mu}$, this object can be obtained by completing the module at the ideal of $\scO(\ft^*/(W_I,\bullet))$ corresponding to the image of $\ola$ with respect to the left action, or at the ideal of $\scO(\ft^*/(W_I,\bullet))$ corresponding to the image of $\omu$ with respect to the right action.

 \begin{lem}
 \label{lem:B-transitivity-sing}
  Let $\lambda,\mu,\nu \in \bbX$.
\begin{enumerate}
\item
\label{it:transitivity-sing-1}
Assume that the stabilizer of $\mu$ for the dot-action of $\Waff$ is $W_I$, and that $\nu \in -\rho + \bbX_I$. Then there exists a canonical isomorphism
\[
\sfQ_{\lambda,\mu} \hatotimes_{\cU_\bS \fg} \sfQ_{\mu,\nu}^I \simto \sfQ_{\lambda,\nu}
\]
in $\Mod_{\fin}^{\bbI}(\cU_\bS^{\hat{\lambda},\hat{\nu}})$.
Similarly, if the stabilizer of $\mu$ for the dot-action of $\Waff$ is $W_I$, and $\lambda \in -\rho + \bbX_I$, then there exists a canonical isomorphism
\[
\sfQ^I_{\lambda,\mu} \hatotimes_{\cU_\bS \fg} \sfQ_{\mu,\nu} \simto \sfQ_{\lambda,\nu}
\]
in $\Mod_{\fin}^{\bbI}(\cU_\bS^{\hat{\lambda},\hat{\nu}})$.
\item
\label{it:transitivity-sing-2}
Assume that the stabilizer of $\mu$ for the dot-action of $\Waff$ is $W_I$, and that $\lambda,\nu \in -\rho + \bbX_I$. Then there exists a canonical isomorphism
\[
\sfQ^I_{\lambda,\mu} \hatotimes_{\cU_\bS \fg} \sfQ_{\mu,\nu}^I \simto \sfQ^I_{\lambda,\nu}
\]
in $\Mod_{\fin}^{\bbI}(\cU_\bS^{\hat{\lambda},\hat{\nu}})$.
\end{enumerate}
 \end{lem}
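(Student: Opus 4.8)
The plan is to realize all the objects $\sfQ_{\bullet,\bullet}$ and $\sfQ^I_{\bullet,\bullet}$ as completions of global sections of twisting sheaves on the flag variety $\cB$, resp.\ on the partial flag variety $\cP_I$, restricted to the relevant Kostant sections, and to deduce the three isomorphisms from transitivity of convolution of such sheaves together with a comparison of the $\cB$- and $\cP_I$-pictures along the projection $q_I\colon\cB\to\cP_I$. The case already established on $\cB$ (Lemma~\ref{lem:B-transitivity}) and Lemmas~\ref{lem:US-D}, \ref{lem:US-D-I}, \ref{lem:quotient-etale} will do most of the work.

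For part~\eqref{it:transitivity-sing-2}, since $\lambda,\nu\in-\rho+\bbX_I$ one may set $a=\lambda-\mu$ and $b=\mu-\nu$, both in $\bbX_I$, and repeat the proof of Lemma~\ref{lem:B-transitivity} with $\cB$ replaced by $\cP_I$ and $\tD$ by $\tD_I$: there is a canonical isomorphism of $\tD_I$-bimodules
\[
 \bigl(\scO_{\cP_I}(a)\otimes_{\scO_{\cP_I}}\tD_I\bigr)\otimes_{\tD_I}\bigl(\scO_{\cP_I}(b)\otimes_{\scO_{\cP_I}}\tD_I\bigr)\simto\scO_{\cP_I}(a+b)\otimes_{\scO_{\cP_I}}\tD_I,
\]
which one restricts to $\widetilde{\bS}_I^{*}\times_{\ft^{*(1)}/W_I}\ft^*/(W_I,\bullet)$ — restriction along the closed embedding $\widetilde{\bS}_I^{*}\hookrightarrow\tbg_I$ being exact on $G$-equivariant quasi-coherent sheaves, by the same argument as in the proof of Lemma~\ref{lem:translation-bimod-B} — then pushes to global sections via Lemma~\ref{lem:US-D-I}, and completes at the point corresponding to $(\ola,\omu)$ (any one side suffices). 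The only subtle point is that $\hatotimes_{\cU_\bS\fg}$ completes the middle factor over $\ft^{*(1)}/W$ at the image of $\mu$, while the $\tD_I$-tensor product naturally involves the completion of $\scO(\ft^*/(W_I,\bullet))$ at the image of $\omu$; these agree because the stabilizer of $\mu$ under the dot-action of $\Waff$ is $W_I$, so by Lemma~\ref{lem:quotient-etale} the morphism $\ft^*/(W_I,\bullet)\to\ft^*/(W,\bullet)$ is \'etale at the image of $\omu$, which moreover is the unique point above its image in $\ft^*/(W,\bullet)$ since its $(W_I,\bullet)$-orbit is reduced to a point, whence $\scO(\ft^*/(W_I,\bullet))^{\wedge}_{\omu}\cong\ZHC^{\hat{\mu}}$ (compare Remark~\ref{rmk:completion-tensor-product-ZHC}).

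For part~\eqref{it:transitivity-sing-1} the additional ingredient is the relation between $\tD$ and $\tD_I$ supplied by the smooth projection $q_I\colon\cB\to\cP_I$: for $c\in\bbX_I$ one has $q_I^*\scO_{\cP_I}(c)\cong\scO_\cB(c)$, the canonical inclusion $q_I^{-1}\tD_I\hookrightarrow\tD$ makes $\tD$ a $(\tD,q_I^{-1}\tD_I)$-bimodule, and — because $\scO_\cB(c)$ is trivial along the fibres of $q_I$ — there is a canonical isomorphism of $\tD$-bimodules $\tD\otimes_{q_I^{-1}\tD_I}q_I^{-1}\bigl(\scO_{\cP_I}(c)\otimes_{\scO_{\cP_I}}\tD_I\bigr)\simto\scO_\cB(c)\otimes_{\scO_\cB}\tD$. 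Applying this with $c=\mu-\nu$ (legitimate since $\nu\in-\rho+\bbX_I$), together with associativity of tensor products and the transitivity of the twisting sheaves $\scO_\cB(\bullet)\otimes_{\scO_\cB}\tD$ on $\cB$, shows that pulling $\sfQ^I_{\mu,\nu}$ up to the $\cB$-picture identifies $\sfQ_{\lambda,\mu}\hatotimes_{\cU_\bS\fg}\sfQ^I_{\mu,\nu}$ with $\sfQ_{\lambda,\mu}\hatotimes_{\cU_\bS\fg}\sfQ_{\mu,\nu}$ — here the convolution over $\cU_\bS\fg$ is read off on the uncompleted level, with the identification of the middle completion again coming from the stabilizer hypothesis on $\mu$ and Lemma~\ref{lem:quotient-etale}, exactly as in part~\eqref{it:transitivity-sing-2}. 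One then concludes by Lemma~\ref{lem:B-transitivity}, which gives $\sfQ_{\lambda,\mu}\hatotimes_{\cU_\bS\fg}\sfQ_{\mu,\nu}\simto\sfQ_{\lambda,\nu}$. The second isomorphism in part~\eqref{it:transitivity-sing-1} is obtained symmetrically, using the hypothesis $\lambda\in-\rho+\bbX_I$ and applying $q_I$ to the other tensor factor.

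The step I expect to be the main obstacle is precisely this comparison of the $\cB$- and $\cP_I$-pictures in part~\eqref{it:transitivity-sing-1}: one must verify carefully that $q_I^{-1}\bigl(\scO_{\cP_I}(c)\otimes_{\scO_{\cP_I}}\tD_I\bigr)$ induces up to $\scO_\cB(c)\otimes_{\scO_\cB}\tD$ along $q_I^{-1}\tD_I\hookrightarrow\tD$ (the geometric content being the fibrewise triviality of $\scO_\cB(c)$ for $c\in\bbX_I$), and above all that this identification is compatible with the actions of the various copies of $\scO(\ft^*)$, $\scO(\ft^*/(W_I,\bullet))$ and $\ZHC$, with their several completions, and with the equivariant structures — in particular that $\hatotimes_{\cU_\bS\fg}$, which is formed over the Frobenius--Harish-Chandra center, correctly sees the tensor product of the underlying sheaves over $\tD$, resp.\ over $\tD_I$. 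Once this bookkeeping is organized (it runs parallel to \S\ref{ss:HCBim-S} and \S\ref{ss:Ug-D-bimodules}), no genuinely new difficulty should arise beyond the geometric input described above.
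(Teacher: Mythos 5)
Your part~(2) is essentially the paper's own argument (the paper disposes of it by saying that the proof of Lemma~\ref{lem:B-transitivity} goes through on $\cP_I$ with $\tD_I$ in place of $\tD$), and your identification of the middle completion via Lemma~\ref{lem:quotient-etale} is the right point. One slip there: the image of $\omu$ is \emph{not} in general the unique point of $\ft^*/(W_I,\bullet)$ above $\tmu$ (other $W$-translates of $\omu$ give further points); but this is also not needed, since \'etaleness at that one closed point already identifies the completed local ring of $\ft^*/(W_I,\bullet)$ at the image of $\omu$ with $\ZHC^{\hat{\mu}}$.

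Part~(1) is where your proposal breaks down. You induce $\sfQ^I_{\mu,\nu}$ up to $\cB$, claim $\sfQ_{\lambda,\mu} \hatotimes_{\cU_\bS\fg} \sfQ^I_{\mu,\nu} \cong \sfQ_{\lambda,\mu} \hatotimes_{\cU_\bS\fg} \sfQ_{\mu,\nu}$, and finish by Lemma~\ref{lem:B-transitivity}. But the $\hatotimes_{\cU_\bS\fg}$-form of Lemma~\ref{lem:B-transitivity} is stated, and true, only for a \emph{regular} middle weight, whereas here the stabilizer of $\mu$ is $W_I$, nontrivial in every case of interest. The obstruction is exactly the ``bookkeeping'' you defer: $\hatotimes_{\cU_\bS\fg}$ glues the middle over $\ZHC^{\hat{\mu}}$, while transitivity of the twisted sheaves on $\cB$ is a tensor product over $\tD$, i.e.\ over the completion of $\scO(\ft^*)$ at $\omu$, which for singular $\mu$ is finite of rank $|W_I|>1$ over $\ZHC^{\hat{\mu}}$; Lemma~\ref{lem:quotient-etale} does \emph{not} apply to $\ft^* \to \ft^*/(W,\bullet)$ at $\omu$. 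Concretely, inducing $\scO_{\cP_I}(\mu-\nu)\otimes_{\scO_{\cP_I}}\tD_I$ up along your $q_I$ and then restricting and completing produces (an object of the size of) $\sfQ_{\mu,\nu}$, which is strictly larger than $\sfQ^I_{\mu,\nu}$, so the intermediate identification fails; and the statement you invoke at the end is simply false for singular $\mu$: for $\lambda$ in the fundamental alcove and $\mu$ on exactly one wall, Lemmas~\ref{lem:translation-bimod-B} and~\ref{lem:transl-bimod-ses} show that $\sfQ_{\lambda,\mu} \hatotimes_{\cU_\bS\fg} \sfQ_{\mu,\lambda}$ has a two-step filtration with subquotients $\sfQ_{\lambda,\lambda}$ and $\sfQ_{s\bullet\lambda,\lambda}$, hence is not a single $\sfQ$. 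This is precisely why the paper runs the comparison in the opposite direction: it pushes the Borel-side factor \emph{down} to $\cP_I$ along $a_I\colon\cB\to\cP_I$, using the algebra morphism $\tD_I \to (a_I)_*\tD$ of~\cite[Proposition~1.2.3]{bmr2} together with $(a_I)_*\scO_\cB(\mu-\nu)\cong\scO_{\cP_I}(\mu-\nu)$, to obtain a canonical isomorphism
\[
(a_I)_*\bigl(\scO_\cB(\lambda-\mu)\otimes_{\scO_\cB}\tD\bigr)\otimes_{\tD_I}\bigl(\scO_{\cP_I}(\mu-\nu)\otimes_{\scO_{\cP_I}}\tD_I\bigr)\simto(a_I)_*\bigl(\scO_\cB(\lambda-\nu)\otimes_{\scO_\cB}\tD\bigr);
\]
on $\cP_I$ the middle torus-side algebra is $\scO(\ft^*/(W_I,\bullet))$, whose completion at the image of $\omu$ \emph{does} agree with $\ZHC^{\hat{\mu}}$ by the stabilizer hypothesis and Lemma~\ref{lem:quotient-etale}, so restricting to $\widetilde{\bS}^*_I \times_{\ft^{*(1)}/W_I}\ft^*/(W_I,\bullet)$ (via Lemma~\ref{lem:US-D-I}) and completing computes $\hatotimes_{\cU_\bS\fg}$ correctly and yields $\sfQ_{\lambda,\nu}$. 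To repair your write-up you should replace the pull-up step by this pushforward argument (your ``symmetric'' second isomorphism in part~(1) has the same issue).
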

 
 \begin{proof}
 \eqref{it:transitivity-sing-1}
 We only prove the first isomorphism; the proof of the second one is similar.
 Our assumptions ensure that $\mu-\nu \in \bbX_I$, so that the object $\sfQ_{\mu,\nu}^I$ is well defined.
 Consider the natural morphism $a_I : \cB \to \cP_I$. By~\cite[Proposition~1.2.3]{bmr2} there exists a canonical morphism of sheaves of algebras
 \begin{equation}
 \label{eqn:morph-tDI-tD}
 \tD_I \to (a_I)_* \tD.
 \end{equation}
 By the projection formula, and since $(a_I)_* \scO_\cB \cong \scO_{\cP_I}$, we also have
 \[
 (a_I)_* \scO_{\cB}(\mu-\nu) \cong \scO_{\cP_I}(\mu-\nu),
 \]
 and via this isomorphism the action of $\tD_I$ on $\scO_{\cP_I}(\mu-\nu)$ is obtained by restriction of scalars along~\eqref{eqn:morph-tDI-tD} from the natural action of $(a_I)_* \tD$ on $(a_I)_* \scO_{\cB}(\mu-\nu)$. We deduce a natural isomorphism
 \begin{multline*}
  (a_I)_* \bigl( \scO_\cB(\lambda-\mu) \otimes_{\scO_\cB} \tD \bigr) \otimes_{\tD_I} \bigl( \scO_{\cP_I}(\mu-\nu) \otimes_{\scO_{\cP_I}} \tD_I \bigr) \simto \\
  (a_I)_* (\scO_\cB(\lambda-\nu) \otimes_{\scO_{\cB}} \tD),
 \end{multline*}
 defined by a formula similar to that considered in the proof of Lemma~\ref{lem:B-transitivity}. The desired isomorphism follows by restricting to $\widetilde{\bS}_I^{*} \times_{\ft^{*(1)}/W_I} \ft^*/(W_I,\bullet)$ and then completing, using Lemma~\ref{lem:US-D-I} and the fact that the natural morphism $\ft^*/(W_I,\bullet) \to \ft^*/(W,\bullet)$ is \'etale at the image of $\omu$, see Lemma~\ref{lem:quotient-etale}.
 
\eqref{it:transitivity-sing-2}
The proof is similar to that of Lemma~\ref{lem:B-transitivity}.
 \end{proof}

\subsection{Conjugation of wall-crossing bimodules}
\label{ss:wall-crossing-bimod}

The following proposition will eventually reduce the question of the description of the bimodules realizing wall-crossing functors for $G$ to the case of wall-crossing functors attached to simple reflections which belong to $W$.

\begin{prop}
\label{prop:conjugation-wall-crossing}
Consider elements $s \in \Saff$, $s' \in \Saff \cap W$ and $w \in \Wext$ such that $s'=wsw^{-1}$.
Let $\lambda, \mu, \mu' \in \bbX$, 
with $\lambda$ belonging to the fundamental alcove, and $\mu$, resp.~$\mu'$, belonging to the wall of the fundamental alcove attached to $s$, resp.~$s'$, and on no other wall. 
Then there exists an isomorphism
\[
\sfP_\bS^{\lambda,\mu'} \hatotimes_{\cU_\bS \fg} \sfP_\bS^{\mu',\lambda} \cong \sfQ_{\lambda, w \bullet \lambda} \hatotimes_{\cU_\bS \fg} \bigl( \sfP_\bS^{\lambda,\mu} \hatotimes_{\cU_\bS \fg} \sfP_\bS^{\mu,\lambda} \bigr) \hatotimes_{\cU_\bS \fg} \sfQ_{w \bullet \lambda, \lambda}
\]
in $\Mod_{\fin}^{\bbI}(\cU_\bS^{\hat{\lambda},\hat{\lambda}})$.
\end{prop}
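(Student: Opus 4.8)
The plan is to prove the isomorphism in $\Mod_{\fin}^{\bbI}(\cU_\bS^{\hat\lambda,\hat\lambda})$ by showing that \emph{both} sides are canonically isomorphic to $\sfQ_{\lambda,w\bullet\mu}\hatotimes_{\cU_\bS\fg}\sfQ_{w\bullet\mu,\lambda}$, and reducing everything to the translation-bimodule calculus of \S\ref{ss:Ug-D-bimodules}, \S\ref{ss:relation} and \S\ref{ss:Ug-D-sing}. One first records two elementary consequences of the normality $\Waff\triangleleft\Wext$: the weight $w\bullet\lambda$ is again regular, since its $\Waff$-stabilizer is $w\{e\}w^{-1}=\{e\}$; and $w\bullet\mu$ has $\Waff$-stabilizer $w\{e,s\}w^{-1}=\{e,s'\}$, so that it ``lies on the wall of $s'$'' in exactly the sense that $\mu'$ does.

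For the right-hand side, since $\lambda$ lies in the fundamental alcove and $\mu$ in its closure, Lemma~\ref{lem:translation-bimod-B} (and its symmetric variant) applied with the element $w$ gives $\sfP_\bS^{\lambda,\mu}\cong\sfQ_{w\bullet\lambda,\,w\bullet\mu}$ and $\sfP_\bS^{\mu,\lambda}\cong\sfQ_{w\bullet\mu,\,w\bullet\lambda}$, whence
\begin{multline*}
\sfQ_{\lambda,w\bullet\lambda}\hatotimes_{\cU_\bS\fg}\bigl(\sfP_\bS^{\lambda,\mu}\hatotimes_{\cU_\bS\fg}\sfP_\bS^{\mu,\lambda}\bigr)\hatotimes_{\cU_\bS\fg}\sfQ_{w\bullet\lambda,\lambda}\\
\cong\sfQ_{\lambda,w\bullet\lambda}\hatotimes_{\cU_\bS\fg}\sfQ_{w\bullet\lambda,w\bullet\mu}\hatotimes_{\cU_\bS\fg}\sfQ_{w\bullet\mu,w\bullet\lambda}\hatotimes_{\cU_\bS\fg}\sfQ_{w\bullet\lambda,\lambda}.
\end{multline*}
Composing the two outer pairs by the transitivity isomorphism of Lemma~\ref{lem:B-transitivity} at the regular weight $w\bullet\lambda$ rewrites the second line as $\sfQ_{\lambda,w\bullet\mu}\hatotimes_{\cU_\bS\fg}\sfQ_{w\bullet\mu,\lambda}$. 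On the other hand, Lemma~\ref{lem:translation-bimod-B} with $w=e$ gives $\sfP_\bS^{\lambda,\mu'}\cong\sfQ_{\lambda,\mu'}$ and $\sfP_\bS^{\mu',\lambda}\cong\sfQ_{\mu',\lambda}$, so the left-hand side of the proposition is $\sfQ_{\lambda,\mu'}\hatotimes_{\cU_\bS\fg}\sfQ_{\mu',\lambda}$.

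It therefore remains to produce a canonical isomorphism
\[
\sfQ_{\lambda,\mu'}\hatotimes_{\cU_\bS\fg}\sfQ_{\mu',\lambda}\cong\sfQ_{\lambda,w\bullet\mu}\hatotimes_{\cU_\bS\fg}\sfQ_{w\bullet\mu,\lambda},
\]
which morally says that this ``wall-crossing composition'' depends only on the reflection hyperplane of $s'$ and not on the chosen weight lying on it; this is where the hypothesis $s'\in W$ is used. Write $s'=s_{\alpha_0}$ for a simple root $\alpha_0\in\fRs$ and set $I=\{\alpha_0\}$, so that $W_I=\{e,s'\}$ and $-\rho+\bbX_I$ is exactly the set of weights $\xi$ with $s'\bullet\xi=\xi$; by the first paragraph both $\mu'$ and $w\bullet\mu$ lie in $-\rho+\bbX_I$, with $\mu'-w\bullet\mu\in\bbX_I$, and $w\bullet\mu$ has $\Waff$-stabilizer precisely $W_I$. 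Now Lemma~\ref{lem:B-transitivity-sing}\eqref{it:transitivity-sing-1} gives $\sfQ_{\lambda,\mu'}\hatotimes_{\cU_\bS\fg}\sfQ^I_{\mu',\mu'}\cong\sfQ_{\lambda,\mu'}$, and Lemma~\ref{lem:B-transitivity-sing}\eqref{it:transitivity-sing-2} (with middle weight $w\bullet\mu$ and both endpoints $\mu'$) gives $\sfQ^I_{\mu',w\bullet\mu}\hatotimes_{\cU_\bS\fg}\sfQ^I_{w\bullet\mu,\mu'}\cong\sfQ^I_{\mu',\mu'}$. Inserting these identities (read backwards), regrouping by associativity, and applying both statements of Lemma~\ref{lem:B-transitivity-sing}\eqref{it:transitivity-sing-1} once more, one transforms $\sfQ_{\lambda,\mu'}\hatotimes_{\cU_\bS\fg}\sfQ_{\mu',\lambda}$ successively into $\sfQ_{\lambda,\mu'}\hatotimes_{\cU_\bS\fg}\sfQ^I_{\mu',w\bullet\mu}\hatotimes_{\cU_\bS\fg}\sfQ^I_{w\bullet\mu,\mu'}\hatotimes_{\cU_\bS\fg}\sfQ_{\mu',\lambda}$ and then into $\sfQ_{\lambda,w\bullet\mu}\hatotimes_{\cU_\bS\fg}\sfQ_{w\bullet\mu,\lambda}$, completing the argument.

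The step I expect to be the main obstacle is this last paragraph, where the parabolic bimodules $\sfQ^I$ enter: one has to check carefully that all the hypotheses of Lemma~\ref{lem:B-transitivity-sing} are met — chiefly the identifications $W_I=\{e,s'\}$ and $-\rho+\bbX_I=\{\xi:s'\bullet\xi=\xi\}$, that the relevant $\sfQ^I$ are defined (that the differences of weights lie in $\bbX_I$), and the computations of the $\Waff$-stabilizers of $w\bullet\lambda$ and $w\bullet\mu$ — and one has to keep track of which completed category each $\sfQ$ lives in, remembering that weights differing by the dot-action of $W$ or by an element of $p\bbX$ induce the same completion of $\fD$. A further point requiring care is that $w$ lies in $\Wext$ rather than in $\Waff$, so that the regularity of $w\bullet\lambda$ and the stabilizer of $w\bullet\mu$ ultimately rest on the torsion-freeness assumption (equivalently on $\Waff\triangleleft\Wext$).
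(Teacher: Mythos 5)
Your proposal is correct and follows essentially the same route as the paper's proof: identify $\sfP_\bS^{\lambda,\mu}$, $\sfP_\bS^{\mu,\lambda}$, $\sfP_\bS^{\lambda,\mu'}$, $\sfP_\bS^{\mu',\lambda}$ with the appropriate $\sfQ$'s via Lemma~\ref{lem:translation-bimod-B}, contract with Lemma~\ref{lem:B-transitivity} at the regular weight $w \bullet \lambda$, and then pass between $w \bullet \mu$ and $\mu'$ using the singular transitivity isomorphisms of Lemma~\ref{lem:B-transitivity-sing} with $I=\{\alpha\}$. The only differences are that you read the chain of $\sfQ^I$-isomorphisms in the opposite direction and spell out the stabilizer and $\bbX_I$-membership checks, which the paper leaves implicit.
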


\begin{proof}
By Lemma~\ref{lem:translation-bimod-B} we have isomorphisms
\[
\sfP_\bS^{\lambda,\mu} \cong \sfQ_{w \bullet \lambda, w \bullet \mu}, \quad \sfP_\bS^{\mu,\lambda} \cong \sfQ_{w \bullet \mu, w \bullet \lambda}.
\]
Using Lemma~\ref{lem:B-transitivity}, we deduce isomorphisms
\begin{multline*}
\sfQ_{\lambda, w \bullet \lambda} \hatotimes_{\cU_\bS \fg} \bigl( \sfP_\bS^{\lambda,\mu} \hatotimes_{\cU_\bS \fg} \sfP_\bS^{\mu,\lambda} \bigr) \hatotimes_{\cU_\bS \fg} \sfQ_{w \bullet \lambda, \lambda} \\
\cong \sfQ_{\lambda, w \bullet \lambda} \hatotimes_{\cU_\bS \fg} \sfQ_{w \bullet \lambda, w \bullet \mu} \hatotimes_{\cU_\bS \fg} \sfQ_{w \bullet \mu, w \bullet \lambda} \hatotimes_{\cU_\bS \fg} \sfQ_{w \bullet \lambda, \lambda}
\cong \sfQ_{\lambda,w \bullet \mu} \hatotimes_{\cU_\bS \fg} \sfQ_{w \bullet \mu, \lambda}.
\end{multline*}
Now the stabilizers of both $\mu'$ and $w \bullet \mu$ for the dot-action of $\Waff$ is $W_{\{\alpha\}}$, where $\alpha \in \fR^{\mathrm{s}}$ is the simple reflection such that $s'=s_\alpha$. By Lemma~\ref{lem:B-transitivity-sing}\eqref{it:transitivity-sing-1}, it follows that we have isomorphisms
\[
\sfQ_{\lambda,w \bullet \mu} \cong \sfQ_{\lambda,\mu'} \hatotimes_{\cU_\bS \fg} \sfQ^{\{\alpha\}}_{\mu', w \bullet \mu}, \quad \sfQ_{w \bullet \mu, \lambda} \cong \sfQ^{\{\alpha\}}_{w \bullet \mu, \mu'} \hatotimes_{\cU_\bS \fg} \sfQ_{\mu',\lambda},
\]
from which we obtain an isomorphism
\[
\sfQ_{\lambda,w \bullet \mu} \hatotimes_{\cU_\bS \fg} \sfQ_{w \bullet \mu, \lambda} \cong \sfQ_{\lambda,\mu'} \hatotimes_{\cU_\bS \fg} \sfQ^{\{\alpha\}}_{\mu', w \bullet \mu} \hatotimes_{\cU_\bS \fg} \sfQ^{\{\alpha\}}_{w \bullet \mu, \mu'} \hatotimes_{\cU_\bS \fg} \sfQ_{\mu',\lambda}.
\]
Then by Lemma~\ref{lem:B-transitivity-sing}\eqref{it:transitivity-sing-2} we have
\[
\sfQ^{\{\alpha\}}_{\mu', w \bullet \mu} \hatotimes_{\cU_\bS \fg} \sfQ^{\{\alpha\}}_{w \bullet \mu, \mu'} \cong \sfQ^{\{\alpha\}}_{\mu', \mu'},
\]
which implies (using again Lemma~\ref{lem:B-transitivity-sing}\eqref{it:transitivity-sing-1}) that
\[
\sfQ_{\lambda,w \bullet \mu} \hatotimes_{\cU_\bS \fg} \sfQ_{w \bullet \mu, \lambda} \cong \sfQ_{\lambda,\mu'} \hatotimes_{\cU_\bS \fg} \sfQ_{\mu',\lambda}.
\]
The desired claim follows, using again Lemma~\ref{lem:translation-bimod-B}.
\end{proof}

\section{Hecke action on the principal block}
\label{sec:Hecke-action}

In this section we assume that $p>h$ where $h$ is the Coxeter number of $G$ (see Remark~\ref{rmk:Cox-number}). In particular, this ensures that $p$ is very good for $G$, so that the results of the previous sections are applicable.

\subsection{Categories of \texorpdfstring{$G$}{G}-modules and \texorpdfstring{$G$}{G}-equivariant \texorpdfstring{$\Ug$}{Ug}-modules}
\label{ss:categories}

We now take a closer look at the category $\Rep(G)$ of finite-dimensional algebraic $G$-modules, and review its
decomposition into ``blocks.'' This will involve the notation introduced in~\S\S\ref{ss:weights}--\ref{ss:central-reductions}.

Recall (see~\S\ref{ss:HCBim-completed}) that for any $\lambda \in \bbX^+$ we have a simple $G$-module $\Sim(\lambda)$ of highest weight $\lambda$, and that all simple $G$-modules are of this form.
The \emph{linkage principle} (see~\cite[Corollary~II.6.17]{jantzen}) states that for $\lambda,\mu \in \bbX^+$ we have
\[
 \Ext^1_{\Rep(G)}(\mathsf{L}(\lambda),\mathsf{L}(\mu)) \neq 0 \quad \Rightarrow \quad \Waff \bullet \lambda = \Waff \bullet \mu.
\]
As a consequence, if for a $\Waff$-orbit $\mathbf{c} \subset \bbX$ we denote by $\Rep_{\mathbf{c}}(G)$ the Serre subcategory of $\Rep(G)$ generated by the simple objects $\mathsf{L}(\lambda)$ with $\lambda \in \mathbf{c} \cap \bbX^+$, then we have a direct sum decomposition
\begin{equation}
\label{eqn:Rep-blocks}
 \Rep(G) = \bigoplus_{\mathbf{c} \in \bbX/(\Waff,\bullet)} \Rep_{\mathbf{c}}(G).
\end{equation}
For $\lambda \in \bbX$, we will write $[\lambda]$ for the $\Waff$-orbit of $\lambda$. 
We will also set
\[
 \Rep_{\langle \lambda \rangle}(G) = \bigoplus_{\substack{\mathbf{c} \in \bbX/(\Waff,\bullet) \\ \mathbf{c} \subset \Wext \bullet \lambda}} \Rep_{\mathbf{c}}(G).
\]

Consider the category $\Mod_{\mathrm{fg}}^G(\Ug)$ of $G$-equivariant finitely generated $\Ug$-modules. For $\xi \in \ft^*/(W,\bullet)$, we will denote by
\[
\Mod_{\mathrm{fg}}^{G,\xi}(\Ug)
\]
the full subcategory of $\Mod_{\mathrm{fg}}^G(\Ug)$ whose objects are the modules annihilated by a power of the ideal $\mathfrak{m}^\xi \subset \ZHC$. As for other similar notations, in case $\xi=\tilde{\lambda}$ for some $\lambda \in \bbX$, we will write $\Mod_{\mathrm{fg}}^{G,\lambda}(\Ug)$ for $\Mod_{\mathrm{fg}}^{G,\tilde{\lambda}}(\Ug)$. If we denote by $\Mod_{\mathrm{fg}}^{G,\wedge}(\Ug)$ the category of $G$-equivariant finitely generated $\Ug$-modules annihilated by a power of the ideal $\cI \subset \ZHC \cap \ZFr$ defined in~\S\ref{ss:comparison}, then as e.g.~in~\eqref{eqn:Mod-Uwedge-sum} we have a canonical decomposition
\[
 \Mod_{\mathrm{fg}}^{G,\wedge}(\Ug) = \bigoplus_{\lambda \in \Lambda} \Mod_{\mathrm{fg}}^{G,\lambda}(\Ug),
\]
where $\Lambda \subset \bbX$ is as in~\eqref{eqn:Mod-Uwedge-sum}.

There is a natural fully faithful functor
\begin{equation}
\label{eqn:For-Rep}
\Rep(G) \to \Mod_{\mathrm{fg}}^G(\Ug)
\end{equation}
sending a $G$-module $V$ to itself, with its $G$-module structure, and with the $\Ug$-module structure obtained by differentiating the $G$-action. The essential image of this functor consists of the finite-dimensional $G$-equivariant $\Ug$-modules having the property that their $\Ug$-module structure is obtained from their $G$-module structure by differentiation. Since, for any $\lambda \in \bbX^+$, the action of $\ZHC$ on $\Sim(\lambda)$ factors through the quotient $\ZHC/\mathfrak{m}^\lambda$,                                                                                                                                                                  the functor~\eqref{eqn:For-Rep} restricts to a functor
\[
\Rep_{[\lambda]}(G) \to \Mod_{\mathrm{fg}}^{G,\lambda}(\Ug)
\]
for any $\lambda \in \bbX$. Since $\mathfrak{m}^\lambda$ only depends on the orbit $\Wext \bullet \lambda$, in this way we also obtain a fully faithful functor
\begin{equation}
\label{eqn:Rep-Ug}
\Rep_{\langle \lambda\rangle}(G) \to \Mod_{\mathrm{fg}}^{G,\lambda}(\Ug).
\end{equation}

\subsection{Action of completed bimodules}
\label{ss:action-bimodules}

Recall the category $\Mod^G_{\fin}(\cU^{\wedge})$ introduced in~\S\ref{ss:comparison}.
There exists a canonical bifunctor
\[
(-) \hatotimes_{\Ug} (-) : \Mod^G_{\fin}(\cU^{\wedge}) \times \Mod_{\fin}^{G,\wedge}(\Ug) \to \Mod_{\mathrm{fg}}^{G,\wedge}(\Ug)
\]
which can be defined as follows. Consider some $M$ in $\Mod^G_{\fin}(\cU^{\wedge})$ and some $V$ in $\Mod_{\mathrm{fg}}^{G,\wedge}(\Ug)$. By definition, there exists $m \in \Z_{\geq 1}$ such that $\cI^m$ acts trivially on $V$. Then the tensor product
\[
(M / \cI^m \cdot M) \otimes_{\Ug} V
\]
is a finitely generated left $\Ug$-module (where in the tensor product we consider the right $\Ug$-action on $M/\cI^m \cdot M$), which does not depend on the choice of $m$, and which admits a natural (diagonal) structure of algebraic $G$-module. Moreover the action of $\cI$ on this module is nilpotent. We can therefore take this as the definition of $M \hatotimes_{\Ug} V$.

The bifunctor $\hatotimes_{\Ug}$ defines on $\Mod_{\fin}^{G,\wedge}(\Ug)$ a structure of module category for the monoidal category $\Mod^G_{\fin}(\cU^{\wedge})$. It is also easily seen that for $\lambda,\mu \in \bbX$ this bifunctor restricts to a bifunctor
\[
 \Mod^G_{\fin}(\cU^{\hat{\lambda},\hat{\mu}}) \times \Mod_{\fin}^{G,\mu}(\Ug) \to \Mod_{\mathrm{fg}}^{G,\lambda}(\Ug)
\]
(where the category $\Mod^G_{\fin}(\cU^{\hat{\lambda},\hat{\mu}})$ is as in~\S\ref{ss:HCBim-completed}),
which itself restricts to a bifunctor
\[
\HCBim^{\hat{\lambda},\hat{\mu}} \times \Rep_{\langle \mu\rangle}(G)  \to \Rep_{\langle \lambda\rangle}(G)
\]
under the embeddings $\HCBim^{\hat{\lambda},\hat{\mu}} \to \Mod^G_{\fin}(\cU^{\hat{\lambda},\hat{\mu}})$ and~\eqref{eqn:Rep-Ug}.

\subsection{Relation with translation functors}
\label{ss:translation}

Recall the definition of the translation functors for $G$-modules from~\cite[Chap.~II.7]{jantzen}.
Fix $\lambda,\mu \in \bbX$, and denote by $\nu$ the only dominant $W$-translate of $\lambda-\mu$. Then the translation functor
\[
T^\lambda_\mu : \Rep_{[\mu]}(G) \to \Rep_{[\lambda]}(G)
\]
is the functor sending an object $V$ to the direct summand of $\Sim(\nu) \otimes V$ which belongs to $\Rep_{[\lambda]}(G)$ in the decomposition provided by~\eqref{eqn:Rep-blocks}. We will consider these functors only in case $\lambda$ and $\mu$ both belong to the closure of the fundamental alcove. In this setting, we have defined in~\S\ref{ss:HCBim-completed} an object
$\sfP^{\lambda,\mu} \in \HCBim_{\diag}^{\hat{\lambda},\hat{\mu}}$.


\begin{lem}
\label{lem:translation-bimodules}
Let $\lambda,\mu \in \bbX$ belonging to the closure of the fundamental alcove.
The composition
\[
\Rep_{[\mu]}(G) \to \Rep_{\langle \mu\rangle}(G) \xrightarrow{\sfP^{\lambda,\mu} \hatotimes_{\Ug} (-)} \Rep_{\langle\lambda\rangle}(G)
\]
is canonically isomorphic to the composition
\[
\Rep_{[\mu]}(G) \xrightarrow{T_\mu^\lambda} \Rep_{[\lambda]}(G) \to \Rep_{\langle \lambda\rangle}(G).
\]
\end{lem}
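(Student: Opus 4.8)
The statement compares two functors $\Rep_{[\mu]}(G) \to \Rep_{\langle\lambda\rangle}(G)$: convolution with the completed bimodule $\sfP^{\lambda,\mu} = \mathsf{C}^{\lambda,\mu}(\Sim(\nu) \otimes \Ug)$, and the classical translation functor $T^\lambda_\mu$ (followed by the inclusion of a block into the larger block). The plan is to unwind both sides through their definitions and reduce the comparison to an elementary statement about tensoring with $\Sim(\nu)$ and projecting to generalized central characters. First I would recall that for $V$ in $\Rep_{[\mu]}(G)$, viewed as a $\Ug$-module via differentiation, the action of $\ZHC$ on $V$ is locally finite and supported (in the sense of generalized eigenvalues) at the point $\tmu \in \ft^*/(W,\bullet)$; concretely $\mathfrak{m}^\mu$ acts topologically nilpotently, so in particular $\cI^m$ acts as zero on $V$ for $m \gg 0$ (here one uses that $\tmu$ lies over the image of $0$ under the Artin--Schreier quotient map, because $\overline{\mu}$ is the differential of a character — see~\S\ref{ss:central-reductions}). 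Hence $V$ lies in $\Mod^{G,\mu}_{\fin}(\Ug) \subset \Mod^{G,\wedge}_{\fin}(\Ug)$, so the bifunctor $\hatotimes_\Ug$ of~\S\ref{ss:action-bimodules} is defined on it.

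Next I would compute $\sfP^{\lambda,\mu} \hatotimes_\Ug V$ directly. By definition of $\mathsf{C}^{\lambda,\mu}$ and of $\hatotimes_\Ug$, for $m$ large we have
\[
 \sfP^{\lambda,\mu} \hatotimes_\Ug V = \bigl( (\Sim(\nu) \otimes \Ug) \otimes_{\scO(\mathfrak{D})} \scO(\mathfrak{D}^{\hat\lambda,\hat\mu}) / \cI^m \bigr) \otimes_\Ug V.
\]
Since $V$ is already supported at $\tmu$ on the right and we are tensoring over $\Ug$ with the right $\Ug$-action on $\Sim(\nu)\otimes\Ug$ (which is free, by right multiplication), the completion on the right copy of $\ZHC$ is harmless: $(\Sim(\nu) \otimes \Ug) \otimes_\Ug V = \Sim(\nu) \otimes V$ as $G$-equivariant $\Ug$-modules (using the isomorphism $(\Ug \otimes \Ug^{\op}) \otimes_\Ug V \simto V \otimes \Ug$ and the explicit formulas of~\S\ref{ss:HCBim} for the module $V\otimes\Ug$, which identify $(V\otimes\Ug)\otimes_\Ug W \cong V\otimes W$ for any $\Ug$-module $W$). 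What the left completion $\scO(\mathfrak{D}^{\hat\lambda,\hat\mu})$ does, after this simplification, is exactly project $\Sim(\nu)\otimes V$ onto its generalized $\tla$-eigenspace for the left $\ZHC$-action — equivalently, onto the summand lying in $\Rep_{\langle\lambda\rangle}(G)$ — because $\scO(\mathfrak{D}^{\hat\lambda,\hat\mu})$ is the completion of $\scO(\mathfrak{D}) = \ZHC \otimes_{\ZHC\cap\ZFr}\ZHC$ at the ideal $\cI^{\lambda,\mu}$, whose $\lambda$-component is $\mathfrak{m}^\lambda$. Since $\Sim(\nu)\otimes V$ is finite-dimensional, this generalized eigenspace is simply the block summand.

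So the content reduces to identifying: (i) the $\Ug$-module $\Sim(\nu) \otimes V$, with $V \in \Rep(G)$, carries its structure by differentiation of the diagonal $G$-action (immediate from the definition of $V\otimes\Ug$ and Leibniz), hence lands in the essential image of~\eqref{eqn:For-Rep}; and (ii) the projection onto the $\tla$-generalized character agrees, under this identification, with the projection onto the block $\Rep_{[\lambda]}(G)$ appearing in the definition of $T^\lambda_\mu$. Point (ii) uses the linkage principle together with the fact that $\mathfrak{m}^\lambda$ depends only on the $\Wext$-orbit $\Wext\bullet\lambda$ (stated in~\S\ref{ss:categories}), so that the central-character summand and the block summand of $\Sim(\nu)\otimes V$ coincide. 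Assembling these, $\sfP^{\lambda,\mu}\hatotimes_\Ug V$ is canonically the image in $\Rep_{\langle\lambda\rangle}(G)$ of the block-summand of $\Sim(\nu)\otimes V$ lying in $\Rep_{[\lambda]}(G)$, which is precisely $T^\lambda_\mu(V)$ followed by the inclusion. Naturality in $V$ is clear throughout since every step is functorial. The main obstacle, such as it is, is purely bookkeeping: keeping straight which copy of $\ZHC$ (left vs.\ right) the various completions act through, and checking that completing on the left really amounts to the finite-dimensional projection — for this I would lean on the remark in~\S\ref{ss:action-bimodules} that $\cI$ acts nilpotently on $M\hatotimes_\Ug V$, so completion and the honest idempotent projection agree on finite-length modules.
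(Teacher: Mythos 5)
Your setup and reduction follow the paper's own route: after unwinding the definitions, convolution with $\sfP^{\lambda,\mu}$ applied to $V \in \Rep_{[\mu]}(G)$ becomes $\Sim(\nu) \otimes V$ followed by the projection onto the generalized $\tla$-character for the left $\ZHC$-action, i.e.\ onto the sum of the blocks $\Rep_{\mathbf{c}}(G)$ with $\mathbf{c} \subset \Wext \bullet \lambda$ in the decomposition~\eqref{eqn:Rep-blocks}. Up to that point your bookkeeping (right completion harmless, $(\Sim(\nu)\otimes\Ug)\otimes_{\Ug}V \cong \Sim(\nu)\otimes V$, finite-dimensionality turning the completion into an idempotent projection) is correct and is essentially what the paper records as ``by definition.''

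The gap is in your point (ii). The two facts you invoke --- the linkage principle and the fact that $\mathfrak{m}^\lambda$ depends only on $\Wext \bullet \lambda$ --- only show that the generalized-character projection is the projection onto $\Rep_{\langle\lambda\rangle}(G)$; they do not show that this coincides with the projection onto the single block $\Rep_{[\lambda]}(G)$ occurring in the definition of $T^\lambda_\mu$. A priori $\Sim(\nu)\otimes V$ could contain composition factors $\Sim(\lambda')$ with $\lambda' \in \Wext\bullet\lambda$ but $\lambda' \notin \Waff\bullet\lambda$, in which case the two projections would genuinely differ; ruling this out is the entire content of the lemma, and your proposal asserts it rather than proves it. The missing argument (which is the paper's proof) is a weight argument: every $T$-weight of $\Sim(\nu)\otimes V$ lies in $\lambda+\Z\fR$ (since $\nu \in W(\lambda-\mu)$ and the weights of $V$ lie in $\mu+\Z\fR$), hence so do the highest weights of all composition factors, and by Lemma~\ref{lem:weights}\eqref{it:lem-no-torsion-1} one has $(\Wext\bullet\lambda)\cap(\lambda+\Z\fR)=\Waff\bullet\lambda$, so $[\lambda]$ is the only $\Waff$-orbit inside $\Wext\bullet\lambda$ that can contribute. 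Note that this step really uses~\eqref{eqn:no-torsion}, i.e.\ the absence of $p$-torsion in $\bbX/\Z\fR$; if that failed the two projections could honestly disagree, so no argument omitting this point can be complete.
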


\begin{proof}
By definition, the first functor sends a module $V$ in $\Rep_{[\mu]}(G)$ to the quotient
\[
(\Sim(\nu) \otimes V) / (\mathfrak{m}^\lambda)^n \cdot (\Sim(\nu) \otimes V)
\]
for $n \gg 0$, i.e.~to the direct sum of the factors in $\Sim(\nu) \otimes V$ corresponding to orbits included in $\Wext \bullet \lambda$ in the decomposition provided by~\eqref{eqn:Rep-blocks}. However, 
all the $T$-weights in $\Sim(\nu) \otimes V$ belong to $\lambda + \Z \fR$. 
In view of Lemma~\ref{lem:weights}\eqref{it:lem-no-torsion-1}, this implies
that $[\lambda]$ is the only $\Waff$-orbit contained in $\Wext \bullet \lambda$ that can contribute to the direct sum above.
\end{proof}

\begin{rmk}
See~\cite[Lemma~4.3.1]{riche} for a different proof of this claim, under more restrictive assumptions which would be sufficient for our present purposes.
\end{rmk}

\subsection{Main result}
\label{ss:main-result}

We now consider the category $\DBS$ of~\S\ref{ss:Hecke-cat} associated with the group $\bG=G^{(1)}$.
We also fix a weight $\lambda$ in the fundamental alcove. (Such a weight exists since $p \geq h$.) For any $s \in \Saff$, we choose a weight $\mu_s \in \bbX$ in the closure of the fundamental alcove, which lies on the wall associated with $s$ but on no other wall. (For the existence of such a weight, see~\cite[\S II.6.3]{jantzen}.) Once these choices have been made, the \emph{wall-crossing functor} associated with $s \in \Saff$ is the composition
\[
\Theta_s := T_{\mu_s}^\lambda \circ T_\lambda^{\mu_s} : \Rep_{[\lambda]}(G) \to \Rep_{[\lambda]}(G).
\]

The main result of the present section (and of this paper) is the following.

\begin{thm}
\label{thm:action}
There exists a monoidal functor
\[
\Psi^\lambda : \DBS \to \HCBim^{\hat{\lambda},\hat{\lambda}}
\]
such that
\[
\Psi^\lambda(B_s) \cong \sfP^{\lambda,\mu_s} \hatotimes_{\Ug} \sfP^{\mu_s,\lambda}
\]
for any $s \in \Saff$.
\end{thm}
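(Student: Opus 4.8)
The plan is to assemble the monoidal functor $\Psi^\lambda$ by composing all the pieces constructed in the previous sections, using the chain of equivalences and fully faithful functors that connect the diagrammatic Hecke category to the category of completed Harish–Chandra bimodules. Concretely, I would start from Theorem~\ref{thm:Hecke-centralizer}, which gives a fully faithful monoidal functor $\DBS \to \Rep_{\mathrm{fl}}^{\Gm}(\bt^* \times_{\bt^*/\WW} \bbJ^*_\bS \times_{\bt^*/\WW} \bt^*)$ for the group $\bG = G^{(1)}$. Next I would pass from this $\Gm$-equivariant picture to the completed, $\bbJ^{\hat 0,\hat 0}_\bS$-equivariant picture over $\cZ_\bS^{\hat 0,\hat 0}$: here one uses that $\lambda$ lies in the fundamental alcove, so by Lemma~\ref{lem:quotient-etale} the quotient morphism $\ft^* \to \ft^*/(W,\bullet)$ is étale at $\ola$, which identifies the completion of $\ft^{*(1)} \times_{\ft^{*(1)}/W} \ft^{*(1)}$ at the image of $(\lambda,\lambda)$ with the completion of $\fD$ at $(\tla,\tla)$, i.e.~with $\Spec(\cZ_\bS^{\hat\lambda,\hat\lambda})$. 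This yields a monoidal functor $\DBS \to \Mod^{\bbJ}_{\fin}(\cZ_\bS^{\hat\lambda,\hat\lambda})$. Then I would apply the monoidal equivalence $\mathscr{L}_{\lambda,\lambda}$ of Corollary~\ref{cor:equiv-ModUg-ModZ} (whose monoidality, when $\lambda$ is in the fundamental alcove, is Proposition~\ref{prop:L-monoidal}) to land in $\HCBim_\bS^{\hat\lambda,\hat\lambda}$. The composite is a monoidal functor $\DBS \to \HCBim_\bS^{\hat\lambda,\hat\lambda}$.

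The next step is to check that this composite sends $B_s$ to (the image in $\HCBim_\bS^{\hat\lambda,\hat\lambda}$ of) the bimodule $\sfP^{\lambda,\mu_s} \hatotimes_\Ug \sfP^{\mu_s,\lambda}$ realizing the wall-crossing functor. I would first do this in the case $s \in \Saff \cap W$: under the functor from $\DBS$ to $\Mod^{\bbJ}_{\fin}(\cZ_\bS^{\hat\lambda,\hat\lambda})$ the object $B_s = B_s^\Bim$ is sent to the appropriate shift of $\scO(\bt^* \times_{\bt^*/\{e,s\}} \bt^*)$ with trivial $\bbJ$-action (by the proof of Lemma~\ref{lem:essential-images}); under the étale identification this is $\cZ_\bS^{\hat\lambda,\hat\lambda} \otimes_{\cZ_\bS^{\hat{\mu_s},\hat\lambda}} \cZ_\bS^{\hat\lambda}$ with trivial structure; and Proposition~\ref{prop:localization-wall-crossing} (which requires precisely that $s$ lie on a wall of the fundamental alcove attached to a reflection in $W$, with $\lambda$ in the alcove and $\mu_s$ on that wall only) identifies $\mathscr{L}_{\lambda,\lambda}$ of this object with $\sfP_\bS^{\lambda,\mu_s} \hatotimes_{\cU_\bS\fg} \sfP_\bS^{\mu_s,\lambda}$. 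This gives the desired isomorphism in $\HCBim_\bS^{\hat\lambda,\hat\lambda}$ for $s \in W$. For general $s \in \Saff$, I would use the standard conjugation trick: choose $w \in \Wext$ and $s' \in \Saff \cap W$ with $s' = wsw^{-1}$ (as in the proof of Lemma~\ref{lem:essential-images}). On the Hecke side, Lemma~\ref{lem:conjugation-abe} expresses $B_s^\Bim \cong \Delta_w \otimes_R B_{s'}^\Bim \otimes_R \Delta_{w^{-1}}$, and $\Delta_w$ corresponds to $\Delta_w^{\bbJ}$ (Lemma~\ref{lem:Delta-J}); tracking this through the equivalences and using Proposition~\ref{prop:conjugation-wall-crossing} (with $\sfQ_{\lambda, w\bullet\lambda}$ playing the role of the image of $\Delta_w$, which is consistent with Lemma~\ref{lem:translation-bimod-B} and Lemma~\ref{lem:B-translation}), the image of $B_s$ is $\sfQ_{\lambda,w\bullet\lambda} \hatotimes_{\cU_\bS\fg} (\sfP_\bS^{\lambda,\mu'} \hatotimes_{\cU_\bS\fg} \sfP_\bS^{\mu',\lambda}) \hatotimes_{\cU_\bS\fg} \sfQ_{w\bullet\lambda,\lambda}$ where $\mu'$ is on the wall for $s'$, which Proposition~\ref{prop:conjugation-wall-crossing} identifies with $\sfP_\bS^{\lambda,\mu_s} \hatotimes_{\cU_\bS\fg} \sfP_\bS^{\mu_s,\lambda}$ (up to the usual independence of the choice of $\mu_s$ on the same wall, which follows from Lemma~\ref{lem:translation-bimod-B}).

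Finally I would lift the construction from $\HCBim_\bS^{\hat\lambda,\hat\lambda}$ back up to $\HCBim^{\hat\lambda,\hat\lambda}$. The key input here is Proposition~\ref{prop:rest-S-fully-faithful}: the restriction functor $\HCBim^{\hat\lambda,\hat\lambda}_\diag \to \HCBim_\bS^{\hat\lambda,\hat\lambda}$ is fully faithful (and monoidal). Since $\DBS$ is generated under $\hatotimes$, grading shift and direct summands by the $B_s$, and since we have just shown each $B_s$ maps to $\sfP_\bS^{\lambda,\mu_s} \hatotimes_{\cU_\bS\fg} \sfP_\bS^{\mu_s,\lambda}$, which is the restriction of $\sfP^{\lambda,\mu_s} \hatotimes_\Ug \sfP^{\mu_s,\lambda} \in \HCBim_\diag^{\hat\lambda,\hat\lambda}$, the composite functor $\DBS \to \HCBim_\bS^{\hat\lambda,\hat\lambda}$ takes values in the essential image of $\HCBim_\diag^{\hat\lambda,\hat\lambda}$; full faithfulness on that subcategory then produces the lift $\Psi^\lambda : \DBS \to \HCBim^{\hat\lambda,\hat\lambda}$ together with its monoidal structure and the isomorphisms $\Psi^\lambda(B_s) \cong \sfP^{\lambda,\mu_s} \hatotimes_\Ug \sfP^{\mu_s,\lambda}$.

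The main obstacle I expect is the careful bookkeeping needed to make the composite of all these functors genuinely monoidal and to verify compatibility of the various identifications — in particular matching the monoidal structures on $\Rep^{\Gm}_{\mathrm{fl}}$, on $\Mod^{\bbJ}_{\fin}(\cZ_\bS^{\hat\lambda,\hat\lambda})$, and on $\HCBim_\bS^{\hat\lambda,\hat\lambda}$ through the étale comparison of completions and through $\mathscr{L}_{\lambda,\lambda}$, and checking that the isomorphism $\Psi^\lambda(B_s) \cong \sfP^{\lambda,\mu_s}\hatotimes_\Ug\sfP^{\mu_s,\lambda}$ is compatible with whatever relations (such as the exact sequences of Lemma~\ref{lem:exact-seq-Bs}, reflected in $\cU_\bS$ via Lemma~\ref{lem:transl-bimod-ses}) are needed for functoriality on all of $\DBS$. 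In practice the fully faithful embedding of Proposition~\ref{prop:rest-S-fully-faithful} does a lot of the work: once the generating objects and the monoidal structure are pinned down on $\HCBim_\bS$, the lift to $\HCBim^{\hat\lambda,\hat\lambda}$ is essentially forced, so the real content is the identification of images of the $B_s$, which is exactly what Propositions~\ref{prop:localization-wall-crossing} and~\ref{prop:conjugation-wall-crossing} are designed to provide.
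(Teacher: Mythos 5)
Your proposal is correct and follows essentially the same route as the paper: compose the functor of Theorem~\ref{thm:Hecke-centralizer} with the \'etale comparison of completions and the localization equivalence $\mathscr{L}_{\lambda,\lambda}$, identify the image of $B_s$ for $s \in \Saff \cap W$ via Proposition~\ref{prop:localization-wall-crossing}, treat general $s$ by the conjugation trick via Proposition~\ref{prop:conjugation-wall-crossing}, and lift through the fully faithful functor of Proposition~\ref{prop:rest-S-fully-faithful}. The only step you treat more lightly than the paper is the identification of the image of $\Delta_w$ with $\mathscr{L}_{\lambda,\lambda}^{-1}(\sfQ_{\lambda,w\bullet\lambda})$, which the paper isolates as a dedicated lemma proved using Lemmas~\ref{lem:B-transitivity}, \ref{lem:B-translation}, \ref{lem:transl-bimod-ses} and~\ref{lem:exact-seq-Bs} --- exactly the ingredients you name.
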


The proof of this theorem will be explained in~\S\ref{ss:proof-thm-action}. Before, we show that (as explained in the introduction) this theorem implies the main conjecture of~\cite{rw}.

\begin{cor}
\label{cor:conj}
There exists a $\bk$-linear right action of the monoidal category $\DBS$ on $\Rep_{[\lambda]}(G)$ such that for any $s \in \Saff$ the action of the object $B_s$ is isomorphic to $\Theta_s$.
\end{cor}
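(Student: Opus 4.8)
The plan is to combine the monoidal functor $\Psi^\lambda$ of Theorem~\ref{thm:action} with the module-category structure on $\Rep_{\langle\lambda\rangle}(G)$ over $\HCBim^{\hat\lambda,\hat\lambda}$ recorded in \S\ref{ss:action-bimodules}. Concretely, the bifunctor $(-)\hatotimes_{\Ug}(-):\HCBim^{\hat\lambda,\hat\lambda}\times\Rep_{\langle\lambda\rangle}(G)\to\Rep_{\langle\lambda\rangle}(G)$, being unital and associative, amounts to a monoidal functor from $\HCBim^{\hat\lambda,\hat\lambda}$ to the monoidal category (under composition) of right-exact $\bk$-linear endofunctors of $\Rep_{\langle\lambda\rangle}(G)$; precomposing with $\Psi^\lambda$ yields a monoidal functor
\[
 \DBS\longrightarrow \mathrm{End}\bigl(\Rep_{\langle\lambda\rangle}(G)\bigr),\qquad X\mapsto \Psi^\lambda(X)\hatotimes_{\Ug}(-).
\]
Since $\Psi^\lambda$ is $\bk$-linear and its target carries no grading, this is well defined on every object $B_{\uw}(n)$ of $\DBS$.

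Next I would check that each of these endofunctors preserves the direct summand subcategory $\Rep_{[\lambda]}(G)\subset\Rep_{\langle\lambda\rangle}(G)$, and that $B_s$ acts on it by $\Theta_s$. As every object of $\DBS$ is of the form $B_{\uw}(n)$ and $\Psi^\lambda$ is monoidal, the functor attached to $B_{\uw}$ is a composition of the functors attached to the letters of $\uw$ (the monoidal unit contributing $\id$ by the unit axiom of \S\ref{ss:monoidal-bimodules}); hence it suffices to treat $\Psi^\lambda(B_s)\hatotimes_{\Ug}(-)$ for $s\in\Saff$. By Theorem~\ref{thm:action} this is isomorphic to $\sfP^{\lambda,\mu_s}\hatotimes_{\Ug}\bigl(\sfP^{\mu_s,\lambda}\hatotimes_{\Ug}(-)\bigr)$, and applying Lemma~\ref{lem:translation-bimodules} successively to the pairs $(\mu_s,\lambda)$ and $(\lambda,\mu_s)$ shows that its restriction to $\Rep_{[\lambda]}(G)$ is (naturally) isomorphic to $T^\lambda_{\mu_s}\circ T^{\mu_s}_\lambda=\Theta_s$; in particular it lands in $\Rep_{[\lambda]}(G)$. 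Therefore every object of $\DBS$ acts by an endofunctor preserving $\Rep_{[\lambda]}(G)$, and restriction produces a monoidal functor $\DBS\to\mathrm{End}(\Rep_{[\lambda]}(G))$ sending $B_s$ to a functor isomorphic to $\Theta_s$.

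Finally I would convert this into a \emph{right} action. For this I would invoke the canonical covariant monoidal equivalence $\varsigma:\DBS\xrightarrow{\ \sim\ }\DBS^{\mathrm{rev}}$ with $\varsigma(B_s)\cong B_s$ for all $s$ (the ``horizontal flip'' of the Soergel calculus, reversing the order of tensor factors; equivalently, swapping the two sides in Abe's bimodule incarnation of \S\ref{ss:abes-incarnation}), and set
\[
 V\star X := \Psi^\lambda\bigl(\varsigma(X)\bigr)\hatotimes_{\Ug}V
\]
for $V\in\Rep_{[\lambda]}(G)$ and $X\in\DBS$. The associativity and unit constraints of the module structure, together with $\varsigma(X\star Y)\cong\varsigma(Y)\star\varsigma(X)$, then yield coherent isomorphisms $V\star(X\star Y)\cong(V\star X)\star Y$ exhibiting a $\bk$-linear right action of $\DBS$ on $\Rep_{[\lambda]}(G)$, and $\varsigma(B_s)\cong B_s$ gives $(-)\star B_s\cong\Theta_s$. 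Taking $\lambda=0$ (which lies in the fundamental alcove because $p>h$), so that $\Rep_{[\lambda]}(G)=\Rep_0(G)$, recovers exactly Conjecture~\ref{conj:intro}. There is no genuinely hard step here: everything is formal once Theorem~\ref{thm:action} is granted, the only substantive input being Lemma~\ref{lem:translation-bimodules}, i.e.\ the linkage-type fact that the completed bimodules $\sfP^{\lambda,\mu}$ implement translation functors on the blocks and do not move one out of $\Rep_{[\lambda]}(G)$.
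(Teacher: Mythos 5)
Your proposal is correct and follows essentially the same route as the paper: it uses the left action of $\HCBim^{\hat{\lambda},\hat{\lambda}}$ on $\Rep_{\langle\lambda\rangle}(G)$ from \S\ref{ss:action-bimodules}, Theorem~\ref{thm:action} together with Lemma~\ref{lem:translation-bimodules} to identify the action of $B_s$ with $\Theta_s$ (hence stability of the block $\Rep_{[\lambda]}(G)$), and the flip anti-autoequivalence of $\DBS$ (your $\varsigma$ is the paper's $\imath$) to turn the left action into a right one. The extra details you spell out (monoidal generation by the $B_s$, the collapse of the grading shift, the remark on $\lambda=0$) are consistent with the paper's argument and introduce no gap.
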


\begin{proof}
As explained in~\S\ref{ss:action-bimodules}, there exists a canonical (left) action of the category $\HCBim^{\hat{\lambda},\hat{\lambda}}$ on the category $\Rep_{\langle \lambda\rangle}(G)$.
The category $\DBS$ admits a canonical autoequivalence $\imath$ which satisfies $\imath(X \cdot Y)=\imath(Y) \cdot \imath(X)$ for any $X,Y \in \DBS$, see e.g.~\cite[\S 4.2]{rw}. Using this autoequivalence, the functor of Theorem~\ref{thm:action} therefore provides a right action of $\DBS$ on $\Rep_{\langle \lambda\rangle}(G)$ such that $B_s$ acts via the bimodule $\sfP^{\lambda,\mu_s} \hatotimes_{\Ug} \sfP^{\mu_s,\lambda}$, for any $s \in \Saff$. By Lemma~\ref{lem:translation-bimodules}, the action of this bimodule stabilizes the subcategory $\Rep_{[\lambda]}(G)$, and its action on this summand is isomorphic to $\Theta_s$. We have therefore constructed the desired action.
\end{proof}

\begin{rmk}
 It is clear from the proof of Corollary~\ref{cor:conj} that the existence of a \emph{right} action of $\DBS$ with the required action of each $B_s$ is equivalent to the existence of a \emph{left} action with the same property. The reason why Conjecture~\ref{conj:intro} mentions a right action is that it makes the comparison with the combinatorics of the category $\Rep_{[ \lambda ]}(G)$ easier.
\end{rmk}

See~\S\ref{ss:image-morphisms} for a discussion of what can be said about the images under $\Psi^\lambda$ of the generating morphisms of $\DBS$.

%

\subsection{Proof of Theorem~\ref{thm:action}}
\label{ss:proof-thm-action}

Recall that
Theorem~\ref{thm:Hecke-centralizer} provides a monoidal functor
\begin{equation}
 \label{eqn:functor-DBS-RepJ}
\DBS \to \Rep^{\Gm}(\ft^{*(1)} \times_{\ft^{*(1)}/W} \bbJ^*_\bS \times_{\ft^{*(1)}/W} \ft^{*(1)}).
\end{equation}
%
Since $\lambda$ belongs to the fundamental alcove, its stabilizer for the dot-action on $\bbX$ is trivial, so that the quotient morphism
\[
\ft^* \to \ft^*/(W,\bullet)
\]
is \'etale at $\ola$, see Lemma~\ref{lem:quotient-etale}. Similarly, the Artin--Schreier map
\[
\ft^* \to \ft^{*(1)}
\]
is \'etale (everywhere, hence in particular at $\ola$), and sends $\ola$ to $0$. Using these maps we obtain morphisms
\[
\ft^{*(1)} \times_{\ft^{*(1)}/W} \ft^{*(1)} \leftarrow \ft^* \times_{\ft^{*(1)}/W} \ft^* \to \ft^*/(W,\bullet) \times_{\ft^{*(1)}/W} \ft^*/(W,\bullet)
\]
\'etale at $(\ola,\ola)$,
which identify the algebra $\cZ_{\bS}^{\hat{\lambda},\hat{\lambda}}$ from~\S\ref{ss:HCBim-S}
with the completion $\scO(\ft^{*(1)} \times_{\ft^{*(1)}/W} \ft^{*(1)})^{\hat{0},\hat{0}}$ of $\scO(\ft^{*(1)} \times_{\ft^{*(1)}/W} \ft^{*(1)})$ with respect to the maximal ideal corresponding to $(0,0)$. Using this identification, the pullback functor associated with the natural morphism $\mathrm{Spec}(\scO(\ft^{*(1)} \times_{\ft^{*(1)}/W} \ft^{*(1)})^{\hat{0},\hat{0}}) \to \ft^{*(1)} \times_{\ft^{*(1)}/W} \ft^{*(1)}$
induces a monoidal functor
\begin{equation}
\label{eqn:completion-bbJ}
 \Rep^{\Gm}(\ft^{*(1)} \times_{\ft^{*(1)}/W} \bbJ^*_\bS \times_{\ft^{*(1)}/W} \ft^{*(1)}) \to \Mod^{\bbJ}_{\fin}(\cZ_\bS^{\hat{\lambda},\hat{\lambda}}),
\end{equation}
where the category on the right-hand side is as in~\S\ref{ss:splitting-bundles}.
Precomposing this functor with~\eqref{eqn:functor-DBS-RepJ}, and then composing with the equivalence of Corollary~\ref{cor:equiv-ModUg-ModZ} we obtain a monoidal functor
\begin{equation}
\label{eqn:functor-DBS-RepUS}
 \Psi_\bS^\lambda : \DBS \to \HCBim_\bS^{\hat{\lambda},\hat{\lambda}}.
\end{equation}

\begin{prop}
\label{prop:image-Bs-finite}
 For any $s \in \Saff \cap W$, 
 there exists an isomorphism
 \[
 \Psi_\bS^\lambda(B_s) \cong \sfP_\bS^{\lambda,\mu_s} \hatotimes_{\cU_\bS\fg} \sfP_\bS^{\mu_s,\lambda}.
 \]
\end{prop}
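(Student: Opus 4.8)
The strategy is to track the object $B_s$ through each of the three functors whose composition defines $\Psi_\bS^\lambda$, and to match the outcome with $\sfP_\bS^{\lambda,\mu_s} \hatotimes_{\cU_\bS\fg} \sfP_\bS^{\mu_s,\lambda}$ using the computation of the latter carried out in Proposition~\ref{prop:localization-wall-crossing}. First I would recall, from the constructions of Section~\ref{sec:Hecke-univ-centralizer} (specifically Lemma~\ref{lem:essential-images}, applied to $s = s_\alpha$ with $\alpha \in \Phi^{\mathrm{s}}$), that the image of $B_s$ under the functor~\eqref{eqn:functor-DBS-RepJ} is the appropriate shift of $\scO(\ft^{*(1)} \times_{\ft^{*(1)}/W_{\{e,s\}}} \ft^{*(1)})$, equipped with the trivial structure as a representation of $\ft^{*(1)} \times_{\ft^{*(1)}/W} \bbJ^*_\bS \times_{\ft^{*(1)}/W} \ft^{*(1)}$; this uses that $s$ lies in $W$, so $s$ genuinely is the reflection $s_\alpha$ and not merely conjugate to one. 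Applying the completion functor~\eqref{eqn:completion-bbJ}, and using the \'etale identification of $\cZ_\bS^{\hat\lambda,\hat\lambda}$ with $\scO(\ft^{*(1)} \times_{\ft^{*(1)}/W} \ft^{*(1)})^{\hat 0,\hat 0}$ together with the analogous identification after replacing one copy by $\ft^*/(W,\bullet)$, one sees that the image of $B_s$ in $\Mod^{\bbJ}_{\fin}(\cZ_\bS^{\hat\lambda,\hat\lambda})$ is (a shift of) $\cZ_\bS^{\hat\lambda,\hat\lambda} \otimes_{\cZ_\bS^{\hat{\mu_s},\hat\lambda}} \cZ_\bS^{\hat\lambda}$ with trivial equivariant structure, where $\cZ_\bS^{\hat\lambda}$ is a $\cZ_\bS^{\hat{\mu_s},\hat\lambda}$-module via the morphism~\eqref{eqn:morph-completions}. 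The point here is purely commutative-algebraic: the subring $\ft^{*(1)}/W_{\{e,s\}}$ of $\ft^{*(1)}$ corresponds, under the \'etale identifications at the relevant completed points, to $\ft^*/(W_{\{e,s\}},\bullet)$, whose completion at $\overline{\mu_s}$ is exactly $\cZ_\bS^{\hat{\mu_s},\hat\lambda}$ by Lemma~\ref{lem:quotient-etale} (the stabilizer of $\mu_s$ is $\{e,s\}$), so the fiber product over $\ft^{*(1)}/W_{\{e,s\}}$ becomes, after completion, the tensor product $\cZ_\bS^{\hat\lambda,\hat\lambda} \otimes_{\cZ_\bS^{\hat{\mu_s},\hat\lambda}} \cZ_\bS^{\hat\lambda}$.

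Then I would apply the equivalence $\mathscr{L}_{\lambda,\lambda}$ of Corollary~\ref{cor:equiv-ModUg-ModZ} to this object, and invoke Proposition~\ref{prop:localization-wall-crossing} directly: it gives precisely
\[
\sfP_\bS^{\lambda,\mu_s} \hatotimes_{\cU_\bS\fg} \sfP_\bS^{\mu_s,\lambda} \cong \mathscr{L}_{\lambda,\lambda}\bigl( \cZ_\bS^{\hat\lambda,\hat\lambda} \otimes_{\cZ_\bS^{\hat{\mu_s},\hat\lambda}} \cZ_\bS^{\hat\lambda} \bigr),
\]
which matches $\Psi_\bS^\lambda(B_s)$ up to the grading shift. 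The remaining task is bookkeeping on the grading/$\Gm$-equivariant shift: one must check that the shift $(1)$ built into $B_s^{\Bim} = R \otimes_{R^s} R(1)$ in Abe's category, transported through Theorem~\ref{thm:Hecke-centralizer}, matches the shift implicit in the definition of $\sfP_\bS^{\lambda,\mu_s} \hatotimes_{\cU_\bS\fg} \sfP_\bS^{\mu_s,\lambda}$ (which ultimately traces back to the shift of $\Sim(\nu)$ in the definition of $\sfP^{\lambda,\mu}$ and the identification~\eqref{eqn:morph-J-T}). I expect this to be routine but slightly delicate, so I would track it by comparing ranks/weights of both sides as free modules over the local ring $\cZ_\bS^{\hat\lambda,\hat\lambda}$ using the fiber computations in~\eqref{eqn:fiber-M-lambda-mu} and the paragraph preceding it.

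The main obstacle is not any single computation but ensuring the three separate \'etale comparisons are mutually compatible: the identification of $\cZ_\bS^{\hat\lambda,\hat\lambda}$ with $\scO(\ft^{*(1)} \times_{\ft^{*(1)}/W} \ft^{*(1)})^{\hat 0,\hat 0}$ used in defining~\eqref{eqn:completion-bbJ}, the identification~\eqref{eqn:morph-completions} of $\cZ_\bS^{\hat{\mu_s},\hat\lambda}$ as a quotient/subalgebra relevant to the fiber product over $\ft^{*(1)}/W_{\{e,s\}}$, and Proposition~\ref{prop:localization-wall-crossing}'s use of the same morphism~\eqref{eqn:morph-completions}---all must be the same map. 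Concretely I would verify that the composite $\cZ_\bS^{\hat{\mu_s},\hat\lambda} \to \cZ_\bS^{\hat\lambda,\hat\lambda}$ arising from the fiber-product description of the image of $B_s$ (namely, the base change $\ft^{*(1)}/W_{\{e,s\}} \to \ft^{*(1)}$, completed) coincides with~\eqref{eqn:morph-completions}, which is defined via translation by $\overline{\mu_s}-\overline{\lambda}$ followed by the \'etale isomorphism~\eqref{eqn:isom-completions-2}. This reduces to a compatibility of two descriptions of the same \'etale neighborhood of $(\overline{\mu_s},\widetilde{\lambda})$ in $\ft^*/(\{e,s\},\bullet) \times_{\ft^{*(1)}/W} \ft^*/(W,\bullet)$, which I would settle by the criterion~\cite[Exp.~V, Proposition~2.2]{sga1} already used repeatedly, noting that both maps send the distinguished point to $(\widetilde\lambda,\widetilde\lambda)$ and agree on tangent spaces. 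Once this compatibility is in hand, the proposition follows by concatenating the displayed isomorphisms.
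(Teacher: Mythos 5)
Your proposal follows essentially the same route as the paper's proof: the image of $B_s$ computed in the proof of Lemma~\ref{lem:essential-images}, the completion functor~\eqref{eqn:completion-bbJ}, Proposition~\ref{prop:localization-wall-crossing}, and an \'etale comparison identifying $\cZ_\bS^{\hat{\lambda},\hat{\lambda}} \otimes_{\cZ_\bS^{\hat{\mu_s},\hat{\lambda}}} \cZ_\bS^{\hat{\lambda}}$ with the completion of $\scO(\ft^{*(1)} \times_{\ft^{*(1)}/\{e,s\}} \ft^{*(1)})$, which is exactly what the paper does. Two small remarks: the grading-shift bookkeeping you worry about is vacuous because~\eqref{eqn:completion-bbJ} lands in the ungraded category $\Mod^{\bbJ}_{\fin}(\cZ_\bS^{\hat{\lambda},\hat{\lambda}})$, and the compatibility of~\eqref{eqn:morph-completions} with the quotient map is settled in the paper not by a tangent-space argument but by unwinding its definition (as in the proof of Lemma~\ref{lem:wall-crossing-diag}) together with a commutative square relating the quotients by $\{e,s\}$ on $\ft^*$ and on $\ft^{*(1)}$ through the Artin--Schreier map, whose horizontal arrows are \'etale at the relevant points by the argument of Lemma~\ref{lem:morph-etale}.
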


\begin{proof}
In the course of the proof of Lemma~\ref{lem:essential-images} we have seen that the image of $B_s$ in $\Rep^\Gm(\ft^{*(1)} \times_{\ft^{*(1)}/W} \bbJ^*_\bS \times_{\ft^{*(1)}/W} \ft^{*(1)})$ is $\scO(\ft^{*(1)} \times_{\ft^{*(1)}/\{e,s\}} \ft^{*(1)})$, endowed with the trivial structure as a representation. 
On the other hand, by Proposition~\ref{prop:localization-wall-crossing} the wall-crossing bimodule $\sfP_\bS^{\lambda,\mu_s} \hatotimes_{\cU_\bS\fg} \sfP_\bS^{\mu_s,\lambda}$ corresponds to the object $\cZ_\bS^{\hat{\lambda},\hat{\lambda}} \otimes_{\cZ_\bS^{\hat{\mu},\hat{\lambda}}} \cZ_\bS^{\hat{\lambda}}$ (again endowed with the trivial structure as a representation) under the equivalence $\mathscr{L}_{\lambda,\lambda}$. 
Recall that $\cZ_\bS^{\hat{\lambda},\hat{\lambda}}$ identifies with the completion of $\scO(\ft^* \times_{\ft^{*(1)}/W} \ft^*)$ at the ideal corresponding to $(\ola,\ola)$.
The considerations in the proof of Lemma~\ref{lem:wall-crossing-diag}, together with the fact that the quotient morphism $\ft^* \to \ft^*/(W,\bullet)$ is \'etale at $\ola$, imply that the algebra $\cZ_\bS^{\hat{\mu},\hat{\lambda}}$ identifies with the completion of $\scO(\ft^*/(\{e,s\},\bullet) \times_{\ft^{*(1)}/W} \ft^*)$ at the ideal corresponding to the image of $(\omu,\ola)$, and that via this identification the morphism $\cZ_\bS^{\hat{\mu},\hat{\lambda}} \to \cZ_\bS^{\hat{\lambda},\hat{\lambda}}$ is induced by the natural morphism
\[
 \ft^* \times_{\ft^{*(1)}/W} \ft^* \to \ft^*/(\{e,s\},\bullet) \times_{\ft^{*(1)}/W} \ft^*
\]
sending $(\ola,\ola)$ to the image of $(\omu,\ola)$. This morphism fits in a natural commutative diagram
\[
 \xymatrix{
 \ft^* \times_{\ft^{*(1)}/W} \ft^* \ar[r] \ar[d] & \ft^{*(1)} \times_{\ft^{*(1)}/W} \ft^{*(1)} \ar[d] \\
 \ft^*/(\{e,s\},\bullet) \times_{\ft^{*(1)}/W} \ft^* \ar[r] & \ft^{*(1)}/\{e,s\} \times_{\ft^{*(1)}/W} \ft^{*(1)}
 }
\]
where the right vertical arrow is induced by the natural quotient morphism $\ft^{*(1)} \to \ft^{*(1)}/\{e,s\}$ and the horizontal arrows are induced by the Artin--Schreier map.
Here the morphism on the upper row is \'etale at $(\ola,\ola)$, and that on the lower row is \'etale at the image of $(\omu,\ola)$ by the same arguments as for Lemma~\ref{lem:morph-etale}.
This observation shows that $\cZ_\bS^{\hat{\lambda},\hat{\lambda}} \otimes_{\cZ_\bS^{\hat{\mu},\hat{\lambda}}} \cZ_\bS^{\hat{\lambda}}$ identifies with the $\scO(\ft^{*(1)} \times_{\ft^{*(1)}/W} \ft^{*(1)})^{\hat{0},\hat{0}}$-module
\[
 \scO(\ft^{*(1)} \times_{\ft^{*(1)}/W} \ft^{*(1)})^{\hat{0},\hat{0}} \otimes_{\scO(\ft^{*(1)}/\{e,s\} \times_{\ft^{*(1)}/W} \ft^{*(1)})^{\hat{0},\hat{0}}} \scO(\ft^{*(1)})^{\hat{0}},
\]
where $\scO(\ft^{*(1)}/\{e,s\} \times_{\ft^{*(1)}/W} \ft^{*(1)})^{\hat{0},\hat{0}}$ is the completion of $\scO(\ft^{*(1)}/\{e,s\} \times_{\ft^{*(1)}/W} \ft^{*(1)})$ at the ideal corresponding to the image of $(0,0)$, and $\scO(\ft^{*(1)})^{\hat{0}}$ is the completion of $\scO(\ft^{*(1)})$ (seen as an $\scO(\ft^{*(1)}/\{e,s\} \times_{\ft^{*(1)}/W} \ft^{*(1)})$-module in the natural way) at the ideal corresponding to $0$. Using the same considerations as in the proof of Lemma~\ref{lem:wall-crossing-diag}, it is easily seen that this module identifies with the completion of $\scO(\ft^{*(1)} \times_{\ft^{*(1)}/\{e,s\}} \ft^{*(1)})$, which finishes the proof of our claim.
\end{proof}

\begin{rmk}
\label{rmk:canonical-isom-s}
 The isomorphism constructed in the proof of Proposition is essentially canonical, in the sense that it only depends on the isomorphism of Proposition~\ref{prop:localization-wall-crossing} (for the pair $(\lambda,\mu_s)$), which itself only depends on a choice of isomorphism $\Sim(\nu_s)^* \cong \Sim(-w_0(\nu_s))$ where $\nu_s$ is the only dominant $W$-translate of $\mu_s-\lambda$, see Remark~\ref{rmk:canonicity-isom}.
\end{rmk}

Recall the objects $(\Delta_w^{\bbJ} : w \in W_\ext)$ introduced at the end of~\S\ref{ss:Rep-Abe}, and the objects $(\sfQ_{\nu,\eta} : \nu,\eta \in \bbX)$ introduced in~\S\ref{ss:Ug-D-bimodules}.

\begin{lem}
\label{lem:image-braid-group-bimodule-L}
 For any $w \in \Wext$, the object $\mathscr{L}_{\lambda,\lambda}^{-1}(\sfQ_{\lambda,w \bullet \lambda})$ is isomorphic to the image of $\Delta_w^{\bbJ}$ under the functor~\eqref{eqn:completion-bbJ}.
\end{lem}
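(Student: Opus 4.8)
The plan is to match the two invertible objects by reducing, via the multiplicativity of both constructions in $w$, to the two families of generators $w=t_{p\nu}$ ($\nu\in\bbX$) and $w\in\WW$ of $\Wext=\WW\ltimes p\bbX$, and then to treat these cases directly. For the reduction I would write $w=t_{p\nu}w_0$ with $\nu\in\bbX$ and $w_0\in\WW$. On the Hecke side $\Delta_w\cong\Delta_{t_{p\nu}}\otimes_R\Delta_{w_0}$ by~\eqref{eqn:convolution-Delta}, hence $\Delta_w^{\bbJ}\cong\Delta_{t_{p\nu}}^{\bbJ}\star\Delta_{w_0}^{\bbJ}$ (the functor of Theorem~\ref{thm:Hecke-centralizer}, obtained through the fully faithful monoidal functor of Proposition~\ref{prop:univ-centralizer-Hecke}, is monoidal); applying the monoidal functor~\eqref{eqn:completion-bbJ} and then the equivalence $\mathscr{L}_{\lambda,\lambda}$, which is monoidal by Proposition~\ref{prop:L-monoidal} applied with middle weight $\lambda$ (in the fundamental alcove), the image of $\Delta_w^{\bbJ}$ is the $\hatotimes_{\cU_\bS\fg}$-product of the images of $\Delta_{t_{p\nu}}^{\bbJ}$ and $\Delta_{w_0}^{\bbJ}$. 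On the bimodule side Lemma~\ref{lem:B-transitivity} (with $\lambda+p\nu$ regular) gives $\sfQ_{\lambda,w\bullet\lambda}\cong\sfQ_{\lambda,\lambda+p\nu}\hatotimes_{\cU_\bS\fg}\sfQ_{\lambda+p\nu,w\bullet\lambda}$; and since $w\bullet\lambda=w_0\bullet\lambda+p\nu$, the object $\sfQ_{\lambda+p\nu,w\bullet\lambda}=\sfQ_{\lambda+p\nu,w_0\bullet\lambda+p\nu}$ is defined by exactly the same data as $\sfQ_{\lambda,w_0\bullet\lambda}$ (the line bundle $\scO_\cB(\lambda-w_0\bullet\lambda)$ and the completion point $(\ola,\overline{w_0\bullet\lambda})$ are unchanged when $p\nu$ is added to both weights, its differential being $0$), so $\sfQ_{\lambda+p\nu,w\bullet\lambda}\cong\sfQ_{\lambda,w_0\bullet\lambda}$. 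This reduces the statement to $w=t_{p\nu}$ and $w=w_0\in\WW$.

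For $w=t_{p\nu}$: by Lemma~\ref{lem:B-transitivity} the object $\sfQ_{\lambda,\lambda+p\nu}$ is the $\hatotimes_{\cU_\bS\fg}$-inverse of $\sfQ_{\lambda+p\nu,\lambda}$, which is $\cU_\bS^{\hat\lambda}\la p\nu\ra$ by Lemma~\ref{lem:B-translation}; since $\cU_\bS^{\hat\lambda}$ is the monoidal unit, $\sfQ_{\lambda,\lambda+p\nu}\cong\cU_\bS^{\hat\lambda}\la-p\nu\ra$. Under $\mathscr{L}_{\lambda,\lambda}^{-1}$ this corresponds, using~\eqref{eqn:L-units} and the fact that $\mathscr{L}_{\lambda,\lambda}$ commutes with the twist $\la\cdot\ra$ (it is given by tensoring with a bimodule carrying the trivial $T^{(1)}$-equivariance), to $\cZ_\bS^{\hat\lambda}\la-p\nu\ra$. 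On the other hand, by the construction at the end of~\S\ref{ss:Rep-Abe} and Lemma~\ref{lem:Delta-J}, the object $\Delta_{t_{p\nu}}^{\bbJ}$ has underlying sheaf the structure sheaf of the diagonal $\ft^{*(1)}\hookrightarrow\ft^{*(1)}\times_{\ft^{*(1)}/W}\ft^{*(1)}$ (the $R$-bimodule underlying $\Delta_{t_{p\nu}}$ is $R$ with equal left and right actions, $t_{p\nu}$ acting trivially on $\bt^*=\ft^{*(1)}$), equipped with the $\bbJ^*_\bS$-equivariant structure attached to the character $p\nu$ of $T^{(1)}$ through~\eqref{eqn:morph-J-T}; hence its image under~\eqref{eqn:completion-bbJ} is $\cZ_\bS^{\hat\lambda}\la-p\nu\ra$, as wanted. (The only point to verify is that~\eqref{eqn:morph-J-T} transports this character to the twist of~\S\ref{ss:Ug-D-bimodules} with the correct sign, which is a routine comparison of conventions.)

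For $w=w_0\in\WW$: via the étale identification $\cZ_\bS^{\hat\lambda,\hat\lambda}\simto\scO(\ft^{*(1)}\times_{\ft^{*(1)}/W}\ft^{*(1)})^{\hat 0,\hat 0}$ of~\S\ref{ss:main-result} (valid because $\lambda$ lies in the fundamental alcove), $\Spec\cZ_\bS^{\hat\lambda,\hat\lambda}$ has $|\WW|$ irreducible components, the completions of the graphs $\mathrm{Gr}(v)$ of the elements $v\in\WW$ acting on $\ft^{*(1)}$, and the image of $\Delta_{w_0}^{\bbJ}$ under~\eqref{eqn:completion-bbJ} is the structure sheaf of the component indexed by $w_0$, with trivial $\bbJ^*_\bS$-equivariant structure. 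I would identify $\mathscr{L}_{\lambda,\lambda}^{-1}(\sfQ_{\lambda,w_0\bullet\lambda})$ with this object as follows. Since $\scO_\cB(\lambda-w_0\bullet\lambda)$ is a line bundle, $\scO_\cB(\lambda-w_0\bullet\lambda)\otimes_{\scO_\cB}\tD\cong\tD$ as right $\tD$-modules, so that $\sfQ_{\lambda,w_0\bullet\lambda}$ is invertible and, using Lemma~\ref{lem:US-D} and the equivalence (valid since $\lambda$ is regular, see~\S\ref{ss:Ug-D-bimodules}) between $\widetilde{\cU}_\bS^{\hat\lambda,\hat\lambda}$-modules and $\cU_\bS^{\hat\lambda,\hat\lambda}$-modules, the left and right actions of $\scO(\ft^*)\subset\widetilde{\cU}_\bS\fg$ on $\sfQ_{\lambda,w_0\bullet\lambda}$ are concentrated at $\ola$ and at $\overline{w_0\bullet\lambda}=w_0\bullet\ola$ respectively. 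Tracing this through the étale identification shows that $\mathscr{L}_{\lambda,\lambda}^{-1}(\sfQ_{\lambda,w_0\bullet\lambda})$ is supported on, and invertible along, the component indexed by $w_0$; since that component is the spectrum of a local ring, $\mathscr{L}_{\lambda,\lambda}^{-1}(\sfQ_{\lambda,w_0\bullet\lambda})$ is isomorphic, as a $\cZ_\bS^{\hat\lambda,\hat\lambda}$-module, to its structure sheaf. It then remains to see that the $\bbJ^*_\bS$-equivariant structure is trivial: it is a character of $\bbJ^*_\bS$ along this component, hence — through~\eqref{eqn:morph-J-T} and~\eqref{eqn:morph-I-T} — an element of $X^*(T^{(1)})=p\bbX$; it is governed by the weight $\lambda-w_0\bullet\lambda$ of $\scO_\cB(\lambda-w_0\bullet\lambda)$ exactly as in the proof of Lemma~\ref{lem:B-translation} (where a weight $p\nu\in p\bbX$ produces the nontrivial twist $\la p\nu\ra$); but $\lambda-w_0\bullet\lambda\in\Z\fR$, and by~\eqref{eqn:no-torsion} such an element lies in $p\bbX$ only if it lies in $p\Z\fR$, which forces $w_0\bullet\ola=\ola$, i.e. $w_0=e$ ($\ola$ being regular). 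Hence for $w_0\neq e$ the $\bbJ^*_\bS$-equivariant structure is trivial, matching $\Delta_{w_0}^{\bbJ}$; and for $w_0=e$ the assertion is just~\eqref{eqn:L-units}.

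I expect the main obstacle to be precisely that last point: showing that for $w_0\in\WW$ the $\bbJ^*_\bS$-equivariant structure on $\mathscr{L}_{\lambda,\lambda}^{-1}(\sfQ_{\lambda,w_0\bullet\lambda})$ is trivial, which is where one must bridge the ``twisted differential operators on $\cB$'' description of $\sfQ_{\lambda,w_0\bullet\lambda}$ with the ``regular centralizer'' description of $\Delta_{w_0}^{\bbJ}$. The cleanest way to carry it out is to track the $G$-equivariant sheaf $\scO_\cB(\lambda-w_0\bullet\lambda)\otimes\tD$ through the equivalences $\Coh^{G^{(1)}}(\fg^{*(1)}_{\mathrm{reg}})\simto\Rep(\bbJ^*_\bS)$ and $\Coh^{G}(\fg^{*(1)}_{\mathrm{reg}})\simto\Rep(\bbI^*_\bS)$ of (the proof of) Proposition~\ref{prop:rest-S-fully-faithful}, isolating the part of its equivariant structure that descends along the Frobenius morphism. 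As a consistency check one can also settle $w_0=s\in\Saff\cap\WW$ independently by combining Proposition~\ref{prop:localization-wall-crossing} with the exact sequences of Lemmas~\ref{lem:transl-bimod-ses} and~\ref{lem:exact-seq-Bs}, Proposition~\ref{prop:image-Bs-finite}, and the determination of the composition factors (using $\Delta_s\otimes_R\Delta_s\cong\Delta_e$ and~\eqref{eqn:L-units}).
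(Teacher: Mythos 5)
Your reduction of the statement to the two cases $w=t_{p\nu}$ and $w=w_0\in W$, and your treatment of the translation case via Lemma~\ref{lem:B-translation}, Lemma~\ref{lem:Delta-J} and the monoidality results, follow the same lines as the paper (which writes $w=t_\nu x$ and splits off the translation part in the other order; this difference is harmless). The genuine problem is your treatment of $w_0\in W$, and specifically the step you yourself flag: the triviality of the $\bbJ^*_\bS$-equivariant structure on $\mathscr{L}_{\lambda,\lambda}^{-1}(\sfQ_{\lambda,w_0\bullet\lambda})$. Your argument is that this structure is a character in $X^*(T^{(1)})=p\bbX$ ``governed by the weight $\lambda-w_0\bullet\lambda$ exactly as in the proof of Lemma~\ref{lem:B-translation}'', and that since $\lambda-w_0\bullet\lambda\notin p\bbX$ for $w_0\neq e$ the character must vanish. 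This is a non sequitur: the character attached to the equivariant structure is \emph{some} element of $p\bbX$, and nothing in your argument computes it. The mechanism of Lemma~\ref{lem:B-translation} does not transfer, because its proof depends crucially on the weight lying in $p\bbX$: only then is $\scO_\cB(p\nu)$ pulled back along Frobenius and only then does the big cell carry a trivializing section annihilated by $\fg$, which is what produces the identification of the twist with the character $p\nu$. For $\lambda-w_0\bullet\lambda\notin p\bbX$ no such $\fg$-invariant section exists, and determining which character of $T^{(1)}$ governs the descent of the $G$-equivariant structure on $\scO_\cB(\lambda-w_0\bullet\lambda)\otimes_{\scO_\cB}\tD$ through $G_1$ is exactly the content of the statement you are trying to prove; so the key point is assumed rather than established. (Your identification of the underlying $\cZ_\bS^{\hat{\lambda},\hat{\lambda}}$-module with the structure sheaf of the completed graph of $w_0$ is correct in substance, though the scheme-theoretic support and rank-one freeness deserve a word more than ``invertible along''.)

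The paper's proof is designed precisely to avoid computing this character directly: it writes $x\in W$ as a reduced expression in $\Saff\cap W$, uses Lemma~\ref{lem:B-transitivity} to factor $\sfQ_{\lambda,x\bullet\lambda}$ into pieces $\sfQ_{y_{i-1}\bullet\lambda,y_i\bullet\lambda}$, and identifies each piece via the exact sequence of Lemma~\ref{lem:transl-bimod-ses}, the computation of $\mathscr{L}_{\lambda,\lambda}^{-1}(\sfP_\bS^{\lambda,\mu_{s_i}}\hatotimes_{\cU_\bS\fg}\sfP_\bS^{\mu_{s_i},\lambda})$ from Proposition~\ref{prop:image-Bs-finite}, the uniqueness (up to automorphism) of a surjection from the completion of $\scO(\ft^{*(1)}\times_{\ft^{*(1)}/\{e,s_i\}}\ft^{*(1)})$ onto the completion of $\scO(\ft^{*(1)})$, and Lemma~\ref{lem:exact-seq-Bs}; the triviality of the equivariant structure then comes for free, since the piece is exhibited as the kernel of a map between objects with trivial equivariance. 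Your closing ``consistency check'' for $w_0=s\in\Saff\cap W$ is essentially this argument; to repair your proof you should promote it from a check to the main argument and iterate it along a reduced expression, rather than relying on the weight argument for general $w_0$.
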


\begin{proof}
 Write $w = t_\nu x$ with $\nu \in p\bbX$ and $x \in W$. Then by Lemma~\ref{lem:B-transitivity} we have
 \[
  \sfQ_{\lambda,w \bullet \lambda} = \sfQ_{\lambda,x \bullet \lambda+p\nu} \cong \sfQ_{\lambda,x \bullet \lambda} \hatotimes_{\cU_\bS\fg} \sfQ_{x \bullet \lambda,x \bullet \lambda+p\nu}.
 \]
It follows from Lemma~\ref{lem:B-translation} and the proof of Lemma~\ref{lem:Delta-J} that $\mathscr{L}_{\lambda,\lambda}^{-1}(\sfQ_{x \bullet \lambda,x \bullet \lambda+p\nu})$ is the image of $\Delta_{t_\nu}^\bbJ$. In view of~\eqref{eqn:convolution-Delta} and the monoidality of $\mathscr{L}_{\lambda,\lambda}^{-1}$ (see Proposition~\ref{prop:L-monoidal}), to conclude it therefore suffices to prove that $\mathscr{L}_{\lambda,\lambda}^{-1}(\sfQ_{\lambda,x \bullet \lambda})$ is the image of $\Delta_x^\bbJ$. In turn, if $x=s_1 \cdots s_r$ is a reduced expression (with each $s_i$ in $W \cap \Saff$) then again by by Lemma~\ref{lem:B-transitivity} we have
\[
 \sfQ_{\lambda,x\bullet\lambda} \cong \sfQ_{\lambda,s_1\bullet\lambda} \hatotimes_{\cU_\bS\fg} \sfQ_{s_1 \bullet \lambda,s_1s_2\bullet\lambda} \hatotimes_{\cU_\bS\fg} \cdots \hatotimes_{\cU_\bS\fg} \sfQ_{(s_1 \cdots s_{r-1}) \bullet \lambda,x\bullet\lambda}.
\]
If we write $y_j = s_1 \cdots s_{j}$ for $j \in \{0, \cdots, r\}$ then, by monoidality of $\mathscr{L}_{\lambda,\lambda}$,
to conclude it suffices to prove that $\mathscr{L}_{\lambda,\lambda}^{-1}(\sfQ_{y_{i-1} \bullet \lambda,y_i \bullet \lambda})$ is the image of $\Delta_{s_i}^\bbJ$ for any $i \in \{1, \cdots, r\}$.

Fix $i \in \{1, \cdots, r\}$. We have $y_i \bullet \lambda < y_{i-1} \bullet \lambda$. By Lemma~\ref{lem:transl-bimod-ses} we therefore have an exact sequence
\[
 \sfQ_{y_i \bullet \lambda,y_{i-1} \bullet \lambda} \hookrightarrow \sfP_\bS^{\lambda,\mu_{s_i}} \hatotimes_{\cU_\bS\fg} \sfP_\bS^{\mu_{s_i},\lambda} \twoheadrightarrow \sfQ_{y_{i-1} \bullet \lambda, y_{i-1} \bullet \lambda}.
\]
As seen in the course of the proof of Proposition~\ref{prop:image-Bs-finite}, $\mathscr{L}_{\lambda,\lambda}^{-1}(\sfP_\bS^{\lambda,\mu_{s_i}} \hatotimes_{\cU_\bS\fg} \sfP_\bS^{\mu_{s_i},\lambda})$ corresponds to the completion of $\scO(\ft^{*(1)} \times_{\ft^{*(1)}/\{e,s_i\}} \ft^{*(1)})$ (with the trivial structure as a representation of the appropriate group scheme), and it is clear that $\mathscr{L}_{\lambda,\lambda}^{-1}(\sfQ_{y_{i-1} \bullet \lambda, y_{i-1} \bullet \lambda})$ is the completion of $\scO(\ft^{*(1)})$. The object $\mathscr{L}_{\lambda,\lambda}^{-1}(\sfQ_{y_i \bullet \lambda,y_{i-1} \bullet \lambda})$ is therefore the kernel of a surjection from the former completion to the latter completion. However, up to an automorphism of the completion of $\scO(\ft^{*(1)})$ there exists only one such surjection, and its kernel corresponds to the completion of $\Delta_{s_i}^{\bbJ}$ by Lemma~\ref{lem:exact-seq-Bs}.
\end{proof}

Once Lemma~\ref{lem:image-braid-group-bimodule-L} is proved, one also obtains that $\mathscr{L}_{\lambda,\lambda}^{-1}(\sfQ_{w \bullet \lambda,\lambda})$, which is the inverse of $\mathscr{L}_{\lambda,\lambda}^{-1}(\sfQ_{\lambda,w \bullet \lambda})$ (see~\S\ref{ss:Ug-D-bimodules}), is isomorphic to the image of $\Delta_{w^{-1}}^{\bbJ}$ under~\eqref{eqn:completion-bbJ}. (In fact, one can then check that for any $\mu \in W_\ext \bullet \lambda$, $\mathscr{L}_{\lambda,\lambda}^{-1}(\sfQ_{\mu,w \bullet \mu})$ is isomorphic to the image of $\Delta_{w}^{\bbJ}$.)

We can finally complete the proof of Theorem~\ref{thm:action}.

\begin{proof}[Proof of Theorem~\ref{thm:action}]
Consider the functor $\Psi_\bS^\lambda$ from~\eqref{eqn:functor-DBS-RepUS}. We claim that for any $s \in \Saff$ we have
\begin{equation}
\label{eqn:Psi-B}
\Psi_\bS^\lambda(B_s) \cong \sfP_\bS^{\lambda,\mu_s} \hatotimes_{\cU_\bS \fg} \sfP_\bS^{\mu_s,\lambda}.
\end{equation}
In fact, if $s \in W$ this is the content of Proposition~\ref{prop:image-Bs-finite}. Otherwise, as already seen in the course of the proof of Lemma~\ref{lem:essential-images}, there exist $x \in \Wext$ and $t \in \Saff \cap W$ such that $s=xtx^{-1}$. Then by Lemma~\ref{lem:conjugation-abe} the image of $B_s$ in $\mathsf{C}_\ext$ is
\[
B_s^{\Bim} \cong \Delta_x \otimes_R B_t^{\Bim} \otimes_R \Delta_{x^{-1}};
\]
using Lemma~\ref{lem:image-braid-group-bimodule-L} (together with the remark following it) and the known description of $\Psi_\bS^\lambda(B_t)$,
we deduce that
\[
 \Psi_\bS^\lambda(B_s) \cong \sfQ_{x^{-1} \bullet \lambda,\lambda} \hatotimes_{\cU_\bS \fg} \sfP_\bS^{\lambda,\mu_t} \hatotimes_{\cU_\bS \fg} \sfP_\bS^{\mu_t,\lambda} \hatotimes_{\cU_\bS \fg} \sfQ_{\lambda, x^{-1} \bullet \lambda}.
\]
In view of Proposition~\ref{prop:conjugation-wall-crossing}, this implies~\eqref{eqn:Psi-B}.

Since each object of $\DBS$ is isomorphic to a shift of a product of objects $B_s$, and since both of the involved functors are monoidal, our claim implies that $\Psi_\bS^\lambda$ takes values in the essential image of the fully faithful functor
\[
 \HCBim_\diag^{\hat{\lambda},\hat{\lambda}} \to \HCBim_\bS^{\hat{\lambda},\hat{\lambda}}.
\]
(see Proposition~\ref{prop:rest-S-fully-faithful}). It follows that $\Psi_\bS^\lambda$ factors in a canonical way through a monoidal functor
\[
 \Psi^\lambda : \DBS \to \HCBim^{\hat{\lambda},\hat{\lambda}}
\]
sending $B_s$ to $\sfP^{\lambda,\mu_s} \hatotimes_{\Ug} \sfP^{\mu_s,\lambda}$ for any $s \in \Saff$, which finishes the proof.
\end{proof}

\subsection{Images of generating morphisms}
\label{ss:image-morphisms}

The category $\DBS$ is defined in~\cite{ew} in terms of generators and relations. In Theorem~\ref{thm:action} we have explained what are the images of the generating objects under $\Psi^\lambda$ (at least, up to isomorphism); by monoidality this determines the image of any object in $\DBS$ (again, up to isomorphism).
We finish the paper with a discussion of what can be said about the image under $\Psi^\lambda$ of the generating morphisms of $\DBS$.

\begin{rmk}
As explained in~\cite[Remark~5.1.2(3)]{rw}, although the original version of Conjecture~\ref{conj:intro} contained information about these images, this information is not needed for the applications considered in~\cite[Part~I]{rw}, and in particular the character formula for tilting modules in $\Rep_{[\lambda]}(G)$.
\end{rmk}

Recall that these generating morphisms fall into four families:
\begin{itemize}
 \item the polynomials (morphisms from $B_\varnothing$ to a shift of $B_\varnothing$, determined by homogeneous elements in $R=\scO(\ft^{*(1)})$);
 \item the ``dot'' morphisms for $s \in \Saff$
\[
       \begin{tikzpicture}[thick,scale=0.07,baseline]
      \draw (0,-5) to (0,0);
      \node at (0,0) {$\bullet$};
      \node at (0,-6.7) {\tiny $s$};
    \end{tikzpicture}
    \qquad \text{and} \qquad
      \begin{tikzpicture}[thick,baseline,xscale=0.07,yscale=-0.07]
      \draw (0,-5) to (0,0);
      \node at (0,0) {$\bullet$};
      \node at (0,-6.7) {\tiny $s$};
    \end{tikzpicture}
\]
 (morphisms from $B_s$ to a shift of $B_\varnothing$ and from $B_\varnothing$ to a shift of $B_s$);
 \item the ``trivalent'' morphisms for $s \in \Saff$
 \[
        \begin{tikzpicture}[thick,baseline,scale=0.07]
      \draw (-4,5) to (0,0) to (4,5);
      \draw (0,-5) to (0,0);
      \node at (0,-6.7) {\tiny $s$};
      \node at (-4,6.7) {\tiny $s$};
      \node at (4,6.7) {\tiny $s$};      
    \end{tikzpicture}
    \qquad \text{and} \qquad
        \begin{tikzpicture}[thick,baseline,scale=-0.07]
      \draw (-4,5) to (0,0) to (4,5);
      \draw (0,-5) to (0,0);
      \node at (0,-6.7) {\tiny $s$};
      \node at (-4,6.7) {\tiny $s$};
      \node at (4,6.7) {\tiny $s$};    
    \end{tikzpicture}
\]
 (morphisms from $B_s$ to a shift of $B_{ss}$ and from $B_{ss}$ to a shift of $B_s$);
 \item the ``$2m_{s,t}$-valent'' morphisms, for pairs $(s,t)$ of distinct elements of $\Saff$ generating a finite subgroup of $\Waff$.
\end{itemize}
The information we can give only concerns the first two families of morphisms.

The image of polynomials is easy to describe: we have $\Psi^\lambda(B_\varnothing)=\cU^{\hat{\lambda}}$. If $\ZHC^{\hat{\lambda}}$ is as in Remark~\ref{rmk:completion-tensor-product-ZHC}, any element in $\ZHC^{\hat{\lambda}}$ determines an endomorphism of $\cU^{\hat{\lambda}}$. Now the natural morphisms
\[
 \ft^*/(W,\bullet) \leftarrow \ft^* \rightarrow \ft^{*(1)}
\]
are \'etale at $\ola$; they therefore determine an isomorphism between $\ZHC^{\hat{\lambda}}$ and the completion $\scO(\ft^{*(1)})^{\hat{0}}$ of $\scO(\ft^{*(1)})$ with respect to the ideal of $0$. The image under $\Psi^\lambda$ of a homogenous polynomial in $R$ is the endomorphism of $\cU^{\hat{\lambda}}$ determined by the corresponding element in $\ZHC^{\hat{\lambda}}$.

To describe the image of the other morphisms, we must be more specific about the isomorphism $\Psi^\lambda(B_s) \cong \sfP^{\lambda,\mu_s} \hatotimes_{\Ug} \sfP^{\mu_s,\lambda}$. First, assume that $s \in W$. In this case, after choosing an isomorphism $\Sim(\nu_s)^* \cong \Sim(-w_0(\nu_s))$ we obtain a canonical such isomorphism, see Remark~\ref{rmk:canonical-isom-s}. Using this isomorphism, Remark~\ref{rmk:canonicity-isom} shows that the image of the upper dot morphism
\[
       \begin{tikzpicture}[thick,scale=0.07,baseline]
      \draw (0,-5) to (0,0);
      \node at (0,0) {$\bullet$};
      \node at (0,-6.7) {\tiny $s$};
    \end{tikzpicture}
\]
is the morphism $\varphi_s : \sfP^{\lambda,\mu_s} \hatotimes_{\Ug} \sfP^{\mu_s,\lambda} \to \cU^{\hat{\lambda}}$ determined by the adjunction
\[
(\sfP^{\lambda,\mu_s} \hatotimes_{\Ug}(-), \ \sfP^{\mu_s,\lambda} \hatotimes_{\Ug}(-))
\]
defined by our choice of isomorphism $\Sim(\nu_s)^* \cong \Sim(-w_0(\nu_s))$. (Here we use an obvious variant of Lemma~\ref{lem:convolution-adjoint} for $\Ug$ in place of $\cU_\bS\fg$.)

Proposition~\ref{prop:rest-S-fully-faithful}, Corollary~\ref{cor:equiv-ModUg-ModZ} and Proposition~\ref{prop:localization-wall-crossing} (together with the various \'etale maps considered above) show that the $\scO(\ft^{*(1)})^{\hat{0}}$-modules
\[
 \Hom_{\HCBim^{\hat{\lambda},\hat{\lambda}}}(\cU^{\hat{\lambda}},\cU^{\hat{\lambda}}) \quad \text{and} \quad \Hom_{\HCBim^{\hat{\lambda},\hat{\lambda}}}(\cU^{\hat{\lambda}},\sfP^{\lambda,\mu_s} \hatotimes_{\Ug} \sfP^{\mu_s,\lambda})
\]
are both free of rank $1$; from this one can check that there exists a unique morphism
\[
 \psi_s : \cU^{\hat{\lambda}} \to \sfP^{\lambda,\mu_s} \hatotimes_{\Ug} \sfP^{\mu_s,\lambda}
\]
whose composition with $\varphi_s$ is the differential of the coroot of $(\bG,\bT)$ associated with $s$ (seen as an endomorphism of  $\cU^{\hat{\lambda}}$), and that this morphism is a generator of $\Hom_{\HCBim^{\hat{\lambda},\hat{\lambda}}}(\cU^{\hat{\lambda}},\sfP^{\lambda,\mu_s} \hatotimes_{\Ug} \sfP^{\mu_s,\lambda})$. In view of the ``barbell relation'' in $\DBS$, the image of the lower dot morphism
\[
      \begin{tikzpicture}[thick,baseline,xscale=0.07,yscale=-0.07]
      \draw (0,-5) to (0,0);
      \node at (0,0) {$\bullet$};
      \node at (0,-6.7) {\tiny $s$};
    \end{tikzpicture}
\]
is $\psi_s$. 

In case $s \notin \Waff$, we do not have any canonical choice of isomorphism $\Psi^\lambda(B_s) \cong \sfP^{\lambda,\mu_s} \hatotimes_{\Ug} \sfP^{\mu_s,\lambda}$. What we can say is that there exists a choice of such an isomorphism (not unique) such that the images of the dot morphisms are described by the same rules as above.


\appendix

\section{Index of notation}
\label{sec:index}

Below is a list of the main notation used in the paper, listed by section of appearance. (We sometimes omit notation used only in one specific subsection.)

\subsection{Section~\ref{sec:Hecke-univ-centralizer}}

$\bk$: algebraically closed field of characteristic $p$, \S\ref{ss:Hecke-cat}.

$\bG$: connected reductive algebraic group over $\bk$, \S\ref{ss:Hecke-cat}.

$\bB$, $\bT$: Borel subgroup and maximal torus in $\bG$, \S\ref{ss:Hecke-cat}.

$\bg$, $\bb$, $\bt$: Lie algebras of $\bG$, $\bB$, $\bT$, \S\ref{ss:Hecke-cat}.

$\bX$, $\bX^\vee$: weight and coweight lattices of $\bT$, \S\ref{ss:Hecke-cat}.

$\Phi$, $\Phi^\vee$, $\Phi^+$, $\Phi^{\mathrm{s}}$: roots, coroots, positive roots, simple roots of $(\bG,\bB,\bT)$, \S\ref{ss:Hecke-cat}.

$\kappa$: choice of isomorphism $\bg \simto \bg^*$, \S\ref{ss:Hecke-cat}.

$\WW$: Weyl group of $(\bG,\bT)$, \S\ref{ss:Hecke-cat}.

$\Waff$, $\Wext$: affine and extended affine Weyl groups of $(\bG,\bT)$, \S\ref{ss:Hecke-cat}.

$\Saff$, $\Wext$: simple reflections in $\Waff$, \S\ref{ss:Hecke-cat}.

$\DBS$: diagrammatic Hecke category attached to $\bG$, \S\ref{ss:Hecke-cat}.

$B_{\uw}$: object of $\DBS$ attached to $\uw$, \S\ref{ss:Hecke-cat}.

$R=\scO(\bt^*)$, \S\ref{ss:Hecke-cat}.

$Q$: fraction field of $R$, \S\ref{ss:abes-incarnation}.

$\mathsf{C}$, $\mathsf{C}'$, $\mathsf{C}_\ext$, $\mathsf{C}'_\ext$: Abe's categories, \S\ref{ss:abes-incarnation}.

$B_s^{\Bim}$: bimodule in $\mathsf{C}$ attached to $s$, \S\ref{ss:abes-incarnation}.

$\Delta_x$: ``standard" object in $\mathsf{C}_\ext$ attached to $x$, \S\ref{ss:abes-incarnation}.

$\bU$: unipotent radical in $\bB$,  \S\ref{ss:Kostant-section-Abe}.

$\bn$: Lie algebra of $\bU$,  \S\ref{ss:Kostant-section-Abe}.

$\bg_\reg$, $\bg^*_\reg$: regular parts in $\bg$ and $\bg^*$, \S\ref{ss:Kostant-section-Abe}.

$\bbJ_\reg$, $\bbJ^*_\reg$: universal centralizers over $\bg_\reg$ and $\bg^*_\reg$, \S\ref{ss:Kostant-section-Abe}.

$\tbg$: Grothendieck resolution attached to $\bG$, \S\ref{ss:Kostant-section-Abe}.

$\tbg_\reg$: regular part in $\tbg$, \S\ref{ss:Kostant-section-Abe}.

$\pi$: natural morphism $\tbg \to \bg^*$, \S\ref{ss:Kostant-section-Abe}.

$\vartheta$: natural morphism $\tbg \to \bt^*$, \S\ref{ss:Kostant-section-Abe}.

$\bg_\rs$, $\bg^*_\rs$: regular semisimple parts in $\bg$ and $\bg^*$, \S\ref{ss:Kostant-section-Abe}.

$\bbJ_\rs$, $\bbJ^*_\rs$: restrictions of $\bbJ_\reg$, $\bbJ^*_\reg$ to $\bg_\rs$, $\bg^*_\rs$, \S\ref{ss:Kostant-section-Abe}.

$\bS$, $\bS^*$: Kostant sections in $\bg$ and $\bg^*$, \S\ref{ss:Kostant-section-Abe}.

$\bbJ^*_\bS$: restriction of $\bbJ^*_\reg$ to $\bS^*$, \S\ref{ss:Kostant-section-Abe}.


$\Rep^{\Gm}(\bt^* \times_{\bt^*/\WW} \bbJ^*_{\bS} \times_{\bt^*/\WW} \bt^*)$: category of $\Gm$-equivariant representations of $\bt^* \times_{\bt^*/\WW} \bbJ^*_{\bS} \times_{\bt^*/\WW} \bt^*$ on coherent sheaves on $\bt^* \times_{\bt^*/\WW} \bt^*$, \S\ref{ss:Rep-Abe}.

$\star$: monoidal product on $\Rep^{\Gm}(\bt^* \times_{\bt^*/\WW} \bbJ^*_{\bS} \times_{\bt^*/\WW} \bt^*)$, \S\ref{ss:Rep-Abe}.

$\Rep^{\Gm}_{\mathrm{fl}}(\bt^* \times_{\bt^*/\WW} \bbJ^*_{\bS} \times_{\bt^*/\WW} \bt^*)$: subcategory of $\Rep^{\Gm}(\bt^* \times_{\bt^*/\WW} \bbJ^*_{\bS} \times_{\bt^*/\WW} \bt^*)$ consisting of modules flat w.~r.~t.~the second projection $\bt^* \times_{\bt^*/\WW} \bt^* \to \bt^*$, \S\ref{ss:Rep-Abe}.

$\Delta^\bbJ_w$: object in $\Rep^{\Gm}(\bt^* \times_{\bt^*/\WW} \bbJ^*_{\bS} \times_{\bt^*/\WW} \bt^*)$ corresponding to $\Delta_w$, \S\ref{ss:Rep-Abe}.

\subsection{Section~\ref{sec:HCBim}}

$G$: connected reductive algebraic group such that $\bG=G^{(1)}$, \S\ref{ss:weights}.

$\Fr$: Frobenius morphism of $G$, \S\ref{ss:weights}.

$B$, $T$, $U$: subgroups of $G$ corresponding to $\bB$, $\bT$, $\bU$, \S\ref{ss:weights}.

$\fg$, $\fb$, $\ft$, $\fn$: Lie algebras of $G$, $B$, $T$, $U$, \S\ref{ss:weights}.

$W$: Weyl group of $(G,T)$, \S\ref{ss:weights}.

$\bX$, $\bX^\vee$: weight and coweight lattices of $T$, \S\ref{ss:weights}.

$\fR$, $\fR^\vee$, $\fR^+$, $\fRs$: roots, coroots, positive roots, simple roots of $(G,B,T)$, \S\ref{ss:weights}.

$w_0$: longest element in $W$, \S\ref{ss:weights}.

$\rho$: halfsum of the positive roots, \S\ref{ss:weights}.

$\bullet$: dot action of $W_\ext$ on $\bbX$ and $W$ on $\ft^*$, \S\ref{ss:weights}.

$\ola$: element of $\ft^*$ associated with $\lambda \in \bbX$, \S\ref{ss:weights}.

$\widetilde{\lambda}$: image of $\ola$ in $\ft^*/(W,\bullet)$, \S\ref{ss:weights}.

$\ft^*_{\mathbb{F}_p}$: ``integral'' part of $\ft^*$, \S\ref{ss:weights}.

$W_I$: subgroup of $W$ associated with $I \subset \fRs$, \S\ref{ss:weights}.

$\Ug$: universal enveloping algebra of $\fg$, \S\ref{ss:center}.

$\ZHC$, $\ZFr$: Harish-Chandra and Frobenius centers of $\Ug$, \S\ref{ss:center}.

$\AS$: Artin--Schreier morphism, \S\ref{ss:center}.

$\mathfrak{C}$: spectrum of $Z(\Ug)$, \S\ref{ss:center}.

$\mathfrak{m}_\eta$: ideal in $\ZFr$ associated with $\eta \in \fg^{*(1)}$, \S\ref{ss:central-reductions}.

$\mathfrak{m}^\xi$: ideal in $\ZHC$ associated with $\xi \in \ft^{*}/(W,\bullet)$, \S\ref{ss:central-reductions}.

$\cU_\eta \fg$, $\cU^\xi \fg$, $\cU_\eta^\xi \fg$: central reductions of $\Ug$, \S\ref{ss:central-reductions}.

$\cN^*$: nilpotent cone in $\fg^*$, \S\ref{ss:central-reductions}.

$\HCBim$: category of Harish-Chandra bimodules for $G$, \S\ref{ss:HCBim}.

$\cZ = Z(\Ug) \otimes_{\ZFr} Z(\Ug)$, \S\ref{ss:HCBim}.

$\Mod^G_\fin(\Ug \otimes_{\ZFr} \Ug^\op)$: category of $G$-equivariant f.g.~$\Ug \otimes_{\ZFr} \Ug^\op$-modules, \S\ref{ss:HCBim}.

$\mathfrak{D} = \ft^*/(W,\bullet) \times_{\ft^{*(1)}/W} \ft^*/(W,\bullet)$, \S\ref{ss:HCBim-completed}.

$\cI^{\lambda,\mu}$: ideal in $\scO(\fD)$ associated with $\lambda,\mu \in \bbX$,  \S\ref{ss:HCBim-completed}.

$\mathfrak{D}^{\hat{\lambda},\hat{\mu}}$: spectrum of the completion of $\scO(\fD)$ w.r.t.~$\cI^{\lambda,\mu}$,  \S\ref{ss:HCBim-completed}.

$\cU^{\hat{\lambda},\hat{\mu}} = (\Ug \otimes_{\ZFr} \Ug^\op) \otimes_{\scO(\fD)} \scO(\mathfrak{D}^{\hat{\lambda},\hat{\mu}})$, \S\ref{ss:HCBim-completed}.

$\Mod^G_\fin(\cU^{\hat{\lambda},\hat{\mu}})$: category of $G$-equivariant f.g.~$\cU^{\hat{\lambda},\hat{\mu}}$-modules, \S\ref{ss:HCBim-completed}.

$\HCBim^{\hat{\lambda},\hat{\mu}}$: subcategory of $\Mod^G_\fin(\cU^{\hat{\lambda},\hat{\mu}})$ of Harish-Chandra bimodules, \S\ref{ss:HCBim-completed}.

$\HCBim^{\hat{\lambda},\hat{\mu}}_\diag$: subcategory of $\HCBim^{\hat{\lambda},\hat{\mu}}$ of diagonally induced bimodules, \S\ref{ss:HCBim-completed}.

$\mathsf{C}^{\lambda,\mu}(-)=\scO(\mathfrak{D}^{\hat{\lambda},\hat{\mu}}) \otimes_{\scO(\fD)} (-)$, \S\ref{ss:HCBim-completed}.

$\cU^{\hat{\lambda}} = \mathsf{C}^{\lambda,\lambda}(\bk \otimes \Ug)$, \S\ref{ss:HCBim-completed}.

$h$: Coxeter number of $G$, \S\ref{ss:HCBim-completed}.

$\sfP^{\lambda,\mu}$: translation bimodule attached to $\lambda,\mu \in \bbX$, \S\ref{ss:HCBim-completed}.

$\Lambda$: set of representatives for the $(\Wext,\bullet)$-orbits on $\bbX$, \S\ref{ss:comparison}.

$\cI$: ideal of $\scO(\ft^{*(1)}/W)$ corresponding to the image of $0$, \S\ref{ss:comparison}.

$\fD^\wedge$: spectrum of the completion of $\scO(\fD)$ w.r.t.~$\cI \cdot \scO(\fD)$, \S\ref{ss:comparison}.

$\scO(\ft^{*(1)}/W)^\wedge$: completion of $\scO(\ft^{*(1)}/W)$ w.r.t.~$\cI$, \S\ref{ss:comparison}.

$\ZHC^\wedge$: completion of $\ZHC$ w.r.t.~$\cI \cdot \ZHC$, \S\ref{ss:comparison}.

$\ZHC^{\hat{\lambda}}$: completion of $\ZHC$ w.r.t.~$\mathfrak{m}^\lambda$, \S\ref{ss:comparison}.

$\cU^\wedge=(\Ug \otimes_{\ZFr} \Ug^\op) \otimes_{\scO(\fD)} \scO(\mathfrak{D}^\wedge)$, \S\ref{ss:comparison}.

$\Mod^G_\fin(\cU^\wedge)$: category of $G$-equivariant f.g.~$\cU^\wedge$-modules, \S\ref{ss:comparison}.

$\HCBim^\wedge$: subcategory of $\Mod^G_\fin(\cU^\wedge)$ of Harish-Chandra bimodules, \S\ref{ss:comparison}.

$\HCBim^\wedge_\diag$: subcategory of $\HCBim^\wedge$ of diagonally induced bimodules, \S\ref{ss:comparison}.

$\mathsf{C}^\wedge(-)=\scO(\mathfrak{D}^\wedge) \otimes_{\scO(\fD)} (-)$, \S\ref{ss:comparison}.

$(-) \hatotimes_\Ug (-)$: monoidal product for the categories $\Mod^G_\fin(\cU^{\hat{\lambda},\hat{\mu}})$, \S\ref{ss:monoidal-bimodules}.

$\bbI^*_\bS$: group scheme over $\bS^*$, \S\ref{ss:centralizer-Kostant}.

$\cU_{\bS}\fg := \Ug \otimes_{\ZFr} \scO(\bS^{*})$, \S\ref{ss:centralizer-Kostant}.

$\fC_\bS := \bS^{*} \times_{\ft^{*(1)} / W} \ft^*/(W,\bullet)$, \S\ref{ss:centralizer-Kostant}.

$\cZ_\bS = \cZ \otimes_{\ZFr} \scO(\bS^*)$, \S\ref{ss:HCBim-S}.

$\Mod^{\bbI}_{\mathrm{fg}}(\cU_\bS \fg \otimes_{\scO(\bS^{*})} \cU_\bS \fg^{\mathrm{op}})$: category of $\bbI^*_\bS$-equivariant f.g.~$\cU_\bS \fg \otimes_{\scO(\bS^{*})} \cU_\bS \fg^{\mathrm{op}}$-modules, \S\ref{ss:HCBim-S}.

$\HCBim_\bS$: subcategory of $\Mod^{\bbI}_{\mathrm{fg}}(\cU_\bS \fg \otimes_{\scO(\bS^{*})} \cU_\bS \fg^{\mathrm{op}})$ of Harish-Chandra bimodules, \S\ref{ss:HCBim-S}.

$\cI^{\lambda,\mu}_\bS = \cI^{\lambda,\mu} \cdot \cZ_\bS$, \S\ref{ss:HCBim-S}.

$\cZ_\bS^{\hat{\lambda},\hat{\mu}}$: completion of $\cZ_\bS$ w.r.t.~$\cI^{\lambda,\mu}_\bS$, \S\ref{ss:HCBim-S}.

$\cU^{\hat{\lambda},\hat{\mu}}_\bS = \cZ_\bS^{\hat{\lambda},\hat{\mu}} \otimes_{\cZ_\bS} ( \cU_\bS \fg \otimes_{\scO(\bS^{*})} \cU_\bS \fg^\op )$, \S\ref{ss:HCBim-S}.

$\bbI_\bS^{\hat{\lambda},\hat{\mu}} = \Spec(\cZ_\bS^{\hat{\lambda},\hat{\mu}}) \times_{\bS^*} \bbI^*_\bS$, \S\ref{ss:HCBim-S}.

$\Mod^{\bbI}_{\fin}(\cU_\bS^{\hat{\lambda},\hat{\mu}})$: category of $\bbI^{\hat{\lambda},\hat{\mu}}_\bS$-equivariant f.g.~$\cU_\bS^{\hat{\lambda},\hat{\mu}}$-modules, \S\ref{ss:HCBim-S}.

$\HCBim_\bS^{\hat{\lambda},\hat{\mu}}$: subcategory of $\Mod^{\bbI}_{\fin}(\cU_\bS^{\hat{\lambda},\hat{\mu}})$ of Harish-Chandra bimodules, \S\ref{ss:HCBim-S}.

$\sfP^{\lambda,\mu}_\bS = \scO(\bS^{*}) \otimes_{\scO(\fg^{*(1)})} \sfP^{\lambda,\mu}$, \S\ref{ss:HCBim-S}.

$(-) \hatotimes_{\cU_\bS \fg} (-)$: monoidal product for the categories $\Mod^{\bbI}_{\fin}(\cU_\bS^{\hat{\lambda},\hat{\mu}})$, \S\ref{ss:HCBim-S}.

$\cZ_\bS^\wedge$: completion of $\cZ_\bS$ w.r.t.~$\cI \cdot \cZ_\bS$, \S\ref{ss:HCBim-S}.

$\cU^\wedge_\bS = \cZ_\bS^\wedge \otimes_{\cZ_\bS} ( \cU_\bS \fg \otimes_{\scO(\bS^{*})} \cU_\bS \fg^\op )$, \S\ref{ss:HCBim-S}.

$\bbI_\bS^\wedge = \Spec(\cZ_\bS^\wedge) \times_{\bS^*} \bbI^*_\bS$, \S\ref{ss:HCBim-S}.

$\Mod^{\bbI}_{\fin}(\cU_\bS^\wedge)$: category of $\bbI^\wedge_\bS$-equivariant f.g.~$\cU_\bS^\wedge$-modules, \S\ref{ss:HCBim-S}.

$\sfC_\bS^\wedge (-) = \scO(\bS^*) \otimes_{\ZFr} \sfC^\wedge(-)$, \S\ref{ss:HCBim-S}.

$\cU_\bS^{\hat{\lambda}} = \scO(\bS^*) \otimes_{\scO(\fg^{*(1)})} \cU^{\hat{\lambda}}$, \S\ref{ss:HCBim-S}.

\subsection{Section~\ref{sec:localization}}

$\Ver_{\eta,B'}(\xi)$: baby Verma module, \S\ref{ss:Azumaya-Ug}.

$\Mod^{\bbI}_{\fin}(\cZ_\bS^\wedge)$: category of representations of $\bbI^\wedge_\bS$ on finitely generated $\cZ_\bS^\wedge$-modules, \S\ref{ss:splitting-bundles}.

$(-) \hatstar_{\bS} (-)$: monoidal product on $\Mod^{\bbI}_{\fin}(\cZ_\bS^\wedge)$, \S\ref{ss:splitting-bundles}.

$\bbJ_\bS^\wedge = \Spec(\cZ_\bS^\wedge) \times_{\bS^*} \bbJ^*_\bS$, \S\ref{ss:splitting-bundles}.

$\Mod^{\bbJ}_{\fin}(\cZ_\bS^\wedge)$: category of representations of $\bbJ^\wedge_\bS$ on finitely generated $\cZ_\bS^\wedge$-modules, \S\ref{ss:splitting-bundles}.

$\Mod^{\bbI}_{\fin}(\cZ_\bS^{\hat{\lambda},\hat{\mu}})$: category of representations of $\bbI^{\hat{\lambda},\hat{\mu}}_\bS$ on finitely generated $\cZ_\bS^{\hat{\lambda},\hat{\mu}}$-modules, \S\ref{ss:splitting-bundles}.

$\bbJ_\bS^{\hat{\lambda},\hat{\mu}} = \Spec(\cZ_\bS^{\hat{\lambda},\hat{\mu}}) \times_{\bS^*} \bbJ^*_\bS$, \S\ref{ss:splitting-bundles}.

$\Mod^{\bbJ}_{\fin}(\cZ_\bS^{\hat{\lambda},\hat{\mu}})$: category of representations of $\bbJ^{\hat{\lambda},\hat{\mu}}_\bS$ on finitely generated $\cZ_\bS^{\hat{\lambda},\hat{\mu}}$-modules, \S\ref{ss:splitting-bundles}.

$\sfM^{\lambda,\mu} = \sfP^{\lambda,-\rho} \hatotimes_{\Ug} \sfP^{-\rho,\mu}$, \S\ref{ss:splitting-bundles}.

$\sfM^{\lambda,\mu}_\bS = \scO(\bS^{*}) \otimes_{\scO(\fg^{*(1)})} \sfM^{\lambda,\mu}$, \S\ref{ss:splitting-bundles}.

$\tfD = \ft^* \times_{\ft^{*(1)}} \ft^*$, \S\ref{ss:study-fibers}.

$\tfD(\lambda)$, $\fD(\lambda)$: irreducible components associated with $\lambda \in \bbX$, \S\ref{ss:study-fibers}.

$\widetilde{\Lambda}$: set of representatives for $\bbX/p\bbX$, \S\ref{ss:study-fibers}.

$\ft^*_\circ$, open subset in $\ft^*$, \S\ref{ss:study-fibers}.

$\widetilde{\bS}^*$: preimage of $\bS^*$ in $\tbg$, \S\ref{ss:study-fibers}.

$\mathscr{L}_{\lambda,\mu}$: localization equivalence, \S\ref{ss:localization-HC}.

\subsection{Section~\ref{sec:Ug-D}}

$\cB=G/B$, \S\ref{ss:Ug-D-reg}.

$\omega$: natural morphism $G/U \to \cB$, \S\ref{ss:Ug-D-reg}.

$\tD$: universal twisted differential operators on $\cB$, \S\ref{ss:Ug-D-reg}.

$\tD_\bS := \tD_{|\widetilde{\bS}^{*} \times_{\ft^{*(1)}} \ft^*}$, \S\ref{ss:Ug-D-reg}.

$\widetilde{\cU}_\bS\fg := \cU_\bS\fg \otimes_{\ZHC} \scO(\ft^*)$, \S\ref{ss:Ug-D-reg}.

$\scO_{\cB}(\lambda)$: line bundle on $\cB$ attached to $\lambda \in \bbX$, \S\ref{ss:Ug-D-bimodules}.

$\widetilde{\cU}_\bS^{\hat{\lambda},\hat{\mu}}$:
completion of
$\widetilde{\cU}_\bS\fg \otimes_{\scO(\bS^{*})} (\widetilde{\cU}_\bS\fg)^\op$
at the ideal corresponding to $(\ola,\omu) \in \ft^* \times_{\ft^{*(1)}/W} \ft^*$, \S\ref{ss:Ug-D-bimodules}.

$\Mod^{\bbI}_{\fin}(\widetilde{\cU}_\bS^{\hat{\lambda},\hat{\mu}})$: category of equivariant f.g.~$\widetilde{\cU}_\bS^{\hat{\lambda},\hat{\mu}}$-modules, \S\ref{ss:Ug-D-bimodules}.

$(-) \hatotimes_{\widetilde{\cU}_\bS \fg} (-)$: monoidal product for the categories $\Mod^{\bbI}_{\fin}(\widetilde{\cU}_\bS^{\hat{\lambda},\hat{\mu}})$, \S\ref{ss:Ug-D-bimodules}.

$\sfQ_{\lambda,\mu}$: completion of
 $\Gamma \bigl( \widetilde{\bS}^{*} \times_{\ft^{*(1)}} \ft^*, (\scO_\cB(\lambda-\mu) \otimes_{\scO_\cB} \tD)_{|\widetilde{\bS}^{*} \times_{\ft^{*(1)}} \ft^*} \bigr)$, \S\ref{ss:Ug-D-bimodules}.
 
$\langle \eta \rangle$: twist functor on $\Mod^{\bbI}_{\fin}(\widetilde{\cU}_\bS^{\hat{\lambda},\hat{\mu}})$, \S\ref{ss:Ug-D-bimodules}.

$\fE=\ft^* \times_{\ft^{*(1)}/W} \ft^*/(W,\bullet)$, \S\ref{ss:relation}.

$\fE':=\ft^*/(W,\bullet) \times_{\ft^{*(1)}/W} \ft^*$, \S\ref{ss:relation}.

$\scO(\fE)^{\hat{\lambda},\hat{\mu}}$: completion of $\scO(\fE)$ at the ideal corresponding to $(\ola,\widetilde{\mu})$, \S\ref{ss:relation}.

$P_I$, $\bP_I$: standard parabolic subgroups in $G$ and $\bG$ associated with $I \subset \fRs$, \S\ref{ss:Ug-D-sing}.

$L_I$, $\bL_I$, $U_I$, $\bU_I$: Levi factor and unipotent radical of $P_I$ and $\bP_I$, \S\ref{ss:Ug-D-sing}.

$\cP_I=G/P_I$, \S\ref{ss:Ug-D-sing}.

$\omega_I$: natural morphism $G/U_I \to \cP_I$, \S\ref{ss:Ug-D-sing}.

$\tD_I$: twisted differential operators on $\cP_I$, \S\ref{ss:Ug-D-sing}.

$\tbg_I$: parabolic Grothendieck resolution for $\bG$, \S\ref{ss:Ug-D-sing}.

$\widetilde{\bS}^*_I$: preimage of $\bS^*$ in $\tbg_I$, \S\ref{ss:Ug-D-sing}.

$\tD_{I,\bS} = (\tD_I)_{|\widetilde{\bS}_I^{*} \times_{\ft^{*(1)}/W_I} \ft^*/(W_I,\bullet)}$, \S\ref{ss:Ug-D-sing}.

$\sfQ^I_{\lambda,\mu}$: completion of {\small
$\Gamma \bigl( \widetilde{\bS}_I^{*} \times_{\ft^{*(1)}/W_I} \ft^*/(W_I,\bullet), (\scO_{\cP_I}(\lambda-\mu) \otimes \tD_I)_{|\widetilde{\bS}_I^{*} \times_{\ft^{*(1)}/W_I} \ft^*/(W_I,\bullet)} \bigr)$}, \S\ref{ss:Ug-D-sing}.

\subsection{Section~\ref{sec:Hecke-action}}

$\Rep_{\mathbf{c}}(G)$: subcategory of $\Rep(G)$ associated with a $\Waff$-orbit $\mathbf{c} \subset \bbX$, \S\ref{ss:categories}.

$[\lambda]$: $\Waff$-orbit of $\lambda \in \bbX$, \S\ref{ss:categories}.

$\Rep_{\langle \lambda \rangle}(G)$: sum of the categories $\Rep_{\mathbf{c}}(G)$ with $\mathbf{c} \subset \Wext \bullet \lambda$, \S\ref{ss:categories}.

$\Mod^G_\fin(\Ug)$: category of $G$-equivariant f.g.~$\Ug$-modules, \S\ref{ss:categories}.

$\Mod^{G,\xi}_\fin(\Ug)$: full subcategory of $\Mod^G_\fin(\Ug)$ of modules annihilated by a power of $\mathfrak{m}^\xi$, \S\ref{ss:categories}.

$\Mod^{G,\wedge}_\fin(\Ug)$: full subcategory of $\Mod^G_\fin(\Ug)$ of modules annihilated by a power of $\cI$, \S\ref{ss:categories}.

$(-) \hatotimes_{\Ug} (-)$: bifunctor defining the action of $\Mod^G_{\fin}(\cU^{\wedge})$ on $\Mod_{\fin}^{G,\wedge}(\Ug)$, \S\ref{ss:action-bimodules}.

$T_\lambda^\mu$: translation functor (for $G$-modules) associated with $\lambda,\mu \in \bbX$, \S\ref{ss:translation}.

$\Theta_s$: wall-crossing functor associated with $s \in \Saff$, \S\ref{ss:main-result}.

$\Psi^\lambda$: functor from $\DBS$ to $\HCBim^{\hat{\lambda},\hat{\lambda}}$, \S\ref{ss:main-result}.

$\Psi^\lambda_\bS$: functor from $\DBS$ to $\HCBim_\bS^{\hat{\lambda},\hat{\lambda}}$, \S\ref{ss:proof-thm-action}.


\end{document}